\documentclass[reqno]{amsart}



\usepackage[
left=3cm,
right=3cm,
top=3cm, 
bottom=3cm,
a4paper]{geometry}


\let\tempone\itemize
\let\temptwo\enditemize
\renewenvironment{itemize}{\tempone \vspace{5pt}\addtolength{\itemsep}{0.5\baselineskip}}{\vspace{5pt} \temptwo}

\let\tempenum\enumerate
\let\tempenumtwo\endenumerate
\renewenvironment{enumerate}{\tempenum \vspace{5pt} \addtolength{\itemsep}{0.5\baselineskip}}{ \vspace{5pt} \tempenumtwo}




\makeatletter
\let\origsection\section
\renewcommand\section{\@ifstar{\starsection}{\nostarsection}}
\newcommand\nostarsection[1]
{\sectionprelude\origsection{#1}\sectionpostlude}
\newcommand\starsection[1]
{\sectionprelude\origsection*{#1}\sectionpostlude}
\newcommand\sectionprelude{%
	\vspace{0.75em} 
}
\newcommand\sectionpostlude{%
	\vspace{1em}   
}
\makeatother

\newcommand\Item[1][]{%
  \ifx\relax#1\relax  \item \else \item[#1] \fi
  \abovedisplayskip=0pt\abovedisplayshortskip=0pt~\vspace*{-\baselineskip}}


\makeatletter
\let\origsubsection\subsection
\renewcommand\subsection{\@ifstar{\starsubsection}{\nostarsubsection}}
\newcommand\nostarsubsection[1]
{\sectionprelude\origsubsection{#1}\subsectionpostlude}
\newcommand\starsubsection[1]
{\subsectionprelude\origsubsection*{#1}\subsectionpostlude}
\newcommand\subsectionprelude{%
	\vspace{0.25em} 
}
\newcommand\subsectionpostlude{%
	\vspace{0.0em}   
}
\makeatother

\usepackage[bb=boondox]{mathalfa}

\usepackage[T1]{fontenc}
\usepackage{lmodern}
\usepackage[utf8]{inputenc}
\usepackage{mathrsfs}
\usepackage{bbm}

\usepackage{mathtools}

\usepackage{esvect}
\usepackage[UKenglish]{babel}

\usepackage{amssymb}
\usepackage{amsthm}
\usepackage{amsmath}
\usepackage{mathrsfs}
\usepackage{tikz-cd}

\usepackage{tikz}

\usetikzlibrary{decorations.markings}
\usetikzlibrary{shapes.geometric, patterns, angles, quotes}

\usetikzlibrary{hobby}

\usepackage{amsmath,amssymb,amsthm,graphicx,amscd,amsfonts,latexsym}
\usepackage{pstricks}
\usepackage[normalem]{ulem}

\usepackage{lscape}
\usepackage[hidelinks]{hyperref}
\usepackage{tikz}
\usetikzlibrary{arrows}
\usetikzlibrary{decorations}
\usetikzlibrary{shapes}
\usetikzlibrary{decorations.pathreplacing,decorations.markings} 

\usetikzlibrary{calc,intersections,through,backgrounds}
\usepackage{graphicx}
\usepackage{caption} 
\usepackage{float}
\usepackage{upgreek}

\usetikzlibrary{shapes.geometric,positioning}
\usetikzlibrary{arrows,decorations.pathmorphing,decorations.pathreplacing}

\usepackage{ifthen} 


\makeatletter
\newcases{rightalignedcases}{\quad}{%
\hfil$\m@th\displaystyle{##}$\hfil}{$\m@th\displaystyle{##}$\hfil}{\lbrace}{.}
\makeatother

\newcounter{pos} 
\tikzset{									
	initcounter/.code={\setcounter{pos}{0}},
	style between/.style n args={3}{
		postaction={
			initcounter,
			decorate,
			decoration={
				show path construction,
				curveto code={
					\addtocounter{pos}{1}
					\pgfmathtruncatemacro{\min}{#1 - 1}
					\ifthenelse{\thepos < #2 \AND \thepos > \min}{
						\draw[#3]
						(\tikzinputsegmentfirst)
						..
						controls (\tikzinputsegmentsupporta) and (\tikzinputsegmentsupportb)
						..
						(\tikzinputsegmentlast);
					}{}
				}
			}
		},
	},
}

\tikzset{
    clip even odd rule/.code={\pgfseteorule}, 
    invclip/.style={
        clip,insert path=
            [clip even odd rule]{
                [reset cm](-\maxdimen,-\maxdimen)rectangle(\maxdimen,\maxdimen)
            }
    }
}

\usepackage{epstopdf}

\makeatletter
\newcommand{\colim@}[2]{%
	\vtop{\m@th\ialign{##\cr
			\hfil$#1\operator@font colim$\hfil\cr
			\noalign{\nointerlineskip\kern1.5\ex@}#2\cr
			\noalign{\nointerlineskip\kern-\ex@}\cr}}%
}
\newcommand{\colim}{%
	\mathop{\mathpalette\colim@{\rightarrowfill@\textstyle}}\nmlimits@
}
\makeatother

\DeclareMathOperator{\Hom}{Hom}
\DeclareMathOperator*{\proj}{proj}
\DeclareMathOperator{\End}{End}

\DeclareMathOperator{\deco}{\mathscr{D}}
\DeclareMathOperator{\marked}{\mathcal{M}}
\DeclareMathOperator{\DA}{\mathsf{D}^b(A)}

\DeclareMathOperator{\Deck}{\mathsf{Deck}}
\DeclareMathOperator{\H^0}{H^0}
\DeclareMathOperator{\HH}{H}
\DeclareMathOperator{\triv}{T}

\DeclareMathOperator{\reg}{reg}
\DeclareMathOperator{\Tw}{Tw}
\DeclareMathOperator{\Fuk}{Fuk}
\DeclareMathOperator{\MCG}{MCG}
\DeclareMathOperator{\Arf}{Arf}
\newcommand{\oInt}{\,\vv{\cap}\,}
\newcommand{\Ob}[1]{\operatorname{Ob}(#1)}
\newcommand{\Kb}[1]{\operatorname{K}^b(\proj #1)}

\newcommand{\seq}{\mathbbm{m}}

\newcommand{\Db}[1]{\mathsf{D}^b(#1)}


\newcommand{\BV}[1]{\operatorname{V}(#1)} 
\newcommand{\BE}[1]{\operatorname{E}(#1)} 
\newcommand{\BH}[1]{\operatorname{H}(#1)} 
\newcommand{\BF}[1]{\operatorname{F}(#1)} 


\newcommand\restr[2]{{
		\left.\kern-\nulldelimiterspace 
		#1 
		\vphantom{\big|} 
		\right|_{#2} 
}}


\tikzset{
	set arrow inside/.code={\pgfqkeys{/tikz/arrow inside}{#1}},
	set arrow inside={end/.initial=>, opt/.initial=},
	/pgf/decoration/Mark/.style={
		mark/.expanded=at position #1 with
		{
			\noexpand\arrow[\pgfkeysvalueof{/tikz/arrow inside/opt}]{\pgfkeysvalueof{/tikz/arrow inside/end}}
		}
	},
	arrow inside/.style 2 args={
		set arrow inside={#1},
		postaction={
			decorate,decoration={
				markings,Mark/.list={#2}
			}
		}
	},
}

\makeatletter
\tikzset{commutative diagrams/.cd,arrow style=tikz,diagrams={>=latex'}}\tikzset{join/.code=\tikzset{after node path={%
			\ifx\tikzchainprevious\pgfutil@empty\else(\tikzchainprevious)%
			edge[every join]#1(\tikzchaincurrent)\fi}}}
\makeatother

\tikzset{>=stealth',every on chain/.append style={join},
	every join/.style={->}}

\tikzset{every loop/.style={min distance=25mm,in=50,out=100,looseness=5}}



\newtheorem{prf}{Proof}[section]

\theoremstyle{remark}

\newtheoremstyle{ownTheoremStyle}
{1em}
{1em}
{\itshape}
{}
{\bfseries}
{.}
{ }
{}

\newtheoremstyle{ownDefinitionStyle}
{1em}
{1em}
{}
{}
{\bfseries}
{.}
{ }
{}

\theoremstyle{ownTheoremStyle}

\newtheorem{thm}[prf]{Theorem}

\newtheorem{Introthm}{Theorem}

\newtheorem{lem}[prf]{Lemma}
\newtheorem{prp}[prf]{Proposition}

\newtheorem{cor}[prf]{Corollary}

\theoremstyle{ownDefinitionStyle}
\newtheorem{exa}[prf]{Example}
\newtheorem*{convention}{Convention}

\newtheorem{definition}[prf]{Definition}
\newtheorem{rem}[prf]{Remark}

\newtheorem{notation}[prf]{Notation}

\newcommand{\quotient}[2]{{\left.\raisebox{.2em}{$#1$}\middle/\raisebox{-.2em}{$#2$}\right.}}

\numberwithin{equation}{section}

\newcommand{\cA}{\mathcal{A}}
\newcommand{\cB}{\mathcal{B}}

\newcommand{\cF}{\mathcal{F}}

\newcommand{\bA}{\mathbb{A}}
\newcommand{\bB}{\mathbb{B}}

\newcommand{\bF}{\mathbb{F}}

\newcommand{\sC}{\mathsf{C}}

\newcommand{\punct}{\mathscr{P}}

\newcommand{\rmod}[1]{{\rm mod \,}#1}

\newcommand{\dg}[1]{\left\lVert #1\right\rVert}

\def\centerarc[#1](#2)(#3:#4:#5) 
{ \draw[#1] ($(#2)+({#5*cos(#3)},{#5*sin(#3)})$) arc (#3:#4:#5); }

\title{\MakeUppercase{Derived equivalence classification of Brauer graph algebras}}
\author{Sebastian Opper}

\address{Charles University, Faculty of Mathematics and Physics, Ke Karlovu 3, 121 16 Praha 2, Czech Republic}
\email{opper@karlin.mff.cuni.cz}

\author{Alexandra Zvonareva}
\address[]{Universit\"{a}t Stuttgart, Institut f\"ur Algebra und Zahlentheorie, Pfaffenwaldring 57, 70569 Stuttgart, Germany}
\email{alexandra.zvonareva@mathematik.uni-stuttgart.de}

\begin{document}

\begin{abstract}

We classify Brauer graph algebras up to derived equivalence by showing that the set of derived invariants introduced by Antipov is complete. These algebras first appeared in representation theory of finite groups and can be defined for any suitably decorated graph on an oriented surface.
 Motivated by the connection between Brauer graph algebras and gentle algebras we consider $A_{\infty}$-trivial extensions of partially wrapped Fukaya categories associated to surfaces with boundary. This construction naturally enlarges the class of Brauer graph algebras and provides a way to construct derived equivalences between Brauer graph algebras with the same derived invariants. As part of the proof we provide an interpretation of derived invariants of Brauer graph algebras as orbit invariants of line fields under the action of the mapping class group.\end{abstract}
	
		\maketitle
	\tableofcontents
	\setcounter{tocdepth}{1}
\section*{Introduction}

\noindent Brauer graph algebras are a class of finite dimensional algebras defined by quivers and relations. The quiver of a Brauer graph algebra can be constructed from a Brauer graph which is a graph embedded into an oriented surface in a minimal way together with a multiplicity function which assigns a non-zero natural number to each vertex. The minimality of the embedding, in particular, determines the surface uniquely up to homeomorphism. As one might expect the graph and the surface play a crucial role in understanding the representation theory  and the derived category of the associated Brauer graph algebra.

 Brauer graph algebras first emerged in modular representation theory in form of blocks with cyclic \cite{Dade} or dihedral defect group \cite{Donovan}. Thereafter, these algebras appeared in other classification results such as in the classification of self-injective cellular algebras of polynomial growth \cite{Ariki1KaseMiyamotoWada}, tame blocks of Hecke algebras \cite{ARIKI, ArikiRep}, symmetric $2$-Calabi-Yau-tilted algebras of finite representation type \cite{Ladkani}, and blocks of tame infinitesimal group schemes \cite{FarsteinerSkowronski}.  Recently, Brauer graph algebras also appeared in connection with dessins d'enfants \cite{SchrollMalic}. 
 
Over an algebraically closed field the class of Brauer graph algebras coincides with the class of symmetric special biserial algebras \cite{AntipovGeneralov, SchrollTrivialExtension}, whose representation theory has been studied quite extensively. All finite dimensional indecomposable modules over such algebras are classified in terms of string and band combinatorics \cite{GelfandPonomarev, DonovanFreislich, ButlerRingel, WaldWaschbusch} and the structure of the Auslander-Reiten components is well understood \cite{ErdmannSkowronsk}. Ext-algebras (also known as Yoneda algebras) of Brauer graph algebras are finitely generated \cite{AntipovGeneralov, GreenSchrollSnashallTaillefer}. Well-known homological  conjectures such as the finitistic dimension conjecture \cite{ErdmannHolmIyamaSchroeer} and the Auslander-Reiten conjecture on the number of simple modules \cite{Pogorzaly, AntipovZvonareva} have been established for this class of algebras. A more comprehensive list of results on Brauer graph algebras  can be found in the survey article \cite{Schroll}.
 
The class of Brauer graph algebras is closed under derived equivalence \cite{AntipovZvonareva, ZvonarevaAntipov}. The study of derived equivalences inside of this class has attracted attention in the recent years. The fact that most of these algebras are derived wild, which, for example, follows from \cite{BekkertHernanVelezMarulanda}, makes this study more interesting but also leads to certain difficulties.\ \medskip

\noindent The main result of this paper is a complete classification of Brauer graph algebras up to derived equivalence. We restrict ourselves to the non-local case since derived equivalent local algebras are isomorphic.
 
 \begin{Introthm}[Theorem \ref{TheoremClassificationBGAs}]\label{IntroTheoremCriterionDerivedEquivalence}
Let $B$ and  $B'$ be two Brauer graph algebras over an algebraically closed field $\Bbbk$ with connected Brauer graphs $\Gamma$ and $\Gamma'$ and assume that $B$ and $B'$ are not local. Then, $B$ and $B'$ are derived equivalent if and only if all of the following conditions are satisfied.
\begin{enumerate}
\setlength\itemsep{1ex}
	\item $\Gamma$ and $\Gamma'$ have the same number of vertices, edges and faces, in particular the surfaces of $B$ and $B'$ are homeomorphic;
    \item \label{Enum2} the multi-sets of perimeters of faces and the multi-sets of the multiplicities of vertices of $\Gamma$ and $\Gamma'$ coincide;
    \item \label{Enum3} Either both or none of $\Gamma$ and $\Gamma'$ are bipartite.
\end{enumerate}
\end{Introthm}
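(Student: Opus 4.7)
The ``only if'' direction is already in hand: by Antipov's results each quantity in (1)--(3) is a derived invariant of the Brauer graph algebra, so the non-trivial content is sufficiency. The plan for sufficiency is to translate the algebraic problem into a topological one about surfaces equipped with a line field and marked data, solve that classification problem using the action of the mapping class group, and then lift the resulting diffeomorphism back to a derived equivalence using the $A_\infty$-trivial extension construction mentioned in the abstract.

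\textbf{Step 1: Geometric model.} For each Brauer graph algebra $B$ with graph $\Gamma$, I would realize $B$ as an $A_\infty$-trivial extension of the partially wrapped Fukaya category $\Fuk(S_\Gamma, M_\Gamma, \eta_\Gamma)$, where $S_\Gamma$ is the ribbon surface of $\Gamma$, $M_\Gamma$ records the stops/marked points coming from the vertices (together with their multiplicities), and $\eta_\Gamma$ is a line field encoding the grading. The underlying Fukaya category is quasi-equivalent to a gentle algebra, and the trivial extension recovers $B$. The two virtues of this model are that the Fukaya category is already classified by $(S_\Gamma, M_\Gamma, [\eta_\Gamma])$ up to orientation-preserving diffeomorphism (by results of Lekili--Polishchuk and Opper--Plamondon--Schroll), and that the trivial extension is functorial under such diffeomorphisms.

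\textbf{Step 2: Matching data topologically.} Assuming (1)--(3), the plan is to exhibit a diffeomorphism $\phi:S_\Gamma\to S_{\Gamma'}$ which carries $M_\Gamma$ to $M_{\Gamma'}$ (with multiplicities), and $[\eta_\Gamma]$ to $[\eta_{\Gamma'}]$. Condition (1) fixes the homeomorphism type of the surface and the number of boundary components; condition (2) pairs up the boundary components together with the winding numbers of $\eta_\Gamma$ around them (these winding numbers are precisely the face perimeters) and simultaneously pairs up the vertex multiplicities; condition (3) is the Arf invariant of the line field, which distinguishes the remaining MCG-orbits when the winding numbers are all even. The relevant completeness result for line fields on surfaces with boundary under $\MCG$ is due to Kawazumi and Randal-Williams, and the plan is to upgrade it to a decorated version that also matches the multiplicity data at the interior marked points; this decorated matching is essentially free since MCG acts transitively on punctures of a given surface.

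\textbf{Step 3: Lifting to a derived equivalence.} Once $\phi$ is found, it induces a quasi-equivalence of the partially wrapped Fukaya categories, and hence -- by functoriality of the $A_\infty$-trivial extension with respect to the extension datum controlled by multiplicities -- a quasi-equivalence of the trivial extensions, giving a derived equivalence $\Db{B}\simeq\Db{B'}$.

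\textbf{Main obstacle.} The principal difficulty lies in the combination of Steps 1 and 2: one must verify that Brauer graph algebras really are recovered as $A_\infty$-trivial extensions in a manner that is natural enough for MCG-symmetries to lift, and one must establish that the list (1)--(3) is complete for the $\MCG$-action on line fields decorated by interior marked points with multiplicities, including the correct identification of the bipartite condition with the $\Arf$ invariant. In particular, the bipartite versus non-bipartite dichotomy is the most delicate point, since it is the only invariant beyond the Euler/combinatorial data and the winding numbers, and its identification with the Arf invariant of $\eta_\Gamma$ has to be carried out by hand from the construction of the line field out of the ribbon graph.
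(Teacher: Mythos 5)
The high-level strategy here is the same as the paper's: re-encode the Brauer graph algebra via a graded punctured surface, classify homotopy classes of line fields up to the mapping class group action, and lift a diffeomorphism that matches all the data to a derived equivalence. However, there are two substantive errors and one significant gap.

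\textbf{The bipartite condition is orientability, not the Arf invariant.} You identify the bipartite versus non-bipartite dichotomy with the Arf invariant $\Arf(\eta)$ of the line field. In the paper this invariant plays no role whatsoever: Lemma \ref{LemmaBipartiteOrientable} shows that $\sigma(\Gamma)$ coincides with $\sigma(\eta_\Gamma)$, i.e.\ with orientability of the line field, which is a coarser $\mathbb{Z}_2$-invariant than Arf. In the Lekili--Polishchuk classification (Theorem \ref{TheoremClassificationOrbits}) the Arf invariant is only an additional invariant when $\sigma(\eta)=0$ and every element of $\Omega_{\eta}^{\partial}\cup\Omega_{\eta}^{\punct}$ is congruent to $2 \bmod 4$; but for line fields of ribbon type the punctures all have winding number $0\equiv 0\bmod 4$, so that hypothesis never holds and the Arf invariant never enters. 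Trying to compute $\Arf(\eta_\Gamma)$ from the ribbon graph would lead you astray. Separately, for genus $1$ the relevant extra invariant is $\gcd(\eta)$, which Lemma \ref{LemmaGCD} shows is also governed by orientability (hence by bipartiteness) for ribbon-type line fields — another check your proposal would need to make and does not.

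\textbf{Multiplicities are not absorbed into the trivial extension.} You propose that the trivial extension of $\Fuk(S_\Gamma,M_\Gamma,\eta_\Gamma)$ recovers $B$ ``with multiplicities''. This only works when $\seq\equiv\mathbf{1}$: the trivial extension of a gentle algebra is a \emph{multiplicity-free} Brauer graph algebra. The paper handles non-trivial multiplicities through an entirely separate mechanism: the multiplicity-free cover of the Brauer graph (Section \ref{SectionBranchedCovers}), realized as a regular $\mathbb{Z}_{\overline{\seq}}$-branched cover of the surface, with the Brauer graph category of higher multiplicity recovered as an orbit category of the corresponding deck group action (Proposition \ref{PropsitionOrbitIsBGC}). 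Your Step 3 claim of ``functoriality of the trivial extension with respect to the extension datum controlled by multiplicities'' has no clear meaning, since the Fukaya category side does not encode multiplicities at all: ribbon-type line fields have trivial winding numbers at punctures, so multiplicities cannot be read off the graded surface.

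\textbf{Independence of the arc system.} Even granting the correct geometric model, the lift in Step 3 is not immediate. A diffeomorphism carrying $(\Sigma,\eta)$ to $(\Sigma',\eta')$ carries one arc system to another arc system, and these yield different $A_\infty$-categories. One must prove that the Morita equivalence class of the Brauer graph category depends only on the homotopy class of the line field, which the paper establishes through the flip/elementary-move machinery (Proposition \ref{Prop_AddingAnArcMorita} and Corollary \ref{CorollaryBGAIndependentOfArcSystem}), combined with an analysis of signs (Lemma \ref{LemmaRootsOfUnityBaseChange}) and of how half-twists act on the homotopy class of a ribbon-type line field (Lemma \ref{LemmaHalfTwistsPreserveHomotopyClasses}) so that the multiplicity function can be freely permuted. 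Your proposal collapses all this into a one-line appeal to functoriality, which does not account for the dependence of the model on the chosen arcs.
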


\noindent  All invariants in Theorem \ref{IntroTheoremCriterionDerivedEquivalence} can be easily computed from the Brauer graph and their interpretation as invariants of the derived category of a Brauer graph algebra is briefly recalled in Remark \ref{RemarkDerivedInvariants}. For example, the number of edges of a Brauer graph agrees with the rank of the Grothendieck group, whereas the number and the perimeters of faces are related to the number and the ranks of tubes in the Auslander-Reiten quiver of the singularity category.  

The first result in the spirit of Theorem \ref{IntroTheoremCriterionDerivedEquivalence} was shown by Rickard in connection with stable equivalences of blocks with cyclic defect group. He showed that, up to derived equivalence, a Brauer tree algebra is determined by the
number of edges of its Brauer tree and the multiplicity at the exceptional vertex \cite{RickardDerivedStable} (see also \cite{MembrilloHernandez}). The derived invariants used in Theorem \ref{IntroTheoremCriterionDerivedEquivalence} were described by Antipov \cite{Antipov} (see also \cite{ZvonarevaAntipov}) who realized the importance of the surface of the Brauer graph for the study of derived equivalences. Using ad-hoc methods he proved a vast generalization of Rickard's result by showing that these invariants are sufficient to distinguish derived equivalence classes of Brauer graph algebras whose surfaces have genus zero. However, it is known that his approach does not generalize to surfaces of higher genus, see Remark \ref{RemarkAAC}. 

The strategy of the proof of Theorem \ref{IntroTheoremCriterionDerivedEquivalence} relies on the interpretation of Brauer graph algebras with trivial  multiplicities as trivial extensions of gentle algebras \cite{SchrollTrivialExtension} and the connection between gentle algebras and partially wrapped Fukaya categories \cite{HaidenKatzarkovKontsevich,LekiliPolishchukGentle,OpperPlamondonSchroll} which led to a classification of gentle algebras up to derived equivalence \cite{AmiotPlamondonSchroll, OpperDerivedEquivalences}. There is a strong connection between derived equivalence of algebras and of their trivial extensions: a theorem of Rickard states that the former implies the latter \cite{RickardDerivedStable}. However, the naive approach of trying to deduce a derived equivalence classification of Brauer graph algebras from the derived equivalence classification of gentle algebras does not work; for more details see Remark \ref{RemarkMainTheoremRickardTheorem} and Example \ref{ExampleBGACuts}.

Our approach is based on the idea that one should consider a larger class of $A_\infty$-algebras to allow
for more derived equivalences as easy intermediate steps. Building on \cite{BocklandtPuncturedSurface} and \cite{HaidenKatzarkovKontsevich}, we introduce a class of $A_{\infty}$-categories which we call \textit{Brauer graph categories}. These categories are associated to certain collections of arcs on \textit{graded} punctured surfaces, that is punctured surfaces equipped with a line field, whose punctures are endowed with multiplicities. When all multiplicities are equal to $1$, the morphisms in the Brauer graph category correspond, roughly speaking, to intersections of the arcs from the collection of arcs and their degrees are induced from the line field.
In the special cases when the surface, the line field and the arc system arise from a Brauer graph, the homotopy category of  twisted complexes over the Brauer graph category is equivalent to the category of perfect complexes over the corresponding Brauer graph algebra. By a well-known theorem of Rickard \cite{RickardMoritaTheory} any equivalence between the categories of perfect complexes amounts to an equivalence of the corresponding derived categories, so it is enough to study such equivalences for our purposes.

We prove that any Brauer graph category with trivial multiplicities is the trivial extension of an $A_{\infty}$-category from \cite{HaidenKatzarkovKontsevich} corresponding to the partially wrapped Fukaya category of a marked surface obtained by ``dragging'' punctures to the boundary. Brauer graph categories with higher multiplicities are interpreted as orbit categories of Brauer graph categories with trivial multiplicities associated with branched covers of the initial surface under the action of their deck transformation groups. This construction extends some of the results from \cite{GreenSchrollSnashall,AsashibaCovers} to our setup.

The $A_\infty$-structure obtained in this way guarantees that adding and deleting certain arcs yields a Morita equivalence between the corresponding $A_\infty$-categories, similarly to the case of Fukaya categories \cite{BocklandtPuncturedSurface, HaidenKatzarkovKontsevich}. An analogue of the classical result stating that any two triangulations are connected by a sequence of flips implies that the Morita equivalence class of a Brauer graph category is independent of the chosen arc collection and depends only on the surface and the homotopy class of the line field. Given a self-diffeomorphism $f$ which identifies  two line fields on a given surface we can fix an arc collection and transport it along $f$ without changing the associated Brauer graph category. As a result, the Morita equivalence class of a Brauer graph category only depends on the orbit of the homotopy class of the line field under the action of the mapping class group. 
Subsequently, Theorem \ref{IntroTheoremCriterionDerivedEquivalence} is derived from the classification of such 
orbits after \cite{LekiliPolishchukGentle} by expressing the invariants in conditions \eqref{Enum2} and \eqref{Enum3} in Theorem \ref{IntroTheoremCriterionDerivedEquivalence} in terms of orbit invariants of the line field and vice versa. From the point of view of graded surfaces, Theorem \ref{IntroTheoremCriterionDerivedEquivalence} can be rephrased in the following way.
\begin{Introthm}
With the assumptions of Theorem \ref{IntroTheoremCriterionDerivedEquivalence}, two Brauer graph algebras are derived equivalent if and only if their associated graded surfaces are diffeomorphic and their multi-sets of multiplicities coincide.
\end{Introthm}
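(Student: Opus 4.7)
The plan is to reduce the statement to Theorem \ref{IntroTheoremCriterionDerivedEquivalence} by translating each combinatorial invariant listed there into geometric data of the associated graded surface. The graded surface attached to a Brauer graph algebra is the oriented punctured surface obtained by placing a puncture at each vertex of $\Gamma$, equipped with the multiplicity function inherited from the vertex multiplicities and with the homotopy class of line field induced by the embedding of $\Gamma$. Two such graded surfaces are diffeomorphic precisely when some orientation preserving diffeomorphism identifies the underlying pointed surfaces and carries one line field to a line field homotopic to the other. By the classification of line field orbits on oriented surfaces due to Lekili--Polishchuk, such a diffeomorphism exists precisely when the underlying pointed surfaces are homeomorphic, the multi-sets of winding numbers around small loops enclosing the punctures and around the boundary circles agree, and, in the case when all these winding numbers are even, an associated $\Arf$-invariant coincides.

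First, I would match condition (1) of Theorem \ref{IntroTheoremCriterionDerivedEquivalence} with the homeomorphism type of the underlying pointed surface: the numbers of vertices, edges and faces of $\Gamma$ determine and are determined by the number of punctures, the rank of the first homology, and the number of boundary components of the associated dragged surface. The multi-set of vertex multiplicities appearing in condition (2) is, by construction of the graded surface, the multi-set of decorations at the punctures, and so accounts for the second piece of data in the statement. It remains to identify the remainder of condition (2) together with condition (3) with the line field orbit invariants.

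Second, I would set up two dictionary lemmas. The first expresses the perimeter of a face of $\Gamma$ as (up to a universal sign and shift) the winding number of the line field around the corresponding boundary component of the dragged marked surface. Taking the arc system dual to the embedded Brauer graph, each face of $\Gamma$ contributes a boundary circle, and the winding number along it can be computed as a signed sum of local contributions from the line field at each corner, which recovers the combinatorial perimeter. The second dictionary lemma expresses bipartiteness of $\Gamma$ as the vanishing of the $\Arf$-invariant in the case when all face perimeters are even. Here bipartiteness is equivalent to the existence of a consistent $\bZ/2$-colouring of the vertices, which trivialises the quadratic refinement on $H_1$ induced by the spin structure determined by the line field.

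The main obstacle is the second dictionary lemma. The first one only requires interpreting boundary cycles in terms of the combinatorics of the embedded graph, whereas the second forces one to carefully track parity data under the dragging construction, to relate the spin structure defined by the line field to a combinatorial Arf-type invariant on the graph, and to check that the resulting $\bZ/2$-invariant is precisely the obstruction to $2$-colouring $\Gamma$. Once both lemmas are in place, conditions (1)--(3) of Theorem \ref{IntroTheoremCriterionDerivedEquivalence} together with the multi-set of vertex multiplicities become equivalent to the diffeomorphism type of the graded surface together with the multi-set of multiplicities at its punctures, and the reformulation follows.
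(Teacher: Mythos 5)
Your overall strategy — treat the statement as a reformulation of Theorem \ref{IntroTheoremCriterionDerivedEquivalence} and build a dictionary between the combinatorial invariants of $\Gamma$ and the Lekili--Polishchuk orbit invariants of the line field $\eta_\Gamma$ — is exactly the paper's route, and your first dictionary lemma is correct: the winding number of the boundary component corresponding to a face equals minus its perimeter (Example \ref{ExampleWindingNumbersBoundaries}). The genuine gap is your second dictionary lemma. Bipartiteness of $\Gamma$ is \emph{not} matched with the $\Arf$-invariant; it is matched with the orientability invariant $\sigma(\eta_\Gamma)$, i.e.\ with the parity of winding numbers of loops (Lemma \ref{LemmaBipartiteOrientable}, via Lemma \ref{LemmaCharacterizationOrientable}). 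Your paraphrase of the Lekili--Polishchuk classification omits precisely this primary invariant, as well as the $\gcd$ invariant needed in genus $1$, and overstates the role of $\Arf$: in Theorem \ref{TheoremClassificationOrbits} the $\Arf$-invariant only enters when $\sigma(\eta)=0$ and \emph{every} element of $\Omega_\eta^{\partial}\cup\Omega_\eta^{\punct}$ is $\equiv 2 \bmod 4$, and for line fields of ribbon type the puncture winding numbers are $0$, so this condition never holds and $\Arf$ is irrelevant for Brauer graph surfaces. Consequently the lemma you propose to prove (``bipartiteness $\Leftrightarrow$ vanishing of $\Arf$ when all perimeters are even'') is not the right statement and cannot be established; without the correct identifications (bipartiteness $\leftrightarrow$ $\sigma$, and in genus $1$ the fact that $\gcd(\eta_\Gamma)\in\{1,2\}$ is again determined by orientability, Lemma \ref{LemmaGCD}) your translation breaks down in every genus $\geq 1$, which is the only case where Theorem A goes beyond Antipov's results.

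A secondary caution: your dictionary must be set up on the \emph{punctured} graded surface $(\Sigma_\Gamma,\punct,\eta_\Gamma)$, not on a ``dragged'' marked surface. The cut/marked surface depends on the choice of cut and is not an invariant of the algebra — indeed Example \ref{ExampleBGACuts} exhibits derived equivalent Brauer graph algebras all of whose cut surfaces are pairwise non-diffeomorphic as marked surfaces — so phrasing the winding-number computation on the dragged surface risks smuggling in non-invariant data. With the corrected second lemma, the assembly you envisage is exactly the paper's Corollary \ref{CorollaryOrbitsLineFieldsRibbonGraph}: number of vertices, edges and faces encode the topology of $(\Sigma_\Gamma,\punct)$, the multi-set of perimeters encodes $\Omega_{\eta_\Gamma}^{\partial}$, bipartiteness encodes $\sigma(\eta_\Gamma)$ (and hence $\gcd$ in genus $1$), and the multiplicities are carried by the punctures, which together with Theorem \ref{IntroTheoremCriterionDerivedEquivalence} yields the reformulation.
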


\noindent We wonder whether certain variations of Brauer graph categories admit an interpretation as Fukaya categories analogous to the case of homologically smooth gentle algebras which can be thought of as formal generators of partially wrapped Fukaya categories of marked surfaces. The trivial extension construction appears in several places in the literature where one starts with a generator of a triangulated category associated to a space of algebraic or geometric nature and produces a generator of an analogous category over a fibration of said space. 

Two instances of this phenomenon are the following. Suppose $Z$ is a smooth Fano variety over the complex numbers of dimension $n-1$ and let $\mathbb{E}=\End^{\bullet}(X)$ be the extension algebra of a generator $X \in \Db{Z}$ naturally equipped with a minimal $A_{\infty}$-structure. Let $\omega_Z$ denote the canonical bundle of $Z$ and $\jmath: Z \hookrightarrow \omega_Z$ the corresponding zero section. According to \cite{BridgelandStern}, $\jmath_{\ast}X$ is a generator of $\Db{\omega_Z}$ and by \cite{SegalDeformationTheoryPoint} and \cite{BallardSheavesLocalCalabiYau}, the $A_{\infty}$-trivial extension of $\mathbb{E}$ by the shifted dual bimodule $\left(\mathbb{D}\mathbb{E}\right)[-n]$ (called the \textit{$n$-dimensional cyclic completion} in \cite{SegalDeformationTheoryPoint}) is a minimal model of $\End^{\bullet}(\jmath_{\ast}X)$. A somewhat similar appeareance of the trivial extension can be found in \cite{SeiBook}. Here one considers a generator of the directed Fukaya category associated to a regular fiber of a Lefschetz fibration and passing to a trivial extension yields a triangulated category which contains the appropriate Fukaya category associated to the total space of the fibration. In this context, there exists a natural geometric interpretation of the passage from the cyclic completions of dimension $m$ to the $m-2$-dimensional completion as a two-fold \textit{suspension} of the Lefschetz fibration due to Seidel \cite{SeidelLefFib} which increases the (real) dimension of the total space by $4$. In light of the previous discussion, trivial extensions of  $A_{\infty}$-categories $\mathcal{F}=\mathcal{F}_{A}(S)$ of an arc system $\cA$ from \cite{HaidenKatzarkovKontsevich} by $\left(\mathbb{D}{\cF}\right)[-2]$ seems to be a more natural candidate to have an interpretation in terms of Fukaya categories and might suggest to view Brauer graph categories as a form of de-suspension of the former.
\vspace{-5pt}

\subsection*{Acknowledgements}
The authors thank Wassilij Gnedin and Mikhail Gorsky for encouraging discussions and Isaac Bird and Steffen Koenig for their constructive feedback on the first version of this paper. The authors thank the anonymous referee for helpful comments and suggestions. The first named author was supported by the Czech Science Foundation as part of the project ``Symmetries, dualities and approximations in derived algebraic geometry and representation theory'' (20-13778S) and the Charles University Research Center program (UNCE/SCI/022). Part of the research was carried out during several stays of the first named author at the University of Stuttgart. He would like to thank Steffen Koenig and the members of the Representation Theory group for their hospitality. Both authors were participating in the Junior Trimester Program "New Trends in Representation Theory"  while working on this paper and would like to thank the Hausdorff Institute of Mathematics in Bonn for organizing the program.

\addtocontents{toc}{\protect\setcounter{tocdepth}{0}}
\section*{Conventions} 
\noindent Throughout this paper $\Bbbk$ will denote an algebraically closed field. For a finite-dimensional $\Bbbk$-algebra $A$ we will denote by $\rmod{A}$ its category of finite-dimensional right  modules; by $\DA$ the bounded derived category of $\rmod{A}$ and by $\Kb{A}$ the homotopy category of bounded complexes of finitely generated projective $A$-modules, also called the category of perfect complexes. The product of arrows inside the path algebra of a quiver is to be read from  right to left, i.e. a path $3\xleftarrow{\beta}2\xleftarrow{\alpha}1$ coincides with the product $\beta\alpha$. For an arrow $\alpha$ we will denote by $s(\alpha)$ its source and by $t(\alpha)$ its target. By $-\otimes-$ we will mean the tensor product over $\Bbbk$.
	
\addtocontents{toc}{\protect\setcounter{tocdepth}{1}}	
	
\section{Brauer graph algebras and their derived invariants} \label{SectionBrauerGraphAlgebras}
\noindent In this section we recall the definition of a Brauer graph algebra and its ribbon graph surface as well as list its known derived invariants.

\subsection{Ribbon graphs and their ribbon surfaces}\ \medskip

\noindent Suppose we are given an embedding 
$\kappa: \Gamma \hookrightarrow \Sigma$ of a graph $\Gamma$ (consisting of vertices and edges) into the interior of an oriented surface $\Sigma$. Assume that $D$ is a small disc which contains a single vertex $v$ of $\Gamma$ and whose boundary meets any edge of $\Gamma$ not more than once. Then, each connected component of $D \cap \left(\Gamma \setminus \{v\}\right)$ is a segment of an edge. We call such a segment a \textit{half-edge} and the union of all half-edges in $D \cap \left(\Gamma \setminus \{v\}\right)$ will be denoted by $H_v$. The orientation of $\Sigma$ endows the set $H_v$ of all half-edges around $v$ with a cyclic order, say the counterclockwise order. This can be rephrased by saying that there is a cyclic permutation $\rho_v: H_v \rightarrow H_v$ which maps any half-edge to its successor in the cyclic order. Passing to the set $H=\bigsqcup_{v} H_v$ of all half-edges, the cyclic permutations give rise to a permutation $\rho: H \rightarrow H$.

If $w$ is another vertex (possibly $v=w$) and $e$ is an edge which connects $v$ with $w$, then $e$ contains a unique pair of half-edges $(h_v, h_w)$ at $v$ and $w$. One may phrase this fact for all vertices at once by defining an involution $\iota: H \rightarrow H$ without fixed points. In the above case, $\iota(h_v)=h_w$. The orbits of $\iota$ are in bijection with the edges of $\Gamma$ and each orbit consists of the two half-edges associated with an edge. \medskip

\noindent Once we neglect the embedding $\kappa$, one  arrives at the combinatorial definition of a \textit{ribbon graph}:

\begin{definition}\label{DefinitionRibbonGraph}
A \textbf{ribbon graph} is a tuple $\Gamma=(V, H, s, \iota, \sigma)$, where

\begin{enumerate}
    \item $V$ is a finite set whose elements are called \textbf{vertices};
    
    \item $H$ is a finite set whose elements are called \textbf{half-edges};
    
    \item $s: H \rightarrow V$ is a function;
    
    \item $\iota: H \rightarrow H$ is an involution without fixed points, and
    
    \item $\rho: H \rightarrow H$ is a permutation whose cycles correspond to the sets $H_v \coloneqq s^{-1}(v)$, $v \in V$.
\end{enumerate}
\end{definition}

\noindent Every ribbon graph defines a graph with vertex set $V$ whose edges are the orbits of $\iota$. An edge $\{h, \iota(h)\}$ is incident to the vertices $s(h)$ and $s(\iota(h))$. Any ribbon graph gives rise to a surface:
\begin{lem}
Up to homeomorphism, there exists a unique compact, oriented surface $\Sigma_{\Gamma}$ with boundary and a (non-unique) embedding $\kappa:\Gamma \hookrightarrow \Sigma_{\Gamma}$ into the interior of $\Sigma_\Gamma$ such that $\Gamma$ is isomorphic to the ribbon graph of the embedding $\kappa$ and such that $\kappa$ is \textit{filling}, i.e.\ $\Sigma_{\Gamma} \setminus \Gamma \cong \partial \Sigma_{\Gamma} \times [0,1)$ is a union of half-open annuli.
\end{lem}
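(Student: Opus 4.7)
The plan is to prove existence by an explicit construction of $\Sigma_{\Gamma}$ as a thickening of the ribbon graph, and then to deduce uniqueness from the fact that the filling condition forces $\Sigma_{\Gamma}$ to be the union of a regular neighbourhood of $\Gamma$ with a collar of its boundary, both of which are pinned down by the ribbon structure.

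For existence I would build $\Sigma_{\Gamma}$ as an explicit quotient. For each vertex $v \in V$ take an oriented closed disc $D_v$ together with $|H_v|$ pairwise disjoint closed arcs $J_h \subset \partial D_v$, one for each $h \in H_v$, arranged along $\partial D_v$ in the counterclockwise cyclic order prescribed by $\rho_v$. For each edge $e = \{h,\iota(h)\}$ take an oriented rectangle $R_e = [0,1] \times [-1,1]$. Glue the two short sides $\{0\}\times[-1,1]$ and $\{1\}\times[-1,1]$ of $R_e$ homeomorphically to $J_h \subset \partial D_{s(h)}$ and $J_{\iota(h)} \subset \partial D_{s(\iota(h))}$ respectively, with the identifications chosen to be orientation-reversing on the arcs so that the ambient orientations of the discs and rectangles agree on the overlap. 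The resulting space $\Sigma_{\Gamma}$ is a compact oriented surface with boundary. The graph $\Gamma$ embeds as the union of the centres of the $D_v$ connected by the core arcs $[0,1]\times\{0\} \subset R_e$, and the induced ribbon structure at each $v$ reproduces $\rho_v$ by construction. The filling condition follows from a combinatorial description of $\Sigma_{\Gamma} \setminus \Gamma$: walking along the thickened graph by passing from a half-edge $h$ to $\rho(\iota(h))$ after crossing an edge partitions the boundary strips into cycles, and the connected component of $\Sigma_{\Gamma} \setminus \Gamma$ corresponding to each such cycle deformation retracts onto a boundary circle, hence is a half-open annulus.

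For uniqueness, suppose $\Sigma$ and $\Sigma'$ are two such surfaces with filling embeddings realising the same ribbon graph. Pick small regular neighbourhoods $N \subset \Sigma$ and $N' \subset \Sigma'$ of the embedded graphs. Both $N$ and $N'$ are assembled from the same local data, namely a disc around each vertex (with half-edges attached in the cyclic order $\rho_v$) and a rectangle along each edge, so the local models can be glued into an orientation-preserving homeomorphism $\phi : N \to N'$ that restricts to a graph isomorphism on $\Gamma$. The filling condition now says that $\Sigma \setminus N$ and $\Sigma' \setminus N'$ are disjoint unions of half-open annuli collaring $\partial \Sigma$ and $\partial \Sigma'$; since $\phi$ sends $\partial N$ to $\partial N'$ and matches the two collar structures up to reparametrisation, it extends to a homeomorphism $\Sigma \to \Sigma'$.

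The main subtlety, which feeds both parts of the argument, is proving that the combinatorial boundary cycles of the ribbon graph really index the connected components of $\Sigma_{\Gamma} \setminus \Gamma$ and that each component is a half-open annulus rather than some more complicated surface. This is a local-to-global argument that one has to carry out once and carefully: following the cycles of $\rho \circ \iota$ on $H$ and tracking the strips cut out of the thickened rectangles and discs. Once this is in hand, it simultaneously verifies the filling condition in the existence step and pins down how $\phi$ matches collars in the uniqueness step.
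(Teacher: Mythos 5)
Your construction is correct and is exactly the standard argument this lemma rests on: the paper itself states the lemma without proof, as the classical fact that thickening the ribbon graph (a disc per vertex, an oriented band per edge, glued along the cyclic orders $\rho_v$) yields the ribbon surface, with the boundary components indexed by the cycles of $\rho\circ\iota$, i.e.\ by the faces in the sense of Definition \ref{DefinitionFaces}, and with uniqueness following from uniqueness of oriented regular neighbourhoods together with the filling condition forcing the complement of such a neighbourhood to be a union of annuli collaring $\partial\Sigma_\Gamma$. The only point to carry out with a little care is the one you already flag, namely that each component of $\Sigma_\Gamma\setminus\Gamma$ is a half-open annulus and that the extension of the neighbourhood homeomorphism over these collars matches boundary circles according to the face cycles; with that done, the proof is complete.
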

\noindent The surface $\Sigma_{\Gamma}$ is called the \textbf{ribbon surface} of $\Gamma$.

\begin{notation}
If $\Gamma$ is a ribbon graph, we write $V(\Gamma)$ and $H(\Gamma)$ for its sets of vertices and half-edges as well as $H_v(\Gamma)$ for all half-edges $h \in \BH{\Gamma}$ such that $s(h)=v \in \BV{\Gamma}$. To simplify the notation, we often write $h^{\pm} \coloneqq \rho^{\pm 1}(h)$ for the successor and predecessor of a half-edge $h$ and $\overline{h}$ for its associated edge. The set of all edges is denoted by $\BE{\Gamma}$.
\end{notation}

\noindent Unless stated otherwise, we will assume that $\Gamma$ is connected, i.e.\ its underlying graph is connected.

\begin{definition}\label{DefinitionMorphismRibbonGraph}
Let $\Gamma, \Gamma'$ be ribbon graphs. A \textbf{morphism} $\varphi:\Gamma \rightarrow \Gamma'$ is a pair $(\varphi_V, \varphi_H)$ consisting of maps $\varphi_V: \BV{\Gamma} \rightarrow \BV{\Gamma'}$ and $\varphi_H: \BH{\Gamma} \rightarrow \BH{\Gamma'}$ which commute with the functions $\rho, \iota$ and $s$. That is, for all $h\in \BH{\Gamma}$, $s\big(\varphi_H(h)\big)=\varphi_V(s(h))$, $\iota\big(\varphi_H(h)\big)= \varphi_H(\iota(h))$ and $\rho(\varphi_H(h))=\varphi_H(\rho(h))$.
\end{definition}

\subsection{Brauer graphs and their algebras}
In this section we recall the definition of a \textit{Brauer graph algebra}.

\begin{definition}\label{DefinitionBrauerGraph} A \textbf{Brauer graph} is a pair $(\Gamma, \seq)$ consisting of a ribbon graph  $\Gamma$ and a function 
$\seq: \BV{\Gamma}\rightarrow \mathbb{N}$.
\end{definition}
\noindent The function $\seq$ in Definition \ref{DefinitionBrauerGraph} is referred to as the \textbf{multiplicity function} and its values as \textbf{multiplicities}. Frequently, we omit $\seq$ from the notation and refer to $\Gamma$ as a Brauer graph. We denote by $\mathbf{n}$ any constant multiplicity function with value $n$ at each vertex and say that a Brauer graph $(\Gamma, \seq)$ is \textbf{multiplicity-free} if $\seq=\mathbf{1}$.\medskip

\noindent To any Brauer graph $(\Gamma, \seq)$ one can associate a quiver $Q=Q_\Gamma$ and an ideal of relations $I=I_\Gamma$ in the path algebra $\Bbbk Q$ as follows.

\begin{enumerate}
    \item The vertices of $Q$ correspond to the edges of $\Gamma$ and for every $h \in  H$, there is an arrow $\alpha_h: \overline{h}\rightarrow \overline{h^+}$. The assignment $\alpha_h \mapsto \alpha_{h^-}$ defines a permutation $\pi=\pi_{\Gamma}$ of the arrows of $Q$ whose orbits are in bijection with $\BV{\Gamma}$. Hence every arrow $\alpha$ defines a closed path 
\begin{displaymath}
C_{\alpha}=\alpha \pi(\alpha) \cdots \pi^l(\alpha),
\end{displaymath} 

\noindent where $l+1$ denotes the cardinality of the $\pi$-orbit of $\alpha$. 
Every vertex of $Q$ is the starting point of exactly two cycles of the form $C_{\alpha}$. If $\alpha=\alpha_h$, set $\seq(C_{\alpha}) \coloneqq \seq(s(h))$.

\item The ideal $I_\Gamma$ is generated by the following set of relations:\smallskip

\begin{enumerate}

    \Item  
    \begin{align*}  
      \alpha\beta,
    \end{align*}  
    \noindent where $\alpha, \beta \in Q_1$ are composable and
    $\pi(\alpha)\neq \beta$;
     \Item  
    \begin{align*}  
      C_\alpha^{\seq(C_\alpha)} - C_{\beta}^{\seq(C_{\beta})},
    \end{align*} 
    \noindent where $\alpha, \beta \in Q_1$ and $t(\alpha)=t(\beta)$, that is $\alpha$ and $\beta$ end at the same edge of $\Gamma$.
\end{enumerate}

\end{enumerate}

\noindent The resulting finite dimensional $\Bbbk$-algebra $\Bbbk Q_{\Gamma} / I_{\Gamma}$ will be denoted by $B_{\Gamma}$. By a \textbf{path of $B_{\Gamma}$} we mean a path of $Q_{\Gamma}$ which is not contained in $I_{\Gamma}$. Note that every path of $B_{\Gamma}$ is a subpath of a cycle $C_\alpha^{\seq(C_\alpha)}$ for some arrow $\alpha$. 
\begin{definition}
 A $\Bbbk$-algebra $B$ is called a \textbf{Brauer graph algebra} if there exists a Brauer graph $(\Gamma, \seq)$ such that $B \cong B_{\Gamma}$ as $\Bbbk$-algebras.
\end{definition}
\noindent We will always use the presentation of the form $\Bbbk Q_\Gamma/I_\Gamma$ when working with Brauer graph algebras.
\begin{exa}
Consider a triangle in the plane with multiplicities $1, 2$ and $3$ as indicated in Figure \ref{FigureExampleBGA}. Its ribbon surface  is homeomorphic to an annulus. The associated Brauer graph algebra is generated by the idempotents corresponding to the three edges of $\Gamma$ and arrows $\alpha, \dots, \zeta$ in Figure \ref{FigureExampleBGA}. The ideal of relations $I_\Gamma$ is generated by
\begin{displaymath}
 \big\{\varepsilon \alpha, \delta \beta, \,  \gamma \varepsilon, \beta \zeta, \,  \alpha \gamma, \zeta\delta \big\} \cup \left\{(\beta \alpha)^2-(\gamma \delta),\, (\delta \gamma)-(\varepsilon \zeta)^3,\, (\zeta \varepsilon)^3- (\alpha \beta)^2\right\}.
\end{displaymath}

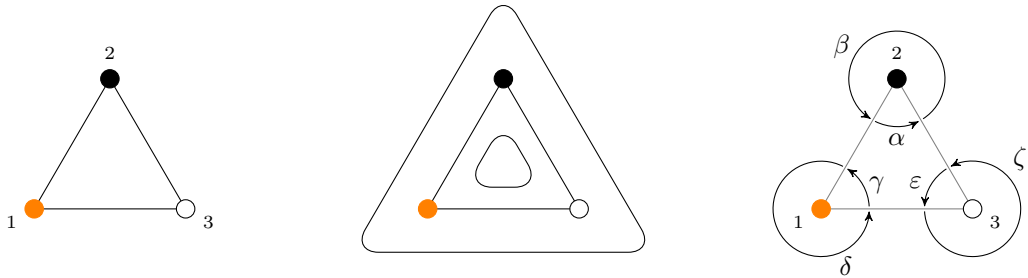
\begin{figure}[h]
\begin{tikzpicture}[scale=1.15]

\begin{scope}[shift={(-4.5,0)}]

\def\r{1};
\def\thickness{3pt}; 
\def\factor{1.3}; 
\def\rtwo{0.75}; 
\def\posi{0.75}; 
\def\dist{-7pt};

 \foreach \i in {1,2,3}
 \draw ({\factor*\r*cos(330-\i*120)},{\factor*\r*sin(330-\i*120)}) node[black]{$\scriptstyle \i$};
 
 \foreach \i in {1,2,3}
 \draw ({\r*cos(330-\i*120)},{\r*sin(330-\i*120)})--({\r*cos(210-\i*120)},{\r*sin(210-\i*120)});

 \draw[fill=white] ({\r*cos(-30)},{\r*sin(-30)}) circle (\thickness);
 \filldraw[black] ({\r*cos(90)},{\r*sin(90)}) circle (\thickness);
 \filldraw[orange] ({\r*cos(210)},{\r*sin(210)}) circle (\thickness);

\end{scope}
\begin{scope}

\def\r{1};
\def\thickness{3pt}; 
\def\factor{1.3}; 
\def\rtwo{0.75}; 
\def\posi{0.75}; 
\def\dist{-7pt};

 \foreach \i in {1,2,3}
 \draw ({\factor*\r*cos(330-\i*120)},{\factor*\r*sin(330-\i*120)});
 
 \foreach \i in {1,2,3}
 \draw ({\r*cos(330-\i*120)},{\r*sin(330-\i*120)})--({\r*cos(210-\i*120)},{\r*sin(210-\i*120)});
\foreach \i in {0.5, 2}
 \draw[rounded corners=3.5mm] ({330-1*120}:{\i*\r})--({330-2*120}:{\i*\r})--({330-3*120}:{\i*\r})--cycle;

 \draw[fill=white] ({\r*cos(-30)},{\r*sin(-30)}) circle (\thickness);
 \filldraw[black] ({\r*cos(90)},{\r*sin(90)}) circle (\thickness);
 \filldraw[orange] ({\r*cos(210)},{\r*sin(210)}) circle (\thickness);
\end{scope}
\begin{scope}[shift={(4.5,0)}]
\def\r{1};
\def\thickness{3pt}; 
\def\factor{1.3}; 
\def\rtwo{0.55}; 
\def\posi{0.75}; 
\def\dist{-7pt};

 \foreach \i in {1,2,3}
 \draw ({\factor*\r*cos(330-\i*120)},{\factor*\r*sin(330-\i*120)}) node[black]{$\scriptstyle \i$};
 
 \foreach \i in {1,2,3}
 \draw[opacity=0.5] ({\r*cos(330-\i*120)},{\r*sin(330-\i*120)})--({\r*cos(210-\i*120)},{\r*sin(210-\i*120)});

 \draw[fill=white] ({\r*cos(-30)},{\r*sin(-30)}) circle (\thickness);
 \filldraw[black] ({\r*cos(90)},{\r*sin(90)}) circle (\thickness);
 \filldraw[orange] ({\r*cos(210)},{\r*sin(210)}) circle (\thickness);

\def\j{2};
  \draw[->] ({\r*cos(330-\j*120)+\rtwo*cos(180-\j*120+3)},{\r*sin(330-\j*120)+\rtwo*sin(180-\j*120+3)}) arc ({180-\j*120+3}:{210-\j*120+270-3}:{\rtwo}) node[pos=0.75, label={[label distance=\dist]{180-\j*120+\posi*300}:$\beta$}] {};
    \draw[->] ({\r*cos(330-\j*120)+\rtwo*cos(120-\j*120+3)},{\r*sin(330-\j*120)+\rtwo*sin(120-\j*120+3)}) arc ({120-\j*120+3}:{180-\j*120-3}:{\rtwo}) node[pos=0.5, label={[label distance=\dist*2]{-30-\j*120}:$\alpha$}] {};
    
\def\j{3};
  \draw[->] ({\r*cos(330-\j*120)+\rtwo*cos(180-\j*120+3)},{\r*sin(330-\j*120)+\rtwo*sin(180-\j*120+3)}) arc ({180-\j*120+3}:{210-\j*120+270-3}:{\rtwo}) node[pos=0.75, label={[label distance=\dist]{180-\j*120+\posi*300}:$\zeta$}] {};
    \draw[->] ({\r*cos(330-\j*120)+\rtwo*cos(120-\j*120+3)},{\r*sin(330-\j*120)+\rtwo*sin(120-\j*120+3)}) arc ({120-\j*120+3}:{180-\j*120-3}:{\rtwo}) node[pos=0.5, label={[label distance=\dist*2.5]{-30-\j*120}:$ \varepsilon$}] {};

\def\j{1};
  \draw[->] ({\r*cos(330-\j*120)+\rtwo*cos(180-\j*120+3)},{\r*sin(330-\j*120)+\rtwo*sin(180-\j*120+3)}) arc ({180-\j*120+3}:{210-\j*120+270-3}:{\rtwo}) node[pos=0.75, label={[label distance=\dist]{180-\j*120+\posi*300}:$\delta$}] {};
    \draw[->] ({\r*cos(330-\j*120)+\rtwo*cos(120-\j*120+3)},{\r*sin(330-\j*120)+\rtwo*sin(120-\j*120+3)}) arc ({120-\j*120+3}:{180-\j*120-3}:{\rtwo}) node[pos=0.5, label={[label distance=\dist*2.5]{-30-\j*120}:$\gamma$}] {};

\end{scope}
\end{tikzpicture}
    \caption{A Brauer graph (left) with its ribbon surface (middle) and its quiver (right).}
    \label{FigureExampleBGA}
\end{figure}

\end{exa}

\begin{convention} Unless stated otherwise we assume that a Brauer graph has at least two edges. This is equivalent to the condition that the corresponding Brauer graph algebra is not local.
\end{convention}

\noindent Note that any two local algebras are derived equivalent if and only if they are Morita equivalent \cite[Corollary 2.13]{RouquierZimmermann}. Since Morita equivalent basic algebras are isomorphic, two local Brauer graph algebras are derived equivalent if and only if they are isomorphic. \medskip

\begin{rem}\label{RemarkLocalCase} The Brauer graph of a non-local Brauer graph algebra $B$ is an invariant of its isomorphism class and is denoted by $\Gamma_{B}$. However, there are two exceptions in the local case: the Brauer graph algebras of the Brauer graphs $(\Gamma, \mathbf{1})$, where $\Gamma$ is a loop, and $(\Gamma',  \mathbf{2})$, where $\Gamma'$ consists of two vertices and a single edge, are isomorphic for certain choices of $\Bbbk$ \cite{ZvonarevaAntipov}.
\end{rem}

\noindent Later on we will need the following modification of a Brauer graph algebra which is a specialization of a \textit{quantized Brauer graph algebra} in the sense of \cite{GreenSchrollSnashall}. It differs from the original definition only by a sign in the defining relations.

\begin{definition}
 Let $(\Gamma, \seq)$ be a Brauer graph and let $\omega:\BE\Gamma\rightarrow \mathbb{Z}$ be a function which assigns an integer to each edge of $\Gamma$. The \textbf{modified Brauer graph algebra}, associated to $(\Gamma,\seq,\omega)$ is defined as $B\simeq \Bbbk Q_\Gamma/I'_\Gamma$, where $I'_\Gamma$ is generated by the following relations:
\begin{enumerate}

    \Item  
    \begin{align*}  
      \alpha\beta,
    \end{align*}  
    \noindent where $\alpha, \beta \in Q_1$ are composable and
    $\pi(\alpha)\neq \beta$;
     \Item  
    \begin{align*}  
      C_\alpha^{\seq(C_\alpha)} - (-1)^{\omega(\bar{h})}C_{\beta}^{\seq(C_{\beta})},
    \end{align*} 
    \noindent where $\alpha, \beta \in Q_1$ and $t(\alpha)=t(\beta)$, $\alpha=\alpha_{h^-}$, $\beta=\alpha_{\iota(h)^-}$, that is the sign in the relation arises from the unique edge of $\Gamma$ at which $\alpha$ and $\beta$ end.
\end{enumerate} 
 
\end{definition}

\noindent If $\omega = 0$, we recover the usual definition of a Brauer graph algebra.

\subsection{Derived invariants of Brauer graph algebras} \ \medskip

\noindent Let $(\Gamma, \seq)$ be a Brauer graph and $\Sigma=\Sigma_{\Gamma}$ its ribbon surface. In particular, $\Sigma \setminus \Gamma \cong \partial \Sigma \times [0,1)$ and hence every connected component of $\Sigma \setminus \Gamma$ is bounded by a unique boundary component $B \subseteq \partial \Sigma$ and a subset $\Delta_B \subset \Gamma$ of edges. The cyclic sequence of half-edges of this (possibly self-glued) polygon encodes a \textit{face} of $\Gamma$.

\begin{definition}\label{DefinitionFaces} A \textbf{face} of \textbf{perimeter} $m \in \mathbb{N}$ is an equivalence class of primitive cyclic sequences

\begin{displaymath}
F=(h_1, \dots, h_{2m}),
\end{displaymath}

\noindent where $h_i \in \BH{\Gamma}$ are such  that $h_{2i+2}=h_{2i+1}^+$ and   $h_{2i+1}=\iota(h_{2i})$ for all $i \in [0,m-1]$ (with indices modulo $2m$). We call a sequence $F$ primitive, if $F$ is not a power of another sequence of this form. Two sequences of such kind are considered equivalent if they agree after a cyclic permutation of their entries.
\end{definition}

\noindent The set of faces of $\Gamma$ is denoted by $\BF{\Gamma}$ and is in bijection with the set of boundary components of $\Sigma_{\Gamma}$. The perimeter of a face coincides with the number of edges of the corresponding polygon $\Delta_B$, where edges are counted separately if they are glued together.\medskip

\noindent We recall that a graph $\Gamma$ is \textbf{bipartite} if and only if every cycle in $\Gamma$ has even length. Equivalently, $\Gamma$ is bipartite if its set of vertices $V$ admits a partition $V=V_1 \sqcup V_2$ into  disjoint subsets $V_1$ and $V_2$ such that every edge connects a vertex in $V_1$ with a vertex in $V_2$. Define $\sigma(\Gamma) \in \{0,1\}$ to be $0$ if $\Gamma$ is bipartite and $1$ otherwise.\medskip

\noindent The following theorem contains a  list of derived invariants for Brauer graph algebras.

\begin{thm}[\cite{Antipov}, \cite{ZvonarevaAntipov}]\label{TheoremDerivedInvariantsBrauerGraphAlgebra}
Let $\Gamma$ and $\Gamma'$ be Brauer graphs (with at least two edges by convention). If $B_{\Gamma}$ and $B_{\Gamma'}$ are derived equivalent, that is $\mathsf{D}^b(B_{\Gamma})$ and $\mathsf{D}^b(B_{\Gamma'})$ are equivalent as triangulated categories, then the following data associated with $\Gamma$ and $\Gamma'$ coincide:
\begin{enumerate}
    \item \label{Surfaces} the number of vertices, the number of edges and the number of faces of $\Gamma$ and $\Gamma'$, in particular, the surfaces $\Sigma_{\Gamma}$ and $\Sigma_{\Gamma'}$ are homeomorphic: $\Sigma_{\Gamma} \cong \Sigma_{\Gamma'}$;
    \item \label{NumberFaces} the multi-sets of perimeters of faces of $\Gamma$ and $\Gamma'$;
    \item the multi-sets of multiplicities of $\Gamma$ and $\Gamma'$;
    
    \item  $\sigma(\Gamma)$ and $\sigma(\Gamma')$.
\end{enumerate}
\end{thm}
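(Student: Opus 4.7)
The plan is to exhibit, for each of the four items, a classical homological or categorical derived invariant of the symmetric algebra $B_\Gamma$ that computes that item on the algebra side. Since derived equivalence preserves each such invariant, this will establish the theorem.

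First I would handle item (1). The number of edges $|\BE{\Gamma}|$ equals the number of isomorphism classes of simple $B_\Gamma$-modules, which is the rank of the Grothendieck group $K_0(\Kb{B_\Gamma})$ and is a standard derived invariant. For the number of vertices, as well as for the multi-set of multiplicities in item (3), I would use that derived equivalent symmetric algebras have isomorphic centers together with compatible filtrations by the Külshammer ideals, as established by Rickard and refined by Zimmermann. For a Brauer graph algebra one can give an explicit formula for $\dim Z(B_\Gamma)$ in terms of $|\BV{\Gamma}|$, $|\BE{\Gamma}|$ and the multiplicities, while the Külshammer filtration sharpens this by detecting the $p$-part of the multiplicities where $p=\ch(\Bbbk)$. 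Together these recover both $|\BV{\Gamma}|$ and the multi-set $\{\seq(v): v\in \BV{\Gamma}\}$. Once $|\BV{\Gamma}|$, $|\BE{\Gamma}|$ and the number of faces $|\BF{\Gamma}|$ are known, the Euler-characteristic relation $|\BV{\Gamma}|-|\BE{\Gamma}|+|\BF{\Gamma}|=2-2g-b$ pins down the homeomorphism type of $\Sigma_\Gamma$, completing item (1).

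For item (2) I would exploit the fact that a derived equivalence between symmetric algebras induces an equivalence of stable module categories and hence an isomorphism of stable Auslander--Reiten quivers. The stable AR quiver of $B_\Gamma$ is known to contain a family of tubes in bijection with the faces of $\Gamma$, and the rank of the tube attached to a face $F$ is an explicit function of the perimeter of $F$ and the multiplicities at the vertices bounding $F$. Since the multi-set of multiplicities is already recovered from item (3), the multi-set of tube ranks then determines the multi-set of perimeters, which yields item (2) and incidentally also confirms the number of faces used above.

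The main obstacle is item (4), the invariance of the bipartiteness indicator $\sigma(\Gamma)$, which none of the preceding invariants detects and which required the additional work of \cite{ZvonarevaAntipov}. Here one must design a fine invariant of $\Db{B_\Gamma}$ that separates bipartite from non-bipartite graphs even when items (1)--(3) agree. Natural candidates are a parity invariant of the Cartan form (reducing modulo $2$ and studying an induced quadratic refinement), an Arf-type invariant built from the Nakayama automorphism of $B_\Gamma$, or a $2$-torsion class in $HH^\ast(B_\Gamma)$ whose parity encodes the dichotomy. Constructing such an invariant purely from the algebra and verifying its derived invariance is the delicate step; from the vantage point of the present paper it will ultimately be reinterpreted geometrically via the parity of the associated line field on $\Sigma_\Gamma$.
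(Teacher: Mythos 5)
There is a genuine gap, and it sits exactly where you locate the ``delicate step''. Note first that the paper does not prove this theorem at all: it is quoted from Antipov and Antipov--Zvonareva, and Remark \ref{RemarkDerivedInvariants} only records which categorical invariant computes which item. For item (4) that remark names a concrete invariant which your proposal misses: the rank of the Grothendieck group of the singularity category $\operatorname{\mathsf{D}_{\text{sg}}}(B_\Gamma)\simeq \DA/\Kb{B_\Gamma}$, which equals $|\BE{\Gamma}|-|\BV{\Gamma}|+1-\sigma(\Gamma)$ (cited to Antipov). Since $B_\Gamma$ is symmetric, the singularity category is the stable module category, so this rank is a derived invariant, and combined with items already recovered it detects $\sigma(\Gamma)$. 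Your treatment of (4) instead stops at a list of candidate invariants (parity of the Cartan form, an Arf-type invariant from the Nakayama automorphism, $2$-torsion in Hochschild cohomology) without constructing any of them or verifying derived invariance; as written, item (4) is simply not proved, so the argument is incomplete precisely on the point that distinguishes this theorem from the older invariants.

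A second, smaller problem is your recovery of $|\BV{\Gamma}|$ and the full multi-set of multiplicities from $\dim Z(B_\Gamma)$ plus the K\"ulshammer filtration. The center only detects the \emph{non-trivial} multiplicities (this is exactly what Remark \ref{RemarkDerivedInvariants} claims), and the K\"ulshammer ideals see only $p$-power information and are vacuous in characteristic $0$; vertices of multiplicity $1$, and multiplicity values prime to $\ch\Bbbk$, are invisible to both. So the step ``together these recover both $|\BV{\Gamma}|$ and the multi-set $\{\seq(v)\}$'' needs an actual argument, and without $|\BV{\Gamma}|$ your Euler-characteristic determination of $\Sigma_\Gamma$ in item (1) also does not close. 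In the cited proofs the vertex count enters through the same stable/singularity-category Grothendieck group computation mentioned above (rank and torsion of the cokernel of the Cartan matrix), not through the center alone. Your items (1) (edges via $K_0$) and (2) (perimeters via tube ranks in the stable AR quiver) do match the interpretations the paper records.
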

\noindent  Theorem \ref{IntroTheoremCriterionDerivedEquivalence}  states that the converse of Theorem \ref{TheoremDerivedInvariantsBrauerGraphAlgebra} holds.

\begin{rem}\label{RemarkDerivedInvariants}Let $A$ be a Brauer graph algebra with ribbon graph $(\Gamma, \seq)$. The invariants in Theorem \ref{TheoremDerivedInvariantsBrauerGraphAlgebra} admit the following interpretation as invariants of $\DA$:
\begin{enumerate}
    \item The rank of the Grothendieck group $\mathcal{K}_0(\DA)$ is $|\BE{\Gamma}|$;
    \item The rank of $\mathcal{K}_0(\operatorname{\mathsf{D}_{\text{sg}}}(A))$ is $|\BE{\Gamma}|-|\BV{\Gamma}|+1-\sigma(\Gamma)$ (see \cite{Antipov4}). Here, $\operatorname{\mathsf{D}_{\text{sg}}}(A)=\quotient{\DA}{\Kb{A}}$ denotes the singularity category of $A$. Since $A$ is symmetric, $\operatorname{\mathsf{D}_{\text{sg}}}(A)$ is equivalent the stable module category of $A$ \cite{Buchweitz, RickardDerivedStable} and invariants of the stable category are also derived invariants.
   \item The multi-set of perimeters (and hence $|\BF{\Gamma}|$) is, roughly speaking, encoded in the ranks of tubes in the Auslander-Reiten quiver of the singularity category. 
    \item The center of $A$ (or, equivalently the $0$-th Hochschild cohomology $\operatorname{HH}^0(A,A)$) determines  the multi-set of non-trivial multiplicities;
   \end{enumerate}
\end{rem}	
\section{\texorpdfstring{Preliminaries on $A_{\infty}$-categories}{Preliminaries on A-infinity categories}} \label{SectionPreliminariesAinfinity}
\noindent In this section we recall some necessary background material on $A_{\infty}$-categories, categories of twisted complexes and trivial extensions of $A_{\infty}$-categories.

\subsection{$A_{\infty}$-categories, homotopy categories and strict functors}

\begin{definition}	
	
\noindent An \textbf{$A_\infty$-category}  $\bA$ consists of a set of objects $\Ob{\mathbb{A}}$,
a $\mathbb{Z}$-graded $\Bbbk$-vector space $\Hom_\mathbb{A}(X,Y)$  for each pair of objects $X,Y \in \Ob{\mathbb{A}}$, and graded $\Bbbk$-linear maps
$$
\begin{tikzcd} \mu^n:=\mu^n_\bA : \Hom_\mathbb{A}(X_{n-1},X_n) \otimes \cdots \otimes \Hom_\mathbb{A}(X_0,X_1) \arrow{r} & \Hom_\mathbb{A}(X_0,X_n) \end{tikzcd}$$

\noindent of degree $2 - n$ for each $n \geq 1$, satisfying the $A_\infty$-relations:

\begin{equation}\label{EquationAInfinityConstraint}
   \sum _{k,m}(-1)^{\dg{a_k}+\cdots +\dg{a_1}}\mu^{n-m+1}(a_n,..., a_{k+m+1},\mu^{m}(a_{k+m},..., a_{k+1}), a_k,..., a_1)=0,
\end{equation}

\noindent for each sequence of composable elements $a_n,\dots,a_2,a_1$, where $\dg{a_i}=|a_i|-1$ denotes the \textit{reduced} degree of $a_i$.
\end{definition}

\noindent   If suitable, we will omit $\mathbb{A}$ and the arity from the structure maps $\mu^n_{\mathbb{A}}$.

\begin{definition}
An $A_\infty$-category $\bA$ is \textbf{strictly unital} if for every object $X \in \Ob{\mathbb{A}}$ there is an element $1_X \in \Hom_{\mathbb{A}}^0(X, X)$ such that 
$\mu^1(1_X)=0,$ and  $\mu^2(a, 1_X) = (-1)^{|a|}\mu^2(1_Y , a) = a$, for all $a \in \Hom_{\mathbb{A}}(X, Y)$, and $\mu^n(\dots, 1_X, \dots) = 0$ for all $n \geq 3$.
\end{definition}

\noindent Strictly unital $A_\infty$-categories with $\mu^k = 0$ for $k \geq 3$ correspond to small dg-categories with differential
$da := (-1)^{|a|}\mu^1(a)$ and composition $a \circ b:=(-1)^{|b|}\mu^2(a,b)$. In particular, any strictly unital $A_{\infty}$-category $\bA$ such that $\mu^k_{\bA}=0$ for all $k \neq 2$ can be naturally identified with a graded $\Bbbk$-linear category.

\begin{convention}
Every $A_{\infty}$-category which we consider in this paper will be strictly unital. 
\end{convention}

\noindent  It follows from \eqref{EquationAInfinityConstraint} that the maps $\mu^1: \Hom_{\mathbb{A}}(X,Y) \rightarrow \Hom_{\mathbb{A}}(X,Y)$ can be considered as differentials, i.e.\ $\mu_{\bA}^1 \circ \mu_{\bA}^1=0$.

\begin{definition} The \textbf{homotopy category} $\H^0\bA$ is the category whose objects are the objects of $\bA$ and whose morphism spaces $\Hom_{\H^0\bA}(X,Y)$ are the $0$-th cohomology groups $\H^0(\Hom_\bA(X,Y))$ of the complexes $\Hom_\bA(X,Y)$ with respect to the differential $\mu^1$. The composition of classes $[a]$ and $[b]$ is given by $[a] \circ [b]:=[\mu^2(a,b)]$. 
\end{definition}

\noindent The homotopy category of $\bA$ is an ordinary $\Bbbk$-linear category. Next, we recall the appropriate notion of functors between $A_\infty$-categories. However, as it suffices for our purposes, we will restrict ourselves to strict $A_{\infty}$-functors and will not define $A_{\infty}$-functors in full generality.

\begin{definition}
	Let $\mathbb{A}, \mathbb{B}$ be $A_{\infty}$-categories. A \textbf{(strict) $A_{\infty}$-functor} $F: \mathbb{A} \rightarrow \mathbb{B}$ is a map $\Ob{\bA} \rightarrow \Ob{\bB}$ together with graded $\Bbbk$-linear maps $\Hom_{\mathbb{A}}(X,Y) \rightarrow \Hom_{\mathbb{B}}(F(X), F(Y))$ for all pairs $(X,Y) \in \Ob{\mathbb{A}}^2$ such that for all homogeneous elements	
	\begin{displaymath}
	a_n \otimes \cdots \otimes a_1 \in \Hom_\bA(X_{n-1}, X_n) \otimes \dots \otimes \Hom_\bA(X_0, X_1),
	\end{displaymath}
	
\noindent the following holds:
\begin{displaymath} 
\mu_\bB(F(a_n), \dots , F(a_1)) =F( \mu_\bA (a_{n}, \dots , a_1)).
\end{displaymath}

\noindent An $A_{\infty}$-functor $F: \mathbb{A} \rightarrow \mathbb{B}$ is said to be an \textbf{isomorphism} if the associated map $\Ob{\bA} \rightarrow \Ob{\bB}$ is bijective and all the maps $\Hom_{\mathbb{A}}(X,Y) \rightarrow \Hom_{\mathbb{B}}(F(X), F(Y))$ are isomorphisms.
	\end{definition}
	
	\begin{convention}
	All $A_{\infty}$-functors $F:\mathbb{A} \rightarrow \mathbb{B}$ in this paper are strict and \textbf{strictly unital}, i.e.\ for all objects $X \in \Ob{\mathbb{A}}$ and $F(1_X) = 1_{F(X)}$.
	\end{convention}

\subsection{Twisted complexes} \ \medskip

\noindent Let us recall the definition of the category of twisted complexes $\Tw\bA$ over an $A_{\infty}$-category $\bA$. The homotopy category of $\Tw\bA$ is triangulated \cite[Proposition 3.29]{SeiBook} and can be thought of as a generalization of the category of perfect complexes, see Remark \ref{RemarkTwistedComplexesForFiniteDimensionalAlgebras}. 
\medskip

\noindent For an object $E$ of an $A_{\infty}$-category $\bA$ and $k \in \mathbb{Z}$, we will denote by $E[k]$ its $k$-th formal \textbf{shift}.

\begin{definition}

\noindent A \textbf{twisted complex} over $\bA$ is a pair $(E, \delta)$ which consists of a formal direct sum $E = \bigoplus_{i=1}^N E_i[k_i]$ of shifted objects of $\bA$ and a strictly lower triangular differential $\delta \in \End^1(E)$.
To spell this out, $\delta$ is a collection of maps 
\begin{displaymath}
 \begin{array}{cc} \delta_{i j}\in \Hom_{\bA}^{k_j-k_i+1}(E_i,E_j), & 1\leq i< j\leq N, \end{array}
\end{displaymath}
 
 \noindent such that $\sum_{k\geq1}\mu^k(\delta,\dots,\delta)=0$. Since $\delta$ is strictly lower triangular, the sum is finite and  for all $1 \leq i < j \leq N$, we get
\begin{displaymath}
\sum_{k\geq 1}  \left( \sum_{i=i_0<i_1<\dots<i_k=j} \mu^k_\bA(\delta_{i_{k-1}i_k},\dots,\delta_{i_0i_1})\right)=0.
\end{displaymath}

\noindent A \textbf{morphism of degree $d$} between twisted complexes is a map of degree $d$ between their underlying formal direct sums. That is, if $E = \bigoplus E_i[k_i]$ and $F=\bigoplus F_j[l_j]$, then an element $f \in \Hom_{\Tw\bA}^d(E, F)$ is a collection of morphisms $f_{ij} \in \Hom_{\bA}^{d+l_j-k_i}(E_i,F_j)$.

\noindent Given twisted complexes $(E_0, \delta^0),\dots ,(E_n, \delta^n)$, $n \geq 1$, and morphisms $f_i \in
\Hom_{\Tw\bA}(E_{i-1}, E_i)$,  set
\begin{equation}\label{EquationMuTwistedComplexes}
\mu_{\Tw\bA}^n(f_n, \dots , f_1) = \displaystyle{\sum_{j_0,\dots,j_n\geq 0}}\mu_\bA^{n+j_0+\dots+j_n}(\underbrace{\delta^n,\dots,\delta^n}_{j_n},f_n,\dots,\underbrace{\delta^1, \dots,\delta^1}_{j_1},f_1,\underbrace{\delta^0,\dots,\delta^0}_{j_0}). 
\end{equation}

\noindent Here the sum is finite again, since $\delta^i$ are strictly lower triangular. The set of twisted complexes together with the  morphism spaces and structure maps defined above forms an  $A_{\infty}$-category $\Tw\bA$ which is called the \textbf{category of twisted complexes}. Note that $\mu_{\Tw \bA}^n=0$ for all $n \geq 3$ whenever $\mu^n_{\bA}=0$ for all $n \geq 3$.
\end{definition}

\begin{rem}\label{RemarkTwistedComplexesForFiniteDimensionalAlgebras}
Suppose $A$ is a basic finite-dimensional algebra and $A=\bigoplus_{i=1}^m P_i$ is a decomposition of $A$ into indecomposable projective $A$-modules. Denote by $\mathbb{A}$ the $A_\infty$-category with objects $\Ob{\mathbb{A}}=\{P_1, \dots, P_m\}$ and morphisms $\Hom_{\mathbb{A}}(P_i, P_j)=\Hom_A(P_i, P_j)$, concentrated in degree $0$. Set $\mu_{\mathbb{A}}^k=0$ for all $k\neq 2$ and $\mu_{\mathbb{A}}^2(a,b) \coloneqq a\cdot b$. Then, $\mathbb{A}$ is an $A_{\infty}$-category and $\Tw \bA$ (considered as a dg-category) is equivalent to the dg-category  of bounded complexes of finite-dimensional projective modules as can be seen by writing out \eqref{EquationMuTwistedComplexes} for $n=1$. Hence, $\H^0(\Tw\bA)$ is equivalent to the category $\Kb{A}$ of perfect complexes.
\end{rem}

\noindent Any functor $\bF: \bA \rightarrow \bB$ induces a functor $\Tw\bF: \Tw\bA \rightarrow \Tw\bB$ and the associated functor $\H^0(\Tw\bF):\H^0(\Tw\bA) \rightarrow \H^0(\Tw\bB)$ is exact.

\begin{definition}
An $A_\infty$-functor $F:\mathbb{A} \rightarrow \mathbb{B}$ is a \textbf{Morita equivalence} if the induced functor $\H^0(\Tw \mathbb{A}) \rightarrow \H^0(\Tw \mathbb{B})$ is an equivalence of triangulated categories.
\end{definition}

\begin{rem}
Let $\mathbb{B}$ be an $A_{\infty}$-category and suppose  $\mathbb{A} \subseteq \mathbb{B}$ is a full $A_{\infty}$-subcategory, i.e.\
\begin{enumerate}
    \item $\Ob{\mathbb{A}} \subseteq \Ob{\mathbb{B}}$,
    \item $\Hom_{\mathbb{A}}(X,Y)=\Hom_{\mathbb{B}}(X,Y)$ for all $X,Y \in \Ob{\mathbb{A}}$, and
    \item for all $k \geq 1$, $\mu_{\mathbb{A}}^k$ is the restriction of $\mu_{\mathbb{B}}^k$.
\end{enumerate} 

\noindent Then, the following holds:
\begin{enumerate}
    \item The category $\Tw \mathbb{A}$ is a full subcategory of $\Tw \mathbb{B}$.
    \item The inclusion $\mathbb{A} \hookrightarrow \mathbb{B}$ induces a fully faithful and exact functor $\H^0(\Tw \mathbb{A}) \hookrightarrow \H^0(\Tw \mathbb{B})$.
\end{enumerate}
\end{rem}

\subsection{Trivial extensions of $A_{\infty}$-categories} \ \medskip

\noindent For any  $A_{\infty}$-category $\mathbb{A}$ one can form a new $A_{\infty}$-category $\triv(\mathbb{A})$ 
which generalizes the construction of a trivial extension of a finite dimensional algebra by its dual bimodule. Recall that for an ordinary $\Bbbk$-algebra $A$, its trivial extension is the algebra on the vector space $A\oplus \Hom_\Bbbk(A,\Bbbk)$ whose multiplication is given by the formula $(a,f)(b,g)\coloneqq (ab, a.g+f.b)$, where ``.'' denotes the action of $A$ on the $A-A$-bimodule $\Hom_{\Bbbk}(A,\Bbbk)$.

\begin{definition}Let $V$ be a graded $\Bbbk$-vector space. Its \textbf{dual} is the graded vector space $\mathbb{D}V$ whose $i$-th homogeneous component is  $(\mathbb{D}V)^i= \Hom_{\Bbbk}(V^{-i}, \Bbbk)$.
\end{definition}

\begin{definition}\label{DefinitionInfinityTrivialExtension}
Let $\mathbb{A}$ be an $A_{\infty}$-category. Its \textbf{trivial extension} is an $A_{\infty}$-category 
$\triv(\mathbb{A})$ with the same objects as $\mathbb{A}$. For $X,Y \in \triv(\mathbb{A})$, set
$$\Hom_{\triv(\mathbb{A})}(X,Y) \coloneqq \Hom_{\mathbb{A}}(X,Y) \oplus \big(\mathbb{D} \Hom_{\mathbb{A}}(Y,X)\big).$$

The multi-linear structure maps $\mu_{\triv(\mathbb{A})}^r$ are uniquely determined by the following requirements:

\begin{enumerate}
\item For $a_r, \dots, a_1$ with $a_j \in \Hom_{\mathbb{A}}(X_{j-1}, X_{j})$,
\begin{displaymath}
 \mu_{\triv(\mathbb{A})}(a_r, \dots, a_1) = \mu_{\mathbb{A}}(a_r, \dots, a_1);
\end{displaymath}
\item For $a_{r},\dots, a_{i+1}, a_{i-1},\dots, a_1$ with $a_j \in \Hom_{\mathbb{A}}(X_{j-1}, X_{j})$ and $f \in \mathbb{D}\Hom_{\mathbb{A}}(X_i, X_{i-1})$,
\begin{equation}\label{EquationFormulaTrivialExtension}
 \mu_{\triv(\mathbb{A})}(a_r, \dots, a_{i+1}, f, a_{i-1}, \dots, a_1)(-) = (-1)^{\dagger} \cdot  f\big( \mu_{\mathbb{A}}(a_{i-1}, \dots, a_1, -, a_r, \dots, a_{i+1}) \big),
\end{equation}

\noindent where $\dagger=\sum_{j=1}^{r}\dg{a_j} +\dg{f}$.

\item For all $f \in \mathbb{D}\Hom_{\mathbb{A}}(U,V)$ and $g \in \mathbb{D}\Hom_{\mathbb{A}}(X,Y)$, the following type of expressions vanishes: 
\begin{displaymath}\mu_{\triv(\mathbb{A})}(\dots,f,\dots,g,\dots).
\end{displaymath}

\end{enumerate}
\end{definition}

\begin{rem} The definition of a trivial extension of an $A_{\infty}$-algebra by a bimodule can be found in \cite{SeidelSub,SeidelLefFib}. The sign $\dagger$ is obtained by combining the signs for the bimodule structure on $\bA$ from \cite{SeidelLefFib}; the definition of a dual bimodule from \cite{Tradler,Mescher} (the sign convention in \cite{Mescher} agrees with ours); and the trivial extension from \cite{SeidelSub}. Note that in \cite{SeidelSub} and \cite{Mescher} the order of the entries in $\mu_{\triv(\mathbb{A})}$ are reversed, however, the signs are not affected.
\end{rem}

\noindent The following is an immediate consequence of Definition \ref{DefinitionInfinityTrivialExtension}.

\begin{lem}\label{LemmaInclusionsTrivialExtension}
There exists a canonical (strict) $A_{\infty}$-functor $\mathbb{A} \rightarrow \triv(\mathbb{A})$. Moreover, if $\mathbb{B} \subseteq \mathbb{A}$ is a full $A_{\infty}$-subcategory, then $\triv(\mathbb{B})$ is a full $A_\infty$-subcategory of $\triv(\mathbb{A})$.
\end{lem}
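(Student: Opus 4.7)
The plan is to exhibit the obvious candidate functor and verify, by a direct unwinding of Definition \ref{DefinitionInfinityTrivialExtension}, that it is strict and $A_\infty$. Concretely, I would define $\iota: \mathbb{A} \to \triv(\mathbb{A})$ to be the identity on objects (which makes sense since $\Ob{\triv(\mathbb{A})} = \Ob{\mathbb{A}}$ by construction) and, on morphism spaces, to be the canonical inclusion $\Hom_{\mathbb{A}}(X, Y) \hookrightarrow \Hom_{\mathbb{A}}(X, Y) \oplus \mathbb{D}\Hom_{\mathbb{A}}(Y, X) = \Hom_{\triv(\mathbb{A})}(X, Y)$ into the first summand. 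The strictness relation $\mu^n_{\triv(\mathbb{A})}(\iota(a_n), \ldots, \iota(a_1)) = \iota(\mu^n_{\mathbb{A}}(a_n, \ldots, a_1))$ on composable tuples $a_i \in \Hom_{\mathbb{A}}(X_{i-1}, X_i)$ is then literally clause (1) of Definition \ref{DefinitionInfinityTrivialExtension}. Strict unitality of $\iota$ is automatic, since by our convention $1_X$ in $\triv(\mathbb{A})$ is declared to be the image of $1_X \in \Hom_{\mathbb{A}}(X,X)$ under the first-summand inclusion.

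For the second claim, I would check the three conditions defining a full $A_\infty$-subcategory for the inclusion $\triv(\mathbb{B}) \subseteq \triv(\mathbb{A})$. The containment $\Ob{\triv(\mathbb{B})} = \Ob{\mathbb{B}} \subseteq \Ob{\mathbb{A}} = \Ob{\triv(\mathbb{A})}$ is immediate from the construction. For $X, Y \in \Ob{\mathbb{B}}$, fullness of $\mathbb{B}$ in $\mathbb{A}$ gives $\Hom_{\mathbb{B}}(X, Y) = \Hom_{\mathbb{A}}(X, Y)$, whence
\[ \Hom_{\triv(\mathbb{B})}(X, Y) = \Hom_{\mathbb{B}}(X, Y) \oplus \mathbb{D}\Hom_{\mathbb{B}}(Y, X) = \Hom_{\mathbb{A}}(X, Y) \oplus \mathbb{D}\Hom_{\mathbb{A}}(Y, X) = \Hom_{\triv(\mathbb{A})}(X, Y). \]
Finally, since the three clauses of Definition \ref{DefinitionInfinityTrivialExtension} describe $\mu^n_{\triv}$ entirely in terms of $\mu$ on the underlying category---namely as $\mu^n_{\mathbb{A}}$ on tuples of $\mathbb{A}$-elements, as a pairing of a dual element $f$ with $\mu^n_{\mathbb{A}}$ applied to the cyclically permuted tuple of $\mathbb{A}$-elements, or as zero when two or more dual elements appear---the hypothesis $\mu^n_{\mathbb{B}} = \mu^n_{\mathbb{A}}|_{\mathbb{B}}$ transfers verbatim to the identity $\mu^n_{\triv(\mathbb{B})} = \mu^n_{\triv(\mathbb{A})}|_{\triv(\mathbb{B})}$.

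No genuine obstacle is expected: both statements are formal bookkeeping consequences of the definition, and the only item requiring attention is that the signs $\dagger$ fixed in \eqref{EquationFormulaTrivialExtension} depend only on degrees, so they are unaffected by restricting the underlying $A_\infty$-category to a full subcategory.
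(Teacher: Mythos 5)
Your argument is correct and is exactly the verification the paper leaves implicit: the lemma is stated there as an immediate consequence of Definition \ref{DefinitionInfinityTrivialExtension}, and your proof simply spells out that the first-summand inclusion is strict by clause (1) and that fullness of $\mathbb{B}$ in $\mathbb{A}$ transfers verbatim to the trivial extensions. Nothing further is needed.
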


\begin{lem}
Suppose that  $\mu_{\mathbb{A}}^i=0$ for all $i \neq 2$. Then, $\mu_{\triv(\mathbb{A})}^i=0$ for all $i \neq 2$. In particular,

\begin{enumerate}
    \item if $A$ is a finite dimensional algebra with trivial extension $T$, then $\triv(A) \cong T$ as $A_{\infty}$-categories with one object;
    \item  if $\mathbb{A}$ denotes the associated categories of indecomposable projective $A$-modules as in Remark \ref{RemarkTwistedComplexesForFiniteDimensionalAlgebras}, then
     \begin{displaymath}
\H^0\big(\!\Tw \triv(\bA)\big) \simeq \Kb{T}.
\end{displaymath}
\end{enumerate}
\end{lem}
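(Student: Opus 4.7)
The vanishing statement $\mu^i_{\triv(\mathbb{A})} = 0$ for $i \neq 2$ follows from a straightforward case analysis of Definition \ref{DefinitionInfinityTrivialExtension}. Given an $r$-tuple of composable morphisms in $\triv(\mathbb{A})$, each entry is either a morphism of $\mathbb{A}$ or a dual element. When two or more entries lie in the dual part, clause (3) forces the output to vanish. When all $r$ entries lie in $\mathbb{A}$, clause (1) gives $\mu^r_{\triv(\mathbb{A})} = \mu^r_{\mathbb{A}}$, which vanishes for $r \neq 2$ by hypothesis. Finally, when exactly one entry is a dual element $f$, clause (2) expresses the output in terms of $\mu^r_{\mathbb{A}}$ applied to the remaining $r-1$ morphisms together with the test argument (still an $r$-ary operation of $\mathbb{A}$), again vanishing unless $r = 2$. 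Combining the three cases yields the claim.

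For (1), I specialise to $\mathbb{A}$ with a single object whose endomorphism algebra is $A$, concentrated in degree zero. Then $\triv(\mathbb{A})$ becomes an ordinary $\Bbbk$-algebra on the vector space $A \oplus \mathbb{D}A$, and unpacking $\mu^2$ on each of the four types of inputs $(a,b)$, $(a,f)$, $(f,a)$, $(f,g)$ recovers the multiplication $(a,f)(b,g) = (ab,\, a\cdot g + f\cdot b)$ defining the trivial extension $T$. Since all generators sit in degree zero, every reduced degree equals $-1$, the signs $(-1)^\dagger$ reduce to $+1$, and no subtle sign mismatch arises; the bimodule actions obtained from clause (2) match the standard $A$-bimodule structure on $\mathbb{D}A$.

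For (2), I would identify $\triv(\mathbb{A})$ with the $\Bbbk$-linear category of indecomposable projective $T$-modules and then invoke Remark \ref{RemarkTwistedComplexesForFiniteDimensionalAlgebras}. Writing $e_i$ for the primitive idempotent corresponding to $P_i$, one computes
\[
\Hom_T(e_iT,\, e_jT) \;=\; e_jAe_i \oplus e_j(\mathbb{D}A)e_i \;\cong\; \Hom_{\mathbb{A}}(P_i,P_j) \oplus \mathbb{D}\Hom_{\mathbb{A}}(P_j,P_i),
\]
which matches the hom spaces in $\triv(\mathbb{A})$ by Definition \ref{DefinitionInfinityTrivialExtension}; a parallel check using (1) identifies the compositions. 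Applying Remark \ref{RemarkTwistedComplexesForFiniteDimensionalAlgebras} to the basic algebra $T$ then gives $\H^0(\Tw \triv(\mathbb{A})) \simeq \Kb{T}$. The only real bookkeeping lies in the sign and convention check underlying (1); everything else is essentially formal, and the expected obstacle is therefore mild.
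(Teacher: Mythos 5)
Your proof is correct and follows essentially the same route as the paper: the vanishing of $\mu^i_{\triv(\mathbb{A})}$ for $i\neq 2$ is read off directly from the defining formula \eqref{EquationFormulaTrivialExtension}, part (1) is obtained by unpacking $\mu^2$ in the one-object, degree-zero case to recover $(a,f)(b,g)=(ab,\,a.g+f.b)$, and part (2) by identifying $\triv(\mathbb{A})$ with the category of indecomposable projective $T$-modules and invoking Remark \ref{RemarkTwistedComplexesForFiniteDimensionalAlgebras}. Your version merely spells out the case analysis and the isomorphism $\Hom_T(e_iT,e_jT)\cong e_jAe_i\oplus e_j(\mathbb{D}A)e_i$ in more detail than the paper does.
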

\begin{proof}It follows from \eqref{EquationFormulaTrivialExtension} that $\mu_{\triv(\mathbb{A})}^i=0$ for all $i \neq 2$. Suppose $\mathbb{A}$ is a finite dimensional algebra $A$, considered as an $A_{\infty}$-category with one object. In particular, every morphism in $\mathbb{A}$ and  $\mathbb{D}A$ is of degree zero. Moreover, by bilinearity
\begin{equation*}
\mu_{\triv(A)}\big( (a_2, f_2), (a_1, f_1) \big) = ( a_2 \cdot a_1,  a_2.f_1 + f_2.a_1).
\end{equation*}

\noindent This proves the first assertion. Now suppose that $\mathbb{A}$ is the category formed by indecomposable projective $A$-modules of a finite dimensional algebra $A$. The second assertion follows from the fact that  $\triv(\mathbb{A})$ coincides with the $A_{\infty}$-category of indecomposable projective $T$-modules as defined in Remark \ref{RemarkTwistedComplexesForFiniteDimensionalAlgebras}.
\end{proof}

\section{Partially wrapped Fukaya categories of marked surfaces}\label{SectionFukayaCategories}

\noindent We recall the definition of the partially wrapped Fukaya category of a graded marked surface  due to Auroux, Bocklandt and Haiden-Katzarkov-Kontsevich \cite{Auroux, BocklandtPuncturedSurface, HaidenKatzarkovKontsevich}. In the exposition we closely follow \cite{HaidenKatzarkovKontsevich}. Sections \ref{SectionDecoratedSurfaces}, \ref{SectionLineFields} and \ref{SectionWinding} contain necessary preliminaries on line fields, gradings on arcs, degree of intersections and winding numbers. We further prove some properties of winding numbers that we are going to use later.

\subsection{Decorated surfaces and arc systems} \label{SectionDecoratedSurfaces}

\begin{definition}
	Let $\Sigma$ be a (smooth) connected, compact oriented surface with non-empty boundary $\partial \Sigma$ and let $\deco \subseteq \Sigma$ be a finite non-empty subset of $\Sigma$.
The pair $(\Sigma, \deco)$ is called a \textbf{decorated surface}. Interior points of $\deco$ are called \textbf{punctures} and the set of all punctures is denoted by $\mathscr{P}$. Boundary points in $\deco$  are called \textbf{marked points} and their set is denoted by $\marked$.
\end{definition}

\begin{definition}\label{DefinitionCurvesHomotopies}A \textbf{curve} on a decorated surface $(\Sigma, \deco)$ is a smooth immersion $\gamma:\Omega \rightarrow \Sigma$, where $\Omega \in \{[0,1], S^1\}$, such that $\gamma^{-1}(\deco \cup \partial \Sigma)=\partial \Omega$ and $\gamma(\partial \Omega) \subseteq \deco$. If $\Omega=[0,1]$, then $\gamma$ is called an \textbf{arc} and if $\Omega=S^1$, then $\gamma$ is called a  \textbf{loop}. A \textbf{homotopy} between curves $\gamma_0$ and $\gamma_1$ is a smooth map $H:[0,1] \times \Omega \rightarrow \Sigma$ such that for every $t \in [0,1]$, $H(t, -): \Omega \rightarrow \Sigma$ is a curve and $H(0,-)=\gamma_0$, $H(1,-)=\gamma_1$. A curve $\gamma: \Omega \rightarrow \Sigma$ is \textbf{simple} if $\gamma|_{\Omega \setminus \partial \Omega}$ is an embedding. 
\end{definition}

\noindent In particular, by definition homotopies between loops are free homotopies and any homotopy between arcs is necessarily constant on the end points since the set $\deco$ is finite. Note that our notion of homotopy is commonly referred to as \textit{regular homotopy} and hence every homotopy induces a homotopy between the derivatives of the corresponding curves. Note also that two curves without closed contractible subpaths (also called ``tear drops'') are homotopic in the sense of Definition \ref{DefinitionCurvesHomotopies} if and only if they are homotopic as smooth maps in the usual sense, see \cite{Chillingworth}. Usually, we drop $\deco$ from the notation of a decorated surface. The respective meaning of the symbols $\deco, \punct$ and $\marked$ should always be apparent from the context.

\begin{definition} A \textbf{boundary arc} is an arc which is homotopic (as a map $[0,1] \rightarrow \Sigma$) relative to its end points to a boundary segment between two consecutive marked points.
\end{definition}

\begin{definition}
	An \textbf{arc system} on $\Sigma$ is a finite set $\mathcal{A}=\{\gamma_1, \dots, \gamma_n\}$ of simple arcs in $\Sigma$ such that for all $i \neq j$, $\gamma_i$ and $\gamma_j$ are non-homotopic and only intersect transversely in $\deco$.
\end{definition}
\noindent For any arc system $\mathcal{A}$ of $\Sigma$ we denote by $|\mathcal{A}| \subseteq \Sigma$ the union of all its arcs. In what follows, we always assume that either $\mathscr{P}=\deco$ or $\mathscr{P}=\emptyset$. In the former case, $\Sigma$ is called \textbf{punctured}. If $\deco=\marked$, $\Sigma$ is called a \textbf{marked surface}. A point $x \in \Sigma \setminus \deco$ is called \textbf{regular}. We denote by $\Sigma_{\reg}\coloneqq \Sigma \setminus \deco$ the set of regular points.
\begin{definition}\label{DefinitionFullAdmissible}
	An arc system $\mathcal{A}$ is called \textbf{full} if $\Sigma \setminus |\mathcal{A}|$ is a disjoint union of discs and annuli which contain exactly one boundary component $B$ of $\Sigma$ such that $B\cap \deco=\emptyset$. 
\end{definition}
\noindent An inclusion-maximal arc system on a marked  surface is called a \textbf{triangulation}.
Note that every triangulation necessarily contains a representative from every homotopy class of boundary arcs.

\noindent Every full arc system $\mathcal{A}$ on a punctured surface defines a ribbon graph $\Gamma=\Gamma_{\cA}$ whose ribbon surface $\Sigma_{\Gamma}$ is naturally embedded into $\Sigma$. Note however that $\Sigma_{\Gamma}$ and $\Sigma$ are not homeomorphic in general as $\Sigma_{\Gamma}$ contains additional boundary components which correspond to  ``inner discs'' on $\Sigma$, i.e.\ components of $\Sigma \setminus |\cA|$ which are not bordered by any boundary component of $\Sigma$. A face of $\Gamma_\cA$ will be called a \textbf{false face} if it corresponds to such an additional component of $\partial \Sigma_{\Gamma}$.

\subsection{Line fields and graded arcs} \label{SectionLineFields} \ \medskip

\noindent We recall the notion of a \textit{line field} and how it can be used to equip curves with an additional structure of a \textit{grading}.

\subsubsection{Line fields}

\begin{definition}Let $(\Sigma, \deco)$ be a decorated surface. A \textbf{line field} is a smooth section $\eta: \Sigma_{\reg} \rightarrow \mathbb{P}(T\Sigma)$, where $\mathbb{P}(T\Sigma)$ denotes the projectivized tangent bundle of $\Sigma$. The triple $(\Sigma, \deco, \eta)$ is called a \textbf{graded surface}. Two line fields $\eta_0$ and $\eta_1$ are called \textbf{homotopic} ($\eta_0\simeq \eta_1$) if they are homotopic as maps $\Sigma_{\reg} \rightarrow \mathbb{P}(T\Sigma)$.
\end{definition}

\noindent  The above definition allows line fields to have singularities at marked points and punctures.  Every nowhere vanishing vector field which is defined on $\Sigma_{\reg}$ gives rise to a line field  by virtue of the projection $T(\Sigma) \rightarrow \mathbb{P}(T\Sigma)$. The converse, however, does not hold and if a line field $\eta$ arises from a vector field in this way, we say that $\eta$ is \textbf{orientable}.

\begin{exa}\label{ExampleLineFieldRibbonGraph}
Every ribbon graph $\Gamma$ gives rise to a homotopy class of line fields on its ribbon surface $\Sigma=\Sigma_{\Gamma}$. We regard $\Gamma$ as a subset of $\Sigma$ as usual. Let $\Delta=\{\delta_e \, | \, e \in \BE{\Gamma}\}$ be a set of disjoint embedded paths with endpoints on $\partial \Sigma$ such that for each $e \in \BE{\Gamma}$, $\delta_e$ crosses $e$ exactly once and transversely and does not cross any other edge of $\Gamma$. Moreover, $\delta_e$ intersects $\partial \Sigma$ transversely and only at its end points.  The collection $\Delta$ cuts $\Sigma$ into polygons which are bounded by paths in $\Delta$ and segments of $\partial \Sigma$. Each polygon $P$ contains a unique vertex $p$ of $\Gamma$ and admits a line field $\eta_P: P \setminus \{p\} \rightarrow \mathbb{P}(T\Sigma)$ whose foliation is depicted in Figure \ref{FigureLineFieldRibbonGraph}. In particular, $\eta_P$ is parallel to the  paths in $\Delta$ at the boundary of $P$ by which we mean that $\eta_P$ and $\dot \delta_e$ coincide as paths in $\mathbb{P}(T\Sigma)$. Moreover, every edge of $\Gamma$ is everywhere transverse to $\eta_{\Gamma}$.
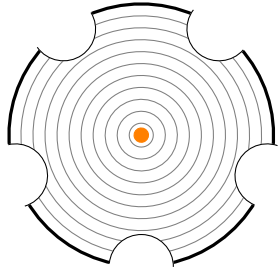
\begin{figure}[H]
\begin{tikzpicture}[scale=1.5, rotate around={18: (0,0)}]

\def\nl{10}; 
\def\r{3pt}; 
\foreach \i in {1, ..., \nl}
{
\draw[gray] (0,0) circle ({\r*\i});

}
\filldraw[orange] (0,0) circle (0.6*\r);
\def\nd{5}; 
\def\dist{\r*(\nl+1)}; 
\def\angle{360/\nd}; 
\def\length{240/\nd}; 

\foreach \i in {1,..., \nd}
{
\draw[very thick] ({\dist * cos(\angle*\i-\length*0.5)},{\dist*sin(\angle*\i-\length*0.5)}) arc ({\angle*\i-\length*0.5}:{\angle*\i+\length*0.5}:{\dist}); 
}
\def\angb{180/\nd};
\def\rtwo{8pt};
\foreach \i in {1,..., \nd}
{
\filldraw[white] ({\dist* cos(\angb * (2*\i+1)},{\dist* sin(\angb* (2*\i+1)}) circle (\rtwo);
\def\startangle{180+\angle*(\i+1)-\length*0.5-105};
\draw[black] ({\dist* cos(\angb * (2*\i+1))+\rtwo*cos(\startangle+2)},{\dist* sin(\angb* (2*\i+1))+\rtwo*sin(\startangle+2)}) arc ({\startangle+2}:{\startangle+180-2}:\rtwo);
}
\end{tikzpicture}
    \caption{The foliation of $\eta_P$ with a singularity at a vertex of $\Gamma$ in the center. Half circles in the boundary represent segments of $\partial \Sigma$.}
    \label{FigureLineFieldRibbonGraph}
\end{figure}
\noindent Since each $\eta_P$ is parallel to the paths in $\Delta$, the line fields $\eta_P$ can be glued to a line field $\eta_{\Gamma}: \Sigma_{\reg} \rightarrow \mathbb{P}(T\Sigma)$.
Lemma \ref{LemmaBipartiteOrientable} shows that $\eta_{\Gamma}$ is orientable if and only if $\Gamma$ is bipartite.
\end{exa}

\noindent Line fields as in Example \ref{ExampleLineFieldRibbonGraph} will be of special importance, which motivates the following definition.

\begin{definition}\label{DefinitionRibbonTypeLineField}
Suppose that $\Sigma$ is a punctured surface. A line field $\eta$ on $\Sigma$ is said to be of \textbf{ribbon type} if there exist a ribbon graph $\Gamma$ and a filling embedding $\kappa: \Gamma \rightarrow \Sigma$ which maps $\BV{\Gamma}$ bijectively onto $\punct$ such that $\eta \simeq \eta_{\Gamma}$ and such that for each $p \in \punct$ there exists a local coordinate chart around $p$ in which all leaves of $\eta$ are loops which encircle $p$. In this case, the graded surface $(\Sigma, \punct,\eta)$ is also said to be of \textbf{ribbon type}.
\end{definition}

\subsubsection{Gradings} \ \medskip

\noindent The choice of a line field allows us to endow arcs with the additional structure of a grading. In what follows $(\Sigma, \deco, \eta)$ shall denote a graded surface.

\begin{definition}A \textbf{path} in $(\Sigma, \deco, \eta)$ is an immersion  $\delta:I \rightarrow \Sigma$, where $I \subseteq \mathbb{R}$ is any (not necessarily compact) interval such that $\delta^{-1}(\deco) \subset \partial I$. Define $J \coloneqq I \setminus \delta^{-1}(\deco)=\delta^{-1}(\Sigma_{\reg})$ and $\delta^{\ast}\eta\coloneqq \eta \circ \delta|_J$, i.e.\ $\delta^{\ast}\eta$ is a path in $\mathbb{P}(T\Sigma)$.
\end{definition}  
\begin{definition}\label{DefinitionGradingArcs}
Let $(\Sigma, \eta)$ be a graded surface and let $\gamma:I \rightarrow \Sigma$ be a path. A \textbf{grading} of $\gamma$ is a homotopy class $\mathfrak{g}$ of homotopies from $\gamma^{\ast}\eta$ to $\dot \gamma|_{J}$. By the $n$-th shift $\mathfrak{g}[n]$ of a grading $\mathfrak{g}$, where $n\in \mathbb{Z}$, we mean the homotopy class of  $\mathfrak{g}[n]:[0,1]\times J \rightarrow \mathbb{P}(T\Sigma)$, such that for every $t \in J$, the homotopy class of the concatenated path $\mathfrak{g}[n]|_{[0,1]\times \{t\}} \ast \bar{\mathfrak{g}}|_{[0,1]\times \{t\}}$ is identified with  $n\in \mathbb{Z}$ under the isomorphism $\mathbb{Z} \cong \pi_1(S^1, 1) \cong \pi_1(\mathbb{P}(T\Sigma)_{\gamma(t)}, \eta(\gamma(t)))$, which identifies $1$ with a counter-clockwise loop. The pair $(\gamma, \mathfrak{g})$ is called a \textbf{graded path} and a \textbf{graded arc} if $\gamma$ is an arc.
\end{definition}

\noindent Most of the time we will not distinguish between a grading and a representative of it.

\begin{definition}
Let $\gamma_1, \gamma_2$ be arcs on $\Sigma$ and denote by $\Omega_i$ the domain of $\gamma_i$. An \textbf{oriented intersection} from $\gamma_1$ to $\gamma_2$ is a pair $(t_1, t_2) \in \Omega_1 \times \Omega_2$ such that  $p \coloneqq \gamma_1(t_1)=\gamma_2(t_2)$ is a transverse intersection of $\gamma_1$ and $\gamma_2$;  additionally, if $p \in \partial \Sigma$, we require that locally around $p$, $\gamma_1$ and $\gamma_2$ are arranged as in Figure \ref{FigureIntersection}. The set of oriented intersections from $\gamma_1$ to $\gamma_2$ is denoted by $\gamma_1 \oInt \gamma_2$. If $\alpha, \beta, \gamma$ are arcs and $p=(r, s) \in \alpha \oInt \beta$ and $q=(s,t) \in \beta \oInt \gamma$, then $(r,t) \in \alpha \oInt \gamma$ and we set $q p \coloneqq (r,t)$.
\end{definition}
 
	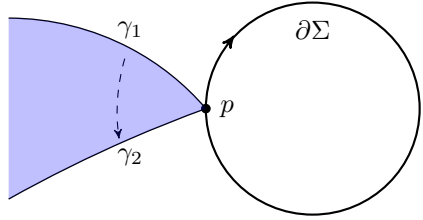
\begin{figure}[h]
		\begin{displaymath}
		\begin{tikzpicture}[scale=0.8, hobby]
		\draw[thick, 
		decoration={markings, mark=at position 0.4 with {\arrow{<}}},
		postaction={decorate}
		](0,0) circle (50pt);
		\filldraw (-50pt, 0) circle (2pt);
		\draw (-40pt,0) node{$p$};
		\draw (0,1.3) node {$\partial \Sigma$};
		\draw [line width=0.5, color=black] plot  [ tension=1] coordinates {  (-50pt,0) (-3, 1) (-5, 1.5)  };
		\draw [line width=0.5, color=black] plot  [ tension=1] coordinates {  (-50pt,0) (-3, -0.5) (-5, -1.5)  };
		\draw (-3,1.3) node{$\gamma_1$};
		\draw (-3,-0.8) node{$\gamma_2$};
		
		\draw[dashed, ->] ({3.2*cos(165)},{3.2*sin(165)}) arc (164:188:3.2);
		\begin{scope}
		\path[clip] plot  [ tension=1] coordinates {  (-50pt,0) (-3, 1) (-5, 1.5)  }-- plot  [ tension=1] coordinates { (-5, -1.5) (-3, -0.5)  (-50pt,0)   };
		\filldraw[blue, fill opacity=0.25] (0,0) circle (10);
		\end{scope}
		\end{tikzpicture}
		\end{displaymath}
		\caption{An oriented boundary intersection $p$ from $\gamma_1$ to $\gamma_2$ and its sector (colored).} 
    \label{FigureIntersection}
	\end{figure}  
\noindent Frequently, we do not distinguish between an oriented intersection and its corresponding image in the surface.  Note that any interior intersection of curves $\gamma_1$ and $\gamma_2$, for instance at a puncture, gives rise to a pair $(p, p^{\ast})$ with $p \in \gamma_1 \oInt \gamma_2$ and $p^{\ast} \in \gamma_2 \oInt \gamma_1$ and if $p=(t_1,t_2)$, then $p^\ast=(t_2, t_1)$. On the other hand, every intersection at the boundary gives rise to an element in exactly one of the two sets of oriented intersections.

Locally every oriented intersection $p \in \gamma_1 \oInt \gamma_2$ at a point of $\deco$ corresponds to a unique sector which is bounded by a segment of $\gamma_1$ and $\gamma_2$. More precisely, if $\gamma_1$ and $\gamma_2$ are oriented so that $p$ is their start point, then the sector lies to the left of $\gamma_1$ and the right of $\gamma_2$.\medskip

\subsubsection{The degree of an intersection}\label{SectionDegreeIntersection} \ \medskip

\noindent Next, suppose that $(\gamma_1,\mathfrak{g}_1)$ and $(\gamma_2, \mathfrak{g}_2)$ are graded paths on a graded surface $(\Sigma, \eta)$ and suppose that $(t_1,t_2)=p \in \gamma_1 \oInt \gamma_2$ is an oriented intersection such that $\gamma_1$ and $\gamma_2$ intersect in $\gamma_1(t_1)=\gamma_2(t_2)$. We attach an integer to $p$ as follows. First, we assume that $p \not \in \deco$, i.e.\ $\gamma_1(t_1)=\gamma_2(t_2) \not \in \deco$.\medskip

\noindent Then we have the following homotopy classes of paths in $\mathbb{P}(T\Sigma)_{p}$, where as usual we identified $p$ with the point $\gamma_1(t_1)=\gamma_2(t_2)$.

\begin{itemize}
    
    \item $\mathfrak{g}_1(t_1)$ which is a path in $\mathbb{P}(T\Sigma)_{p}$ from $\eta(p)$ to $\dot \gamma_1(t_1)$ and $\overline{\mathfrak{g}_2}(t_2)$ which is a path from $\dot \gamma_2(t_2)$ to $\eta(p)$;
    
    \item an embedded counter-clockwise path $\varepsilon$ from $\dot \gamma_1(t_1)$ to $\dot \gamma_2(t_2)$. Since $\dot \gamma_1(t_1) \neq \dot \gamma_2(t_2)$, the assumptions on $\varepsilon$ guarantee its uniqueness up to homotopy.
    
\end{itemize}

\noindent The concatenation $\delta \coloneqq \mathfrak{g}_1 \ast \varepsilon \ast  \overline{\mathfrak{g}_2}$ is closed and defines an element in $\pi_1(\mathbb{P}(T\Sigma)_{p}, \eta(p))$. Hence $\delta$ defines an integer $\deg(p)$ by the isomorphism  $\pi_1(\mathbb{P}(T\Sigma)_{p}, \eta(p)) \cong \pi_1(S^1, 1) \cong \mathbb{Z}$. One observes that 
\begin{displaymath}
\deg p + \deg p^\ast=1,\end{displaymath}
\noindent where $p^\ast=(t_2, t_1) \in \gamma_2 \oInt \gamma_1$.

If $p \in \deco$, let $u:[0,1] \rightarrow \Sigma$ be an embedded path which lies in the sector corresponding to $p$ and which intersects $\gamma_1$ and $\gamma_2$ exactly once and transversely at its end points. More precisely, we assume that there exist $s_1, s_2 \in (0,1)$ such that $u(0)=\gamma_1(s_1) \in \Sigma \setminus \deco$ and $u(1)=\gamma_2(s_2) \in \Sigma \setminus \deco$.   After choosing a grading on $u$, the oriented intersections $q_1=(s_1, 0) \in \gamma_1 \oInt u$ and $q_2=(s_2, 1) \in \gamma_2 \oInt u$ give rise to an integer
\begin{displaymath}
\deg(p) \coloneqq \deg(q_1) - \deg(q_2) = \deg(q_1) + \deg(q_2^{\ast})-1, 
\end{displaymath}
\noindent which is independent of $u$ and the chosen grading on $u$.  The integer $\deg(p)$ is referred to as the \textbf{degree} of $p$.

\subsection{Winding numbers}\label{SectionWinding} \ \medskip

\noindent The definition of the winding number is somewhat technical and the reader may skip Definition \ref{DefinitionWindingNumbers} on a first read to continue with Remark \ref{RemarkIntuivitveWindingNumbers} which contains a more direct description.

\begin{definition} \label{DefinitionWindingNumbers}Let $\eta$ be a line field on a decorated surface $\Sigma$ and let $\gamma \subseteq \Sigma$ be a loop. The \textbf{winding number of $\gamma$} is the intersection number of its derivative $\dot \gamma: S^1 \rightarrow \mathbb{P}(T\Sigma)$ with $\eta(\Sigma_{\reg}) \subseteq \mathbb{P}(T\Sigma)$. We denote by $\omega_{\eta}$ the function which associates to a loop $\gamma$ its winding number.
\end{definition}

\noindent By construction, the winding number of a loop $\gamma$ is the image of $\dot \gamma$ under the homomorphism $H_1(\mathbb{P}(T\Sigma), \mathbb{Z}) \rightarrow \mathbb{Z}$ determined by the intersection product with $\eta(\Sigma_{\reg})$.  We stress that therefore the winding number of a loop is an invariant of its homotopy class in the sense of Definition \ref{DefinitionCurvesHomotopies} but is not invariant under arbitrary smooth homotopies.

\begin{rem}\label{RemarkIntuivitveWindingNumbers} A more explicit description of the winding number may be given as follows. Suppose $\gamma: S^1 \rightarrow \Sigma$ is a loop. Then, at each point $t_0 \in S^1$ there are two distinguished points in the fiber $\mathbb{P}(T\Sigma)_{\gamma(t_0)} \cong S^1$  at $z\coloneqq \gamma(t_0)$. The first one is the line $\eta(z)$ and the second is the tangent line $\dot \gamma(t_0)$ of $\gamma$ at $z$. Locally, we find coordinates so that $c \coloneqq\dot \gamma(t) \in S^1$ is constant for all $t \in S^1$ close enough to $t_0$. As one follows the loop $\gamma$, the lines $\eta(\gamma(t))$ draw a path $\delta$ in $S^1$. One may count the intersection number of $\delta$ and the point $c$ in $S^1$ which determines an integer. In other words, $\omega_{\eta}$ counts with sign how often the tangent line of $\gamma$ agrees with the line which is determined by $\eta$. Figure \ref{FigureWindingNumbers} depicts possible contributions to the winding number of $\gamma$.
\end{rem}
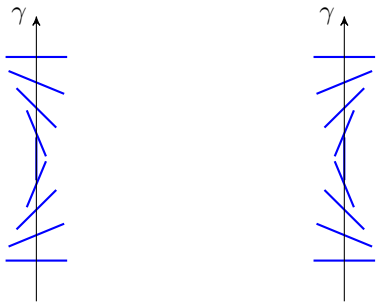
\begin{figure}[H]
	\begin{displaymath}
	\begin{tikzpicture}
		
		\begin{scope}[scale=2.7]	

		\def\steps{8}; 
		\def\l{0.15};

		\foreach \i in {0,1,...,\steps}
		{	
	\pgfmathsetmacro{\ll}{0.7*\l+ 0.3*\l*abs(cos((1/\steps)*\i*180))};
			\draw[blue, thick] ({\ll*cos( (1/\steps)*\i*180)},{(1/\steps)*\i+\ll*sin( (1/\steps)*\i*180))})--({\ll*cos( (1/\steps)*\i*180+180)},{(1/\steps)*\i+\ll*sin( (1/\steps)*\i*180+180))});	
				
		}

				\draw[ ->] (0,-0.2)--(0,1.2) node[left]{$\gamma$};
		\end{scope}
		
	\begin{scope}[scale=2.7, shift={(1.5,0)}]	

	\def\steps{8}; 
	\def\l{0.15};

	\foreach \i in {0,1,...,\steps}
	{	
		\pgfmathsetmacro{\ll}{0.7*\l+ 0.3*\l*abs(cos((1/\steps)*\i*180))};
		\draw[blue, thick] ({\ll*cos( -(1/\steps)*\i*180)},{(1/\steps)*\i+\ll*sin( -(1/\steps)*\i*180))})--({\ll*cos(- (1/\steps)*\i*180+180)},{(1/\steps)*\i+\ll*sin( -(1/\steps)*\i*180+180))});	
		
	}
	
	\draw[->] (0,-0.2)--(0,1.2) node[left]{$\gamma$};
	\end{scope}

	\end{tikzpicture}
	\end{displaymath}
	\caption[Contributions of a segment of $\gamma$ to its winding number with respect to a line field]{Contributions of a segment of $\gamma$ to its winding number with respect to a line field   \! \begin{tikzpicture}[scale=2, baseline=-3.3ex]\begin{scope} 
			\draw[blue, ultra thick] (0,-0.2)--(0.25,-0.2);
			\end{scope}\end{tikzpicture}. The contribution is $-1$ on the left and $+1$ on the right.} \label{FigureWindingNumbers}
\end{figure}

\noindent By counting signed intersections of its derivative with the line field we can further define the \textbf{winding number of an arc} $\delta:[0,1]\rightarrow \Sigma$ which is transverse to a line field $\eta$ in some neighbourhood of its endpoints. It is invariant under homotopies which satisfy the additional assumption that all intermediate curves are nowhere parallel to $\eta$ in a neighbourhood of their end points. A short proof of this fact is given in Remark \ref{RemarkWindingNumberWellDefined}. Note that if $\eta$ is of ribbon type, then every arc has a winding number and every homotopy between arcs automatically satisfy the previous assumption. Indeed, we identify $\eta$ locally with a line field on the unit disc whose foliation consists of concentric circles with center $0$ and observe that (identifying $\delta$ locally with 
a path $[0,\varepsilon) \rightarrow \mathbb{R}^2$, $\delta(0)=0$)
\begin{displaymath}
\frac{d}{dt} \dg{\delta(t)}^2= 2 < \delta(t), \dot \delta(t)>
\end{displaymath}
\noindent is non-zero for all $t \neq 0$ in a neighbourhood of $0$. Here, $<-,->$ denotes the usual scalar product. Thus, $\dot \delta(t)$ is never tangent to the circle through $\delta(t)$ whenever $t$ is close to $0$. On the other hand, it is not difficult to see that the winding number of an arc is not invariant under homotopies of the line field.

\begin{rem}\label{RemarkWindingNumberWellDefined}
Suppose that $H: [0,1] \times [0,1] \rightarrow \Sigma$ is a homotopy between arcs such that for all $t \in [0,1]$, the arc $\gamma_t=H|_{\{t\} \times [0,1]}$ is nowhere parallel to $\eta$ in a neighbourhood of $\{0,1\}$. Let $\hat{H}: [0,1]^2 \rightarrow \mathbb{P}(T\Sigma)$ denote the associated derivative lift of $H$. By a small perturbation of $\hat{H}$ which leaves $\{0,1\} \times [0,1] \cup [0,1] \times \{0,1\}$ fixed we may assume that $\hat{H}$ and $\eta(\Sigma_{\reg})$ are transversal maps. The assumptions on $H$ imply that there exists a closed neighbourhood $U$ of $\punct \cup \partial \Sigma \subseteq \Sigma$ which is a disjoint collection of discs and annuli such that $\hat{H}([0,1]^2) \cap \eta(\Sigma_{\reg}) \subseteq W\coloneqq \eta(\Sigma_{\reg} \setminus U)$.  By compactness and the fact $H$ is a homotopy of arcs, there exists an open interval $I \subseteq [0,1]$ such that $X \coloneqq \hat{H}^{-1}(W)=\hat{H}^{-1}(\eta(\Sigma_{\reg})) \subseteq [0,1] \times I$. 
Since $X=\big(\hat{H}|_{[0,1] \times I}\big)^{-1}(W)$, $X$ is a $1$-dimensional manifold with boundary $\partial X = X \cap \big(\{0,1\} \times I\big)$. Replacing $U$ by a suitable smaller open neighbourhood of $\punct \cup \partial \Sigma$, we find that $X$ is closed in $[0,1]^2$ and hence compact showing that $X$ is a disjoint union of circles and intervals with end points on $\{0,1\} \times [0,1]$. This shows that the intersection numbers of $\dot \gamma_0$ and $\dot \gamma_1$ with $\eta(\Sigma_{\reg})$ agree.
\end{rem}

\begin{exa}\label{ExContraLoop}
The winding number of a counter-clockwise simple contractible loop is $2$. 
\end{exa}
\noindent Every boundary component $B \subseteq \partial \Sigma$  with its induced orientation gives rise to a simple loop whose winding number we denote by $\omega_\eta(B)$. Similarly, for every puncture $p \in \mathscr{P}$, we denote by $\omega_\eta(p)$ the winding number of any simple clockwise loop around $p$. 

\begin{exa}\label{ExampleWindingNumbersBoundaries} Let $\Gamma$ be a ribbon graph and let $(\Sigma, \eta)=(\Sigma_{\Gamma},\eta_{\Gamma})$ denote the associated graded punctured surface. Then, $\omega_{\eta}(p)=0$ for all punctures $p \in \mathscr{P}$ and for every component $B \subseteq \partial \Sigma$, 
\begin{equation}\label{EquationWindingNumberBoundary}\omega_{\eta}(B) = -l\end{equation}
 \noindent where $l$ denotes the perimeter of the corresponding face $F_B \in \BF{\Gamma}$, c.f.\ Figure \ref{FigureWindingNumberBoundary}.
 
 \begin{figure}[H]
  \begin{tikzpicture}
  
  \begin{scope}[rotate=18, scale=1.3]
  
  \def\r{1.5};
  \def\n{5};
  \def\angle{360 / \n};
  \def\factor{1.5};
 \draw[orange,very thick, opacity=100] ({1*\angle}:\r) \foreach \i in {2,..., \n} {
            -- ({\i*\angle}:\r)
        } -- cycle;
\foreach \i in {1, ..., \n} \filldraw[orange] ({\i*\angle}:\r) circle (3pt);
    \draw [orange, very thick] ({\angle}:\r)--({\angle}:{0.2});
     \filldraw[orange] ({\angle}:{0.2}) circle (3pt);

\begin{scope}        
   
\path[rounded corners, clip] ({1*\angle}:{\r*1.3}) \foreach \i in {2,..., \n} { -- ({\i*\angle}:{\r*1.3})
         } -- cycle;
         
\def\fac{0.8};
\def\dis{0.19}; 
\coordinate (D) at ({\dis-\dis*cos(72)},{-\dis*sin(72)});
\coordinate (E) at ({\dis*cos(144)-\dis*cos(72)},{\dis*sin(144)-\dis*sin(72)});

\coordinate (F) at ($ ({\angle}:{\fac*\r})+  (D) $);
\coordinate (G) at ($ ({\angle}:{0.15})+  (D) $);
\coordinate (H) at ($ ({\angle}:{0.15})+  (E) $);
\coordinate (J) at ($ ({\angle}:{\fac*\r})+  (E) $);

\draw[rounded corners, very thick, decoration={markings, mark=at position 0.2375 with {\arrow{<}}},
		postaction={decorate}] ({2*\angle}:{\r*\fac}) \foreach \i in {3,...,5} { --({\i*\angle}:{\r*\fac})} --(F)--(G)--(H)--(J)--cycle;

\begin{pgfinterruptboundingbox}
 \path[invclip] ({2*\angle}:{\r*\fac}) \foreach \i in {3,...,5} { --({\i*\angle}:{\r*\fac})}--(F)--(G)--(H)--(J)--cycle;
 \end{pgfinterruptboundingbox}

 \def\ct{9};
     \def\radius{0.75};
     \def\st{\radius / \ct};         
  \foreach \i in {1, ..., \n}
  {
     \begin{scope}[shift={({\i*\angle:\r})}]
    
     \foreach \j in {2,...,\ct}
     \draw[blue, opacity=0.65] (0,0) circle ({\st*\j});
     \end{scope}
  }      
  
  \begin{scope}

   \begin{scope}[shift={({\angle:0.2})}]
    
     \foreach \j in {2,...,5}
     \draw[blue, opacity=0.65] (0,0) circle ({\st*\j});
     \end{scope}
     
    \end{scope}

 \end{scope}

  \end{scope}
  \end{tikzpicture}
  \caption{A face of length $7$ in a ribbon graph and its line field $\eta_{\Gamma}$. Each corner of the face contributes $-1$  to the winding number of the associated boundary component.}
  \label{FigureWindingNumberBoundary}
 \end{figure}
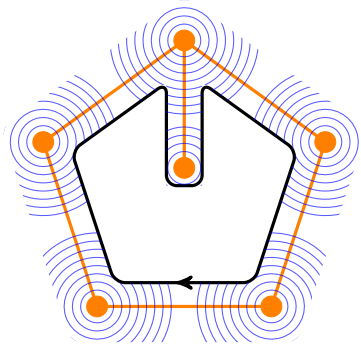
 
\end{exa}

\begin{rem}
If $p \in \gamma_1 \oInt \gamma_2$ is an oriented intersection at a puncture, then $\deg(p) + \deg(p^{\ast}) = \omega_\eta(p)$. 
\end{rem}

\noindent The Poincar\'e-Hopf theorem states that the winding numbers $\omega_\eta(B)$ and $\omega_\eta(p)$ are related to each other by the topology of $\Sigma$. It reads as follows.

\begin{thm}[Poincar\'e-Hopf Theorem] Let $\eta:\Sigma \rightarrow \mathbb{P}(T\Sigma)$ be a line field on an oriented compact surface $\Sigma$. Then,
$$ 
    \sum_{B \subseteq \partial\Sigma}{\omega_\eta(B)} = 2 \chi(\Sigma).
$$

\noindent In particular, if $(\Sigma, \punct)$ is a punctured surface and $\eta$ is a line field defined on $\Sigma_{\reg}$, then 
\begin{displaymath}
\sum_{B \subseteq \partial\Sigma}{\omega_\eta(B)} + \sum_{p \in \mathscr{P}}{\omega_\eta(p)} = 2 \chi\left(\Sigma \setminus \punct\right).
\end{displaymath}
\end{thm}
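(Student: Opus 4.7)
The plan is to prove the first formula for a compact bordered surface and then deduce the punctured version by excision. In the punctured setting, remove small open disc neighborhoods $D_p \subset \Sigma$ of each puncture $p \in \punct$; the compact surface $\Sigma' = \Sigma \setminus \bigsqcup_{p \in \punct} D_p^{\circ}$ satisfies $\chi(\Sigma') = \chi(\Sigma \setminus \punct)$, and each $\partial D_p$, oriented with $\Sigma'$ on its left, traces a clockwise loop around $p$, so $\omega_{\eta}(\partial D_p) = \omega_{\eta}(p)$ by definition. Hence the second formula follows from the first applied to $(\Sigma', \eta)$.

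For the bordered case, I would exploit the fact that a surface with boundary deformation retracts onto a $1$-complex, so $T\Sigma$ is trivializable. Fix a trivialization $T\Sigma \cong \Sigma \times \mathbb{R}^{2}$, which induces $\mathbb{P}(T\Sigma) \cong \Sigma \times \mathbb{RP}^{1}$. Under this, $\eta$ becomes a smooth map $\bar{\eta} \colon \Sigma \to \mathbb{RP}^{1}$, while for a boundary circle $B$ the loop $\dot B$ sits inside $B \times \mathbb{RP}^{1} \cong T^{2}$ as the graph of the tangent direction map $B \to \mathbb{RP}^{1}$. Both $\dot B$ and $\eta(\Sigma) \cap (B \times \mathbb{RP}^{1})$ are then graphs of maps $S^{1} \to \mathbb{RP}^{1} \cong S^{1}$, and the intersection pairing on $H_{1}(T^{2})$ yields
\begin{displaymath}
\omega_{\eta}(B) = \deg\bigl(\dot B \colon B \to \mathbb{RP}^{1}\bigr) - \deg\bigl(\bar{\eta}|_{B}\bigr).
\end{displaymath}

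To conclude, the natural degree-$2$ cover $S^{1} \to \mathbb{RP}^{1}$ gives $\deg(\dot B \colon B \to \mathbb{RP}^{1}) = 2 w_{B}$, where $w_{B}$ is the ordinary tangent winding relative to the trivialization. The Hopf Umlaufsatz for bordered surfaces---cleanly seen by capping off each boundary by a disc, extending the trivialization across each cap, and invoking Poincar\'e--Hopf for vector fields on the resulting closed surface---gives $\sum_{B} w_{B} = \chi(\Sigma)$. Meanwhile, $\sum_{B} \deg(\bar{\eta}|_{B})$ is the image of $[\partial \Sigma] \in H_{1}(\partial \Sigma)$ under $H_{1}(\partial \Sigma) \to H_{1}(\Sigma) \xrightarrow{\bar{\eta}_{*}} H_{1}(\mathbb{RP}^{1}) = \mathbb{Z}$, which vanishes because $\partial \Sigma$ bounds $\Sigma$. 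Summing over $B$ therefore yields $\sum_{B} \omega_{\eta}(B) = 2\chi(\Sigma)$. The main obstacle in executing this plan is to align the orientation conventions on $\partial \Sigma$, on the intersection pairing of $T^{2}$, and on the identification $\mathbb{RP}^{1} \cong S^{1}$, so that the factor $2$ coming from the double cover $S^{1} \to \mathbb{RP}^{1}$---which encodes the unoriented nature of a line field---combines with the classical Umlaufsatz term to produce exactly $2\chi(\Sigma)$ rather than $\pm 2\chi(\Sigma)$ or $\chi(\Sigma)$; this can be fixed by a sanity check on a disc with a constant line field, where Example \ref{ExContraLoop} gives $\omega_{\eta}(\partial D) = 2 = 2\chi(D)$.
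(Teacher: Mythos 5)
The paper states this result as the classical Poincar\'e--Hopf theorem and offers no proof of its own, so there is no in-paper argument to match; judged on its own terms, your sketch is essentially correct. The reduction of the punctured formula to the bordered one is clean and consistent with the paper's conventions: excising small discs $D_p$ gives $\chi(\Sigma')=\chi(\Sigma\setminus\punct)$, and the induced boundary orientation on $\partial D_p$ (surface on the left) is precisely the clockwise loop around $p$ used to define $\omega_\eta(p)$. In the bordered case, trivializing $T\Sigma$ (possible since $\partial\Sigma\neq\emptyset$; for closed $\Sigma$ the statement is vacuous, as a global line field forces $\chi(\Sigma)=0$), writing $\omega_\eta(B)=2w_B-\deg(\bar\eta|_B)$, and killing $\sum_B\deg(\bar\eta|_B)$ because $[\partial\Sigma]$ maps to zero in $H_1(\Sigma)$ is a sound route, and anchoring the overall sign on the disc with a constant line field (where $\omega_\eta(\partial D)=2=2\chi(D)$) legitimately resolves the orientation bookkeeping, including the passage from the intersection number with the section surface in $\mathbb{P}(T\Sigma)$ to the intersection number of the two graphs in the torus $\mathbb{P}(T\Sigma)|_B$.

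The one step you should repair is the justification of $\sum_B w_B=\chi(\Sigma)$: you cannot in general ``extend the trivialization across each cap'' --- if you could, the capped closed surface $\hat\Sigma$ would have trivial tangent bundle, forcing $\chi(\hat\Sigma)=0$. The standard fix is to extend not the trivialization but the nowhere-vanishing vector field it provides: push the constant frame field over each cap disc allowing isolated zeros; a comparison of the cap's framing with the surface framing along $B$ shows the total index inside the cap over $B$ is $w_B+1$, and Poincar\'e--Hopf for vector fields on $\hat\Sigma$ gives $\sum_B(w_B+1)=\chi(\hat\Sigma)=\chi(\Sigma)+n$, where $n$ is the number of boundary components, hence $\sum_B w_B=\chi(\Sigma)$. (Equivalently, one can quote the relative Euler class argument: the boundary-tangent framing and the global trivialization are two sections of the unit tangent bundle over $\partial\Sigma$, and their difference, $\sum_B w_B$, computes the obstruction $\chi(\Sigma)$ to extending the former.) With that substitution the argument goes through.
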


\subsubsection{Cycles of oriented intersections and their degrees} \ \medskip

\noindent Let  $(\Sigma, \eta)$ be a graded punctured surface and let $\cA$ be an $\eta$-graded arc system on $\Sigma$. Suppose that $(\gamma_i)_{i \in \mathbb{Z}_m}$ is a sequence of arcs in $\cA$ and suppose that $(p_i)_{i \in \mathbb{Z}_m}$ is a cycle of oriented intersections, by which we mean that for every $i \in \mathbb{Z}_m$, $p_i \in \gamma_i \oInt \gamma_{i+1}$ or $p_i \in \gamma_{i+1} \oInt \gamma_i$. The concatenation $\gamma$ of all arcs $\gamma_i$  (following the cyclic order) is an oriented piecewise smooth loop. There are several and non-equivalent ways to smooth out $\gamma$ to a loop since every $p_i$ is a puncture. However, since each $p_i$ is oriented they determine a canonical homotopy class of loops which agree with $\gamma$ outside a small neighbourhood of the intersections. Locally around $p_i$ such a smoothing is given as follows:
\begin{displaymath}
 \begin{tikzpicture}[scale=0.9]
 
 \begin{scope}
 \begin{scope}[el/.style = {inner sep=2pt, align=left}]
 \def\r{1.5};
 \def\ang{-5};
 \coordinate (A) at ({135-\ang}:{\r});
  \coordinate (B) at ({225+\ang}:{\r});
    	\filldraw (0,0) circle (2.5pt);
\draw[decoration={markings, mark=at position 0.5 with {\arrow{>}}},
		postaction={decorate}] (A)--(0,0) node[above, near start, anchor=south]{$\gamma_i$};
\draw[decoration={markings, mark=at position 0.5 with {\arrow{<}}},
		postaction={decorate}] (B)--(0,0) node[yshift=-9.5pt, xshift=4.5pt, near start]{$\gamma_{i+1}$};

\draw[red, ->] ({135-\ang+5}:0.65) arc ({135-\ang+5}:{225+\ang-5}:0.65) node[midway, left] {$\scriptstyle p_i$};
 \end{scope}
 \draw[thick, ->] (1,0)--(2,0);
  \begin{scope}[shift={(4,0)}]
 \def\r{1.5};
 \def\ang{-5};
 \coordinate (A) at ({135-\ang}:{\r});
  \coordinate (B) at ({225+\ang}:{\r});
    	\filldraw (0,0) circle (2.5pt);
\draw[decoration={markings, mark=at position 0.75 with {\arrow{>}}},
		postaction={decorate}, rounded corners=5ex] (A)--(0,0)--(B);
 \end{scope}
 \end{scope}

\draw (5,0) node{or};
\begin{scope}[shift={(7,0)}]
 \begin{scope}[el/.style = {inner sep=2pt, align=left}]
 \def\r{1.5};
 \def\ang{-5};
 \coordinate (A) at ({135-\ang}:{\r});
  \coordinate (B) at ({225+\ang}:{\r});
    	\filldraw (0,0) circle (2.5pt);
\draw[decoration={markings, mark=at position 0.5 with {\arrow{>}}},
		postaction={decorate}] (A)--(0,0) node[above, near start, anchor=south]{$\gamma_i$};
\draw[decoration={markings, mark=at position 0.5 with {\arrow{<}}},
		postaction={decorate}] (B)--(0,0) node[yshift=-9.5pt, xshift=4.5pt, near start]{$\gamma_{i+1}$};

\draw[red, ->] ({225+\ang+5}:0.65) arc ({225+\ang+5}:{360+135-\ang-5}:0.65) node[midway, right] {$\scriptstyle p_i$};
 \end{scope}
 \draw[thick, ->] (1.4,0)--(2.4,0);
  \begin{scope}[shift={(4,0)}]
 \def\r{1.5};
 \def\ang{-5};
 \coordinate (A) at ({135-\ang}:{\r});
  \coordinate (B) at ({225+\ang}:{\r});
    	\filldraw (0,0) circle (2.5pt);
\draw[decoration={markings, mark=at position 0.85 with {\arrow{>}}},
		postaction={decorate}, hobby] plot coordinates {(A) ({105}:0.35) (0.5,0) ({255}:0.35) (B)};
 \end{scope}
 \end{scope}

 \end{tikzpicture}
\end{displaymath}
\noindent Which one of the smoothings is applied depends on whether $p_i$ is an oriented intersection from $\gamma_i$ to $\gamma_{i+1}$ or vice versa. We simply refer to such a loop as above as a \textbf{smoothing} of the cycle $(p_i)_{i \in \mathbb{Z}_m}$.\medskip

\noindent The following lemma shows the relationship between the winding number of the smoothing and the degrees of the intersections $p_i$.

\begin{lem}\label{LemmaDegreeCondition}
Let $(\Sigma, \eta)$ be a graded punctured surface and let $\cA$ be an $\eta$-graded arc system on $\Sigma$. Suppose that $(p_i)_{i \in \mathbb{Z}_m}$ is a cycle of oriented intersections of arcs in $\cA$ and denote by $\gamma_{\text{sm}}$ its smoothing. Then,
\begin{displaymath}
 \sum_{i \in \mathbb{Z}_m} \sigma_i \cdot (\deg(p_i)-1) = \omega_\eta(\gamma_{\text{sm}}),
 \end{displaymath}
\noindent where 
\begin{displaymath}
 \sigma_i = \begin{cases} \phantom{-}1, & \text{if } p_i \in \gamma_{i} \oInt \gamma_{i+1}; \\[0.35em] -1, & \text{if } p_i \in \gamma_{i+1} \oInt \gamma_i. \end{cases}
\end{displaymath}
\end{lem}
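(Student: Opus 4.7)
The plan is to homotope $\gamma_{\text{sm}}$ into a piecewise smooth loop whose pieces have computable winding numbers, and then to sum the contributions.

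First, for each $i \in \mathbb{Z}_m$ I would pick a small embedded auxiliary path $u_i \colon [0,1] \to \Sigma$ lying in the sector of $p_i$, meeting $\gamma_i$ and $\gamma_{i+1}$ transversely exactly at its endpoints $u_i(0) \in \gamma_i$ and $u_i(1) \in \gamma_{i+1}$---the same kind of path used in Section~\ref{SectionDegreeIntersection} to define $\deg(p_i)$ at a puncture---and equip it with an arbitrary grading. Writing $q_i \in \gamma_i \oInt u_i$ and $q'_i \in \gamma_{i+1} \oInt u_i$ for the endpoint intersections, one has $\deg(p_i) = \deg(q_i) - \deg(q'_i)$. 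Up to a small isotopy and rescaling of $u_i$, the loop $\gamma_{\text{sm}}$ becomes freely homotopic to the cyclic concatenation of, for each $i$, a segment $A_i$ running along $\gamma_i$ between its intersections with $u_{i-1}$ and $u_i$, followed by a segment $B_i$ running along $u_i$ in the direction dictated by $\sigma_i$.

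Next I would invoke additivity of winding numbers on such a decomposition, which holds when endpoint tangents match at every joint---by construction this is the case here, up to the orientation-preserving identifications in $\mathbb{P}(T\Sigma)$. The winding contribution of a smooth segment of a graded arc $\gamma$ between two prescribed endpoint tangents is directly readable off the grading, namely it equals the difference of the degrees of the endpoint intersections, with a sign determined by the direction of traversal. Substituting these expressions, the combined contribution of $A_i$ and $B_i$ simplifies---after applying the relation $\deg(q) + \deg(q^{\ast}) = 1$ at a transverse intersection of graded arcs away from $\deco$---to $\sigma_i \deg(p_i) - \sigma_i$, where $\sigma_i \deg(p_i)$ comes from the identity $\deg(p_i) = \deg(q_i) - \deg(q'_i)$ and $-\sigma_i$ accounts for a single unit correction per smoothing. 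Summing over $i \in \mathbb{Z}_m$ yields the claimed formula.

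The main obstacle will be the careful bookkeeping of signs and orientations: the sign $\sigma_i$ governs both the direction in which $B_i$ is traversed along $u_i$ and, together with $\sigma_{i-1}$, the direction in which $A_i$ runs along $\gamma_i$, and each sign flip negates a degree contribution. A further point to verify is that the arbitrary grading on each auxiliary arc $u_i$ drops out of the final sum: this holds because each $u_i$ enters in exactly two endpoint contributions (one on its $\gamma_i$-side via $A_i$, one on its $\gamma_{i+1}$-side via $B_i$), and the two grading-dependent pieces cancel via $\deg(q) + \deg(q^{\ast}) = 1$.
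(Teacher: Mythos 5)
Your overall strategy---replace each smoothed corner by a run across the sector along the auxiliary arc $u_i$ used in the definition of $\deg(p_i)$, compute winding contributions piecewise, and let the arbitrary gradings of the $u_i$ cancel in the sum---is in substance the same as the paper's own proof, which transports gradings along $\gamma_{\text{sm}}$ and records a jump of $\sigma_i\cdot(1-\deg(p_i))$ at each $p_i$ using exactly these auxiliary arcs. The gap is in your additivity step. The tangents do \emph{not} match at the joints: at $q_i$ the segment $A_i$ ends with tangent $\dot\gamma_i$ while $B_i$ starts with tangent $\pm\dot u_i$, and these are transverse by construction. So the concatenation is only piecewise smooth, additivity of the signed crossing count with $\eta$ requires smoothing each corner, and each corner contributes $0$ or $\pm 1$ according to whether the turn sweeps through the line field. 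Determining that contribution---equivalently, comparing the \emph{clockwise} turn performed by the smoothing with the \emph{counter-clockwise} path $\varepsilon$ entering the definition of the degree---is precisely where the $-1$ in $\deg(p_i)-1$ and the sign $\sigma_i$ come from, and in your write-up it is simultaneously asserted (``a single unit correction per smoothing'') and contradicted (``endpoint tangents match, so additivity holds''). The decisive sign analysis, which is the whole content of the lemma, is therefore missing.

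Relatedly, your intermediate claim that the winding contribution of a segment equals the difference of the degrees of its endpoint intersections cannot be literally correct: the left-hand side is independent of the gradings chosen on the $u_i$, while the right-hand side is not, and it also ignores the local corner data (which side the sector lies on, the direction of $\varepsilon$). What is true, and what the paper proves as \eqref{EquationSimpleCase}, is that the grading induced on the smoothed loop shifts by $1-\deg(q)$ at each corner $q$; the two corner shifts then combine via $\deg(p_i)=\deg(q_i)+\deg(q_{i+1}^{\ast})-1$, and the total grading discrepancy around the loop is identified with $\omega_\eta(\gamma_{\text{sm}})$. If you repair the additivity step by carrying out this corner-by-corner comparison (for instance in a local trivialization of $\mathbb{P}(T\Sigma)$ near each $p_i$), your computation does close up, and the $u_i$-gradings indeed drop out because each $u_i$ enters only through the two endpoints of its own segment $B_i$, whose difference is pinned by the (grading-independent) winding of that segment rather than by the relation $\deg(q)+\deg(q^{\ast})=1$ alone.
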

\begin{proof}
We may assume that $\gamma_i$ is oriented in such a way that $p_i$ is its end point 
and regard  $\gamma_{\text{sm}}$ as a closed path $\gamma_{\text{sm}}:[0,1] \rightarrow \Sigma$ by choosing a base point $z=\gamma_{\text{sm}}(0)=\gamma_{\text{sm}}(1)$ in the interior of $\gamma_0$ (recall that $\gamma_{\text{sm}}$ agrees with the curves $\gamma_i$ outside of a small closed neighbourhood $U$ of the punctures). Any grading $\mathfrak{g}$ on $\gamma_i$, where $i \in \mathbb{Z}_m \setminus \{0\}$, determines a unique grading $\hat{\mathfrak{g}}$ on $\gamma_{\text{sm}}$ which agrees at the overlap of the two paths. Vice versa, every grading on $\gamma_{\text{sm}}$ induces a grading on each of the arcs $\gamma_i$, $i \neq 0$. Moreover, every grading $\mathfrak{g}$ on $\gamma_0$ determines two (and possibly distinct) gradings $\hat{\mathfrak{g}}^s$ and $\hat{\mathfrak{g}}^t$ on $\gamma_{\text{sm}}$ corresponding to the segment of $\gamma_0$ on either side of the base point $z$. Let $\mathfrak{g}_i$ denote the given grading of $\gamma_i \in \cA$ and denote by $\hat{\mathfrak{g}}_i$ the corresponding grading on $\gamma_{\text{sm}}$. Then, for $i, i+1 \neq 0$, one observes that 
\begin{equation}\label{EquationGradings}
 \hat{\mathfrak{g}}_{i+1}=\hat{\mathfrak{g}}_{i}[\sigma_i \cdot (1-\deg(p_i))],
\end{equation}

\noindent with $\sigma_i$ defined as in the assertion and likewise, $\hat{\mathfrak{g}}_1= \hat{\mathfrak{g}}_0^s[\sigma_0 \cdot (1-\deg(p_0))]$ and $\hat{\mathfrak{g}}_0^t=\hat{\mathfrak{g}}_{m-1}[\sigma_{m-1} \cdot (1-\deg(p_{m-1}))]$. To see this, we may assume that $p_i \in \gamma_i \oInt \gamma_{i+1}$ and choose a graded embedded path $u$ as in the definition of $\deg(p_i)$ in Section \ref{SectionDegreeIntersection}. That is, $u$ connects segments of $\gamma_i$ and $\gamma_{i+1}$ near $p_i$ and spans across the sector which corresponds to $p_i$. Locally the smoothing at $p_i$ can be regarded as the smoothing of the piecewise smooth path comprised of $\gamma_i$, $u$ and $\gamma_{i+1}$ at the intersections $(s_i, t_i)=q_i \in \gamma_i \oInt u$ and $q_{i+1} \in u \oInt \gamma_{i+1}$. Let $\hat{f}$ denote the grading which is induced on $\gamma_{\text{sm}}$ from $u$ in the same way as before. Then in order to prove \eqref{EquationGradings} it is sufficient to show
\begin{equation}\label{EquationSimpleCase}
\begin{array}{ccc}
\hat{\mathfrak{f}}=\hat{\mathfrak{g}}_i[1-\deg(q_{i})] & \text{and} & \hat{\mathfrak{g}}_{i+1}=\hat{\mathfrak{f}}[1-\deg(q_{i+1})]
\end{array}
 \end{equation}
 
 \noindent as this implies $\hat{\mathfrak{g}}_{i+1}=\hat{\mathfrak{g}}_i[2-\deg(q_i)-\deg(q_{i+1})]=\hat{\mathfrak{g}}_i[1-\deg(p_i)]$.
  The gradings $\mathfrak{g}_i$ and $\mathfrak{f}$ are uniquely determined by their associated paths $\mathfrak{g}_i(s_i)$ and $\mathfrak{f}(t_i)$ in the fiber $\mathbb{P}(T\Sigma)_{q_i}$ which connect $\eta(q_i)$ with $\dot \gamma_i(s_i)$ and $\dot u(t_i)$ respectively.
 Now, the first equation in \eqref{EquationSimpleCase} (and similar the second) follows from the fact that locally 
 the smoothing of $\gamma_i$ and $u$ at $q_i$ is the projection of a path which is homotopic to the path obtained by connecting $\dot \gamma_i$ and $\dot u$ via a \emph{clockwise} path in  $\mathbb{P}(TD)_{q_i}$ with endpoints $\dot \gamma_i(s_i)$ and $\dot u(t_i)$ whereas $\deg(q_i)$ is computed by connecting $\mathfrak{g}_i(s_i)$ with  $\overline{\mathfrak{f}(t_i)}$ via a \emph{counter-clockwise} path in the fiber.

 Let $\omega \in \mathbb{Z}$ be such that $\hat{\mathfrak{g}}_0^t[\omega]=\hat{\mathfrak{g}}_0^s$. Then, it follows from the definition of winding numbers that $\omega=\omega_{\eta}(\gamma_{\text{sm}})$. By the equations above, we also conclude that
\begin{displaymath}
\hat{\mathfrak{g}}_0^t= \hat{\mathfrak{g}}_{m-1}\big[\sigma_{m-1} \cdot (1-\deg(p_{m-1}))\big]  = \cdots = \hat{\mathfrak{g}}_0^s\bigg[\sum_{i \in \mathbb{Z}_m} \sigma_i \cdot \left(1-\deg(p_i)\right).\bigg]\end{displaymath}

\noindent Thus, $\sum_{i \in \mathbb{Z}_m} \sigma_i \cdot \left(1-\deg(p_i)\right)= -\omega_{\eta}(\gamma_{\text{sm}})$. This finishes the proof.
\end{proof}

\begin{cor}\label{CorollaryDegreeCondition}
Let $(p_i)_{i \in \mathbb{Z}_m}$ be as in Lemma \ref{LemmaDegreeCondition}. Then, 
\begin{displaymath}
 m + \sum_{i \in \mathbb{Z}_m} \deg(p_i) \equiv \omega_\eta(\gamma_{\text{sm}}) \mod 2.
\end{displaymath}
\end{cor}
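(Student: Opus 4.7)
The plan is to derive the corollary as an immediate modulo $2$ reduction of Lemma \ref{LemmaDegreeCondition}. The lemma gives the exact identity
\[
\sum_{i \in \mathbb{Z}_m} \sigma_i \cdot (\deg(p_i) - 1) = \omega_\eta(\gamma_{\text{sm}}),
\]
where $\sigma_i \in \{+1,-1\}$. The key observation is that $\sigma_i \equiv 1 \pmod{2}$ regardless of whether $p_i$ belongs to $\gamma_i \oInt \gamma_{i+1}$ or to $\gamma_{i+1} \oInt \gamma_i$, so the signs $\sigma_i$ play no role in the parity of the left-hand side.

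Reducing the identity modulo $2$, I would therefore obtain
\[
\sum_{i \in \mathbb{Z}_m} (\deg(p_i) - 1) \equiv \omega_\eta(\gamma_{\text{sm}}) \pmod{2},
\]
which rearranges to
\[
\sum_{i \in \mathbb{Z}_m} \deg(p_i) - m \equiv \omega_\eta(\gamma_{\text{sm}}) \pmod{2}.
\]
Finally, since $-m \equiv m \pmod{2}$, this yields the desired congruence
\[
m + \sum_{i \in \mathbb{Z}_m} \deg(p_i) \equiv \omega_\eta(\gamma_{\text{sm}}) \pmod{2}.
\]

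There is no real obstacle here: the entire content lies in Lemma \ref{LemmaDegreeCondition}, and the corollary is merely a convenient parity-only reformulation obtained by discarding the orientation signs $\sigma_i$. The only thing to check is that each term really contributes a single factor of $\sigma_i$ so that passing to $\mathbb{Z}/2$ absorbs all of them simultaneously, which is transparent from the statement of the lemma.
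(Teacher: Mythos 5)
Your proof is correct and is exactly the intended argument: the paper states this corollary without proof as an immediate consequence of Lemma \ref{LemmaDegreeCondition}, obtained by reducing the lemma's identity modulo $2$ and using that $\sigma_i \equiv 1$ and $-m \equiv m \pmod 2$. Nothing is missing.
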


\begin{cor}\label{CorollaryWindingEdges}
In the situation of Corollary \ref{CorollaryDegreeCondition} and under the additional assumption that $\eta$ is of ribbon type we also have  $$\sum_{i\in\mathbb{Z}_m}\omega_\eta(\gamma_i) \equiv  m + \omega_\eta(\gamma_{\text{sm}})\mod 2,$$ since $\dot \gamma_{\text{sm}}$ has one additional crossing with $\eta$ in a neighbourhood of each $p_i$ compared to $\bigcup\limits_{i\in\mathbb{Z}_m} \dot \gamma_i|_{(0,1)}$. Thus,
\begin{displaymath}
\sum_{i \in \mathbb{Z}_m} \omega_{\eta}(\gamma_i) \equiv \sum_{i \in \mathbb{Z}_m} \deg(p_i) \mod 2.
\end{displaymath}
\end{cor}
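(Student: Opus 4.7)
Both congruences are equivalent, given Corollary \ref{CorollaryDegreeCondition}: subtracting the congruence of that corollary from the first one yields the second. Hence the task reduces to proving
\[
\sum_{i \in \mathbb{Z}_m} \omega_\eta(\gamma_i) \equiv m + \omega_\eta(\gamma_{\text{sm}}) \mod 2.
\]
The plan is to pick pairwise disjoint small neighbourhoods $N_i$ of the punctures $p_i$ outside which $\gamma_{\text{sm}}$ coincides with $\bigsqcup_j \gamma_j$. Since both winding numbers are computed as intersection numbers of the derivatives with $\eta(\Sigma_{\text{reg}}) \subset \mathbb{P}(T\Sigma)$, they agree on the common part, and the congruence reduces to the local claim already asserted in the corollary: inside each $N_i$, $\dot\gamma_{\text{sm}}$ contributes exactly one more transverse intersection with $\eta$ modulo $2$ than $(\dot\gamma_i \cup \dot\gamma_{i+1})|_{(0,1)}$ does.

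To verify this local claim, I would use the ribbon-type hypothesis (Definition \ref{DefinitionRibbonTypeLineField}) to choose $N_i$ together with a coordinate chart identifying $(N_i, p_i)$ with a punctured disc $(D,0)$ in which the leaves of $\eta$ are concentric circles around $0$. Because ribbon type automatically guarantees transversality of curves to $\eta$ near $p_i$, a small homotopy reduces us to the case where $\gamma_i \cap N_i$ and $\gamma_{i+1} \cap N_i$ are straight radial segments at angles $\theta_i$ and $\theta_{i+1}$, respectively. On such radial segments, $\dot\gamma$ and $\eta$ are distinct constant sections of $\mathbb{P}(T\Sigma)|_{N_i}$ (radial versus angular), so their images in $\mathbb{P}(T\Sigma)$ are disjoint and contribute no transverse intersections. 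The smoothing $\gamma_{\text{sm}} \cap N_i$ is a smooth arc in the sector at $p_i$ (of angular width $\alpha_S$) joining the radial pieces with matching tangents; a direct calculation shows that its tangent rotates by $\alpha_S - \pi$ (from angle $\theta_i + \pi$ at the start to $\theta_{i+1}$ at the end, in the simply connected sector), while the direction of $\eta$ rotates by $\alpha_S$ as the position sweeps through the sector. The relative rotation is therefore $-\pi$, representing one full turn in the fibre $\mathbb{R}\mathbb{P}^1 \cong \mathbb{R}/\pi\mathbb{Z}$ and hence exactly one transverse crossing.

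The main technical obstacle is making the tangent-rotation count for the smoothing precise. Concretely, I would track a continuous lift of $\arg(\dot\gamma_{\text{sm}}) - \arg(\eta)$ along the smoothing parameter: it starts at $\pi/2$ and ends at $-\pi/2$, so it meets an integer multiple of $\pi$ exactly once between the endpoints, producing the one crossing. The rotation is intrinsic because the sector is simply connected, so the smoothing is unique up to regular homotopy rel endpoint tangents, and any ambiguity by full turns around $p_i$ would contribute an even number of $\eta$-crossings by Example \ref{ExampleWindingNumbersBoundaries}. Summing the local claim over the $m$ punctures yields the first congruence, and combining it with Corollary \ref{CorollaryDegreeCondition} gives the second.
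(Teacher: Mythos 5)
Your proposal is correct and follows the same route as the paper: the paper's justification is the one-line observation that $\dot\gamma_{\text{sm}}$ acquires one extra crossing with $\eta$ near each $p_i$, together with Corollary \ref{CorollaryDegreeCondition}, and you simply expand the local crossing count. Your local computation in the concentric-circle chart (relative angle dropping from $\pi/2$ to $-\pi/2$, hence exactly one passage through a multiple of $\pi$) is a valid way to make the paper's claim precise, and your remarks about transversality near punctures (ribbon type) and the evenness of contributions from extra turns correctly address the only potential ambiguities.
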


\subsection{Fukaya categories of marked surfaces} \ \medskip

\noindent Suppose that  $\cA$ is a full graded arc system on a graded surface $(\Sigma, \marked, \eta)$ with marked points and suppose that each boundary component of $\Sigma$ contains at least one marked point. For the given data, Haiden, Katzarkov and Kontsevich \cite{HaidenKatzarkovKontsevich} defined an $A_{\infty}$-category $\mathcal{F}=\mathcal{F}_{\cA}(\Sigma)$ and showed that the associated triangulated category
\begin{displaymath}
\Fuk(\Sigma)=\HH^0\left(\Tw \mathcal{F}\right),
\end{displaymath}

\noindent does not depend on $\cA$ up to equivalence.  The category $\Fuk(\Sigma)$ is referred to as the \textbf{Fukaya category} of the graded surface $(\Sigma, \marked, \eta)$. The precise definition of $\Fuk(\Sigma)$ in a slightly restricted context is given below.\medskip

\noindent
 Let $D$ be a disc with a set $\marked \subseteq \partial D$ of $n \geq 1$ marked points and let $\Sigma$ be a decorated surface. By a \textbf{marked disc} on $\Sigma$ we mean a continuous map $D \rightarrow \Sigma$ which is a smooth immersion on $D \setminus \marked$ and which sends points in $\marked$ to points in $\deco$ and boundary arcs of $D$ to arcs of $\cA$. We will assume that the edges of the disc are numbered in the clockwise order by $1,\dots,n \in \mathbb{Z}_n$. In particular, a marked disc whose corresponding sequence of arcs in $\cA$ is $\delta_1, \dots, \delta_n$ specifies a cyclic sequence of oriented intersections $p_i \in \delta_i \oInt \delta_{i+1}$. 

\begin{definition}\label{DefinitionFukayaCategory}
Suppose that  $\cA$ is a full graded arc system on a graded marked surface $(\Sigma, \marked, \eta)$ and suppose that each boundary component of $\Sigma$ contains a marked point. The strictly unital $A_{\infty}$-category $\cF=\cF_{\cA}(\Sigma)$ is a category with 
\begin{itemize}
    \item \textbf{Objects:} $\{X_\gamma\}_{\gamma\in\cA}$;
    \item \textbf{Morphisms:} Let $\gamma_1, \gamma_2 \in \cA$ be distinct. Then $\gamma_1 \oInt \gamma_2$ parametrizes a $\Bbbk$-basis of the vector space $\Hom_{\cF}(X_{\gamma_1}, X_{\gamma_2})$. The identity morphism and the set of self-intersections of $\gamma_1$ form a $\Bbbk$-basis of $\Hom_{\cF}(X_{\gamma_1},X_{\gamma_1})$.
    \item \textbf{Grading:} The degree of the morphism $a$ corresponding to a (self-)intersection $p$ is $\deg(p)$ and is denoted by $|a|$. 
    \item \textbf{Composition:} If $a \in \Hom_{\cF}(X_{\gamma_2}, X_{\gamma_3})$ corresponds to $p \in \gamma_2 \oInt \gamma_3$ and $b \in \Hom_{\cF}(X_{\gamma_1}, X_{\gamma_2})$ corresponds to $q \in \gamma_1 \oInt \gamma_2$, then $\mu_2(a,b)=(-1)^{|b|}c$, where $c$ corresponds to $pq \in \gamma_1 \oInt \gamma_3$ if defined and $c=0$ otherwise. All other compositions of basis elements vanish. 
    \item \textbf{Higher operations:} Let $D$ be a marked disc on $\Sigma$ and denote by $p_1, \dots, p_n$ its associated sequence of oriented intersections between arcs of $\cA$. The corresponding sequence of morphisms  $a_1,\dots,a_n$  will be called  a \textbf{disc sequence}. For any disc sequence  $a_1,\dots, a_n$, $$\mu_n(a_n,\dots,a_1b)=(-1)^{|b|}b \text{ for }a_1b\neq 0 \text{ and}$$ $$\mu_n(ba_n,\dots,a_1)=b \text{ for }ba_n\neq 0.$$ 
\noindent Higher operations vanish on all sequences of maps which are not of the form specified above.
\end{itemize}
\end{definition}

\begin{rem}It follows from Lemma \ref{LemmaDegreeCondition} that $ |a_1|+\dots+|a_n|=n-2$ for any disc sequence $a_1,\dots,a_n$ which shows that all operations in $\cF_{\cA}(\Sigma)$ have the correct degree. 
\end{rem}
\noindent Similar to the case of Brauer graph algebras, we can think of arcs of $\cA$ as vertices  and of morphisms as paths in a quiver.\medskip

\noindent For our purposes it is useful to extend Definition \ref{DefinitionFukayaCategory} to the case when $\partial\Sigma$ contains components without marked points. For example, this can be done in the following way. 
\begin{definition}\label{DefinitionExtendedFukayaCategories}[Extended definition of Fukaya categories]
Let $\cA$ be a full arc system on a graded marked surface $(\Sigma, \marked, \eta)$. Denote by $B_1, \dots, B_m \subseteq \partial \Sigma$ all boundary components with $B_i \cap \marked = \emptyset$ and for each $i \in [1,m]$,  let $q_i \in B_i$. Choose a full graded arc system $\cB \supseteq \cA$ of $(\Sigma, \marked \sqcup \{q_i\}_{i \in [1,m]}, \eta)$ and define $\mathcal{F}_{\cA}$ as the full $A_{\infty}$-subcategory of $\mathcal{F}_{\cB}$ which is spanned by the objects $\cA \subseteq \cB$.
\end{definition}

\noindent It is not difficult to verify that the $A_{\infty}$-category $\mathcal{F}_{\cA}$  is independent of the choice of the points $\{q_i\}$ and the arc system $\cB$.

\begin{definition}
A graded admissible arc system which does not admit any marked discs will be called \textbf{formal}. If $\cA$ is formal, then $A_{\infty}$-category $\mathcal{F}_{\cA}$ is formal, that is all its higher operations vanish. 
\end{definition}

\begin{rem}\label{RemarkFukayaGentle} If $\cA$ is formal and all morphisms in $\cF_{\cA}$ are concentrated in degree zero, then the endomorphism algebra of $\bigoplus_{\gamma \in \cA}X_{\gamma}$ is a \textit{gentle algebra} in the sense of \cite{AssemSkowronski} and every gentle algebra can be obtained in this way, see \cite{OpperPlamondonSchroll} or \cite{LekiliPolishchukGentle}. Thus, Definition \ref{DefinitionExtendedFukayaCategories} allows us to obtain all gentle algebras as idempotent subalgebras of endomorphism algebras of generators of Fukaya categories. In particular, for any gentle algebra $A$, $\Kb{A}$ is a full subcategory of $\H^0\Tw(\mathcal{F}_{\cB})$ of the homotopy category of a Fukaya category.
\end{rem}

\begin{rem}\label{RemarkSketchHKKProof}
\noindent The proof that $\Fuk(\Sigma)$ is essentially independent of the chosen arc system  uses the following ingredients \cite[Lemma 3.2, Proposition 3.2]{HaidenKatzarkovKontsevich}:

\begin{enumerate}
\item \label{ListPropertiesFukayacategories1} If $\cA$ is a full arc system and $\gamma \in \cA$ is an arc such that $\cA'=\cA \setminus \{\gamma\}$ is full, then there is a fully faithful inclusion $\mathcal{F}_{\cA'} \subseteq \mathcal{F}_{\cA}$ of $A_{\infty}$-categories which is a Morita equivalence.

\item \label{ListPropertiesFukayacategories2} Every two triangulations $\cA$ and $\cB$ of $(\Sigma,\marked)$ are related by a sequence of \textit{flips} each of which removes an arc from the given arc collection and adds a new arc. It therefore follows from \eqref{ListPropertiesFukayacategories1}  that $\mathcal{F}_{\cA}$ and $\mathcal{F}_{\cB}$ are related by a zig-zag of Morita equivalences. 

\item \label{ListPropertiesFukayacategories3} The construction of the Morita equivalence in \eqref{ListPropertiesFukayacategories1} is based on the following observation. If $\cA$ is full and $\cA'=\cA \setminus \{\gamma\}$ is full, then there exists a disc sequence $a_1,\dots,a_n$ involving objects $X_0, X_1, \dots, X_n=X_0$ in $\Tw(\cF_\cA)$ such that $X_0=X_{\gamma}$ and there exists a twisted complex $X_0'$ which is built from shifts of objects $X_1,\dots,X_{n-1}$ such that $X_0\simeq X_0'$ in $\H^0(\Tw\cF_\cA)$. It follows that $\cA$ and $\cA'$ generate the same triangulated subcategory of $\H^0(\Tw\cF_\cA)$ and hence the whole category $\H^0(\Tw\cF_\cA)$. In other words, the inclusion $\H^0(\Tw\cF_{\cA'}) \hookrightarrow \H^0(\Tw\cF_\cA)$ is an equivalence.
\end{enumerate}
\end{rem}

\noindent Analogues of \eqref{ListPropertiesFukayacategories1} and \eqref{ListPropertiesFukayacategories2} for Brauer graph categories will be essential in the proof of Theorem \ref{IntroTheoremCriterionDerivedEquivalence}.\medskip

\noindent For the sake of completeness we show that $\cF_{\cA}$ is still independent of $\cA$ if the ambient surface contains boundary components without marked points.

\begin{lem}\label{LemmaWellDefineFukaya}
Let $(\Sigma, \marked, \eta)$ be a marked surface. Let $\cA$ be a full arc system on $\Sigma$ and let $\gamma\in \cA$ be an arc such that $\cA\backslash\{\gamma\}$ is full. Then the natural inclusion $\cF_{\cA\backslash\{\gamma\}}\rightarrow \cF_\cA$ is a Morita equivalence. Moreover, the Morita equivalence class of $\cF_\cA$ is independent of $\cA$.
\end{lem}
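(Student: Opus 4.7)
My plan is to reduce both assertions to the case covered by Haiden-Katzarkov-Kontsevich, where every boundary component of $\Sigma$ carries a marked point. By Definition \ref{DefinitionExtendedFukayaCategories}, fix auxiliary marked points $\{q_i\}$ on the unmarked boundary components of $\Sigma$ and a common extension $\cB \supseteq \cA$ to a full graded arc system on $(\Sigma, \marked \sqcup \{q_i\}, \eta)$. Then $\cF_\cA$ and $\cF_{\cA \setminus \{\gamma\}}$ are full $A_\infty$-subcategories of $\cF_\cB$, so $\cF_{\cA \setminus \{\gamma\}} \hookrightarrow \cF_\cA$ is automatically fully faithful at the level of homotopy categories of twisted complexes. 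Morita equivalence thus reduces to showing that $X_\gamma \in \H^0(\Tw \cF_\cB)$ lies in the triangulated subcategory generated by $\{X_\delta : \delta \in \cA \setminus \{\gamma\}\}$.

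The key step I would carry out first is the following combinatorial dichotomy: under the assumption that both $\cA$ and $\cA \setminus \{\gamma\}$ are full, at least one of the two (possibly equal) regions $R_1, R_2$ of $\Sigma \setminus |\cA|$ adjacent to $\gamma$ is a disc containing no unmarked boundary component. If $R_1 \neq R_2$ are both annular, each carries its own unmarked boundary component (by fullness of $\cA$), so merging them upon removal of $\gamma$ yields a region with two distinct unmarked boundaries, contradicting fullness of $\cA \setminus \{\gamma\}$. If $R_1 = R_2 = R$, then by fullness of $\cA \setminus \{\gamma\}$ the region $R' \coloneqq R \cup \gamma^{\circ}$ is a disc or an annulus with a unique unmarked boundary, and $\gamma$ is non-separating inside $R'$ since $R$ is connected; but every simple arc in a disc or annulus with endpoints on the boundary is separating, forcing $R'$ and hence $R$ itself to be a disc.

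Having isolated a disc region $R_1$ adjacent to $\gamma$, I would then choose the extension $\cB$ so that the additional arcs in $\cB \setminus \cA$ are placed only inside the annular regions of $\Sigma \setminus |\cA|$, and in particular do not enter $R_1$. The polygon of $\Sigma \setminus |\cB|$ on the $R_1$-side of $\gamma$ then coincides with $R_1$ and is bounded entirely by arcs of $\cA \setminus \{\gamma\}$; the associated marked disc in $\cF_\cB$ yields a disc sequence through $\gamma$ whose remaining entries lie in $\cA \setminus \{\gamma\}$, so the HKK resolution recalled in Remark \ref{RemarkSketchHKKProof}(3) expresses $X_\gamma$ as a twisted complex over $\cA \setminus \{\gamma\}$, completing the proof of the first assertion. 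For the second, given two full arc systems $\cA, \cA'$ I would complete each to a triangulation on $(\Sigma, \marked \sqcup Q, \eta)$ for a common auxiliary set $Q$, connect the two triangulations by the flip sequence of Remark \ref{RemarkSketchHKKProof}(2) to obtain a zig-zag of Morita equivalences, and apply the first assertion iteratively to remove the auxiliary arcs from each side. The main obstacle is the combinatorial dichotomy; once it is established, the remainder is a direct application of the HKK machinery already summarised in Remark \ref{RemarkSketchHKKProof}.
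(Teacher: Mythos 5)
Your strategy for the first assertion follows the paper's: you choose exactly the paper's extension $\cB$ (auxiliary arcs confined to the annular regions, joining $q_i$ to a marked point on the other boundary of the annulus) and reduce everything to producing a disc sequence through $X_\gamma$ in $\cF_\cB$ that avoids the arcs $\delta_i$. But your route to that disc sequence has a genuine gap. First, a repairable slip: it is false that every simple arc in an annulus with endpoints on the boundary is separating (an arc joining the two boundary circles is non-separating); in your case $R_1=R_2$ the argument can be saved because the endpoints of $\gamma$ are marked, hence lie on the boundary circle of $R'$ other than the unmarked one, so $\gamma$ has both endpoints on one boundary component and is indeed separating — but then the correct conclusion is that the case $R_1=R_2$ cannot occur, not that $R'$ is a disc. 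The substantive gap is the next step: knowing that an adjacent region $R_1$ is a disc does \emph{not} give that it ``is bounded entirely by arcs of $\cA\setminus\{\gamma\}$''. A disc component of $\Sigma\setminus|\cA|$ may have segments of $\partial\Sigma$ between marked points on its boundary, and such a region carries no marked disc and hence no disc sequence (the boundary of a marked disc must map to arcs, Definition \ref{DefinitionFukayaCategory}). Your dichotomy only controls unmarked boundary \emph{circles}, never boundary \emph{segments}; with just the properties of fullness you invoke, the desired all-arc face need not exist and generation can genuinely fail — e.g.\ in a disc with four marked points take $\cA$ to consist of one boundary arc and one diagonal sharing an endpoint: both regions adjacent to the diagonal are discs, neither is an all-arc polygon, and the diagonal is not generated by the boundary arc. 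What is missing is precisely the boundary-segment constraint built into the Haiden--Katzarkov--Kontsevich notion of fullness; with it, the merging argument shows that one of the two faces adjacent to $\gamma$ contains neither an unmarked circle nor a boundary segment, which is the statement you actually need. The paper avoids constructing the polygon altogether: it applies the removal statement of Remark \ref{RemarkSketchHKKProof}(3) to $(\cB,\gamma)$ as a black box and only observes that the resulting marked disc cannot contain any $\delta_i$, since every face of $\cB$ adjacent to $\delta_i$ has the unmarked circle $B_i$ (minus $q_i$) on its boundary.

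Your plan for the second assertion also has a flaw. Triangulating the enlarged surface $(\Sigma,\marked\sqcup Q)$ and then ``applying the first assertion iteratively to remove the auxiliary arcs'' is not available: arcs ending at points of $Q$ are not arcs of $(\Sigma,\marked)$, so the first assertion says nothing about them, and deleting them is not a Morita equivalence on the enlarged surface — after removing $\delta_i$ the complement acquires an annulus around the now-marked circle $B_i$, and indeed $\H^0(\Tw\cF_\cA)$ is in general a \emph{proper} subcategory of the Fukaya category of the enlarged surface (this is the whole point of Definition \ref{DefinitionExtendedFukayaCategories}). Your flip zig-zag therefore only relates the two big Fukaya categories, and nothing ensures those equivalences restrict to the subcategories generated by $\cA$ and $\cA'$. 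The paper instead completes $\cA$ and $\cA'$ to maximal arc systems of $(\Sigma,\marked)$ itself and uses flip-connectivity of triangulations of $(\Sigma,\marked)$ \cite{Hatcher}, each flip being a removal followed by an addition, both covered by the first assertion (together with the remark that regradings do not change the Morita class); you should argue the second assertion this way.
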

\begin{proof}
Let $\cA$ be a full arc system on a graded marked surface $(\Sigma, \marked, \eta)$. Denote by $B_1, \dots, B_m \subseteq \partial \Sigma$ the boundary components with $B_i \cap \marked = \emptyset$ and for each $i \in [1,m]$,  let $q_i \in B_i$. Recall that the arc system $\cA$ cuts the surface $\Sigma$ into discs and annuli. The boundary of each annulus is a union of one of the $B_i$ and  a collection of arcs from $\cA$. Let us denote by $U_i$ the unique annulus  whose boundary contains $B_i$.

We choose a full graded arc system $\cB \supseteq \cA$ of $(\Sigma, \marked \sqcup \{q_i\}_{i \in [1,m]}, \eta)$  as follows: for each $i \in [1,m]$ let $\delta_i\in \cB\backslash \cA$ be an arc connecting $q_i\in B_i$ with a marked point from $\marked$ which lies in $\partial U_i$. This clearly produces a full arc system of  $(\Sigma, \marked \sqcup \{q_i\}_{i \in [1,m]}, \eta)$ since each annulus is cut into a disc along some $\delta_i$. The $A_{\infty}$-category $\mathcal{F}_{\cA}$ is  the full $A_{\infty}$-subcategory of $\mathcal{F}_{\cB}$ which is spanned by the objects $\cA \subseteq \cB$.
Let $\gamma \in \cA$ be such that $\cA\backslash \{\gamma\}$ is full. Then, $\cB\backslash\{\gamma\}$ is  full as well. The disc sequence which is used  to construct the Morita equivalence $\cF_{\cB\backslash\{\gamma\}}\rightarrow \cF_\cB$ (see Remark \ref{RemarkSketchHKKProof} \eqref{ListPropertiesFukayacategories3})   does not contain the objects $X_{\delta_i}$ since the corresponding marked disc does not contain the arcs $\delta_i$. Thus, the subcategories generated by the objects $\{X_\delta\}_{\delta\in \cA}$ and by the objects $\{X_\delta\}_{\delta\in \cA\backslash\{\gamma\}}$ in $\H^0\Tw(\cF_\cB)$ coincide which shows  that the inclusion $\cF_{\cA\backslash\{\gamma\}}\rightarrow \cF_\cA$ is a Morita equivalence. Note that if $\cA$ and $\cA'$ differ only by their gradings, we also get a Morita equivalence $\cF_{\cA'}\rightarrow \cF_\cA$.

Finally, let us consider any two full arc systems $\cA$, $\cA'$ on $(\Sigma,\marked)$. Both systems can be completed to triangulations of $(\Sigma,\marked)$ without changing their Morita equivalence classes. As any two triangulations of $(\Sigma,\marked)$ are connected by a sequence of edge flips (e.g.\ see \cite{Hatcher}) there exists a zig-zag of Morita equivalences, connecting $\cA$ and $\cA'$ up to a change of grading.
\end{proof}

\section{Brauer graph categories and trivial extensions of Fukaya categories}\label{SectionBrauerGraphCategories}

\noindent In this section we study trivial extensions of Fukaya categories and their ``coordinate-free'' 
counterparts which we call Brauer graph categories.

\subsection{Brauer graph categories}

\begin{definition}\label{DefinitionAdmissible}Let $\Sigma$ be a punctured surface.
A full arc system $\mathcal{A}$ on $\Sigma$ is \textbf{admissible} if for every puncture $p$, there exists an embedded path $c_p:[0,1] \rightarrow \left(\Sigma \setminus |\mathcal{A}|\right) \cup \{p\}$  such that $c_p(0)=p$ and $c_p(1) \in \partial \Sigma$.
\end{definition}

\noindent If $p$ is a puncture, a path $c_p$ as in Definition \ref{DefinitionAdmissible} is called a \textbf{cutting path} of $p$. One considers two cutting paths of $p$ as \textbf{homotopic} if there exists a homotopy between them all of whose intermediate paths are cutting paths of $p$. In other words, homotopies are allowed to move the end point of a cutting path freely within $\partial \Sigma$. In general there may exist many non-homotopic cutting paths for a puncture.

\begin{definition}\label{DefCP}
	Let $\mathcal{A}$ be an admissible arc system. A \textbf{cut} of $\mathcal{A}$ is a collection $\mathsf{C}=(c_p)_{p \in \mathscr{P}}$ of pairwise disjoint paths such that for every $p \in \mathscr{P}$ the path $c_p$ is a cutting path of $p$. The pair $(\cA, \mathsf{C})$ is called a \textbf{cutting pair}.
\end{definition}

\noindent We recall that any arc system $\cA$ can be naturally regarded as a ribbon graph $\Gamma=\Gamma_{\cA}$ whose vertices are punctures and whose edges are the arcs of $\mathcal{A}$. Let $p$ be a puncture, or equivalently, a vertex of $\Gamma$. Every cut $\sC$ of $\mathcal{A}$ determines an arrow $\alpha_p^\sC$ in $Q_{\Gamma}$ which corresponds to the unique pair of half-edges $(h,h^+) \in \BH{\Gamma}^2_p$ such that in a small neighbourhood of $p$ the curve $c_p$ belongs to the sector which lies between $h$ and $h^+$. For any path $\alpha$ of $\Bbbk Q_\Gamma$, we write $\alpha_p^\sC\in \alpha$, if $\alpha$ decomposes as $\alpha=u \alpha_p^\sC v$ for some paths $u$ and $v$. 

\begin{notation}
Let $B_{\Gamma}$ be a modified Brauer graph algebra and let $\gamma=\alpha \pi(\alpha)\cdots\pi^i(\alpha)$ be a non-trivial path. We will denote by $\gamma^*$ the path of $B_{\Gamma}$ which complements $\gamma$ to a maximal non-zero path, that is $\gamma\gamma^*=C_\alpha^{\seq(C_\alpha)}$.
\end{notation}

\noindent Note that  for any path $\gamma$, $\gamma^{\ast}\gamma=C_{\beta}^{\seq(C_\beta)}$ for some arrow $\beta$. If $\gamma=\alpha \pi(\alpha)\dots\pi^i(\alpha)$, then $\beta=\pi^{i+1}(\alpha)$.

\begin{definition}\label{DefinitionBrauerGraphCategory} Let $\cA$ be a graded arc system on a graded punctured surface $(\Sigma, \punct, \eta)$ with a line field of ribbon type. Let $\seq$ be a multiplicity function on $\punct$. The \textbf{Brauer graph category} $\mathbb{B}=\mathbb{B}(\cA, \seq)$ is the following $A_{\infty}$-category.
\begin{itemize}
    \item \textbf{Objects:} The set $\{X_\gamma\}_{\gamma\in \cA}$ is the set of objects of $\mathbb{B}$.
    
    \item \textbf{Morphisms:} Denote by $B_{\mathcal{A}}$ the modified Brauer graph algebra of $(\Gamma_{\mathcal{A}}, \seq, \omega)$, where $\omega(\gamma)\coloneqq \omega_\eta(\gamma)$ for all $\gamma\in \cA=\BE{\Gamma_{\cA}}$. An arc $\gamma$ corresponds to a vertex of the quiver $Q_{\Gamma_\cA}$. Given graded arcs $\gamma, \delta \in \mathcal{A}$, the set of equivalence classes of paths from the vertex $\gamma$ to the vertex $\delta$ in $B_{\mathcal{A}}$ is a $\Bbbk$-basis of $\Hom_{\mathbb{B}}(X_\gamma, X_\delta)$.
    
    \item \textbf{Gradings:} Every path is a homogeneous morphism whose degree equals the sum of the degrees of its arrows. An arrow $a \in \Hom_{\mathbb{B}}(X_\gamma, X_\delta)$ which corresponds to an intersection $p \in \gamma \oInt \delta$ at a puncture has degree $|a|\coloneqq \deg(p)$. The degree of the loop corresponding to a pair of half edges $(h,h)$ at a vertex with valency $1$ is $0$.
    
    \item \textbf{Composition:}  For classes of paths  $a \in \Hom_{\mathbb{B}}(X,Y)$, $b \in \Hom_{\mathbb{B}}(Y,Z)$ set  $$\mu_2(b,a)\coloneqq(-1)^{|a|}b a, $$
    where $ba$ denotes the associated product in $B_{\cA}$.
    
    \item \textbf{Higher operations:} Let $a_n,\dots,a_1$ be a disc sequence (recall that $a_i$ correspond to oriented intersections $p_i\in \delta_i \oInt \delta_{i+1}$), let $b$ be a path, then 

       \begin{equation}\label{Mu1}\mu(ba_n,\dots,a_1)=b \mbox{, for } ba_n\neq 0;\end{equation}
         \begin{equation}\label{Mu2}\mu(a_n,\dots,a_1b)=(-1)^{|b|}b \mbox{, for } a_1b\neq 0;\end{equation}
         \begin{equation}\label{HigherOps5}
         \begin{aligned} \mu(a_n,\dots, a_{r+1}, a_r(ba_r)^*,ba_r,a_{r-1},\dots, a_2)=(-1)^{\circ}a_1^* \mbox{, for } ba_r\neq 0,\\
         \text{where }\circ=|a_1|+|a_2|+\cdots+ |a_{r-1}|+|ba_r|+\omega_\eta(\delta_2)+\cdots+\omega_\eta(\delta_r).
         \end{aligned}
         \end{equation}
         \item[] Higher operations vanish for all sequences of elements which are not of the form above.
\end{itemize}
\end{definition}
\noindent Note that partially wrapped Fukaya categories and Brauer graph categories do not change if we replace the arcs in the underlying arc system by homotopic arcs with the corresponding grading. For Brauer graph categories this follows from Remark \ref{RemarkWindingNumberWellDefined}.
\begin{lem}
Let $\cA$ be an $\eta$-graded arc system on a graded punctured surface $(\Sigma, \punct, \eta)$ of ribbon type. Let $\seq$ be a multiplicity function on $\punct$. The Brauer graph category $\mathbb{B}=\mathbb{B}(\cA, \seq)$ is well-defined.
\end{lem}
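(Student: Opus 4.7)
The plan is to verify that $\mathbb{B}(\cA,\seq)$ satisfies the three requirements for being a strictly unital $A_{\infty}$-category: that the morphism spaces are $\mathbb{Z}$-graded vector spaces, that the proposed structure maps $\mu^n$ are unambiguously defined on their stated inputs, and that the $A_\infty$-relations \eqref{EquationAInfinityConstraint} hold.

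For the grading, the only non-trivial check concerns the socle relations $C_\alpha^{\seq(C_\alpha)} = (-1)^{\omega(\bar h)}C_\beta^{\seq(C_\beta)}$ of the modified Brauer graph algebra $B_\cA$, since both sides must be assigned the same integer degree in $\Hom_{\mathbb{B}}(X_{\bar h}, X_{\bar h})$. Computing each side as the sum of degrees of its constituent arrows and applying Lemma \ref{LemmaDegreeCondition} to the cyclic sequence of oriented intersections winding around the two punctures at the endpoints of $\bar h$, combined with the vanishing $\omega_\eta(p)=0$ of winding numbers around punctures for a ribbon-type line field (Example \ref{ExampleWindingNumbersBoundaries}), produces the required equality. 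The well-definedness of the higher-operation formulas is immediate: the conditions $ba_r\neq 0$ (and the analogous ones) guarantee that $ba_r$ extends to a maximal non-zero cycle $C^{\seq}$ in $B_\cA$, making both $(ba_r)^*$ and $a_r(ba_r)^*$ non-zero elements of $B_\cA$.

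For the $A_\infty$-axioms, the verification at each arity $n$ is a finite case analysis, since $\mu^n$ vanishes outside the three families \eqref{Mu1}, \eqref{Mu2} and \eqref{HigherOps5}, and $\mu^1=0$. Associativity of $\mu^2$ reduces to associativity of multiplication in $B_\cA$ once the Koszul sign $(-1)^{|a|}$ of Definition \ref{DefinitionBrauerGraphCategory} is tracked. For higher $n$, the cancellations come in two flavours: those of Fukaya-category type, where a disc sequence combines with an adjacent composition of the form $\mu^2(\mu^n,-)$ or $\mu^n(-,\mu^2,-)$, mirroring the arguments sketched in Remark \ref{RemarkSketchHKKProof}; and new cancellations in which operations of type \eqref{HigherOps5} interact with socle morphisms through $\mu^2$ or with adjoining disc sequences. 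The sign $(-1)^{\omega_\eta(\delta_2)+\cdots+\omega_\eta(\delta_r)}$ in \eqref{HigherOps5} is inserted precisely so that the socle-relation sign $(-1)^{\omega(\bar h)}$ of $B_\cA$ is absorbed whenever one side of an $A_\infty$-relation passes through the socle, which is exactly why the modified Brauer graph algebra, rather than its ordinary counterpart, enters the definition.

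The main obstacle is the sign bookkeeping in the case analysis for \eqref{HigherOps5}: three independent sources of signs---the Koszul sign in $\mu^2$, the winding-number signs from the graded surface, and the socle-relation sign in $B_\cA$---must cancel simultaneously on every input of an $A_\infty$-relation. The topological input that makes this possible is the degree constraint from Lemma \ref{LemmaDegreeCondition} together with Corollaries \ref{CorollaryDegreeCondition} and \ref{CorollaryWindingEdges}, which forces the degrees of elements in a disc sequence to satisfy $|a_1|+\cdots+|a_n|=n-2$ and relates degrees of adjacent arrows in a socle cycle to winding numbers of the participating arcs.
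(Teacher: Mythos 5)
There is a genuine gap, and it sits exactly where you declare things ``immediate.'' In the paper, the whole content of this lemma is the check that the higher operations are well-defined on the morphism spaces, whose basis elements are \emph{equivalence classes} of paths in the modified Brauer graph algebra: a maximal cycle admits two presentations, $C_\alpha^{\seq(C_\alpha)}=(-1)^{\omega_\eta(\gamma)}C_\beta^{\seq(C_\beta)}$, and when such an element occurs as an input (e.g.\ $ba_n=a_n^*a_n=(-1)^{\omega_\eta(\delta_n)}a_{n-1}a_{n-1}^*$ in \eqref{Mu1}, $a_1b=a_1a_1^*=(-1)^{\omega_\eta(\delta_2)}a_2^*a_2$ in \eqref{Mu2}, or $ba_r$ and $a_r(ba_r)^*$ in \eqref{HigherOps5}), the same tuple matches two of the defining patterns simultaneously, so one must verify that the two formulas return the same answer, signs included. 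This is not automatic: it forces a case-by-case comparison in which the sign $(-1)^{\omega_\eta(\delta_2)+\cdots+\omega_\eta(\delta_r)}$ of \eqref{HigherOps5} must cancel against the socle sign, and the cancellation rests on the parity identity $\sum_i|a_i|\equiv\sum_i\omega_\eta(\delta_i)\bmod 2$ of Corollary \ref{CorollaryWindingEdges} (plus $|a^*|=-|a|$, $|a_r^*a_r|=0$). Your proposal only checks that the symbols $(ba_r)^*$, $a_1^*$ exist when $ba_r\neq 0$, which is the easy half; the consistency of the formulas under the identification of socle presentations is never addressed, yet it is precisely what ``well-defined'' means here.

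Conversely, the part you spend most effort planning --- a direct verification of the $A_\infty$-relations by case analysis --- is not what this lemma claims, and as written it is a plan rather than a proof: ``the cancellations come in two flavours'' and ``the sign is inserted precisely so that the socle sign is absorbed'' are statements of intent, not computations, and the sign bookkeeping you acknowledge as the main obstacle is exactly what would have to be carried out. The paper deliberately avoids this route: after establishing well-definedness here, it obtains the $A_\infty$-structure indirectly, for $\seq=\mathbf{1}$ by identifying $\mathbb{B}(\cA,\mathbf{1})$ with the trivial extension $\triv(\cF_{\cA_{\sC}})$ (Proposition \ref{PropositionTrivialExtensionIsomorphism}, itself a long sign computation relying on Lemma \ref{LemmaDeltaP}), and for general $\seq$ via the orbit-category construction over a branched cover (Corollary \ref{CorollaryBGACategoryHigherMultWellDefined}). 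If you want your direct approach to stand, you must actually perform the arity-by-arity sign verification you describe; otherwise, restrict the lemma's proof to the consistency checks above and defer the $A_\infty$-axioms as the paper does.
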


\begin{proof}
Since $\eta$ is of ribbon type, the ideal of relations $I'_\Gamma$ is homogeneous and the degrees are well defined. The elements $a_1^*$ and $(ba_r)^*$ are defined, since $a_1$ and $a_r$ are non-trivial paths.  The only thing we need to check is whether the higher operations agree on elements which are identified under the relations in the ideal $I'_\Gamma$. Let $c_n,\dots,c_1$ be a sequence of composable elements of the form appearing in \eqref{Mu1}, \eqref{Mu2} or \eqref{HigherOps5} and let us assume that one of the elements $c_i$ is of the form $C_\alpha^{\seq(C_\alpha)}$ for some arrow $\alpha$. In the case of \eqref{Mu1} the only element that can be of this form is $ba_n$ since the arc system $\cA$ is admissible. That is, $ba_n=a_n^*a_n=(-1)^{\omega_\eta(\delta_n)}a_{n-1}a_{n-1}^*$, where $\delta_i$ denotes the arc corresponding to the domain of $a_i$ and the codomain of $a_{i-1}$. Let us compare the expressions $\mu(c_n,\dots,c_1)$ for the two different ways of interpreting $c_n$. We have
$$\mu(a_n^*a_n,\dots,a_1)=a_n^* \text{ and}$$
$$(-1)^{\omega_\eta(\delta_n)}\mu(a_{n-1}a_{n-1}^*,a_{n-1}\dots,a_1)=(-1)^{\omega_\eta(\delta_n)+\circ}a_n^*,$$
where $\circ=|a_n|+|a_1|+\cdots+ |a_{n-1}|+\omega_\eta(\delta_1)+\cdots+\omega_\eta(\delta_{n-1})$. The signs agree because $|a_1|+\cdots+ |a_{n}| \equiv \omega_\eta(\delta_1)+\cdots+\omega_\eta(\delta_{n}) \mod 2$ by Corollary \ref{CorollaryWindingEdges}. 

\noindent In the case of \eqref{Mu2} we get:  the only element that can have the form $C_\alpha^{\seq(C_\alpha)}$ is $a_1b$. That is, $a_1b=a_1a_1^*=(-1)^{\omega_\eta(\delta_2)}a_{2}^*a_{2}$. A comparison yields
\begin{align*}
    \mu(a_n,\dots,a_1a_1^*) & = (-1)^{|a_1^*|}a_1^*\; \text{   and}\\
     (-1)^{\omega_\eta(\delta_2)}\mu(a_n,\dots,a_2,a_2^*a_2) & = (-1)^{\omega_\eta(\delta_2) +|a_1|+|a_2^*a_2|+\omega_\eta(\delta_2)}a_1^*.
\end{align*}
\noindent The second formula follows from \eqref{HigherOps5} with $a_2^{\ast} a_2$ in the role of $ba_r$. Since $|a_i^{\ast}|=-|a_i|$ the signs agree.

\noindent Let us check the case of \eqref{HigherOps5}. The only elements that can have the form $C_\alpha^{\seq(C_\alpha)}$ are $a_r(ba_r)^*$ and $ba_r$. In the first case $b$ is the trivial path and $a_r(ba_r)^*=a_ra_r^*=(-1)^{\omega_\eta(\delta_{r+1})}a_{r+1}^*a_{r+1}$. In the second case $ba_r=a_{r}^*a_r=(-1)^{\omega_\eta(\delta_{r})}a_{r-1}a_{r-1}^*$. These two cases coincide up to a shift in indices. Let us compare the higher operations for the case $ba_r=a_{r}^*a_r=(-1)^{\omega_\eta(\delta_{r})}a_{r-1}a_{r-1}^*$:
$$ \mu(a_n,\dots, a_{r+1}, a_r,a_{r}^*a_r,a_{r-1},\dots, a_2)=(-1)^{\circ}a_1^* ,$$ $$
         \text{where }\circ=|a_1|+|a_2|+\cdots+ |a_{r-1}|+|a_r^*a_r|+\omega_\eta(\delta_2)+\cdots+\omega_\eta(\delta_r);
         $$
$$ (-1)^{\omega_\eta(\delta_{r})} \mu(a_n,\dots, a_r,a_{r-1}a_{r-1}^*,a_{r-1},\dots, a_2)=(-1)^{\omega_\eta(\delta_{r})+\circ}a_1^* ,$$ $$
         \text{where }\circ=|a_1|+|a_2|+\cdots+ |a_{r-1}|+\omega_\eta(\delta_2)+\cdots+\omega_\eta(\delta_{r-1}).
         $$         
The signs agree, since $|a_r^*a_r|=0$. The cases $r=2$ or $r-1=n$ were already dealt with when discussing \eqref{Mu1} and \eqref{Mu2}.         
\end{proof}

\noindent It is clear that $\bB=\mathbb{B}(\mathcal{A}, \seq)$ satisfies \eqref{EquationAInfinityConstraint} for $n=3$ and hence admits the structure of a graded $\Bbbk$-linear category with composition given by $g \circ f \coloneqq (-1)^{|f|}\mu^2_{\bB}(g,f)$ due to the vanishing of $\mu^1_{\bB}$. We delay the proof that $\mathbb{B}(\mathcal{A}, \mathbf{1})$ is an $A_{\infty}$-category to Section \ref{SectionTrivialExtensionsFukaya} where we show  that $\mathbb{B}(\mathcal{A},  \mathbf{1})$ is the trivial extension of a related Fukaya category which we construct by means of a cut. This can be seen as an $A_\infty$-counterpart of a theorem by Schroll \cite[Theorem 1.2.]{SchrollTrivialExtension} stating that a trivial extension of a gentle algebra is isomorphic to a Brauer graph algebra.
For a general multiplicity function $\seq$, we will see in Corollary \ref{CorollaryBGACategoryHigherMultWellDefined} that $\mathbb{B}(\mathcal{A},  \seq)$ arises as an orbit category of $\mathbb{B}(\mathcal{A}^{\seq},  \mathbf{1})$, where $\mathcal{A}^{\seq}$ is an arc system on a suitable branched cover of $\Sigma$ with branch locus $\mathscr{P}$.\medskip

\noindent Let $(\Gamma, \seq)$ be a Brauer graph and let $(\Sigma_{\Gamma}, \eta_{\Gamma})$ denote the associated graded surface. Since each arc $\gamma\in E(\Gamma)$ is transverse to the line field, we can grade $\gamma$ by taking the clockwise embedded (that is the ``shortest'') path from $\gamma^* \eta_{\Gamma}(t)$ to $\dot\gamma(t)$ for each $t\in(0,1)$. This provides a homotopy from $\gamma^* \eta_{\Gamma}$ to $\dot\gamma$ and defines a grading.\medskip

\noindent The next proposition shows that Brauer graph categories are a generalization of Brauer graph algebras.

\begin{prp}\label{PropositionBGAIsBrauerGraphCategory}Let $(\Gamma, \seq)$ be a Brauer graph regarded as an $\eta_{\Gamma}$-graded arc system on its corresponding graded surface $(\Sigma_{\Gamma}, \eta_{\Gamma})$ as explained above. Then $\mathbb{B}=\bB(\Gamma, \seq)$ is isomorphic to the category of indecomposable projective modules of $B=B_{(\Gamma, \seq)}$ as defined in Remark \ref{RemarkTwistedComplexesForFiniteDimensionalAlgebras}. In particular, $\H^0(\Tw \mathbb{B}) \cong \Kb{B}$.
\end{prp}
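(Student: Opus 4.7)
The goal is to show that $\mathbb{B}=\mathbb{B}(\Gamma,\seq)$ coincides, as a strictly unital $A_{\infty}$-category, with the $A_{\infty}$-category $\mathbb{A}$ of indecomposable projective $B$-modules described in Remark \ref{RemarkTwistedComplexesForFiniteDimensionalAlgebras}. Since $\mathbb{A}$ has morphism spaces concentrated in degree $0$ and $\mu^{k}_{\mathbb{A}}=0$ for $k\neq 2$, my plan is to match objects and morphism spaces, show that every morphism of $\mathbb{B}$ lives in degree $0$, and verify that $\mu^{2}$ recovers the algebra multiplication of $B$ while $\mu^{n}$ vanishes for $n\neq 2$.

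I would first show that the modified Brauer graph algebra $B_{\cA}$ used to define morphism spaces of $\mathbb{B}$ agrees with the ordinary Brauer graph algebra $B$. The crucial point is that $\omega_{\eta_{\Gamma}}(\gamma)=0$ for every edge $\gamma\in\BE{\Gamma}$. This follows from Example \ref{ExampleLineFieldRibbonGraph}, where $\eta_{\Gamma}$ is constructed so as to be everywhere transverse to $\Gamma$: for such $\gamma$ the lift $\dot\gamma$ never meets the section $\eta(\Sigma_{\reg})\subset\mathbb{P}(T\Sigma)$, so the signed intersection number defining the winding number is zero. With $\omega\equiv 0$ the signs in the defining relations of $B_{\cA}$ become trivial, and the paths of $B$ provide a common $\Bbbk$-basis for $\Hom_{\mathbb{B}}(X_{\gamma},X_{\delta})$ and for the corresponding morphism space in $\mathbb{A}$.

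Next I would verify that every arrow $\alpha_{h}$ of $\mathbb{B}$ has degree $0$. In local polar coordinates $(r,\theta)$ around a vertex $v$ the line field $\eta_{\Gamma}$ is spanned by $\partial_{\theta}$, and an edge $\overline{h}$ emanating from $v$ at angle $\theta_{h}$ has $\dot{\overline{h}}=\partial_{r}$; the prescribed grading is therefore a clockwise quarter-turn from $\eta=\theta_{h}+\tfrac{\pi}{2}$ to $\dot{\overline{h}}=\theta_{h}$ inside $\mathbb{P}(T\Sigma)\cong\mathbb{R}/\pi\mathbb{Z}$. To compute $\deg(\alpha_{h})$ by the recipe of Section \ref{SectionDegreeIntersection}, I would take the auxiliary curve $u$ to be a counter-clockwise circular arc of radius $\epsilon$ lying in the sector between $\overline{h}$ and $\overline{h^{+}}$; then $\dot{u}\equiv\eta$ along $u$, so the constant homotopy is a canonical grading. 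Tracing the loops $\mathfrak{g}_{\overline{h}}\ast\varepsilon\ast\overline{\mathfrak{f}}$ and $\mathfrak{g}_{\overline{h^{+}}}\ast\varepsilon\ast\overline{\mathfrak{f}}$ in the respective fibres, the clockwise contribution of the grading is cancelled by the counter-clockwise path $\varepsilon$, giving $\deg(q_{1})=\deg(q_{2})=0$ and hence $\deg(\alpha_{h})=\deg(q_{1})-\deg(q_{2})=0$. The valency-$1$ loop has degree $0$ by the convention in Definition \ref{DefinitionBrauerGraphCategory}. Consequently every path in $\mathbb{B}$ sits in degree $0$, and $\mu^{2}_{\mathbb{B}}(b,a)=(-1)^{|a|}ba=ba$ matches multiplication in $B$.

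It remains to show $\mu^{n}_{\mathbb{B}}=0$ for $n\neq 2$. The case $n=1$ is forced since $\mathbb{B}$ has no morphisms in degree $1$. For $n\geq 3$, the higher operations \eqref{Mu1}, \eqref{Mu2}, \eqref{HigherOps5} of Definition \ref{DefinitionBrauerGraphCategory} are only prescribed on disc sequences, and I would argue that $\Sigma_{\Gamma}$ with arc system $\BE{\Gamma}$ admits none. Indeed, by the filling property of the embedding $\Gamma\hookrightarrow\Sigma_{\Gamma}$, the complement $\Sigma_{\Gamma}\setminus\Gamma$ is a disjoint union of half-open annuli, so no marked disc has interior avoiding $\punct$; dually, any formal disc sequence running once around a vertex $v$ would combine Lemma \ref{LemmaDegreeCondition} with $\omega_{\eta_{\Gamma}}(v)=0$ from Example \ref{ExampleWindingNumbersBoundaries} to yield $-\mathrm{val}(v)=0$, a contradiction. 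Putting the three steps together produces a strict $A_{\infty}$-isomorphism $\mathbb{B}\xrightarrow{\sim}\mathbb{A}$ sending $X_{\gamma}\mapsto P_{\gamma}$ and identifying paths, and the second assertion $\H^{0}(\Tw\mathbb{B})\simeq\Kb{B}$ then follows from Remark \ref{RemarkTwistedComplexesForFiniteDimensionalAlgebras}. The main obstacle is the degree computation of the third paragraph: it is precisely the cancellation of the clockwise grading quarter-turn by the counter-clockwise test path that collapses the a priori graded $A_{\infty}$-structure of $\mathbb{B}$ onto the ungraded algebra $B$.
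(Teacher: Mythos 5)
Your proposal is correct and follows essentially the same route as the paper's proof: identify $X_\gamma$ with $P_\gamma$, use that edges are everywhere transverse to $\eta_\Gamma$ so their winding numbers vanish and the modified relations carry no signs, check that all intersection degrees at vertices are zero for the prescribed grading (which the paper simply declares immediate and you compute explicitly), and rule out marked discs, hence higher operations, via the filling property that every face of $\Gamma$ contains a boundary component. The only blemish is your redundant ``dual'' argument: the smoothing of a disc sequence is a contractible loop of winding number $\pm 2$, not a loop around a vertex, so the claimed contradiction $-\mathrm{val}(v)=0$ is not the right computation, but this does not matter since your primary argument already suffices.
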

\begin{proof}
We consider the usual splitting $B = \bigoplus_{\gamma \in \BE{\Gamma}} P_\gamma$ of $B$ into indecomposable projective $B$-modules which correspond to its vertices $\BE{\Gamma}=(Q_{\Gamma})_0$ and denote by $\bA$ the associated category of projective $B$-modules from Remark \ref{RemarkTwistedComplexesForFiniteDimensionalAlgebras}. The desired isomorphism $\bB \rightarrow \bA$  maps an object $X_{\gamma} \in \Ob{\mathbb{B}}$, where $\gamma \in \BE{\Gamma}=(Q_{\Gamma})_0$, to $P_\gamma$ and a morphism $X_\gamma \rightarrow X_\delta$ which corresponds to a path $\alpha$ in $Q_{\Gamma}$ which starts at $\gamma$ and ends at $\delta$ to the  induced morphism $P_\gamma \rightarrow P_\delta$. It is immediate from the choice of grading on $E(\Gamma)$ that the degrees of all intersections of arcs at vertices of $\Gamma$ are zero, so all morphism in $\bB$ are of degree zero. Since all faces of $\Gamma$ contain a boundary component, the are no marked discs on $\Sigma_\Gamma$, and hence no higher operations in $\mathbb{B}$. Finally, all edges $\gamma\in \BE\Gamma$ are nowhere parallel to $\eta_{\Gamma}$. Hence, their winding numbers vanish and so the relations in the definition of the Brauer graph category are not modified by signs. \end{proof}
\subsection{Multiplicity-free Brauer graph categories as trivial extensions of Fukaya categories}\label{SectionTrivialExtensionsFukaya}

\subsubsection{Arc systems and marked surfaces induced by cuts} \ \medskip

\begin{definition}Let $\mathcal{A}$ be an admissible arc system on a punctured surface $(\Sigma, \deco, \eta)$ and let $\sC$ be a cut of $\cA$. The \textbf{cut surface} of $\sC$ is the marked surface $\Sigma_\mathsf{C}=(\Sigma, \deco_\mathsf{C})$, where $\deco_\mathsf{C} \subseteq \partial \Sigma$ denotes the set of all end points of the paths $c_p$. 
\end{definition}

\noindent Every cut of an admissible arc system $\mathcal{A}$ induces an arc system  of its cut surface by dragging the punctures along the cutting paths and  deforming the arcs of $\mathcal{A}$ accordingly. The newly created arc collection is a full arc system of $\Sigma_\mathsf{C}$ which we denote by $\mathcal{A}_\mathsf{C}$. The arcs of $\cA$ and $\cA_\mathsf{C}$ are in bijection. Alternatively, and more conveniently, one may choose a simple path $\delta_B:S^1 \rightarrow \Sigma$ for every boundary component $B$ of $\Sigma$ which is homotopic to $B$ (considered as a path) and such that $\delta_B \cap |\cA|$ consists of all $p \in \deco$ with $c_p(1) \in B$. Then $\Sigma$ is homeomorphic to the subsurface $\Sigma' \subseteq \Sigma$ such that $\Sigma \setminus \Sigma'$ is a collection of annuli bounded by $\partial \Sigma$ and all the loops $\delta_B$. We may choose a  homeomorphism $\Sigma' \rightarrow \Sigma$ which maps $p$ to $c_p(1)$ and which extends the inclusion $\Sigma' \setminus T \hookrightarrow \Sigma$, where $T$ is a small collar neighbourhood of $\partial \Sigma'$ in $\Sigma'$. 
The homeomorphism allows us to deform any line field $\eta$ on $\Sigma$ ``along the cutting paths'' and induce a line field $\eta_{\mathsf{C}}$ on $\Sigma_{\mathsf{C}}$. In other words, we can think of the triple $(\Sigma_{\mathsf{C}}, \deco_C, \eta_{\mathsf{C}})$ as of $(\Sigma', \deco, \eta|_{\Sigma'})$. The construction ensures two useful properties of $\eta_{\mathsf{C}}$. First, there exists a canonical bijection between gradings of $\cA$ and gradings of $\cA_{\mathsf{C}}$ and second, if $p$ is an oriented intersection of arcs in $\cA$ which corresponds to an oriented intersection of arcs in $\cA_{\mathsf{C}}$ with their respective gradings, then the two intersections are of the same degree. Subsequently, whenever $\cA$ is graded we assume implicitly that $\cA_{\mathsf{C}}$ is equipped with the corresponding  grading under the above bijection.

\subsubsection{Isomorphisms between Brauer graph categories and trivial extensions} \ \medskip

\noindent As indicated before we prove the following relationship between Fukaya categories and Brauer graph categories.

\begin{prp}\label{PropositionTrivialExtensionIsomorphism}
Let $(\mathcal{A}, \mathsf{C})$ be a cutting pair on a punctured graded surface $(\Sigma, \eta)$ of ribbon type. Suppose that $\cA$ is $\eta$-graded. Then, there exists an isomorphism
\begin{displaymath}
\begin{tikzcd}[ampersand replacement=\&] \Phi:\triv\left(\mathcal{F}_{\mathcal{A}_{\mathsf{C}}}\right) \arrow{r}{\simeq} \& \mathbb{B}(\cA, \mathbf{1}) \end{tikzcd}\end{displaymath}
\noindent between the underlying graded $\Bbbk$-linear categories under which the $A_{\infty}$-operations in Definition \ref{DefinitionBrauerGraphCategory} correspond to the $A_{\infty}$-operations of $\triv(\mathcal{F}_{\cA_{\mathsf{C}}})$. In particular, $\mathbb{B}(\cA, \mathbf{1})$ is an $A_{\infty}$-category.
\end{prp}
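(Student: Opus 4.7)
The plan is to construct $\Phi$ through a bijection of natural bases, verify that it is a graded $\Bbbk$-linear isomorphism, and then compare the $A_{\infty}$-structures term by term.

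On objects, $\Phi$ uses the canonical identification $\cA \leftrightarrow \cA_{\mathsf{C}}$ obtained by dragging the punctures along the cutting paths. A $\Bbbk$-basis of $\Hom_{\mathbb{B}}(X_\gamma, X_\delta)$ is given by classes of paths in $B_{\cA}$ from $\gamma$ to $\delta$, and every non-trivial such path $w$ is a sub-path of a unique maximal cycle $C_\alpha$ at some puncture $p$, with complement $w^{\ast}$ satisfying $w w^{\ast} = C_\alpha$ (here $\seq = \mathbf{1}$ is essential). The arrow $\alpha_p^{\mathsf{C}}$ appears exactly once in $C_\alpha$, so it lies in exactly one of $w$ and $w^{\ast}$. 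Paths that avoid every $\alpha_p^{\mathsf{C}}$ are precisely those that do not cross any cutting path, and under the identification $\cA \leftrightarrow \cA_{\mathsf{C}}$ they correspond bijectively to the basis elements of $\Hom_{\mathcal{F}_{\cA_{\mathsf{C}}}}$: the trivial path $e_\gamma$ corresponds to $1_{X_\gamma}$, and a non-trivial such path $w$ to the oriented intersection of the corresponding arcs of $\cA_{\mathsf{C}}$ at the marked point obtained from the puncture $p$. I define $\Phi(b) \coloneqq w$ for such $b, w$, and $\Phi(b^{\vee}) \coloneqq v^{\ast}$ whenever $b^{\vee} \in \mathbb{D}\Hom_{\mathcal{F}_{\cA_{\mathsf{C}}}}(X_\delta, X_\gamma)$ is dual to a basis element $b$ corresponding to $v \in \Hom_{\mathbb{B}}(X_\delta, X_\gamma)$. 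Taking complements then gives a bijection between paths containing some $\alpha_p^{\mathsf{C}}$ and paths avoiding them (with reversed source and target), so $\Phi$ is a linear bijection on bases. Degree preservation on the $\mathcal{F}$-part follows from the compatibility between the gradings on $\cA$ and $\cA_{\mathsf{C}}$, and on the dual part it reduces to verifying $|C_\alpha| = 0$ for every maximal cycle, which follows from Lemma \ref{LemmaDegreeCondition} applied to the cyclic sequence of intersections defining $C_\alpha$ together with the ribbon-type condition on $\eta$.

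For the graded-linear structure one splits $\mu^{2}_{\triv(\mathcal{F}_{\cA_{\mathsf{C}}})}$ into four cases. For two non-dual factors, composition in $\mathcal{F}_{\cA_{\mathsf{C}}}$ concatenates intersections when the result is again an intersection at a marked point, which matches the product of the corresponding paths in $B_{\cA}$ so long as no $\alpha_p^{\mathsf{C}}$ is reached. A cross term $\mu^{2}(a, b^{\vee})$ or $\mu^{2}(b^{\vee}, a)$ is computed via the bimodule action on $\mathbb{D}\Hom_{\mathcal{F}}$; after applying the complement bijection it becomes a product in $B_{\cA}$ whose output is a path containing $\alpha_p^{\mathsf{C}}$, with the sign $(-1)^{\omega(\overline{h})}$ in the modified relation $C_\alpha - (-1)^{\omega(\overline{h})} C_\beta$ arising precisely from the bimodule action applied across the shared cycle at the two endpoints of $\overline{h}$. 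The double-dual composition vanishes on both sides.

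For the higher operations, marked discs on $\Sigma_{\mathsf{C}}$ correspond exactly to disc sequences whose interior does not meet any cutting path, giving the non-zero terms of the form \eqref{Mu1} and \eqref{Mu2} in $\mathbb{B}$. The operations of the trivial extension involving a single dual input are prescribed by \eqref{EquationFormulaTrivialExtension}: rotating the disc sequence and evaluating the dual functional on the result produces the output of \eqref{HigherOps5} once one identifies the rotated disc sequence with a portion of some cycle $C_\alpha$ at a puncture and recognizes the complement of $b a_r$ in that cycle as $(b a_r)^{\ast}$. The main obstacle is the sign compatibility: the Koszul-type sign $\dagger = \sum_{j=1}^{r}\dg{a_j}+\dg{b^{\vee}}$ of the trivial extension, the $(-1)^{|b|}$ signs appearing in Definition \ref{DefinitionBrauerGraphCategory}, and the winding-number correction $\omega_\eta(\delta_2) + \cdots + \omega_\eta(\delta_r)$ in \eqref{HigherOps5} share the same parity by Corollary \ref{CorollaryWindingEdges} applied to the sub-cycle traversed by the relevant sub-sequence. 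Once the signs are reconciled, invoking the vanishing of operations involving two dual inputs on both sides shows that $\Phi$ intertwines all $A_\infty$-operations, endowing $\mathbb{B}(\cA, \mathbf{1})$ with the $A_\infty$-structure of a trivial extension.
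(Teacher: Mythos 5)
There is a genuine gap, and it is exactly the point the paper flags as ``a more subtle issue'': your functor is the naive one, $\Phi(b^{\vee})\coloneqq v^{\ast}$ with no sign corrections on the dual basis, and this map does not intertwine the operations in general. A quick test: compare $\mu_{\triv(\cF)}(f_b,a_{n-1},\dots,a_1)=(-1)^{\dg{a_1}+\cdots+\dg{a_{n-1}}-1}f_{a_nb}$ (the case \eqref{HigherOps6'} of the trivial-extension formula \eqref{EquationFormulaTrivialExtension}) with its image under your $\Phi$, which by \eqref{Mu1} is $\mu_{\bB}(b^{\ast},a_{n-1},\dots,a_1)=(a_nb)^{\ast}$ since $b^{\ast}=(a_nb)^{\ast}a_n$. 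Using $\sum_i|a_i|=n-2$ for a disc sequence, the sign on the trivial-extension side is $(-1)^{|a_n|}$, so the two sides differ by $(-1)^{|a_n|}$, which is not $+1$ in general (degrees of intersections at punctures need not be even). The same kind of discrepancy appears in the comparison of \eqref{HigherOps5} with the rotated-functional terms: the quantities that must cancel are \emph{partial} sums of degrees and winding numbers along a segment of a cycle, together with the position of the cut, and Corollary \ref{CorollaryWindingEdges} only controls \emph{full} cycles of intersections, so your appeal to it does not reconcile the signs. A further symptom is that your $\Phi$ is not even well defined on the dual of an identity morphism: the ``complement'' of $e_\gamma$ is ambiguous between the two maximal cycles $C_\alpha$ and $C_\beta$ at $\gamma$, which are identified in the modified Brauer graph algebra only up to the sign $(-1)^{\omega_\eta(\gamma)}$.

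What is missing is the coherent rescaling of the dual basis: one must set $\Phi(f_a)=(-1)^{\Delta(a^{\ast})}a^{\ast}$ with $\Delta=\Delta_{\sC}+\Delta_p$, where $\Delta_{\sC}$ is read off from the cut as in \eqref{EquationDeltaCut} and the family $(\Delta_p)_{p\in\punct}$ is chosen so that the consistency relation \eqref{MagicFormulaDelpap} holds across every edge. The existence of such a family is not formal; it is Lemma \ref{LemmaDeltaP}, a global parity statement over the ribbon graph proved by building a $\mathbb{Z}_2$-valued potential on the vertices and checking, via Corollaries \ref{CorollaryDegreeCondition} and \ref{CorollaryWindingEdges}, that the defining expression vanishes on closed walks. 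With these corrections in place, \eqref{MagicFormulaDelpap} iterated around the marked disc is precisely what absorbs the leftover partial sums (this is the computation \eqref{DeltaBBB} in the paper) and also makes $\Phi(f_{e_\gamma})$ well defined. Your outline of the basis bijection, the degree check $|aa^{\ast}|=0$ from the ribbon-type hypothesis, and the identification of marked discs with disc sequences all match the paper, but without the $\Delta$-signs and Lemma \ref{LemmaDeltaP} the central verification collapses, so as written the argument does not establish the isomorphism.
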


\noindent Before we give a proof of the proposition, let us describe $\Phi$ in the easiest case of ordinary algebras. The general definition is similar but involves a coherent choice of signs  whose existence is a more subtle issue. Suppose that $\cA$ is a graded admissible formal arc system on a graded punctured surface $\Sigma$ such that  $\mathbb{B}=\mathbb{B}(\cA, \mathbf{1})$ is concentrated in degree zero and let $\mathsf{C}$ be a cut of $(\Sigma, \cA)$. Then, after identifying $\cA$ and $\cA_{\sC}$ canonically as sets, $\Phi$ maps $X_\gamma \in \Ob{\cF_ {\cA_{\sC}}}$ to $X_{\gamma} \in \Ob{\mathbb{B}}$ and a morphism associated to an oriented intersection to the corresponding path in $B_{\Gamma_{\cA}}$. If further $g$ is a morphism in $\mathcal{F}_{\cA_{\sC}}$ corresponding to a path $\beta$ of $B_{\Gamma_{\cA}}$ and $f_{\beta}$ denotes the functional dual to $g$ with respect to the basis in Definition \ref{DefinitionFukayaCategory}, then  $\Phi(f_{\beta})=\beta^{\ast}$. This is easily seen to define an isomorphism as all signs which arise from winding numbers of arcs and their gradings vanish. 

\begin{proof}[Proof of Proposition \ref{PropositionTrivialExtensionIsomorphism}]
Set $\mathcal{F}\coloneqq \mathcal{F}_{\mathcal{A}_{\mathsf{C}}}$, $\mathbb{T}\coloneqq \triv(\mathcal{F})$ and $\mathbb{B} \coloneqq \mathbb{B}(\mathcal{A}, \mathbf{1})$. We further denote by $B_{\cA}$ the modified Brauer graph algebra of $(\Gamma_{\cA}, \mathbf{1}, \omega)$, where $\omega(\gamma)=\omega_\eta(\gamma)$ for all $\gamma \in \BV{\Gamma_{\cA}}=\cA$.
The vector space $\Hom_{\mathcal{F}}(X_\gamma,X_\delta)$ has a basis consisting of paths $\gamma\rightarrow \delta$ corresponding to the oriented intersections and a trivial path in case $\gamma=\delta$. For a path $a:\gamma \rightarrow \delta$, we denote by $f_a \in \mathbb{D}\Hom_{\mathcal{F}}(X_\gamma,X_\delta)$ the corresponding element of the dual basis.  Elements of these two bases for appropriate $\Hom$-spaces in $\cF$ form a basis of $\Hom_\mathbb{T}(X_\gamma,X_\delta)$. We will make all the computations on the elements of this basis.\medskip

\noindent \textbf{Computation of $\mu_{\mathbb{T}}$:} Let us first express the multiplication in $\mathbb{T}$ in terms of the paths in $\cF$ and their duals. For any paths $a,b$ in $ \cF$ we have $\mu_2(a,b)=(-1)^{|b|}ab$. We recall that $\dg{a}=|a|-1$. For any paths  $a,c$ and $d$  such that $c=da$ we have:
\begin{equation}\label{FormulaMult1}
    \mu_2(a,f_c)=(-1)^{\dg{a}+\dg{f_c}}f_c(\mu_2(-,a))=(-1)^{\dg{a}+\dg{f_c}}(-1)^{|a|}f_d=(-1)^{|f_c|}f_d.
\end{equation}
Likewise, for paths $l,m$ and $b$ such that $l=bm$ we get: 
\begin{equation}\label{FormulaMult2}\mu_2(f_l,b)=(-1)^{\dg{f_l}+\dg{b}}f_l(\mu_2(b,-))=(-1)^{\dg{f_l}+\dg{b}}(-1)^{|m|}f_m=f_m.
\end{equation}
The last equality follows from $|f_l|+|b|+|m|=0$. Multiplication is zero on all other pairs of elements from the basis.

Next, we compute the higher operations in $\mathbb{T}$. If $a_1, \dots, a_n$ is a disc sequence and $b$ is an element in $\cF$ such that $a_1b$ is defined, then  $\mu_\cF(a_n,\dots,a_1b)$ agrees with $\mu_\mathbb{T}(a_n,\dots,a_1b)$ and the same holds for $\mu(ba_n,\dots,a_1)$. If $l > 2$ and $\mu_{\mathbb{T}}(u_l, \dots, u_1) \neq 0$ but the tuple $(u_l, \dots, u_1)$ is not of the above form, then one of its entries must be  a functional. We may assume that $(u_{m}, \dots, u_1)=(x_m, \dots, x_{i+1}, f, x_{i-1}, \dots, x_1)$ where $f$ is a functional and each $x_j$ is a path in $\cF$. In that case, $\mu_{\mathbb{T}}(x_m, \dots x_{i+1},f,x_{i-1},\dots, x_1)\neq 0$ if and only if there exists a morphism $x$ in $\cF$ such that $f(y)\neq 0$, where $y = \mu_\cF(x_{i-1},\dots, x_1,x,x_m, \dots x_{i+1})$. Thus, $x_{i+1}', \dots, x_m, x, x_1, \dots, x_{i-1}'$ must be a disc sequence such that either $x_{i-1}=bx_{i-1}'$ or $x_{i+1}=x_{i+1}'b$ for some $b$ and $y=\pm b$. Now $f=f_b$ and $\mu(x_m, \dots x_{i+1},f,x_{i-1},\dots, x_1)=(-1)^\circ f_x$, where $\circ$ is the sum of $\dg{x_1} + \dots + \dg{x_{m}} +\dg{f}$ and the sign which appears in the definition of $\mu_{\cF}$. Using that $|f_b|=-|b|$ and hence $(-1)^{|f_b|+|b|}=1$, we can describe the result of the higher multiplications in $\mathbb{T}$ as follows:

\begin{align}\label{HigherOps3'}\mu_{\mathbb{T}}(a_n,\dots, a_{r+1},f_b,ba_r,\dots, a_2) & =(-1)^{\dg{a_2}+\cdots+\dg{a_n}-1} \cdot f_{a_1}, & \mbox{where } ba_r\neq 0; \\
  \label{HigherOps4'}\mu_{\mathbb{T}}(a_n,\dots, a_{2},f_b)&=(-1)^{\dg{a_2}+\cdots+\dg{a_n}+\dg{f_b}} \cdot f_{ba_1}, & \mbox{where } ba_1\neq 0;\\
  \label{HigherOps5'}\mu_{\mathbb{T}}(a_n,\dots, a_{r}b,f_b,a_{r-1},\dots, a_2)&=(-1)^{\dg{a_2}+\cdots+\dg{a_n}+\dg{b}} \cdot f_{a_1}, & \mbox{where } a_{r}b\neq 0;\\
  \label{HigherOps6'}\mu_{\mathbb{T}}(f_b,a_{n-1},\dots, a_{1})&=(-1)^{\dg{a_1}+\cdots+\dg{a_{n-1}}-1} \cdot f_{a_nb}, & \mbox{where } a_nb\neq 0.\end{align}
 \smallskip
\noindent \textbf{Definition of a strict functor $\mathbb{T} \rightarrow \mathbb{B}$}: For a path $a=a_1\dots a_{l}\in Q_{\Gamma_\cA}$ which is a part of a cycle around a puncture $p$ set
\begin{equation}\label{EquationDeltaCut}
\Delta_\sC(a) \coloneqq \begin{cases} |a_{i+1}\dots a_{l}|, &  \text{if }\alpha_p^\sC= a_i; \\  0, & \text{if }\alpha_p^\sC\notin a.\end{cases}\end{equation}

\noindent In other words, $\Delta_\sC(a) \neq 0$ only if $a$ is a path in $\cF$. By Lemma \ref{LemmaDeltaP} below there is a choice of $\Delta_p\in \{0,1\}$ for each $p\in \punct$ such that the following holds: if $\gamma \in \cA=\left(Q_{\Gamma_{\cA}}\right)_0$ is an arc with end points $p, q \in \punct$ and $\alpha, \beta \in \left(Q_{\Gamma_{\cA}}\right)_1$ denotes the pair of distinct arrows which end at $\gamma$, then
\begin{equation}\label{MagicFormulaDelpap} \Delta_\sC(C_\alpha) + \Delta_p\equiv \Delta_\sC(C_\beta) + \Delta_q  + \omega_{\eta}(\gamma)\mod 2.\end{equation}

\noindent Set $\Delta_p(a):=\Delta_p$, if $\alpha_p^\sC\in a$ and $\Delta_p(a)\coloneqq 0$, otherwise. Define $$\Delta(a) \coloneqq \Delta_\sC(a)+\Delta_p(a).$$

Let us now define a strict $A_{\infty}$-functor $\Phi: \mathbb{T} \rightarrow \mathbb{B}$. Define
$\Phi(\gamma) \coloneqq \gamma$ for all $\gamma \in \mathcal{A}_{\mathsf{C}}$, where $\gamma$ denotes the corresponding arc in $\mathcal{A}$. 
For a path $a$ in $\cF$, set $\Phi(a) \coloneqq a$, and $\Phi(f_a)\coloneqq (-1)^{\Delta(a^*)} a^*$ in case $a$ is a non-trivial path when viewed as a path in the Brauer graph category. For the trivial path $e_\gamma$ at the vertex $\gamma$ set $\Phi(f_{e_\gamma})\coloneqq (-1)^{\Delta(C_\alpha)} C_\alpha=(-1)^{\Delta(C_\beta)} C_\beta$, where $\alpha,\beta$ are arrows ending at $e_\gamma$, \eqref{MagicFormulaDelpap} implies that $\Phi(f_{e_\gamma})$ is well-defined. The degrees of $a$ in $\cF$ and $\mathbb{B}$ agree. Moreover, $|aa^*|=0$ since $\eta$ is of ribbon type which implies $|a^*|=-|a|=|f_a|$. Consequently, $\Phi$ preserves the degrees of homogeneous elements. \medskip

\noindent \textbf{Comparison of compositions:} For paths $a,b$ in $\cF$ we have $\mu_\mathbb{B}(\Phi(a),\Phi (b))=\Phi\left(\mu_\mathbb{T}(a,b)\right)$ as $\mu_\mathbb{T}(a,b)$ is a morphism in $\cF$. Next, let $c,d$ and $a$ be paths such that $c=da$. Then $d^*=ac^*$ and hence $\Delta(c^*)=\Delta(d^*)$. By \eqref{FormulaMult1} we derive 
\begin{displaymath}
\begin{aligned}
\mu_\mathbb{B}\left(\Phi(a),\Phi(f_c)\right) & = (-1)^{\Delta(c^*)} \cdot \mu_\mathbb{B}(a,c^*) &  & = (-1)^{\Delta(c^*)+|c^*|} \cdot d^* \\ 
& = (-1)^{\Delta(c^*)+|c^*|+\Delta(d^*)} \cdot \Phi(f_d) & & = (-1)^{|f_c|} \cdot \Phi(f_d)\\ & =\Phi\left( \mu_\mathbb{T}(a,f_c)\right).\end{aligned}
\end{displaymath}

\noindent In case $d$ is a trivial path and $a=c$ we can use $C_\alpha=aa^*$ instead of $d^*$ to guarantee $\Delta(c^*)=\Delta(d^*)$ in the computation. In case $a,d$ and $c$ are all the same trivial path we can chose the same cycle $C_\alpha$ as both $d^*$ and $c^*$ in the computation.

\noindent Similarly, for $l,b$ and $m$ such that $l=bm$ we have $m^*=l^*b$. Hence, $\Delta_\sC(m^*)=|b|+\Delta_\sC(l^*)$ and thus $\Delta(m^*)=|b|+\Delta(l^*)$ which implies that
\begin{displaymath}
\begin{aligned}
\mu_\mathbb{B}\left(\Phi(f_l),\Phi( b)\right)=(-1)^{\Delta(l^*)} \cdot \mu_\mathbb{B}(l^*,b)
= (-1)^{\Delta(l^*)+|b|} \cdot m^* = \Phi( f_m)= \Phi\left(\mu_\mathbb{T}(f_l,b)\right).
\end{aligned}
\end{displaymath}

\noindent As before, in case $m$ is a trivial path and $l=b$ we can use $C_\alpha=l^*l$ instead of $m^*$ to guarantee $\Delta_\sC(m^*)=|b|+\Delta_\sC(l^*)$ in the computation. In case $l,b$ and $m$ are all the same trivial path we can chose the same cycle $C_\alpha$ as both $m^*$ and $l^*$ in the computation.

\noindent Since $\mu_\mathbb{T}(f_a,f_b)=0=\mu_\mathbb{B}(a^*,b^*)$ for all paths $a, b$ in $\cF$, we conclude that $\Phi$ commutes with $\mu^2$.\smallskip

\noindent \textbf{Comparison of higher operations:} Let $a_n, \dots, a_1$ be a disc sequence and let $\delta_1, \dots, \delta_n \in \cA$ denote the arcs such that for each $i \in \{1,\dots, n\}$, $a_i$ corresponds to an intersection $p_i \in \delta_i \oInt \delta_{i+1}$. We want to show that $\Phi$ commutes with $\mu^n$ on  sequences of the form 
\begin{displaymath}
a_n,\dots, a_{r+1},f_b,ba_r,\dots, a_2.
\end{displaymath}

\noindent If $ba_r\neq 0$, then $b^\ast=a_r(ba_r)^\ast$. In case $b$ is a trivial path we will use $b^\ast=a_r(ba_r)^\ast=a_ra_r^*$ as the definition of $b^*$. We compute
\begin{align*}
\mu_\mathbb{B}\left(\Phi(a_n),\dots,\Phi(f_b),\Phi(ba_r),\dots, \Phi(a_2)\right) & =(-1)^{\Delta(b^*)} \cdot  \mu_\mathbb{B}(a_n,\dots,a_r(ba_r)^*, ba_r,\dots,  a_2) \\ & =(-1)^{\Delta(b^*)+\circ} \cdot a_1^*\end{align*}

\noindent where $\circ=|a_1| + \cdots +|a_{r-1}| + |ba_r| + \sum_{i=2}^{r} \omega_{\eta}(\delta_i)$. On the other hand, \eqref{HigherOps3'} gives us
\begin{displaymath}
\Phi(\mu_{\mathbb{T}}(a_n, \dots, a_{r+1}, f_b, ba_r, \dots, a_2) = \Phi \left((-1)^{\dg{a_2}+\cdots+\dg{a_n}-1}f_{a_1}\right)=(-1)^\star \cdot a_1^*,
\end{displaymath}
\noindent where $\star \coloneqq \dg{a_2}+\cdots+\dg{a_n}-1+\Delta(a_1^*)$. As $\sum_{i=1}^n {\dg{a_i}} \equiv 0 \mod 2$ by Lemma \ref{LemmaDegreeCondition}, $\star \equiv  |a_1| + \Delta(a_1^{\ast}) \mod 2$. Our goal is to show that $(-1)^{\Delta(b^*) + \circ}=(-1)^{\star}$. In the subsequent computation we write $C_d$ for the cycle $C_{d_1}$ whenever $d=d_1\dots d_{l}$ and $\pi(d)$ for $\pi(d_l)$.  Due to \eqref{MagicFormulaDelpap} and setting $\delta_{n+1}\coloneqq \delta_1$ we find that
\begingroup
\addtolength{\jot}{0.3em}
\begin{align}\label{DeltaBBB} \Delta_{p_r}(b^*) & \equiv \Delta_{p_r}(a_r^*) \nonumber \\ & \equiv \Delta_\sC(C_{a_{r}})+\Delta_\sC(C_{\pi(a_{r+1})})+\Delta_{p_{r+1}}(a_{r+1}^*) + \omega_{\eta}(\delta_{r+1}) \nonumber \\ & \equiv \Delta_\sC(C_{a_{r}})+\Delta_\sC(C_{\pi(a_{r+1})})+\Delta_\sC(C_{a_{r+1}})+\Delta_\sC(C_{\pi(a_{r+2})})+\Delta_{p_{r+2}}(a_{r+2}^*) + \sum_{i=r+1}^{r+2} \omega_{\eta}(\delta_i) \nonumber \\ &  \vdotswithin{= } \nonumber \\ & \equiv \Delta_\sC(C_{a_r}) + \textstyle{\sum_{i=r+1}^{n}}|a_i| +  \Delta_\sC(C_{\pi(a_{1})})+\Delta_{p_1}(a_{1}^*) + \sum_{i=r+1}^{n+1}{\omega_{\eta}(\delta_i)} \mod 2. 
\end{align}
\endgroup

\noindent The last equality follows from $\Delta_\sC(C_{\pi(a_{i})})+\Delta_\sC(C_{a_{i}})\equiv |a_i|$. Since $\Delta_\sC(b^{\ast}) + |b| = \Delta_\sC(C_{a_r})$ and $\Delta_\sC(C_{a_1}) + |a_1| = \Delta_\sC(C_{\pi(a_1)})$, it follows that 

\begin{displaymath}
\Delta(b^*) \equiv \Delta_\sC(b^\ast) + \Delta_{p_r}(b^\ast) \equiv |b| + \Delta(a_1^{\ast}) + |a_1| + \sum_{i=r+1}^n |a_i| + \sum_{j=r+1}^{n+1} \omega_{\eta}(\delta_j) \mod 2.
\end{displaymath}
Finally, $\Delta(b^{\ast}) + \circ \equiv \star \mod 2$ then follows from $\sum_{i=1}^n |a_i| \equiv n \mod 2$  and $\sum_{i=1}^n \omega_{\eta}(\delta_i) \equiv n \mod 2$. This identifies \eqref{HigherOps5} and  \eqref{HigherOps3'}.

Next,  in order to compare the signs in  \eqref{HigherOps5} and  \eqref{HigherOps5'}, we need to compare the  signs $\Delta(b^{\ast}) + \circ$ and $\star$  in front of $a_1^{\ast}$. Let us consider a sequence
\begin{displaymath}
a_n,\dots, a_{r+1},a_rb,f_b,\dots, a_2.
\end{displaymath}
Here $b^*=(a_rb)^*a_r$ and we use this as the definition of $b^*$ in case $b$ is a trivial path. Now $\Delta(b^{\ast})=\Delta_\sC(C_{a_r})+|a_r|+\Delta_{p_r}(b^{\ast})$. 
$$\star= \dg{a_2}+\cdots+\dg{a_n} +\dg{b} +\Delta(a_1^{\ast})  \text{ and}$$
$$\circ=  |a_1| + \dots +|a_{r-1}|+|(a_rb)^*a_r|+\sum_{j=2}^{r} \omega_{\eta}(\delta_j).$$
Using \ref{DeltaBBB}, we get:
\begin{align*} \Delta(b^*)+ \circ +\star & \equiv \Delta_\sC(C_{a_r})+|a_r|+\Delta_\sC(C_{a_r}) + \sum_{i=r+1}^{n}|a_i| +  \Delta_\sC(C_{\pi(a_{1})})+\Delta_{p_1}(a_{1}^*) + \sum_{i=r+1}^{n+1}\omega_{\eta}(\delta_i)\\ & \phantom{\equiv} \; \;\phantom{\Delta_\sC(C_{a_r})}  +  \sum_{i=1}^{r-1}|a_i|+|(a_rb)^*a_r|+\sum_{j=2}^{r} \omega_{\eta}(\delta_j) +\sum_{i=2}^{n}\dg{a_i} +\dg{b}+ \Delta(a_1^{\ast})\\ & \equiv \sum_{i=1}^{n}|a_i| + \sum_{j=1}^{n} \omega_{\eta}(\delta_j) +\Delta_\sC(C_{\pi(a_{1})})+\Delta_{p_1}(a_{1}^*) +\Delta(a_{1}^*) +|b^*| +\sum_{i=2}^{n}\dg{a_i} +\dg{b} \\ & \equiv 0 \mod 2. \end{align*}

The last equality follows from  $\Delta_\sC(C_{\pi(a_1)})=|a_1|+\Delta_\sC(a_1^*)$.
Thus, $\Phi$ commutes with $\mu^n$ in this case as well.
 Next, we consider a sequence of morphisms of the form
\begin{displaymath}
a_n,\dots, a_2, f_b\end{displaymath}

\noindent with $ba_1\neq 0$. By \eqref{Mu2}, \eqref{HigherOps4'} and $b^*=a_1(ba_1)^*$ it is sufficient to show that  
\begin{displaymath}
\Delta(b^*)+|(ba_1)^*|=\dg{a_2} +\cdots+\dg{a_{n}}+\dg{f_b} +  \Delta((ba_1)^*),
\end{displaymath}

\noindent This follows from the equations $\Delta_\sC((ba_1)^*)= \Delta_\sC(b^*)$ and $\Delta_{p_1}((ba_1)^*)=\Delta_{p_1}(b^*)$ as well as $\sum_{i=1}^n\dg{a_i}=0\mod 2$.
Finally, we consider a sequence of the form
\begin{displaymath}
f_b,a_{n-1}\dots, a_1\end{displaymath}

\noindent  with $a_nb\neq 0$. By \eqref{Mu1}, \eqref{HigherOps6'}, and $b^*=(a_nb)^*a_n$, we have to prove
\begin{displaymath}
\Delta(b^*) =\dg{a_1} +\cdots+\dg{a_{n-1}}-1+ \Delta((a_nb)^*)\end{displaymath}
\noindent which follows again from $\sum_{i=1}^n\dg{a_i}=0\mod 2$ and the fact that $\Delta_\sC((a_nb)^*) + |a_n|= \Delta_\sC(b^*)$ as well as $\Delta_{p_n}((a_nb)^*)= \Delta_{p_n}(b^*)$.

Higher operations vanish in both $\mathbb{T}$ and $\mathbb{B}$ on all  sequences which are not of any of the above forms. Thus, $\Phi$ is an $A_\infty$-functor. It is clear that it induces isomorphisms between the corresponding morphism spaces in $\mathbb{T}$ and $\mathbb{B}$.
\end{proof}

\noindent The following lemma was used in the proof of Proposition \ref{PropositionTrivialExtensionIsomorphism}.

\begin{lem}\label{LemmaDeltaP}
Let $(\cA, \sC)$ be a graded cutting pair on a punctured graded surface $(\Sigma, \eta)$ of ribbon type. Then, there is a family $(\Delta_p)_{p \in \punct}$ with $\Delta_p \in \{0,1\}$ such that for all edges $\gamma \in \cA=\BE{\Gamma_{\cA}}$ the following holds: if $\alpha, \beta$ denotes the pair of distinct arrows  in $Q_{\Gamma_{\cA}}$ which end at $\gamma$ and $p$ and $q$ denote the end points of $\gamma$, then
\begin{equation}\label{EquationDeltaConstraint} \Delta_{\sC}(C_{\alpha}) + \Delta_{p} \equiv  \Delta_{\sC}(C_{\beta}) + \Delta_{q} +\omega_\eta(\gamma) \mod 2.\end{equation}
\end{lem}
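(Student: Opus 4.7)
The plan is to reinterpret \eqref{EquationDeltaConstraint} as an $\mathbb{F}_2$-cochain equation on $\Gamma_\cA$ and to verify its solvability through the associated cycle condition, where the cancellations between winding numbers and intersection degrees will come from Corollary~\ref{CorollaryWindingEdges}.

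Since $-1 \equiv 1 \pmod 2$, \eqref{EquationDeltaConstraint} is equivalent to $\Delta_p + \Delta_q \equiv c(\gamma) \pmod 2$ where
$$c(\gamma) \coloneqq \Delta_\sC(C_\alpha) + \Delta_\sC(C_\beta) + \omega_\eta(\gamma).$$
Viewing $c$ as an $\mathbb{F}_2$-valued $1$-cochain on $\Gamma_\cA$ and $\Delta$ as a $0$-cochain, this becomes $d\Delta = c$; it admits a solution iff $c$ is exact, equivalently iff for every closed walk of edges $\gamma_1,\dots,\gamma_m$ in $\Gamma_\cA$ with consecutive vertices $v_0=v_m, v_1, \dots, v_{m-1}$ one has $\sum_{i=1}^m c(\gamma_i) \equiv 0 \pmod 2$. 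Granting this, the desired $\Delta$ is built by fixing a value at one vertex in each connected component of $\Gamma_\cA$ and propagating along a spanning tree.

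Fix such a closed walk and let $\alpha_i$, $\beta_i$ denote the arrows ending at $\gamma_i$ from $v_{i-1}$, respectively $v_i$. Regrouping the two sums of $\Delta_\sC$-terms by vertex yields
$$\sum_{i=1}^m c(\gamma_i) \;\equiv\; \sum_{i=1}^m\bigl(\Delta_\sC(C_{\beta_i}) + \Delta_\sC(C_{\alpha_{i+1}})\bigr) \;+\; \sum_{i=1}^m \omega_\eta(\gamma_i) \pmod 2,$$
with indices taken cyclically. At $v_i$, enumerate the arrows in cyclic order $a_1^{(i)},\dots,a_{n_i}^{(i)}$, let $j_i$, $l_i$ be the cyclic positions of the half-edges of $\gamma_i$, $\gamma_{i+1}$ there, and let the cut satisfy $\alpha_{v_i}^\sC = a_{c_i}^{(i)}$. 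Unwinding the definitions of $\Delta_\sC$ and $C_\bullet$, each of $\Delta_\sC(C_{\beta_i})$ and $\Delta_\sC(C_{\alpha_{i+1}})$ is a sum of arrow degrees over a cyclic arc ending at $c_i - 1$ and starting at $j_i$, resp.\ $l_i$. Their sum modulo $2$ is therefore the degree sum over the symmetric difference of these two arcs, and a short case analysis on the cyclic arrangement of the triple $(j_i, l_i, c_i)$ identifies this symmetric difference with the arrows lying in whichever of the two sectors between the half-edges of $\gamma_i$ and $\gamma_{i+1}$ does not contain the cut. Consequently
$$\Delta_\sC(C_{\beta_i}) + \Delta_\sC(C_{\alpha_{i+1}}) \equiv \deg(p_i) \pmod 2,$$
where $p_i$ is the oriented intersection of $\gamma_i$ and $\gamma_{i+1}$ at $v_i$ lying in this ``no-cut'' sector (its direction, $\gamma_i \to \gamma_{i+1}$ or the reverse, being dictated by the arrangement).

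The sequence $(p_i)_{i \in \mathbb{Z}_m}$ is then a cycle of oriented intersections in the sense of Corollary~\ref{CorollaryWindingEdges}, and since $\eta$ is of ribbon type the corollary yields $\sum_i \omega_\eta(\gamma_i) \equiv \sum_i \deg(p_i) \pmod 2$. Combining with the previous step,
$$\sum_{i=1}^m c(\gamma_i) \equiv 2 \sum_i \deg(p_i) \equiv 0 \pmod 2,$$
closing the cycle check. The main technical obstacle is the vertex-local identification of the two ``tails ending at the cut'' with a single ``no-cut sector'': one must treat both essentially different cyclic arrangements of $(j_i, l_i, c_i)$ and the degenerate cases $c_i \in \{j_i, l_i\}$ in which one of the cyclic arcs is empty and contributes zero. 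Beyond this combinatorial point, the ribbon-type hypothesis enters only through Corollary~\ref{CorollaryWindingEdges}.
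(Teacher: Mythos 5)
Your proof is correct and follows essentially the same route as the paper's: there one also defines $\Delta_u$ by summing $\Delta_{\sC}(C_{\alpha})+\Delta_{\sC}(C_{\beta})+\omega_\eta(\gamma)$ along a path from a fixed base vertex, reduces well-definedness to the vanishing of this sum on closed walks, and verifies this via the same vertex-local identity $\Delta_{\sC}(C_{\alpha_h})+\Delta_{\sC}(C_{\alpha_{h'}})\equiv\deg(p_i)$ for the oriented intersection in the sector avoiding the cutting path, combined with Corollaries \ref{CorollaryDegreeCondition} and \ref{CorollaryWindingEdges}. The differences (the $\mathbb{F}_2$-cochain/spanning-tree phrasing and invoking only Corollary \ref{CorollaryWindingEdges}) are purely cosmetic.
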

\begin{proof} Set $\Gamma=\Gamma_{\cA}$. Choose any vertex $v \in \BV{\Gamma}=\punct$ and set $\Delta_v \coloneqq 0$. For any vertex $u \in \BV{\Gamma}$, choose a path  $\gamma^u= \gamma_{l_u}^u \cdots \gamma_1^u$ in $\Gamma$ from $v$ to $u$ with oriented edges $\gamma_i^u \in \cA=\BE{\Gamma}$. Define $\Delta_u$ as the residue modulo $2$ of
\begin{displaymath}
\sum_{h \in A_{\gamma^u}}\Delta_{\sC}(C_{\alpha_h}) + \sum_{j=1}^{l_u}\omega_\eta(\gamma_j^u),\end{displaymath}

\noindent where $A_{\gamma^u}$ denotes the set of all half-edges in $\Gamma$ along  $\gamma^u$ and where $\alpha_h$ denotes the arrow in $Q_{\Gamma}$ which corresponds to the pair $(h^-,h)$. We claim that $\Delta_u$ is well-defined, i.e.\ does not depend on the choice of $\gamma^u$. Suppose that $\gamma'$ is another path which starts in $v$ and ends in $u$. Then, $\gamma^u \overline{\gamma'}$ is closed and starts at $v$. It therefore suffices to show that $\sum_{h \in A_{\delta}}\Delta_{\sC}(C_{\alpha_h}) + \sum_{j=1}^{l}\omega_\eta(\delta_j) \equiv 0 \mod 2$  for all closed paths $\delta=\delta_{l} \cdots \delta_1$ which start at $v$. Let us assume that $\delta$ is closed and based at $v$. We observe that for consecutive half-edges $h\in \delta_{i}$ and $h'\in \delta_{i+1}$, such that $s(h)=s(h')$, $\Delta_{\sC}(C_{\alpha_h})+\Delta_{\sC}(C_{\alpha_{h'}})=\deg(p_i)$, where $p_i$ is the unique oriented intersection of $\overline{h} \in \cA$ and $\overline{h'} \in \cA$ whose  sector does not contain the cutting path $c_{p_i}$ and which is induced by an oriented intersection of $h$ and $h'$. 
The collection of all these oriented intersections defines a cycle $(p_i)_{i \in \mathbb{Z}_{l}}$. Corollary \ref{CorollaryDegreeCondition} and Corollary \ref{CorollaryWindingEdges} imply
\begin{displaymath}
\sum_{h \in A_{\delta}}\Delta_{\sC}(C_{\alpha_h}) +  \sum_{i=1}^{l}\omega_\eta(\delta_i) \equiv \sum_{i \in \mathbb{Z}_l}{\deg(p_i)} +\sum_{i=1}^{l}\omega_\eta(\delta_i) \equiv 0 \mod 2.
\end{displaymath} 
\noindent  This shows that $\Delta_u$ is well-defined. Finally, \eqref{EquationDeltaConstraint} follows from the definition of $\Delta_p$ and $\Delta_q$ by choosing $\gamma^p$ and $\gamma^q$ such that $\gamma^p=\gamma\gamma^q$ for a suitable orientation of $\gamma$.\end{proof}

\section{Brauer graph categories of branched covers and their orbit categories}\label{SectionBranchedCovers}
\noindent We have proved in Section \ref{SectionBrauerGraphCategories} that any Brauer graph category with trivial multiplicities on a graded surface $\Sigma$ with an arc system $\cA$ is the trivial extension of a  category $\mathcal{F}_{\cA_{\mathsf{C}}}$ for a suitable cut $\mathsf{C}$ of $(\Sigma, \cA)$. We prove a similar statement for non-trivial multiplicities by showing that such Brauer graph categories arise as orbit categories of trivial extensions associated with suitable branched covers of $\Sigma$.
\subsection{Branched covers and orbit categories of Brauer graph categories}

\subsubsection{A short reminder on branched covers}\ \medskip

\noindent We recall a few basic facts about branched covers on surfaces. 

\begin{definition}Let $d \in \mathbb{Z}$ and let $\Sigma$ be a punctured surface with punctures $\mathscr{P}$.
A \textbf{$d$-fold branched cover} of $\Sigma$ is a punctured surface $(\tilde{\Sigma}, \tilde{\mathscr{P}})$ together with a surjective map $\Theta:\tilde{\Sigma} \rightarrow \Sigma$ such that $\Theta^{-1}(\mathscr{P})=\tilde{\punct}$ and which restricts to a $d$-fold covering map $\tilde{\Sigma}_{\reg} \rightarrow \Sigma_{\reg}$. 
\end{definition}
\noindent In particular, locally around each puncture, any $d$-fold branched cover $\Theta$ is assembled from maps $\mathbb{C} \rightarrow \mathbb{C}$ of the form $z \mapsto z^n$. We recall that by our conventions all punctured surfaces are connected. In particular, $\tilde{\Sigma}$ is connected. By convention we will also assume that  $\tilde{\Sigma}$ is equipped with the unique orientation such that the restriction of $\Theta$ to $\tilde{\Sigma}_{\reg}$ is orientation-preserving.

\begin{definition}Let $\Theta:\tilde{\Sigma} \rightarrow \Sigma$ be a branched covering.
A \textbf{deck transformation} of $\Theta$ is a diffeomorphism $f: \tilde{\Sigma} \rightarrow \tilde{\Sigma}$ such that $\Theta \circ f= \Theta$.
\end{definition}

\noindent Deck transformations preserve the set of punctures and the orientation (the proof is the same as for non-branched coverings). The group of all deck transformations of a branched cover $\Theta$ will be denoted by $\Deck(\Theta)$. The group of deck transformations acts on the fiber over any point and a branched cover $\Theta$ is called \textbf{regular} if this action is transitive. In particular, any regular $d$-fold branched cover of a punctured surface $\Sigma$ is locally of the form $z \mapsto z^n$, where  $n|d$.\medskip

\noindent In Section \ref{SectionHigherMultiplicityBranchedCover} we define a regular branched cover of $\Sigma$ for every admissible arc system and every multiplicity function. The orders of ramification of this cover will be given by the multiplicities.\ \medskip

\noindent The following seems to be well-known but was included in lack of a suitable reference.

\begin{prp}
The restriction $f \mapsto f|_{\tilde{\Sigma}_{\reg}}$ induces an isomorphism between $\Deck(\Theta)$ and the group of deck transformations of the cover $\tilde{\Sigma}_{\reg} \rightarrow \Sigma_{\reg}$.
\end{prp}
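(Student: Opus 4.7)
The plan is to verify the map is a well-defined group homomorphism, check injectivity (which is almost immediate), and then spend the main effort on surjectivity by extending every unramified deck transformation smoothly across the punctures using the local normal form $z \mapsto z^n$ of the branched cover.

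First I would note that the restriction is well-defined: every deck transformation $f$ of $\Theta$ preserves $\tilde{\punct} = \Theta^{-1}(\punct)$, hence preserves $\tilde{\Sigma}_{\reg}$, and obviously still satisfies $\Theta \circ (f|_{\tilde{\Sigma}_{\reg}}) = \Theta|_{\tilde{\Sigma}_{\reg}}$. It is a group homomorphism by construction. For injectivity, suppose $f_1, f_2 \in \Deck(\Theta)$ agree on $\tilde{\Sigma}_{\reg}$. Since $\tilde{\Sigma}_{\reg} = \tilde{\Sigma} \setminus \tilde{\punct}$ is dense in $\tilde{\Sigma}$ (punctures form a finite set of points) and $f_1, f_2$ are continuous, they agree everywhere.

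The substantive step is surjectivity. Take an unramified deck transformation $g \colon \tilde{\Sigma}_{\reg} \to \tilde{\Sigma}_{\reg}$ with $\Theta \circ g = \Theta$; I would construct an extension $\bar{g} \colon \tilde{\Sigma} \to \tilde{\Sigma}$. Pick $\tilde{p} \in \tilde{\punct}$ with image $p = \Theta(\tilde{p})$. By the hypothesis on $\Theta$, there exist local coordinate discs $\tilde{D}$ around $\tilde{p}$ and $D$ around $p$ such that $\Theta|_{\tilde{D}}$ is modeled on $z \mapsto z^n$ for some $n \geq 1$. Choose a small punctured neighbourhood $U \subseteq \tilde{D} \setminus \{\tilde{p}\}$ of $\tilde{p}$; its image $g(U)$ lies in $\Theta^{-1}(\Theta(U)) = \Theta^{-1}(V \setminus \{p\})$ for a punctured disc $V \subseteq D$. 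As $g$ is a self-homeomorphism of $\tilde{\Sigma}_{\reg}$ covering the identity on $\Sigma_{\reg}$, the connected component of $g(U)$ is contained in a local chart at some preimage $\tilde{q}$ of $p$; shrinking $U$ we may assume $g(U)$ sits inside a coordinate disc $\tilde{D}'$ around $\tilde{q}$. In the coordinates $\tilde{D}, \tilde{D}'$ the map $g$ becomes a continuous map $U \to \tilde{D}' \setminus \{\tilde{q}\}$ commuting with $z \mapsto z^n$, hence a rotation $z \mapsto \zeta z$ with $\zeta^n = 1$ (up to composing the charts with such a rotation). In particular $g$ is bounded near $\tilde{p}$ and setting $\bar{g}(\tilde{p}) \coloneqq \tilde{q}$ yields a continuous and in fact smooth extension. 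Doing this at every puncture produces a smooth map $\bar{g} \colon \tilde{\Sigma} \to \tilde{\Sigma}$ with $\Theta \circ \bar{g} = \Theta$; applying the same construction to $g^{-1}$ gives an inverse, so $\bar{g}$ is a diffeomorphism and therefore a deck transformation of $\Theta$ restricting to $g$.

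The only delicate point, and the place I would be most careful, is the extension at the punctures: one must ensure that $g$ really does extend across each $\tilde{p}$ to a single point of $\tilde{\punct}$ rather than, say, an essential singularity or multiple limit points. This is exactly what the local normal form $z \mapsto z^n$ buys; the key observation is that any fiber-preserving homeomorphism between two punctured discs presented as $z \mapsto z^n$ must be of the form $z \mapsto \zeta z$ for an $n$-th root of unity $\zeta$, which extends continuously to the centre. With that local statement in hand, the global extension and its smoothness are automatic.
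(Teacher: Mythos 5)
Your proposal is correct and follows essentially the same route as the paper: restrict to punctured-disc neighbourhoods where $\Theta$ has the normal form $z \mapsto z^n$, observe that a fiber-preserving map there must be multiplication by an $n$-th root of unity, and conclude that it extends smoothly across the puncture; the paper leaves well-definedness and injectivity implicit, which you spell out, but this is only a minor elaboration.
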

\begin{proof}
Let $g$ be a deck transformation  of $\Theta|_{\tilde{\Sigma}_{\reg}}$. Let $p \in \punct$ and let $p \in D$ be an open disc neighbourhood such that for $D_p=D\setminus \{p\}$, $U \coloneqq \Theta^{-1}(D_p) \cong \bigsqcup_{i=1}^m D_p$. Then, $g$ restricts to a diffeomorphism $U \rightarrow U$ and we may choose coordinates such that on each connected component $W$ of  $U$, $\Theta|_W$ corresponds to a map $\mathbb{C}^{\times} \rightarrow \mathbb{C}^{\times}$, $z \mapsto z^n$. The assertion follows from the fact that  the map $g|_{W}$ (considered as a diffeomorphism $\mathbb{C}^{\times} \rightarrow \mathbb{C}^\times$) must be of the form $z \mapsto \zeta \cdot z$ where $\zeta$ is an $n$-th root of unity. Hence $g|_W$ admits a unique smooth extension to a map $h: \mathbb{C} \rightarrow \mathbb{C}$ which is compatible with $\Theta|_W$.
\end{proof}

\subsubsection{Action of deck transformations on Brauer graph categories}\ \medskip

\noindent We consider group actions on Brauer graph categories which arise from branched covers. Suppose we are given a graded punctured surface $(\Sigma, \eta)$ of ribbon type together with an $\eta$-graded admissible arc system $\cA$ and a branched cover $\Theta: \tilde{\Sigma} \rightarrow \Sigma$. We make the following observations:
\begin{enumerate}

    \item Let $\tilde{\cA}$ denote the set of all lifts of arcs in $\mathcal{A}$ along $\Theta$. Then $\tilde{\cA}$ is an admissible arc system on $\tilde{\Sigma}$ as can be seen from lifting the cutting paths for $\cA$. Moreover, $\Deck(\Theta)$ acts freely on the set $\tilde{\cA}$.

    \item The line field $\eta$ lifts to a unique $\Deck(\Theta)$-invariant line field $\tilde{\eta}$ of ribbon type on $\tilde{\Sigma}$ and the $\eta$-gradings on the arcs of $\mathcal{A}$ can be lifted canonically to $\tilde{\eta}$-gradings of the arcs in $\tilde{\mathcal{A}}$.

\end{enumerate}

\noindent Our goal is to define an action of $\Deck(\Theta)$ on the Brauer graph category $\mathbb{B}(\tilde{\cA}, \mathbf{1})$ in the following sense.
  
\begin{definition}\label{DefinitionLeftAction}Let $\mathbb{A}$ be an $A_{\infty}$-category. A \textbf{free left action} of a group $G$ on $\mathbb{A}$ is a free left action of $G$ on $\Ob{\mathbb{A}}$ together with a $\Bbbk$-linear action of $G$ on $\bigoplus_{X,Y \in \Ob{\mathbb{A}}} \Hom_\mathbb{A}(X, Y)$, which maps a homogeneous element  $a \in \Hom_\mathbb{A}(X,Y)$ to a homogeneous element $g.a \in \Hom_\mathbb{A}(g.X, g.Y)$ of the same degree, such that all operations $\mu_{\mathbb{A}}^m$ are $G$-equivariant, i.e.\ for all $g \in G$ and all $m \geq 1$,
	\begin{equation}\label{EquationOperationsFreeAction}
		g.\mu_{\mathbb{A}}^m(a_m, \dots, a_1) = \mu_{\mathbb{A}}^m(g.a_m , \dots, g.a_1).
	\end{equation}
\end{definition}

\begin{lem}\label{LemmaBranchedCoverInducesCategoricalAction} Let $(\Sigma, \eta)$ be a surface of ribbon type. Let $\Theta: \tilde{\Sigma} \rightarrow \Sigma$ be a  branched cover with the line field $\tilde{\eta}$ and let  $\tilde{\mathcal{A}}$ be the lift of an admissible, $\eta$-graded arc system $\cA$ with its canonical $\tilde{\eta}$-grading. Then, $\mathbb{B}(\tilde{\mathcal{A}}, \mathbf{1})$  admits a free left action of $\Deck(\Theta)$.
\end{lem}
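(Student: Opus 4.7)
The plan is to define a left action of $\Deck(\Theta)$ on $\mathbb{B} \coloneqq \mathbb{B}(\tilde{\cA}, \mathbf{1})$ by pushing objects, paths, and oriented intersections along the deck transformations, and then to verify equivariance of the $A_\infty$-operations from the $\Deck(\Theta)$-invariance of $\tilde{\eta}$ and of the canonical lifted gradings.

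First, for any $g \in \Deck(\Theta)$, I will set $g.X_{\tilde{\gamma}} \coloneqq X_{g \circ \tilde{\gamma}}$. Freeness of this assignment on $\Ob{\mathbb{B}}$ is exactly the freeness of the $\Deck(\Theta)$-action on $\tilde{\cA}$ noted in the setup. To define the action on morphisms, I will use the basis of $\Hom_{\mathbb{B}}(X_{\tilde{\gamma}}, X_{\tilde{\delta}})$ given by non-zero paths in the modified Brauer graph algebra $B_{\tilde{\cA}}$ from $\tilde{\gamma}$ to $\tilde{\delta}$. Each arrow of $Q_{\Gamma_{\tilde{\cA}}}$ corresponds to a pair of consecutive half-edges at a puncture, equivalently to an oriented intersection at that puncture. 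Because $g$ is an orientation-preserving diffeomorphism of $\tilde{\Sigma}$ that stabilises $\tilde{\punct}$, it sends $p \in \tilde{\gamma} \oInt \tilde{\delta}$ at $\tilde{q}$ to $g.p \in (g \circ \tilde{\gamma}) \oInt (g \circ \tilde{\delta})$ at $g(\tilde{q})$ and respects the cyclic order of half-edges at every puncture, hence the permutation $\pi$ and the cycles $C_\alpha$.

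Since $\tilde{\eta}$ is $\Deck(\Theta)$-invariant and the $\tilde{\eta}$-gradings of arcs in $\tilde{\cA}$ are lifted canonically, the degrees of oriented intersections and the winding numbers $\omega_{\tilde{\eta}}(\tilde{\gamma})$ are preserved by $g$. I extend the assignment $\Bbbk$-linearly and multiplicatively to the path algebra; the two families of defining relations of $B_{\tilde{\cA}}$ transport correctly because the zero relations $\alpha\beta = 0$ for $\pi(\alpha) \neq \beta$ go to relations of the same form, and the signed commutation of the cycles transports because both $C_\alpha$ and the signs $(-1)^{\omega(\bar{h})}$ are $g$-equivariant. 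This produces the desired graded $\Bbbk$-linear action $g.(-) \colon \Hom_{\mathbb{B}}(X_{\tilde{\gamma}}, X_{\tilde{\delta}}) \rightarrow \Hom_{\mathbb{B}}(g.X_{\tilde{\gamma}}, g.X_{\tilde{\delta}})$.

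To verify \eqref{EquationOperationsFreeAction}, I note that $\mu^2$-equivariance is immediate from the induced algebra structure on $B_{\tilde{\cA}}$ and the degree-preservation, since the Koszul sign depends only on degrees. For the higher operations in \eqref{Mu1}, \eqref{Mu2} and \eqref{HigherOps5}, the non-vanishing input tuples are precisely those coming from marked discs on $\tilde{\Sigma}$; applying $g$ to a marked disc yields a marked disc whose boundary arcs and oriented intersection sequence are the $g$-translates of the original ones. Every sign appearing in these formulas is built from the degrees $|a_i|$, the reduced degrees $\dg{a_i}$, and the winding numbers $\omega_{\tilde{\eta}}(\tilde{\delta}_i)$ of the arcs in the disc sequence, all of which are $g$-invariant. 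Functoriality $(gh).(-) = g.(h.(-))$ then follows from the associativity of composition of diffeomorphisms, yielding the required free left action. The main obstacle I expect is purely bookkeeping — tracking the sign $(-1)^{\omega(\bar{h})}$ of the modified commutation relation and the various signs in \eqref{HigherOps5} through the transport — but it is trivialised by $g^{\ast}\tilde{\eta} \simeq \tilde{\eta}$, which makes all these data manifestly $\Deck(\Theta)$-invariant.
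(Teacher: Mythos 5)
Your proposal is correct and follows essentially the same route as the paper's proof: freeness is inherited from the free action on $\tilde{\cA}$, the path basis of the morphism spaces is transported using that deck transformations preserve orientation (hence cyclic orders, $\pi$ and the cycles $C_\alpha$), the $\Deck(\Theta)$-invariance of $\tilde{\eta}$ and the canonical gradings guarantees preservation of degrees, winding numbers and the signed relations, and equivariance of the higher operations follows because $g$ sends disc sequences to disc sequences.
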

\begin{proof}Set $\mathbb{B}=\mathbb{B}(\tilde{\mathcal{A}}, \mathbf{1})$. The group $\Deck(\Theta)$ acts freely on $\tilde{\mathcal{A}}$ and hence on $\Ob{\mathbb{B}}=\{X_{\delta}\}_{\delta \in \tilde{\cA}}$.
Given arcs $\gamma, \delta \in \tilde{\mathcal{A}}$, a homogeneous basis of $\Hom_\mathbb{B}(X_\gamma, X_\delta)$ is in bijection  with paths in the modified Brauer graph algebra $B_{\tilde{\cA}}$ with trivial multiplicities associated with the ribbon graph $\Gamma_{\tilde{\mathcal{A}}}$. Moreover, every such path corresponds to an oriented intersection of $\gamma$ and $\delta$, a trivial path (in case $\gamma=\delta$) or a full cycle $C_\alpha=(-1)^{\omega_{\tilde{\eta}}(\gamma)}C_\beta$, where $\alpha$ and $\beta$ are two arrows which end at $\gamma$ and correspond to half-edges $h$ and $h'$. Every deck transformation $f$ preserves the orientation and hence induces a bijection between $\gamma \oInt \delta$ and  $(f.\gamma) \oInt (f.\delta)$. It maps $\gamma$ to $f.\gamma$,  $h$ to $f.h$ and $h'$ to $f.h'$. Thus, it induces a bijection between trivial paths and between full cycles sending $C_\alpha$ to $C_{f.\alpha}$, where $f.\alpha$ corresponds to $f.h$.  Since $\tilde{\eta}$ is $\Deck(\Theta)$-invariant it follows that $f$ induces a graded $\Bbbk$-linear isomorphism $\Hom_{\bB}(X_\gamma, X_\delta) \rightarrow \Hom_{\bB}(X_{f.\gamma}, X_{f. \delta})$ of degree $0$ which is compatible with composition. The relations from the definition of the modified Brauer graph algebra are equivariant under this action since $\omega_{\tilde{\eta}}(\gamma)=\omega_{\tilde{\eta}}(f.\gamma)$. Finally, \eqref{EquationOperationsFreeAction} is a consequence of the fact that $(a_m, \dots, a_1)$ is a disc sequence if and only if $(g.a_m, \dots, g.a_1)$ is a disc sequence.
\end{proof}

\subsubsection{Orbit categories of group actions}\ \medskip

\noindent  An $A_{\infty}$-structure can be ``pushed down'' along any free group action to form a new $A_{\infty}$-category classically known as the \textit{orbit category}.

\begin{definition}
 The \textbf{orbit category} of a free left action by a  group $G$ on $\mathbb{A}$ is an $A_{\infty}$-category $\mathbb{A}/G$ such that
$\Ob{\mathbb{A}/G}=\Ob{\mathbb{A}}$ and for all $X,Y \in \Ob{\mathbb{A}}$,
$$
\Hom_{\mathbb{A}/G}(X, Y) \coloneqq \bigoplus_{g \in G} \Hom_\mathbb{A}(X, g.Y).
$$

\noindent The $A_{\infty}$-operations on $\mathbb{A}/G$ are defined as follows: let 
$$a_m \otimes \cdots \otimes a_1 \in \Hom_{\mathbb{A}/G}(X_{m-1}, X_{m}) \otimes \cdots\otimes \Hom_{\mathbb{A}/G}(X_0, X_1)$$
\noindent be such that for each $i \in [1,m]$, $a_i \in \Hom(X_{i-1}, g_i. X_i)$ with $g_i \in G$.
Then,
\begin{displaymath}
\mu_{\mathbb{A}/G}(a_m, \dots, a_1) \coloneqq \mu_{\mathbb{A}}\big( (g_1 \cdots g_{m-1}).a_m, \dots,(g_1g_2).a_3, g_1.a_2 ,a_1 \big) \in \Hom_{\mathbb{A}}(X_0, (g_1 \cdots g_m). X_m).
\end{displaymath}
\end{definition}

\begin{lem}
$\mathbb{A}/G$ is an $A_{\infty}$-category.
\end{lem}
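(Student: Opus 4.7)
The plan is to verify the $A_\infty$-relations \eqref{EquationAInfinityConstraint} for $\mu_{\mathbb{A}/G}$ by reducing them directly to the corresponding relations for $\mathbb{A}$. Fix homogeneous elements $a_i \in \Hom_{\mathbb{A}}(X_{i-1}, g_i.X_i) \subseteq \Hom_{\mathbb{A}/G}(X_{i-1}, X_i)$ for $i = 1,\dots,m$, and introduce the partial products $h_i := g_1 g_2 \cdots g_i$ (with $h_0 := e$) together with the translated morphisms $\tilde{a}_i := h_{i-1}.a_i \in \Hom_{\mathbb{A}}(h_{i-1}.X_{i-1},\, h_i.X_i)$. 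These form a composable chain in $\mathbb{A}$ with $\dg{\tilde{a}_i} = \dg{a_i}$ (the action is degree-preserving), and by construction
\[
\mu_{\mathbb{A}/G}(a_m,\dots,a_1) \;=\; \mu_{\mathbb{A}}(\tilde{a}_m,\dots,\tilde{a}_1).
\]

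The main step is to extend this identification to nested operations. Writing $b := \mu^n_{\mathbb{A}/G}(a_{k+n},\dots,a_{k+1})$, one sees that $b$ lands in the summand $\Hom_{\mathbb{A}}(X_k, (g_{k+1}\cdots g_{k+n}).X_{k+n})$, so it has group-index $g_{k+1}\cdots g_{k+n}$ in $\mathbb{A}/G$. When $b$ is plugged into the outer operation $\mu^{m-n+1}_{\mathbb{A}/G}$, the translation assigned to its slot is exactly $h_k$, and the $G$-equivariance property \eqref{EquationOperationsFreeAction} of $\mu_{\mathbb{A}}$ yields $h_k.b = \mu^n_{\mathbb{A}}(\tilde{a}_{k+n},\dots,\tilde{a}_{k+1})$. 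The translations on the remaining slots telescope: for $i > k+n$ the prefactor $h_k \cdot (g_{k+1}\cdots g_{k+n})\cdot g_{k+n+1}\cdots g_{i-1}$ collapses to $h_{i-1}$. Hence the entire nested expression unfolds to
\[
\mu^{m-n+1}_{\mathbb{A}}\bigl(\tilde{a}_m,\dots,\tilde{a}_{k+n+1},\; \mu^n_{\mathbb{A}}(\tilde{a}_{k+n},\dots,\tilde{a}_{k+1}),\; \tilde{a}_k,\dots,\tilde{a}_1\bigr).
\]

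Summing over $k$ and $n$ with the Koszul signs prescribed by \eqref{EquationAInfinityConstraint}, which agree on both sides because $\dg{\tilde{a}_i} = \dg{a_i}$, the $A_\infty$-relation for $\mathbb{A}/G$ evaluated on $(a_m,\dots,a_1)$ becomes the $A_\infty$-relation for $\mathbb{A}$ evaluated on $(\tilde{a}_m,\dots,\tilde{a}_1)$, which vanishes by hypothesis; by $\Bbbk$-multilinearity this extends from homogeneous to arbitrary inputs. Strict unitality is handled by the same unfolding: the identity of $X$ in $\mathbb{A}/G$ is $1_X$ in the $e$-summand, and the axioms for $1_X$ under $\mu^n_{\mathbb{A}/G}$ reduce to those for $1_X$ under $\mu^n_{\mathbb{A}}$. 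I do not foresee a substantial obstacle—the argument is essentially bookkeeping—the one delicate point being to confirm that translation factors compose correctly through nesting, which is precisely where the telescoping $h_k \cdot (g_{k+1}\cdots g_{i-1}) = h_{i-1}$ is used and where the particular left-action convention in the definition of $\mu_{\mathbb{A}/G}$ becomes essential.
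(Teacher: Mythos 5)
Your proof is correct and follows essentially the same route as the paper's: unfold each nested $\mu_{\mathbb{A}/G}$ into $\mu_{\mathbb{A}}$ applied to the translated morphisms (your $\tilde a_i=h_{i-1}.a_i$ is the paper's $u_1^{i-1}.a_i$), using the equivariance \eqref{EquationOperationsFreeAction} and the telescoping of the group elements, and then match Koszul signs via degree preservation of the action. The brief remark on strict unitality is a harmless addition beyond what the paper records.
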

\begin{proof}
With the notation $u_a^b= g_a g_{a+1} \cdots g_b$ for $ a \leq b$ one has $u_a^b \cdot u_{b+1}^c= u_a^c$. Then,

	\begin{equation}
	\begin{split}
	 & \mu_{\mathbb{A}/G}\big( a_m, \dots, a_{i+1}, \mu_{\mathbb{A}/G}(a_{i}, \dots, a_{j}), a_{j-1}, \dots, a_1\big)  \\  \stackrel{\phantom{\eqref{EquationOperationsFreeAction}}}{=} \, & \mu_{\mathbb{A}/G}(a_m, \dots, a_{i+1} , \, \mu_{\mathbb{A}}\big( u_j^{i-1}.a_i, \dots, g_j.a_{j+1}, a_j\big), \, a_{j-1}, \dots, a_1\Big) \\  
	 \stackrel{\phantom{\eqref{EquationOperationsFreeAction}}}{=} \, & \mu_{\mathbb{A}}\Big( u_1^{m-1}.a_m, \dots, u_1^i. a_{i+1}, \, u_1^{j-1}.\mu_{\mathbb{A}}\big( u_j^{i-1}.a_i, \dots, g_j.a_{j+1}, a_j\big), \,  u_1^{j-2}.a_{j-1}, \dots, a_1\Big) \\
	  \stackrel{\eqref{EquationOperationsFreeAction}}{=} \, & \mu_{\mathbb{A}}\Big( u_1^{m-1}.a_m, \dots, u_1^i. a_{i+1}, \, \mu_{\mathbb{A}}\big( u_1^{i-1}.a_i, \dots, u_1^{j-1}.a_j\big), \, u_1^{j-2}.a_{j-1}, \dots, a_1\Big).
	  \end{split}
	\end{equation}
	
	\noindent Since $\dg{g.f}=\dg{f}$ for all homogeneous morphisms $f$ and all $g \in G$, the $A_{\infty}$-constraints follow from the constraints for $\mathbb{A}$.
\end{proof}

\noindent Two objects $X, Y$ in $\mathbb{A}/G$ are isomorphic if there exists $g \in G$ and an isomorphism $X \rightarrow g.Y$ in $\mathbb{A}$. One concludes:

\begin{cor}\label{CorollarySkeletonMoritaEquivalence}
Let $\bA$ be an $A_{\infty}$-category with a free left action of a  group $G$ and let $S$ be a  complete set of representatives of the $G$-orbits of $\Ob{\mathbb{A}}$. Then, the inclusion $S \hookrightarrow \mathbb{A}/G$ of the corresponding full $A_{\infty}$-subcategory is a Morita equivalence.
\end{cor}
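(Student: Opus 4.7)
The plan is to argue essential surjectivity by hand; full faithfulness is automatic because $S \hookrightarrow \mathbb{A}/G$ is an inclusion of a full $A_\infty$-subcategory, so by the remark following Definition~\ref{DefinitionInfinityTrivialExtension} (the generic statement about full subcategories of $A_\infty$-categories) the induced functor $\H^0(\Tw S) \hookrightarrow \H^0(\Tw(\mathbb{A}/G))$ is fully faithful and exact. Hence it suffices to show every object of $\H^0(\Tw(\mathbb{A}/G))$ is isomorphic to one coming from $\Tw S$.

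The main technical input is that for every $X \in \Ob{\mathbb{A}}$ and every $g\in G$, the objects $X$ and $g.X$ are isomorphic in $\mathbb{A}/G$ via a strict isomorphism. First, I would verify that $g.1_X = 1_{g.X}$: from $G$-equivariance the element $g.1_X\in \Hom^0_{\mathbb{A}}(g.X,g.X)$ is closed under $\mu^1$, vanishes against $\mu^n$ for $n\ge 3$, and satisfies the strict two-sided unitality equations $\mu^2_{\mathbb{A}}(a,g.1_X)=a$ and $\mu^2_{\mathbb{A}}(g.1_X,b)=(-1)^{|b|}b$ for all morphisms of $\mathbb{A}$ with the appropriate source/target, which follows from unitality for $1_X$ applied to $g^{-1}.a$ and $g^{-1}.b$ together with equivariance. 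Uniqueness of strict units then gives $g.1_X=1_{g.X}$.

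With this in hand, set $f\coloneqq 1_X\in \Hom_{\mathbb{A}}(X,X)$, viewed as an element of the summand of $\Hom_{\mathbb{A}/G}(X,g.X)=\bigoplus_{h}\Hom_{\mathbb{A}}(X,h.(g.X))$ indexed by $h=g^{-1}$, and set $\tilde f\coloneqq 1_{g.X}\in \Hom_{\mathbb{A}}(g.X,g.X)$, viewed as the summand of $\Hom_{\mathbb{A}/G}(g.X,X)$ indexed by $g$. Applying the definition of $\mu^2_{\mathbb{A}/G}$ together with $g^{-1}.1_{g.X}=1_X$, one computes $\mu^2_{\mathbb{A}/G}(\tilde f,f)=\mu^2_{\mathbb{A}}(1_X,1_X)=1_X$ in the $h=e$ summand of $\Hom_{\mathbb{A}/G}(X,X)$, and symmetrically $\mu^2_{\mathbb{A}/G}(f,\tilde f)=1_{g.X}$; this exhibits $f$ as a strict isomorphism with inverse $\tilde f$.

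Finally, take any twisted complex $(E,\delta)$ over $\mathbb{A}/G$ with underlying graded object $E=\bigoplus_{i=1}^N X_i[k_i]$. For each $i$ let $s_i \in S$ be the unique representative in the $G$-orbit of $X_i$ and let $g_i\in G$ satisfy $s_i=g_i.X_i$. The strict isomorphisms of the previous step assemble to a degree-zero, diagonal, strictly invertible morphism $\Phi\colon E \to E' \coloneqq \bigoplus_i s_i[k_i]$ in $\Tw(\mathbb{A}/G)$; transporting $\delta$ along $\Phi$ yields a differential $\delta'$ on $E'$ making $(E',\delta')$ a twisted complex over $S$, and $\Phi$ becomes an isomorphism $(E,\delta)\xrightarrow{\sim}(E',\delta')$ in $\H^0(\Tw(\mathbb{A}/G))$. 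Since $(E',\delta')$ lies in the essential image of $\H^0(\Tw S)$, essential surjectivity and hence the Morita equivalence follows. The only step that requires real care is the verification that the action preserves strict units, which is what makes $f$ and $\tilde f$ mutually inverse without introducing any higher homotopies.
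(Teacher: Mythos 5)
Your proof is correct and follows essentially the same route as the paper, which simply observes that for any $g\in G$ the objects $X$ and $g.X$ become isomorphic in $\mathbb{A}/G$ (via the unit, exactly your $f$ and $\tilde f$) and concludes essential surjectivity on $\H^0(\Tw-)$, with full faithfulness coming from fullness of the subcategory; you merely make explicit the unit-preservation $g.1_X=1_{g.X}$ and the strictification of twisted complexes that the paper leaves implicit. (Only note that the full-subcategory remark you invoke is the one in Section 2 following the definition of Morita equivalence, not the remark after Definition \ref{DefinitionInfinityTrivialExtension}.)
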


\subsection{Multiplicity-free covers of Brauer graph algebras} \ \medskip

\noindent We recall the construction of a multiplicty-free cover of a Brauer graph from \cite{GreenSchrollSnashall} (see also \cite{Asashiba}). 
It associates to any Brauer graph $(\Gamma, \seq)$ a multiplicity-free Brauer graph. We will see that the ribbon surface of the new Brauer graph is a branched cover of the original ribbon surface which is ramified over every vertex $v$ of $\Gamma$ with $\seq(v)\neq 1$.  \medskip

\noindent In what follows, we identify a multiplicity function with the corresponding function $\seq:\BH{\Gamma} \rightarrow \mathbb{Z}$ which is determined by the formula 
$$\seq(h)=\seq(s(h)).$$

\noindent The number $\overline{\seq}$ shall henceforth denote the least common multiple of all values of $\seq$. By a \textbf{cut} of $\Gamma$, we mean a subset $\sC \subseteq \BH{\Gamma}$ such that  $\sC \cap \BH{\Gamma}_v$ consists of a single element for all $v \in \BV{\Gamma}$.
\begin{definition}Let $(\Gamma, \seq)$ be a Brauer graph and let $\mathsf{C}$ be a cut of $\Gamma$.

\begin{itemize}
    \item Let $W=W_\sC: \BH{\Gamma} \rightarrow \mathbb{Z}_{\overline{\seq}}$ denote the  function defined by 
\[W(h) \coloneqq \begin{rightalignedcases}
   \overline{\left(\frac{\overline{\seq}}{\seq(h)}\right)}, & \text{if } h \in \sC; \\
   \overline{0}, & \text{otherwise}.
\end{rightalignedcases}\]

      \item Denote by $\Gamma^{\seq}=\Gamma^{\seq}_\sC$ the ribbon graph which is defined by the following data:
      \begin{enumerate} 
      
      \item $\BV{\Gamma^{\seq}}$ consists of all pairs $(v, g+U_v)$, where $v\in \BV{\Gamma}$ and $g + U_v$ is a coset of the subgroup $U_v \subseteq \mathbb{Z}_{\overline{\seq}}$ which is generated by the residue class of $\frac{\overline{\seq}}{\seq(v)}$;
          \item $\BH{\Gamma^{\seq}} = \BH{\Gamma} \times \mathbb{Z}_{\overline{\seq}} \coloneqq \{h_g \, | \, h \in \BH{\Gamma}, g \in \mathbb{Z}_{\overline{\seq}}\}$;
          \item $\iota(h_g) \coloneqq \iota(h)_g$ and $\sigma(h_g) \coloneqq \sigma(h)_{g+W(h)}$;
          \item $s(h_g) \coloneqq (s(h), g +U_{s(h)})$. 
      \end{enumerate}
\end{itemize} 

\noindent Note that $s(h_g)=s(\sigma(h_g))$. The Brauer graph $(\Gamma^{\seq}, \mathbf{1})$ is called a \textbf{multiplicity-free cover} of $(\Gamma, \seq)$.
\end{definition}

\noindent The function $W_{\mathsf{C}}$ is referred to as the \textbf{weight function}. We collect a few observations about the multiplicity-free cover.

\begin{enumerate}
    \item There exists a canonical projection 
    \begin{equation}\label{EquationMapPi}\Theta:\Gamma^{\seq} \rightarrow \Gamma.\end{equation} 
    \noindent It maps a vertex $(v, g + U_v)$ to the vertex $v$ and a half-edge $h_g$ to the half-edge $h$. In particular, $\Theta$ commutes with the functions $\sigma$, $\iota$ and $s$ and hence is a morphism of ribbon graphs. It induces a map $\BF{\Gamma^{\seq}} \rightarrow \BF{\Gamma}$ which maps a face $(h_1, \dots, h_{2m}) \in \BF{\Gamma^{\seq}}$ to the unique face $(l_1, \dots, l_{2d}) \in \BF{\Gamma}$ such that $d \, | \, m$ and such that $(\Theta(h_1), \dots, \Theta(h_{2m}))=(l_1, \dots, l_{2d}, l_1, \dots, l_{2d}, \dots,l_1, \dots, l_{2d})$.
    \item The ribbon graph $\Gamma^{\seq}$ carries a canonical $\mathbb{Z}_{\overline{\seq}}$-action, c.f.\ \cite[Lemma 3.7.]{GreenSchrollSnashall}. An element $g \in \mathbb{Z}_{\overline{\seq}}$ acts by the maps
    \begin{displaymath}
    \begin{tikzcd}[row sep=0.5ex]
        \BV{\Gamma^{\seq}} \arrow{r} & \BV{\Gamma^{\seq}}, & & \BH{\Gamma^{\seq}} \arrow{r} & \BH{\Gamma^{\seq}}, \\
        (v, f + U_v)  \arrow[mapsto]{r} & (v, (g+f) + U_v), & & h_f \arrow[mapsto]{r} & h_{g+f}.
    \end{tikzcd}\end{displaymath}
\noindent The functions $\iota, \sigma$ and $s$ are $\mathbb{Z}_{\overline{\seq}}$-equivariant and hence $g$ defines an automorphism of $\Gamma^{\seq}$. Note that  since $\Gamma^{\seq}$ is connected,
any ribbon graph automorphism $f: \Gamma^{\seq} \rightarrow \Gamma^{\seq}$ such that $\Theta \circ f=\Theta$ is of the above form. Similar to the case of deck transformations this follows from the observation that every such automorphism is determined by its value on a single half-edge.
\end{enumerate}    

\begin{cor}
The action of $\mathbb{Z}_{\overline{\seq}}$ on $\BE{\Gamma^{\seq}}$ is free and $U_v$ is the stabilizer of $(v, f+ U_v) \in \BV{\Gamma^{\seq}}$.\end{cor}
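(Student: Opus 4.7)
The plan is to read off both statements directly from the explicit formulas defining the $\mathbb{Z}_{\overline{\seq}}$-action. For the stabilizer of a vertex $(v,f+U_v)$, the formula $g.(v,f+U_v) = (v,(g+f)+U_v)$ shows that $g$ fixes this vertex if and only if $g+U_v = U_v$ as cosets, i.e.\ $g \in U_v$. Hence the stabilizer is precisely $U_v$, independently of the chosen representative $f$.

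For freeness on edges, first recall that edges of $\Gamma^{\seq}$ are the $\iota$-orbits on $\BH{\Gamma^{\seq}} = \BH{\Gamma} \times \mathbb{Z}_{\overline{\seq}}$, and since $\iota(h_f)=\iota(h)_f$ the second component is preserved by $\iota$. Therefore an edge of $\Gamma^{\seq}$ is determined by a pair $(\overline{h},f)$ consisting of an edge $\overline{h} \in \BE{\Gamma}$ together with an element $f \in \mathbb{Z}_{\overline{\seq}}$. The action of $g \in \mathbb{Z}_{\overline{\seq}}$ sends $h_f$ to $h_{g+f}$ and therefore sends the edge $(\overline{h},f)$ to the edge $(\overline{h},g+f)$. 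If $g$ fixes such an edge, then $g+f = f$ in $\mathbb{Z}_{\overline{\seq}}$, so $g = 0$. This establishes freeness of the action on $\BE{\Gamma^{\seq}}$.

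I do not anticipate any obstacle here; both claims are immediate consequences of the formulas used to define $\Gamma^{\seq}$ and the $\mathbb{Z}_{\overline{\seq}}$-action, and no additional input is required beyond the preceding paragraph of the paper. The only point worth being explicit about in the write-up is that $\iota$ acts trivially on the $\mathbb{Z}_{\overline{\seq}}$-component of a half-edge, which is what allows edges to be labelled by pairs $(\overline{h}, f)$ and makes the freeness argument transparent.
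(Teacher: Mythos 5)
Your proof is correct and matches the paper's treatment: the paper states this corollary without a separate argument precisely because, as you show, both claims are read off directly from the defining formulas, with the key observation that $\iota(h_g)=\iota(h)_g$ keeps the $\mathbb{Z}_{\overline{\seq}}$-label of a half-edge well-defined on edges (using that $\iota$ is fixed-point free), so the edge action is simply translation in that label.
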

\noindent Note that $U_v$ is of order $\seq(v)$ and that $|H_{(v,f+U_v)}|=|H_v|\cdot \seq(v)$ for all $f\in \mathbb{Z}_{\overline{\seq}}$.

\begin{exa}\label{ExampleMultiplicityFreeCover}
We consider the multiplicity-free cover of the triangle from Example \ref{FigureExampleBGA}. The non-trivial entries of the weight function are indicated by arrows between half-edges, i.e.\ an arrow corresponding to a pair $(h, h^+)$ of half-edges has label $W(h)$.
\begin{displaymath}
\begin{tikzpicture}

\def\r{1};
\def\thickness{3pt}; 
\def\factor{1.3}; 
\def\rtwo{0.75}; 
\def\posi{0.75}; 
\def\dist{-7pt};

 \foreach \i in {1,2,3}
 \draw ({\factor*\r*cos(330-\i*120)},{\factor*\r*sin(330-\i*120)}) node[black]{$\scriptstyle \i$};
 
 \foreach \i in {1,2,3}
 \draw ({\r*cos(330-\i*120)},{\r*sin(330-\i*120)})--({\r*cos(210-\i*120)},{\r*sin(210-\i*120)});

 \draw[fill=white] ({\r*cos(-30)},{\r*sin(-30)}) circle (\thickness);
 \filldraw[black] ({\r*cos(90)},{\r*sin(90)}) circle (\thickness);
 \filldraw[orange] ({\r*cos(210)},{\r*sin(210)}) circle (\thickness);
 
  \foreach \i in {2,3}
  {
\pgfmathparse{int(Mod(int(6/ \i), 6))};
       \let\theIntINeed\pgfmathresult;
 \draw[->] ({\r*cos(330-\i*120)+\rtwo*cos(180-\i*120)},{\r*sin(330-\i*120)+\rtwo*sin(180-\i*120)}) arc ({180-\i*120}:{210-\i*120+270}:{\rtwo}) node[pos=\posi, label={[label distance=\dist]{180-\i*120+\posi*300}:$\overline{  \theIntINeed}$}] {};
 }
\end{tikzpicture}
\end{displaymath}
\noindent   Figure \ref{FigureMultipicityFreeCover} shows the two ``halfs'' of its multiplicity-free cover $\Gamma^\seq$. The ribbon surface of $\Gamma^{\seq}$ is a torus.  The three elements in the fiber of the vertex $\bullet$ are labelled by $\{\overline{0}, \overline{3}\}, \{\overline{1}, \overline{4}\}$ and $\{\overline{2}, \overline{5}\}$ and are obtained by identification of pairs of vertices with the same labels on the left and on the right hand side of Figure \ref{FigureMultipicityFreeCover}. 
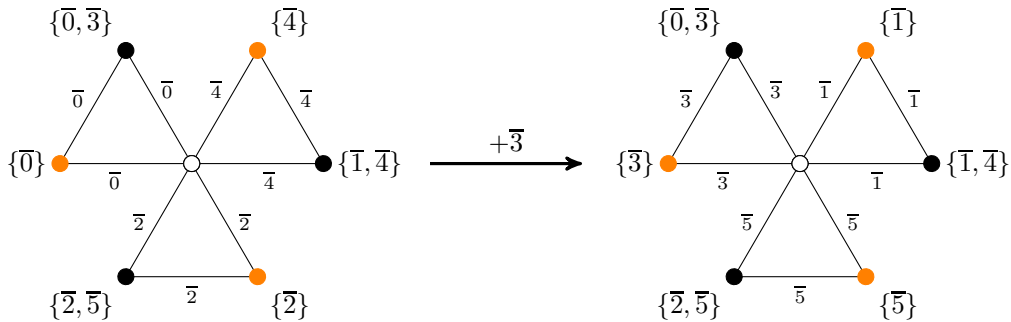
\begin{figure}[h]
\begin{displaymath}
 \begin{tikzpicture}
 
\def\r{1};
\def\thickness{3pt}; 
\def\factor{1.3}; 
\def\rtwo{0.75}; 
\def\posi{0.75}; 
\def\dist{-7pt};

\begin{scope}

\foreach \j in {1,2,3}
{
\begin{scope}[rotate around={\j*120:({\r*cos(-30)},{\r*sin(-30)})}]


\foreach \i in {2}
{
        \pgfmathparse{int(Mod(int(6-\j), 6))};
       \let\CosetOne\pgfmathresult;
       \pgfmathparse{int(Mod(int(\CosetOne+3), 6))};
       \let\CosetTwo\pgfmathresult;
  \draw ({\r*cos(330-\i*120)},{\r*sin(330-\i*120)}) node[label={[label distance=-2pt]{-120+\i*120+\j*120}:$\{\overline{\CosetTwo}, \overline{\CosetOne} \}$}] {};
 }

\foreach \i in {1}
{
        \pgfmathparse{int(Mod(int((\j-1)*2+2), 6))};
       \let\CosetOne\pgfmathresult;
  \draw ({\r*cos(330-\i*120)},{\r*sin(330-\i*120)}) node[anchor=center, label={[label distance=-2pt]{60+\i*120+\j*120}:$\{\overline{\CosetOne} \}$}] {};
 }
 
 \foreach \i in {1,2, 3}
 {
  \pgfmathparse{int(Mod(int((\j-1)*2+2), 6))};
       \let\lab\pgfmathresult;
 \draw ({\r*cos(330-\i*120)},{\r*sin(330-\i*120)})--({\r*cos(210-\i*120)},{\r*sin(210-\i*120)}) ;

 }
 
 
 \draw[fill=white] ({\r*cos(-30)},{\r*sin(-30)}) circle (\thickness);
 \filldraw[black] ({\r*cos(90)},{\r*sin(90)}) circle (\thickness);
 \filldraw[orange] ({\r*cos(210)},{\r*sin(210)}) circle (\thickness);
 \end{scope} 
 }
 
\begin{scope}[shift={({\r*cos(-30},{\r*sin(-30)})}]
\foreach \i in {0, 1, 2}
{ 
\pgfmathparse{int(Mod(int(4+2*\i), 6))};
       \let\lab\pgfmathresult;
       \def\off{12};
       \def\fac{0.6};
 \draw ({sqrt(3)*\fac*\r*cos(60+\i*120+\off)},{sqrt(3)*\fac*\r*sin(60+\i*120+\off)}) node[anchor=center]{$\scriptstyle \overline{\lab}$};
 
 \pgfmathparse{int(Mod(int(2*\i), 6))};
       \let\lab\pgfmathresult;
 \draw ({sqrt(3)*\fac*\r*cos(120+\i*120-\off)},{sqrt(3)*\fac*\r*sin(120+\i*120-\off)}) node[anchor=center]{$\scriptstyle \overline{\lab}$};

 \pgfmathparse{int(Mod(int(2*\i+4), 6))};
       \let\lab\pgfmathresult;
 \draw ({sqrt(3)*\r*cos(30+\i*120)},{sqrt(3)*\r*sin(30+\i*120)}) node[anchor=center]{$\scriptstyle \overline{\lab}$};
}
\end{scope}
\end{scope}


\begin{scope}[shift={(8,0)}]

\draw[very thick, ->] (-4,{\r*sin(-30)})--(-2,{\r*sin(-30)}) node[pos=0.5, above]{$+ \overline{3}$};
\foreach \j in {1,2,3}
{
\begin{scope}[rotate around={\j*120:({\r*cos(-30)},{\r*sin(-30)})}]


\foreach \i in {2}
{
        \pgfmathparse{int(Mod(int(6-\j-3), 6))};
       \let\CosetOne\pgfmathresult;
       \pgfmathparse{int(Mod(int(\CosetOne+3), 6))};
       \let\CosetTwo\pgfmathresult;
  
  \draw ({\r*cos(330-\i*120)},{\r*sin(330-\i*120)}) node[label={[label distance=-2pt]{-120+\i*120+\j*120}:$\{\overline{\CosetOne}, \overline{\CosetTwo} \}$}] {};
 }

\foreach \i in {1}
{
        \pgfmathparse{int(Mod(int((\j-1)*2+2-3), 6))};
       \let\CosetOne\pgfmathresult;
  \draw ({\r*cos(330-\i*120)},{\r*sin(330-\i*120)}) node[anchor=center, label={[label distance=-2pt]{60+\i*120+\j*120}:$\{\overline{\CosetOne} \}$}] {};
 }
 
 \foreach \i in {1,2, 3}
 {
 \draw ({\r*cos(330-\i*120)},{\r*sin(330-\i*120)})--({\r*cos(210-\i*120)},{\r*sin(210-\i*120)}) ;

 }

 \draw[fill=white] ({\r*cos(-30)},{\r*sin(-30)}) circle (\thickness);
 \filldraw[black] ({\r*cos(90)},{\r*sin(90)}) circle (\thickness);
 \filldraw[orange] ({\r*cos(210)},{\r*sin(210)}) circle (\thickness);
 \end{scope} 
 }
 
\begin{scope}[shift={({\r*cos(-30},{\r*sin(-30)})}]
\foreach \i in {0, 1, 2}
{
\pgfmathparse{int(Mod(int(4+2*\i-3), 6))};
       \let\lab\pgfmathresult;
       \def\off{12};
       \def\fac{0.6};
 \draw ({sqrt(3)*\fac*\r*cos(60+\i*120+\off)},{sqrt(3)*\fac*\r*sin(60+\i*120+\off)}) node[anchor=center]{$\scriptstyle \overline{\lab}$};
 
 \pgfmathparse{int(Mod(int(2*\i-3), 6))};
       \let\lab\pgfmathresult;
 \draw ({sqrt(3)*\fac*\r*cos(120+\i*120-\off)},{sqrt(3)*\fac*\r*sin(120+\i*120-\off)}) node[anchor=center]{$\scriptstyle \overline{\lab}$};

 \pgfmathparse{int(Mod(int(2*\i+4-3), 6))};
       \let\lab\pgfmathresult;
 \draw ({sqrt(3)*\r*cos(30+\i*120)},{sqrt(3)*\r*sin(30+\i*120)}) node[anchor=center]{$\scriptstyle \overline{\lab}$};
}
\end{scope}
\end{scope}

 \end{tikzpicture}
\end{displaymath}
    \caption{Vertices are labelled by cosets in $\mathbb{Z}_6$; an edge with label $g$ represents an edge $e_g$.}
    \label{FigureMultipicityFreeCover}
\end{figure}
The cyclic order on the edges around the vertex $\{\overline{0},\overline{3}\}$ (and analogously for other vertices in the fiber) is given by the following cyclic order on their respective ends: $(\{\overline{0}\}, \{\overline{0}, \overline{2}, \overline{4}\}, \{\overline{3}\}, \{\overline{1}, \overline{3}, \overline{5}\})$. The cyclic order around all other vertices can be read off from Figure \ref{FigureMultipicityFreeCover}. The involution corresponding to $\overline{3} \in \mathbb{Z}_6$ interchanges the left and the right hand side of Figure \ref{FigureMultipicityFreeCover} and the element $\overline{2} \in \mathbb{Z}_6$ acts by rotation around $\circ$-vertices.
\end{exa}

\subsection{Brauer graph categories of higher multiplicity via multiplicity-free covers}\label{SectionHigherMultiplicityBranchedCover} \ \medskip

\noindent Suppose that $\mathcal{A}$ is an admissible arc system on a punctured surface $\Sigma$ and let $\seq$ be a multiplicity function. Set $\Gamma \coloneqq \Gamma_{\mathcal{A}}$. Then any cut $\sC$ of $\cA$ determines a cut of $\Gamma$ and hence a multiplicity-free cover $\Gamma^{\seq}=\Gamma^{\seq}_{\sC}$. This in turn gives rise to a punctured surface $\Sigma^{\seq}$ by gluing discs to all those boundary components on the ribbon surface of $\Gamma^{\seq}$ which correspond to the preimage of a false face under the map $\BF{\Gamma^{\seq}} \rightarrow \BF{\Gamma}$. In what follows we identify $\Gamma$ with $|\mathcal{A}| \subseteq \Sigma$ and $\Gamma^{\seq}$ with $|\mathcal{A}^{\seq}| \subseteq \Sigma^{\seq}$, where $\mathcal{A}^{\seq}$ denotes the admissible arc system on $\Sigma^\seq$ corresponding to $\Gamma^{\seq}$.

\begin{lem}\label{LemmaBranched}
Let $\Sigma, \Sigma^{\seq}$, $\Gamma$ and $\Gamma^{\seq}$ be as above. Then, for a suitable smooth structure on $\Sigma^{\seq}$,
the projection $\Theta: \Gamma^{\seq} \rightarrow \Gamma$ from \eqref{EquationMapPi} extends to a regular branched cover $\Theta_{\Sigma}:\Sigma^{\seq} \rightarrow \Sigma$ with $\Deck(\Theta_{\Sigma}) \cong  \mathbb{Z}_{\overline{m}}$.
\end{lem}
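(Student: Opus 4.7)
The plan is to realize $\Theta_{\Sigma}$ as the quotient map by a $\mathbb{Z}_{\overline{\seq}}$-action that is free away from the branch points, working first at the level of ribbon surfaces and only afterwards extending across the glued discs. The starting point is the natural $\mathbb{Z}_{\overline{\seq}}$-action on $\Gamma^{\seq}$ (via $h_f \mapsto h_{g+f}$), which is free on half-edges and has vertex stabilizers equal to $U_v = \langle \overline{\seq}/\seq(v) \rangle$; moreover, $\Theta: \Gamma^{\seq} \to \Gamma$ is precisely the quotient of ribbon graphs, since it commutes with $\sigma$, $\iota$, and $s$. By uniqueness of the filling embedding of a ribbon graph in its ribbon surface, this action extends to an orientation-preserving action on $\Sigma_{\Gamma^{\seq}}$, and the quotient $\Sigma_{\Gamma^{\seq}}/\mathbb{Z}_{\overline{\seq}}$ is canonically identified with $\Sigma_{\Gamma}$, producing a continuous surjection $\Theta_{\text{rib}}: \Sigma_{\Gamma^{\seq}} \to \Sigma_{\Gamma}$.

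Next, I would equip both surfaces with smooth structures making $\Theta_{\text{rib}}$ a smooth branched cover of degree $\overline{\seq}$. Away from $\BV{\Gamma^{\seq}}$ the map is already a covering, and one can pull back a smooth structure from $\Sigma_{\Gamma}$. At a vertex $(v, f+U_v)$ with $n = \seq(v)$, I would choose disc charts in which the map takes the standard form $z \mapsto z^n$. This is possible because the cyclic order of half-edges at $(v, f+U_v)$, governed by the formula $\sigma(h_g) = \sigma(h)_{g + W(h)}$, consists of $n$ concatenated copies of the cyclic order at $v$, which matches the combinatorics of wrapping $n$ times around the origin; and the local model $z \mapsto z^n$ is equivariant with respect to $U_v$ acting by $n$-th roots of unity.

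Finally, to pass from ribbon surfaces to $\Sigma$ and $\Sigma^{\seq}$, one caps off the false faces of $\Gamma$ inside $\Sigma_{\Gamma}$ and, compatibly, their preimages under $\BF{\Gamma^{\seq}} \to \BF{\Gamma}$ inside $\Sigma_{\Gamma^{\seq}}$. Since the $\mathbb{Z}_{\overline{\seq}}$-action is free away from vertices, the preimage of each false face is a union of full $\mathbb{Z}_{\overline{\seq}}$-orbits of boundary components, so the capping can be carried out equivariantly, and $\Theta_{\text{rib}}$ extends smoothly and without new ramification to $\Theta_{\Sigma}: \Sigma^{\seq} \to \Sigma$ with branch locus $\mathscr{P}$ and ramification index $\seq(v)$ at $v$. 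Regularity and $\Deck(\Theta_{\Sigma}) \cong \mathbb{Z}_{\overline{\seq}}$ follow from the free and transitive action of $\mathbb{Z}_{\overline{\seq}}$ on generic fibers together with connectedness of $\Sigma^{\seq}$. The main delicate point is the equivariant construction of the local model $z \mapsto z^n$ at each ramification point, which amounts to bookkeeping the cyclic order dictated by the weight function $W$; a secondary point to verify from the setup is that $\Gamma^{\seq}$, and hence $\Sigma^{\seq}$, is connected so that $\Sigma^{\seq}$ is a punctured surface in our conventional sense.
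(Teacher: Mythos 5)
Your quotient-by-$\mathbb{Z}_{\overline{\seq}}$ strategy is a reasonable alternative to the paper's construction (the paper instead builds $\Theta_{\Sigma}$ directly from attaching maps $u_F$ of faces and then identifies $\Deck(\Theta_{\Sigma})$ with the ribbon graph automorphisms over $\Theta$), and your treatment of the local model $z \mapsto z^n$ at lifted vertices via the cyclic order correctly isolates one of the two key points. But there is a genuine gap at the capping step. You argue that, since the action is free away from the vertices, the preimage of each false face is a union of full $\mathbb{Z}_{\overline{\seq}}$-orbits of boundary components, so the caps can be glued equivariantly ``without new ramification''. Pointwise freeness does not give this: a nontrivial group element can stabilize a boundary circle setwise while acting on it by a fixed-point-free rotation, in which case the quotient map on that circle has degree greater than one and the equivariant cap forces a branch point at the centre of the glued disc. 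Such a branch point lies over a regular point of $\Sigma$, so $\Theta_{\Sigma}$ would fail to restrict to a covering $\Sigma^{\seq}_{\reg} \rightarrow \Sigma_{\reg}$, i.e.\ it would not be a branched cover in the sense required (branch locus contained in $\punct$). Indeed, in this very construction the non-false faces typically do behave this way (their boundary circles map with degree $d_F>1$), so an argument using only freeness away from the vertices cannot distinguish false from non-false faces and would prove too much.

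The missing ingredient is precisely where the definition of the cut enters, and it is the point the paper's proof isolates ($d_F=1$ for false faces). Each cutting path $c_p$ ends on $\partial\Sigma$ and lies in a single component of $\Sigma \setminus |\cA|$, so the sector containing $c_p$ near $p$ is a corner of a face meeting the boundary; hence no half-edge of $\sC$ occurs as a corner of a false face, so $W_{\sC}(h)=\overline{0}$ for every half-edge $h$ with $(h,h^+)$ part of a false face, and therefore $h^+_g=(h^+)_g$ along false faces. It follows that every false face of $\Gamma$ has exactly $\overline{\seq}$ preimages in $\BF{\Gamma^{\seq}}$, each of the same perimeter — equivalently, the action on the corresponding boundary components is free as an action on the set of components — and only then do the caps glue in without introducing ramification. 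You need to add this argument explicitly. A smaller point: appealing to ``uniqueness of the filling embedding'' only extends each graph automorphism individually; to get an honest $\mathbb{Z}_{\overline{\seq}}$-action on $\Sigma_{\Gamma^{\seq}}$, and to identify the quotient with $\Sigma_{\Gamma}$, you should build the ribbon surface functorially from vertex discs and edge bands, on which the action and the vertex-disc rotation model are manifest; this is fixable but should be said.
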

\begin{proof}
Every face $F=(h_1, \dots, h_{2m}) \in \BF{\Gamma^{\seq}}$ defines a continuous map $u_{F}: S^1 \rightarrow \Gamma^{\seq}$ which maps every $m$-th root of unity to a vertex of $\Gamma^{\seq}$ and which maps the open segment on the circle between two consecutive roots of unity homeomorphically onto the arc $\overline{h_{2j}} \subseteq |\cA|$ with its end points removed. In the same way, every face $G \in \BF{\Gamma}$ defines a map $u_G: S^1 \rightarrow \Gamma$. Note that for every $F \in \BF{\Gamma^{\seq}}$, there exists $d=d_{F} \geq 1$ such that $u_{\Theta(F)}(z^d)=\Theta \circ u_{F}(z)$ for all $z \in S^1$ for a suitable choice of the maps $u_G: S^1 \rightarrow \Gamma$. Note that $d_F=1$ whenever $F$ is a false face. This is due to the fact that the weight function $W_{\mathsf{C}}$ is trivial on all half-edges which do not belong to $\mathsf{C}$: given $h \in \BH{\Gamma}$ such that the pair $(h,h^+)$ is a part of a false face and $g \in \mathbb{Z}_{\overline{\seq}}$ we have $h_g^+=(h^+)_g$. Hence, any false face $F$ has exactly $\overline{\seq}$ preimages in  $\BF{\Gamma^{\seq}}$ with the same perimeter as $F$. Regarding the maps $u_{F}$ as attaching maps, we have
\begin{displaymath}
\Sigma^\seq \cong \Gamma^{\seq} \, {\cup}_{\, {\scriptstyle\bigsqcup} u_U} \Bigg( \bigsqcup_{U} S^1 \times [0,1] \Bigg) \cup_{\, {\scriptstyle\bigsqcup} u_{F}} \Bigg( \bigsqcup_{F} D^2  \Bigg) 
\end{displaymath} 
\noindent where $F \in \BF{\Gamma^{\seq}}$ ranges over the set of preimages of all false faces of $\Gamma$ under $\Theta$ and $U$ is indexed by all other faces of $\Gamma^{\seq}$. Similarly, we identify $\Sigma$ with 
\begin{displaymath}
\phantom{\Sigma^\seq \cong \quad}  \Gamma  \, \cup_{\, {\scriptstyle\bigsqcup} u_V} \Bigg( \bigsqcup_{V} S^1 \times [0,1] \Bigg) \cup_{\, {\scriptstyle\bigsqcup} u_G} \Bigg( \bigsqcup_{G} D^2  \Bigg)
\end{displaymath} 
\noindent where $G \in \BF{\Gamma^{\seq}}$ is indexed by the false faces of $\Gamma$ and $V$ is indexed by all other faces. Then, $\Theta_{\Sigma}: \Sigma^\seq \rightarrow \Sigma$ extends $\Theta: \Gamma^{\seq} \rightarrow \Gamma$. It maps a point $(z, t) \in S^1 \times [0,1]$ in the annulus corresponding to a face $U$ to the point $(z^{d_{U}}, t)$ in the annulus which corresponds to $V=\Theta(U)$ and maps every disc corresponding to a preimage $F$ of a false face one-to-one onto the disc corresponding to $G=\Theta(F)$.

By construction, $\Theta_{\Sigma}$ is a topological branched cover whose group of deck transformations is isomorphic to the group of ribbon graph automorphisms $f:\Gamma^{\seq} \rightarrow \Gamma^{\seq}$ such that $\Theta \circ f=\Theta$. Thus, $\Deck(\Theta_{\Sigma}) \cong \mathbb{Z}_{\overline{m}}$. By \cite[Proposition 4.40]{LeeSmoothManifolds}, there exists a unique smooth structure on $\Sigma^{\seq}$ such that $\Theta_{\Sigma}$ is smooth.
\end{proof}

\begin{rem}The statement of Lemma \ref{LemmaBranched} remains true for any smooth structure if one allows for a deformation of the embedding $\kappa: \Gamma^{\seq} \hookrightarrow \Sigma^{\seq}$. Since $\Sigma^{\seq}$ is a compact surface, all its self-homeomorphisms are isotopic to a diffeomorphism (e.g. see \cite{HatcherTorusTrick} for a simple proof). Applied to the identity it shows that $\Theta_{\Sigma}$ may be deformed by a diffeomorphism $f$ (isotopic to the identity) to be smooth for any previously chosen smooth structure. After deforming $\kappa$ via $f$, the deformation of $\Theta_{\Sigma}$ extends $\Theta$.
\end{rem}

\noindent Now we can consider $\cA^{\seq}$ as lifts of arcs in $\cA$ along the branched cover $\Theta_{\Sigma}$. By Lemma \ref{LemmaBranchedCoverInducesCategoricalAction}, $\mathbb{Z}_{\overline{\seq}}$ acts freely on the category $\mathbb{B}=\mathbb{B}(\mathcal{A}^{\seq}, \mathbf{1})$. Its orbit category $\mathbb{B}/ \mathbb{Z}_{\overline{\seq}}$ contains a Morita equivalent subcategory, which we denote by  $\mathbb{B}^{\seq}$, whose objects are in bijection with $\mathcal{A}$ (Corollary \ref{CorollarySkeletonMoritaEquivalence}). The following proposition is analogous to \cite[Theorem 3.9.]{GreenSchrollSnashall} and shows that $\mathbb{B}^{\seq}$ is nothing but the category $\mathbb{B}(\mathcal{A}, \seq)$.

\begin{prp}\label{PropsitionOrbitIsBGC}
Let $\mathcal{A}$ be a graded admissible arc system on a graded surface of ribbon type, let $\seq$ be a multiplicity function and let $\mathbb{B}^{\seq}$ denote the $A_{\infty}$-category defined above. Then, there exists an isomorphism
$$\begin{tikzcd} \mathbb{B}^{\seq} \arrow{r}{\simeq} &  \mathbb{B}(\mathcal{A}, \seq)\end{tikzcd}$$
\noindent of the underlying $\Bbbk$-linear graded categories under which the $A_{\infty}$-operations in Definition \ref{DefinitionBrauerGraphCategory} correspond to the $A_{\infty}$-operations of $\mathbb{B}^{\seq}$.
\end{prp}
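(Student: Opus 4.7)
I would build an explicit isomorphism $\Phi: \mathbb{B}^{\seq} \to \mathbb{B}(\mathcal{A}, \seq)$ of graded $\Bbbk$-linear categories and then match the $A_\infty$-operations, mirroring the combinatorial content of the Green--Schroll--Snashall result \cite{GreenSchrollSnashall}. Fix a set $S \subseteq \mathcal{A}^{\seq}$ of representatives for the free $\mathbb{Z}_{\overline{\seq}}$-action on $\mathcal{A}^{\seq}$, so that $\Theta_\Sigma$ induces a bijection $S \to \mathcal{A}$, and send $X_{\tilde\gamma}$ to $X_{\Theta_\Sigma(\tilde\gamma)}$. On morphisms, by definition of the orbit category,
$$\Hom_{\mathbb{B}^{\seq}}(X_{\tilde\gamma}, X_{\tilde\delta}) = \bigoplus_{g \in \mathbb{Z}_{\overline{\seq}}} \Hom_{\mathbb{B}(\mathcal{A}^{\seq},\mathbf{1})}(X_{\tilde\gamma}, g.X_{\tilde\delta}),$$
each summand having a basis of paths in $B_{\mathcal{A}^\seq}$. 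The core combinatorial claim is that any path in $Q_{\Gamma^\seq}$ starting at $\tilde\gamma$ is uniquely determined by its projection to a path in $Q_{\Gamma}$, and conversely any path in $Q_\Gamma$ from $\gamma$ to $\delta$ lifts uniquely to a path starting at $\tilde\gamma$; the winding governed by the weight function $W_\mathsf{C}$ (through $\sigma(h_g) = \sigma(h)_{g+W(h)}$) determines the endpoint in the fiber over $\delta$, which is precisely the grading element $g$ of the orbit category. Under this bijection, a single full cycle $C_{\tilde\alpha}$ in $\Gamma^\seq$ corresponds to $C_\alpha^{\seq(C_\alpha)}$ in $\Gamma$, so the modified relation $C_{\tilde\alpha} - (-1)^{\omega_{\tilde\eta}(\tilde{\bar h})} C_{\tilde\beta}$ of $B_{\mathcal{A}^\seq}$ is sent to the defining relation $C_\alpha^{\seq(C_\alpha)} - (-1)^{\omega_\eta(\bar h)} C_\beta^{\seq(C_\beta)}$ of $B_\mathcal{A}$, using $\omega_{\tilde\eta}(\tilde\gamma) = \omega_\eta(\gamma)$ for the lifted line field.

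Next I would check that degrees match: since $\tilde\eta$ is defined as the lift of $\eta$ and $\Theta_\Sigma$ is a local diffeomorphism away from the punctures, the degree of any oriented intersection in $\Sigma^\seq$ equals the degree of its image in $\Sigma$, so arrows and paths have matching degrees. For $\mu_2$, the orbit-category formula exactly amounts to concatenating lifted paths and then reading off the result in the fiber; this projects to the corresponding product in $B_\mathcal{A}$, and since both the sign $(-1)^{|a|}$ in Definition \ref{DefinitionBrauerGraphCategory} and the winding-number signs pull back along $\Theta_\Sigma$, the two compositions agree.

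The main work lies in the higher operations. I would classify the marked discs on $\Sigma^\seq$ that contribute to $\mu_n$ in $\mathbb{B}(\mathcal{A}^\seq, \mathbf{1})$ and observe that under $\Theta_\Sigma$ each such disc projects either to an honest marked disc on $\Sigma$ (producing the operations \eqref{Mu1}--\eqref{Mu2} in $\mathbb{B}(\mathcal{A}, \seq)$) or to a disc-like configuration whose image wraps a puncture $p$ a total number of times $\leq \seq(p)$, which is exactly the combinatorial picture encoded by \eqref{HigherOps5} and the element $a_r(ba_r)^\ast$. The $\mathbb{Z}_{\overline{\seq}}$-component $g$ records precisely how many times the lifted disc sequence wraps around the appropriate vertex of $\Gamma^\seq$. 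The main obstacle I expect is bookkeeping the sign
$$\circ = |a_1|+\cdots+|a_{r-1}|+|ba_r|+\omega_\eta(\delta_2)+\cdots+\omega_\eta(\delta_r)$$
attached to \eqref{HigherOps5} and verifying that it matches the sign produced by the lifted disc sequence in $\mathbb{B}(\mathcal{A}^\seq,\mathbf{1})$. I expect this to reduce, via Lemma \ref{LemmaDegreeCondition} and Corollary \ref{CorollaryWindingEdges} applied on $\Sigma^\seq$, together with the identity $\omega_{\tilde\eta}(\tilde\gamma) = \omega_\eta(\gamma)$, to a parity computation entirely parallel to the one carried out in Lemma \ref{LemmaDeltaP} and the proof of Proposition \ref{PropositionTrivialExtensionIsomorphism}, so no new conceptual input beyond careful tracking of parities should be required.
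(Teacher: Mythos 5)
Your overall architecture is the same as the paper's: an explicit functor $\varphi$ induced by $\Theta_\Sigma$, the bijection between paths upstairs (from $\gamma_X$ to the various $g.\gamma_Y$, with $g$ recorded by the weight function) and paths downstairs, preservation of degrees because $\Theta_\Sigma$ is a local diffeomorphism away from the punctures, preservation of the sign in the modified relations via $\omega_{\tilde\eta}(\tilde\gamma)=\omega_\eta(\Theta_\Sigma(\tilde\gamma))$, and the observation that a full cycle $C_{\tilde\alpha}$ projects to $C_{\Theta(\alpha)}^{\seq(v)}$. Up to that point your proposal matches the paper's proof.

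The step that would not go through as you describe it is the matching of the higher operations. There is no dichotomy among marked discs on $\Sigma^{\seq}$: every marked disc contributing to a disc sequence upstairs projects to an honest marked disc on $\Sigma$, and conversely every marked disc on $\Sigma$ lifts (the relevant faces are the false faces, on which the weight function $W_{\mathsf C}$ vanishes because the cut half-edges lie in boundary faces, so these faces are unramified of covering degree one; cf.\ the proof of Lemma \ref{LemmaBranched}). In particular, the operations of type \eqref{HigherOps5} downstairs do \emph{not} come from ``disc-like configurations wrapping a puncture up to $\seq(p)$ times'': both upstairs and downstairs they are given by the same formula \eqref{HigherOps5} of Definition \ref{DefinitionBrauerGraphCategory}, applied to corresponding disc sequences, and the wrapping is carried entirely by the inserted paths $ba_r$ and $a_r(ba_r)^*$; these correspond under $\varphi$ precisely because $(-)^{\ast}$ is intertwined, i.e.\ because $C_{\tilde\alpha}\mapsto C_{\Theta(\alpha)}^{\seq(v)}$ — an ingredient you already have but do not use here. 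What is actually needed, and what the paper proves, is (i) that $\varphi$ sends disc sequences to disc sequences (projection along the immersion $\Theta|_{\tilde\Sigma_{\reg}}$) and (ii) the converse lifting statement, namely that every marked disc on $\Sigma$, being an immersion outside finitely many boundary points, lifts to $\Sigma^{\seq}$; your plan only addresses the projection direction. Finally, the parity computation you anticipate (in the style of Lemma \ref{LemmaDeltaP} and Proposition \ref{PropositionTrivialExtensionIsomorphism}) is not needed: since $\varphi$ preserves degrees and winding numbers of arcs, the sign $\circ$ in \eqref{HigherOps5} and the signs in \eqref{Mu1}, \eqref{Mu2} coincide term by term upstairs and downstairs, so no further bookkeeping arises. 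With these corrections your argument becomes the paper's proof.
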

\begin{proof}Let $B_{\tilde{\cA}}$ denote the modified Brauer graph algebra of $(\Gamma^{\seq}, \mathbf{1})$ and let $B_{\cA}$ denote the modified Brauer graph algebra associated with $(\Gamma, \seq)$.
Let us construct a morphism of graded $\Bbbk$-linear categories  $\varphi: \bB^{\seq} \rightarrow \bB(\cA, \seq)$.

Every object $X \in \Ob{\bB^{\seq}}$ corresponds uniquely to an arc $\gamma_X\in \tilde{\Gamma}$. On objects $\varphi$ is defined by sending $X \in \Ob{\bB^{\seq}}$ to the object $X_{\Theta(\gamma_X)}$. 
For all $Y \in \Ob{\bB^{\seq}}$, the graded vector space $\Hom_{\bB^{\seq}}(X,Y)$ possesses a homogeneous basis which is in bijection with non-zero paths in $B_{\tilde{\cA}}$  from the vertex $\gamma_X \in \BV{Q_{\Gamma^{\seq}}}$ and to all vertices $g.\gamma_Y$, where $g \in \mathbb{Z}_{\overline{\seq}}$. 
Note that every path from $\gamma_X$ to $g.\gamma_Y$ is a subpath of a maximal path around some vertex $(v,f+U_v)$ incident to $\gamma_X$ and starting at $\alpha_h$ for a half-edge $h\in \gamma_X$. Thus, there are non-zero paths from $\gamma_X$ to $g.\gamma_Y$ if and only if $g\in U_v$ and there is a path from $\Theta(\gamma_X)$ to $\Theta(\gamma_Y)$ in $\bB(\cA, \seq)$. For a path $\alpha: \gamma_X \rightarrow g.\gamma_Y$ going around vertex $v$ and starting at $\alpha_h$  define $\varphi(\alpha)$ to be the path of the same length  $\varphi(\alpha): \Theta(\gamma_X) \rightarrow \Theta(\gamma_Y)$, going around $\Theta(v)$ and starting at $\alpha_{\Theta(h)}$. Since $|U_v|=\seq(v)$, the map $\varphi$ assigns to a morphism $X_{\gamma} \rightarrow X_{\gamma}$ which corresponds to a maximal path $C_{\alpha}$ of $B_{\tilde{\cA}}$ the morphism $X_{\Theta(\gamma)} \rightarrow X_{\Theta(\gamma)}$ which corresponds to a maximal path $C_{\Theta(\alpha)}^{\seq(v)}$ of $B_{\cA}$.  We observe that $\varphi$ is compatible with compositions and preserves the relations of the modified Brauer graph algebras due to $\omega_{\tilde{\eta}}(\gamma)=\omega_{\eta}(\Theta(\gamma))$. Equality of the two winding numbers follows from the fact that $\Theta$ is a local diffeomorphism away from the punctures. Finally, due to our choice of gradings on $\tilde{\cA}$, it is apparent that $\varphi$ is compatible with the given gradings on $\bB$ and $\bB(\cA, \seq)$ and we conclude  that $\varphi$ induces an isomorphism of graded $\Bbbk$-linear categories.

In order to show that $\varphi$ is compatible with the higher operations defined on both categories it suffices to show that $\varphi$ maps disc sequences to disc sequences and that every disc sequence has a preimage which is a disc sequence. This follows from the fact that $\Theta|_{\tilde{\Sigma}_{\reg}}$ is an immersion and the fact that every continuous map from a marked disc $D$ to $\Sigma$ which is a smooth immersion outside a finite number of points on its boundary lifts to a map $D \rightarrow \tilde{\Sigma}$ with the same properties.
\end{proof}

\begin{cor}\label{CorollaryBGACategoryHigherMultWellDefined}The operations in Definition \ref{DefinitionBrauerGraphCategory} turn the $\Bbbk$-linear graded category $\mathbb{B}(\mathcal{A}, \seq)$ into an $A_{\infty}$-category.
\end{cor}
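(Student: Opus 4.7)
The plan is to deduce this corollary directly from the two preceding results by chaining together the $A_\infty$-structures that have already been established. Concretely, I would argue as follows.

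First, by Proposition \ref{PropositionTrivialExtensionIsomorphism} applied to the lifted data $(\cA^\seq, \tilde\eta)$ on the branched cover $\Sigma^\seq$ with trivial multiplicity, the Brauer graph category $\bB(\cA^\seq, \mathbf{1})$ is isomorphic to the trivial extension $\triv(\cF_{\cA^\seq_\sC})$ for any choice of cut $\sC$. Trivial extensions of $A_\infty$-categories are $A_\infty$-categories (Definition \ref{DefinitionInfinityTrivialExtension}), so $\bB(\cA^\seq,\mathbf{1})$ is an honest $A_\infty$-category. Second, Lemma \ref{LemmaBranchedCoverInducesCategoricalAction} endows $\bB(\cA^\seq, \mathbf{1})$ with a free left $\mathbb{Z}_{\overline{\seq}}$-action, and the general lemma on orbit categories shows that the quotient $\bB(\cA^\seq, \mathbf{1})/\mathbb{Z}_{\overline{\seq}}$ inherits an $A_\infty$-structure. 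Restricting to any set of orbit representatives gives a full $A_\infty$-subcategory $\bB^\seq$, so $\bB^\seq$ is an $A_\infty$-category.

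Finally, Proposition \ref{PropsitionOrbitIsBGC} provides an isomorphism $\varphi:\bB^\seq\to\bB(\cA,\seq)$ of the underlying graded $\Bbbk$-linear categories under which the higher operations on $\bB^\seq$ transport to precisely the formulas of Definition \ref{DefinitionBrauerGraphCategory}. Transporting the $A_\infty$-structure along an isomorphism of graded $\Bbbk$-linear categories preserves the $A_\infty$-relations, hence $\bB(\cA,\seq)$ with the operations of Definition \ref{DefinitionBrauerGraphCategory} is an $A_\infty$-category.

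I do not expect any real obstacle here: the work has already been done in Proposition \ref{PropositionTrivialExtensionIsomorphism} (checking signs and matching the trivial extension formulas with Definition \ref{DefinitionBrauerGraphCategory}) and in Proposition \ref{PropsitionOrbitIsBGC} (matching disc sequences upstairs with those downstairs via $\Theta$). The only thing one might wish to spell out, if desired, is why the full $A_\infty$-subcategory spanned by a set of orbit representatives in $\bB(\cA^\seq,\mathbf{1})/\mathbb{Z}_{\overline\seq}$ is well-defined as an $A_\infty$-category; but this is immediate from the definition of an orbit category together with the freeness of the action, which ensures that any tuple of composable morphisms between representatives has a unique expansion in the orbit $\Hom$-spaces.
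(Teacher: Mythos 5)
Your argument is correct and follows essentially the same route the paper intends: $\mathbb{B}(\cA^{\seq},\mathbf{1})$ is an $A_{\infty}$-category by Proposition \ref{PropositionTrivialExtensionIsomorphism}, the free $\Deck(\Theta_\Sigma)\cong\mathbb{Z}_{\overline{\seq}}$-action of Lemma \ref{LemmaBranchedCoverInducesCategoricalAction} together with the orbit-category construction and Corollary \ref{CorollarySkeletonMoritaEquivalence} makes $\bB^{\seq}$ an $A_{\infty}$-category, and Proposition \ref{PropsitionOrbitIsBGC} transports this structure to the operations of Definition \ref{DefinitionBrauerGraphCategory} on $\mathbb{B}(\cA,\seq)$. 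No gaps.
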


\section{Morita equivalences and elementary moves}\label{SectionMoritaEquivalences}

\noindent In this section we prove that the Morita equivalence class of a Brauer graph category of a graded punctured surface is independent of the chosen arc collection, see Corollary \ref{CorollaryBGAIndependentOfArcSystem}. The following preliminary result serves as the foundation for our proof. In this section, all admissible arc systems are understood to be on a graded surface of ribbon type.

\begin{prp}\label{Prop_AddingAnArcMorita}Let $\mathcal{A}$ be a graded admissible arc system and let $\gamma \in \mathcal{A}$ be such that $\mathcal{B}=\mathcal{A} \setminus \{\gamma\}$ is full. Then, $\mathbb{B}(\mathcal{B}, \seq)$ is a full subcategory of  $ \mathbb{B}(\mathcal{A}, \seq)$ and the inclusion is a Morita equivalence.
\end{prp}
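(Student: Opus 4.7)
The plan is to treat the multiplicity-free case first via the trivial extension description of Proposition \ref{PropositionTrivialExtensionIsomorphism}, and then reduce the general case to it through the branched cover of Lemma \ref{LemmaBranched}. Because cutting paths for $\cA$ lie in $\Sigma\setminus|\cA|\subseteq\Sigma\setminus|\cB|$, any cut $\sC$ of $(\Sigma,\cA)$ is automatically a cut of $(\Sigma,\cB)$, and applying Proposition \ref{PropositionTrivialExtensionIsomorphism} to both cutting pairs produces $A_\infty$-isomorphisms $\mathbb{B}(\cA,\mathbf{1})\cong\triv(\cF_{\cA_\sC})$ and $\mathbb{B}(\cB,\mathbf{1})\cong\triv(\cF_{\cB_\sC})$ on the common cut surface $\Sigma_\sC$. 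Since $\cB_\sC=\cA_\sC\setminus\{\gamma_\sC\}$ is itself a full arc subsystem of $\cA_\sC$, the inclusion $\cF_{\cB_\sC}\hookrightarrow\cF_{\cA_\sC}$ is a full $A_\infty$-subcategory inclusion, and by Lemma \ref{LemmaInclusionsTrivialExtension} so is $\triv(\cF_{\cB_\sC})\hookrightarrow\triv(\cF_{\cA_\sC})$. Transporting this through the trivial extension isomorphisms exhibits $\mathbb{B}(\cB,\mathbf{1})$ as a full $A_\infty$-subcategory of $\mathbb{B}(\cA,\mathbf{1})$.

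For the Morita equivalence in the multiplicity-free case I would invoke Lemma \ref{LemmaWellDefineFukaya} and the disc sequence argument recalled in Remark \ref{RemarkSketchHKKProof}, which produce a twisted complex $X'_\gamma$ over $\{X_\delta\}_{\delta\in\cB_\sC}$ in $\Tw\cF_{\cA_\sC}$ together with a closed morphism $f\colon X_\gamma\to X'_\gamma$ that becomes invertible in $\H^0\Tw\cF_{\cA_\sC}$. The key observation is that the canonical inclusion $\cF_{\cA_\sC}\hookrightarrow\triv(\cF_{\cA_\sC})$ of Lemma \ref{LemmaInclusionsTrivialExtension} is a strict $A_\infty$-functor, so the $A_\infty$-identities of the form $\mu^2(g,f)=\mathrm{id}+\mu^1(h)$ witnessing invertibility of $f$ transfer verbatim to $\Tw\triv(\cF_{\cA_\sC})$. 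Consequently $X_\gamma$ lies in the triangulated subcategory of $\H^0\Tw\mathbb{B}(\cA,\mathbf{1})$ generated by $\{X_\delta\}_{\delta\in\cB}$, which together with the fullness already established and the fact that $\{X_\delta\}_{\delta\in\cA}$ generates $\H^0\Tw\mathbb{B}(\cA,\mathbf{1})$ yields the desired Morita equivalence.

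To handle a general multiplicity function $\seq$, I would pass to the regular branched cover $\Theta\colon\Sigma^\seq\to\Sigma$ of Lemma \ref{LemmaBranched} and lift $\cA$ and $\cB$ to $\cA^\seq\coloneqq\Theta^{-1}(\cA)$ and $\cB^\seq\coloneqq\cA^\seq\setminus\Theta^{-1}(\gamma)$. Fullness descends from $\cB$ to $\cB^\seq$ because $\Theta$ is a branched cover with branch locus contained in $\punct$ and false faces lift to false faces, and since the arcs of $\Theta^{-1}(\gamma)$ are pairwise disjoint on $\Sigma^\seq$, inserting them back into $\cB^\seq$ one at a time exhibits a chain of full arc systems from $\cB^\seq$ up to $\cA^\seq$. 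Iterating the multiplicity-free case along this chain produces a $\mathbb{Z}_{\overline{\seq}}$-equivariant Morita equivalence $\mathbb{B}(\cB^\seq,\mathbf{1})\hookrightarrow\mathbb{B}(\cA^\seq,\mathbf{1})$, which I would descend to orbit categories: fullness is preserved by the orbit-category $\Hom$ formula, and the twisted complex resolution of any lift $X_{\tilde\gamma}$ of $X_\gamma$ transfers through the quotient functor. Choosing compatible sets of orbit representatives, Corollary \ref{CorollarySkeletonMoritaEquivalence} and Proposition \ref{PropsitionOrbitIsBGC} identify the resulting inclusion of representative subcategories with $\mathbb{B}(\cB,\seq)\hookrightarrow\mathbb{B}(\cA,\seq)$.

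The main obstacle I anticipate is the coherent bookkeeping of the various identifications (cuts, trivial extensions, branched covers, orbit categories and representative subcategories) and in particular checking that under all of them the naive inclusion of Brauer graph categories prescribed by Definition \ref{DefinitionBrauerGraphCategory} corresponds precisely to the inclusion constructed along the way. This compatibility, which is what allows the abstractly produced Morita equivalence to be identified with the one asserted in the statement, must be verified at the level of the underlying graded $\Bbbk$-linear data by tracking basis elements through the isomorphisms of Propositions \ref{PropositionTrivialExtensionIsomorphism} and \ref{PropsitionOrbitIsBGC}.
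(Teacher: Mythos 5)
Your multiplicity-free argument is essentially the paper's proof: the paper also transports the resolution of $X_\gamma$ from Remark \ref{RemarkSketchHKKProof} and Lemma \ref{LemmaWellDefineFukaya} into the Brauer graph category along a strict $A_\infty$-functor out of $\cF_{\cA_\sC}$, which for $\seq=\mathbf{1}$ is exactly the composite of the trivial-extension inclusion of Lemma \ref{LemmaInclusionsTrivialExtension} with the isomorphism of Proposition \ref{PropositionTrivialExtensionIsomorphism}, and then concludes by generation. Where you genuinely diverge is the case of arbitrary $\seq$: the paper never passes to the branched cover. It observes instead that for \emph{every} multiplicity function there are strict $A_\infty$-functors $\Psi_\cB\colon \cF_{\cB_\sC}\to\bB(\cB,\seq)$ and $\Psi_\cA\colon \cF_{\cA_\sC}\to\bB(\cA,\seq)$, sending an arc to the same arc and a morphism to the corresponding path not containing any cut arrow $\alpha_p^\sC$ (strictness holds because disc sequences correspond); these fit into a commutative square with the two inclusions, and applying $\HH^0(\Tw(-))$ to $\Psi_\cA$ transports the isomorphism of $X_\gamma$ with a twisted complex over the remaining arcs directly into $\HH^0(\Tw\,\bB(\cA,\seq))$, after which generation finishes the proof uniformly in $\seq$. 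Your detour through Lemma \ref{LemmaBranched}, Lemma \ref{LemmaBranchedCoverInducesCategoricalAction}, Corollary \ref{CorollarySkeletonMoritaEquivalence} and Proposition \ref{PropsitionOrbitIsBGC} can be made to work, but it buys nothing and costs several verifications the paper's argument does not need: that an equivariant full inclusion which is a Morita equivalence descends to orbit categories (provable by pushing the twisted-complex resolution along the strict functor $\bB(\cA^\seq,\mathbf{1})\to\bB(\cA^\seq,\mathbf{1})/\mathbb{Z}_{\overline{\seq}}$ given by the identity components of the morphism spaces; note also that the intermediate arc systems in your one-arc-at-a-time chain are not $\mathbb{Z}_{\overline{\seq}}$-invariant, only the composite inclusion is); that Proposition \ref{PropsitionOrbitIsBGC} applies to $\cB$ with the cover built from $\cA$, i.e.\ that the lift of $\cB$ along $\Theta_\Sigma$ realizes the multiplicity-free cover of $(\Gamma_\cB,\seq)$; and the compatibility of representative subcategories that you flag yourself. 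None of these is a fatal gap, but they are exactly the bookkeeping that the paper's choice of the comparison functors $\Psi$ renders unnecessary.
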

\begin{proof}

The first part of the assertion is clear and for the second part it is sufficient to show that the induced functor between the respective homotopy categories of twisted complexes is essentially surjective. 
Let $(\cA,\sC)$ be a cutting pair. Since $\cB$ is full, $(\cB,\sC)$ is a cutting pair as well. Let   $\cF_\cA$ and $\cF_\cB$ denote the corresponding $A_\infty$-categories. We can consider the following commutative diagram of $A_\infty$-functors.
\begin{displaymath}
 \begin{tikzcd}
       {\cF}_{\cB} \arrow{d}{\Psi_\cB} \arrow{r}{\iota_\cF} & {\cF}_{\cA}  \arrow{d}{\Psi_\cA} \\
        \bB(\cB, \seq) \arrow{r}{\iota_\bB} & \bB(\cA, \seq). \end{tikzcd}
\end{displaymath}
Here $\iota_\cF$ and $\iota_\bB$ denote the natural inclusions and $\Psi_\cA$ and $\Psi_\cB$ are inclusions, which send an object $X_\delta$ corresponding to an arc $\delta$ to the object corresponding to the same arc and any morphism corresponding to a path $\delta_1\rightarrow \delta_2$ to the corresponding path $\delta_1 \rightarrow \delta_2$ which does not contain $\alpha_\sC^p$ for any $p\in \punct$. Since disc sequences for $\cF_{\cB_\sC}$ correspond to disc sequences for $\mathbb{B}(\mathcal{B}, \seq)$ and disc sequences for $\cF_{\cA_\sC}$ correspond to disc sequences for $\mathbb{B}(\mathcal{A}, \seq)$, this defines strict $A_\infty$-functors. Note that in case $\seq=\mathbf{1}$ the functor $\psi$ is just the inclusion of an $A_\infty$-category into its trivial extension from Lemma  \ref{LemmaInclusionsTrivialExtension}.
The diagram above induces a commutative square of exact functors $\H^0(\Tw\iota_\cF), \H^0(\Tw\iota_\bB), \H^0(\Tw\Psi_\cB)$ and $\H^0(\Tw\Psi_\cA)$ between the respective homotopy categories of twisted complexes. Since $\H^0(\Tw\Psi_\cA)$ is a functor, it preserves isomorphisms and it follows from Remark \ref{RemarkSketchHKKProof} and Lemma \ref{LemmaWellDefineFukaya} that in $\H^0(\Tw\mathbb{B}(\mathcal{A}, \seq))$, the object $X_\gamma$ is isomorphic to a twisted complex on the remaining generators $X_\delta$, where $\delta\neq \gamma$.   Thus, the essential image of the inclusion $\H^0(\Tw\iota_\bB): \H^0(\Tw\mathbb{B}(\mathcal{B}, \seq)) \rightarrow \H^0(\Tw\mathbb{B}(\mathcal{A}, \seq))$ contains all the objects $X_\delta$, $\delta\in\cA$. Since these generate the triangulated category $\H^0(\Tw\mathbb{B}(\mathcal{A}, \seq))$ and since the essential image of $\H^0(\Tw\iota_\bB)$ is a triangulated subcategory, we conclude that $\Psi_\cA$ is essentially surjective and hence a Morita equivalence.\end{proof}
\noindent The previous proposition motivates the following definition.

\begin{definition}
	Let $\mathcal{A}, \mathcal{B}$ be two admissible arc systems. We say that $\mathcal{B}$ is obtained from $\mathcal{A}$ by an \textbf{elementary move} if either $\mathcal{B}=\cA \sqcup \gamma$ or $\mathcal{B}=\cA \backslash\{\gamma\}$.
\end{definition}

\noindent Proposition \ref{Prop_AddingAnArcMorita} implies that Brauer graph categories whose arc systems are related by a sequence of elementary moves are Morita equivalent. We emphasize a few easy observations:

\begin{itemize}
    \item Every admissible arc system is connected to a maximal admissible arc system by a sequence of elementary moves.
    \item  If $(\cA, \mathsf{C})$ is a cutting pair and $\cA$ is maximal, then every flip of edges in the associated triangulation of $\Sigma_{\mathsf{C}}$ corresponds to a two-step sequence of elementary moves given by removing and adding an arc. The sequence transforms $\cA$ into a new maximal admissible arc system for which $\mathsf{C}$ is a cut. By \cite{Hatcher}, every two triangulations of $(\Sigma_{\sC}, \cA_{\sC})$ are related by a sequence of flips. Note that a triangulation in \cite{Hatcher} does not contain boundary arcs. However, one obtains a triangulation in the above sense from a triangulation in the sense of \cite{Hatcher} simply by adding boundary arcs and this process gives rise to a bijection. 
    \item If $\cA$ and $\cB$ are graded arc systems which differ only by their respective grading, then the Brauer graph categories $\mathbb{B}(\cA, \seq)$ and $\mathbb{B}(\cB, \seq)$ are Morita equivalent, since a shift of the grading of an arc corresponds to a shift in the category of twisted complexes.
\end{itemize}

\noindent Suppose now that $(\cA, \mathsf{C})$ is a cutting pair with $\mathsf{C}=(c_p)_{p \in \punct}$. Then for every boundary component $B \subseteq \partial \Sigma$, the map $p \mapsto c_p(1)$ induces a bijection from $\punct_\mathsf{C}(B) \coloneqq \{ p \in \punct \, | \, c_p(1) \in B\}$ to $\deco_{\mathsf{C}} \cap B$ which turns $\punct_{\mathsf{C}}(B)$ into a cyclically ordered set. By passing back and forth between a maximal cutting pair and its associated triangulation of a marked surface, the above observations show the following.

\begin{cor}\label{CorollaryTriangulationConnectedByElementaryMoves} Suppose that $(\cA, \mathsf{C})$ and $(\cB, \mathsf{D})$ are cutting pairs  of $\Sigma$ such that for all boundary components $B \subseteq \partial\Sigma$, $\punct_{\mathsf{C}}(B)=\punct_{\mathsf{D}}(B)$ as cyclically ordered sets. Then, $\cA$ and $\cB$ are connected by a sequence of elementary moves.
\end{cor}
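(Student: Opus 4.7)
The plan is to reduce the statement to the analogous (and already known) result for triangulations of a marked surface, via the cut construction. First I would extend both cutting pairs to maximal ones: choose a sequence of elementary moves that turns $\cA$ into an admissible arc system $\cA'$ which is maximal among admissible arc systems having $\mathsf{C}$ as a cut (add arcs one by one, checking admissibility with respect to $\mathsf{C}$ at each step), and similarly extend $\cB$ to a maximal admissible arc system $\cB'$ compatible with $\mathsf{D}$. The cut surfaces $\Sigma_{\mathsf{C}}$ and $\Sigma_{\mathsf{D}}$ are by construction marked surfaces whose sets of marked points on each boundary component $B\subseteq\partial\Sigma$ coincide with the cyclically ordered sets $\punct_{\mathsf{C}}(B)$ and $\punct_{\mathsf{D}}(B)$ respectively. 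By the hypothesis, these cyclically ordered sets agree, so there is an orientation-preserving homeomorphism of marked surfaces $\varphi:\Sigma_{\mathsf{C}}\to\Sigma_{\mathsf{D}}$ (up to isotopy) identifying the marked points.

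Under the bijection between admissible arc systems with cut $\mathsf{C}$ on $\Sigma$ and arc systems on $\Sigma_{\mathsf{C}}$, the maximality of $\cA'$ translates into the fact that $\cA'_{\mathsf{C}}$ is a triangulation of $\Sigma_{\mathsf{C}}$, and similarly $\cB'_{\mathsf{D}}$ is a triangulation of $\Sigma_{\mathsf{D}}$. Via $\varphi$, we may view both $\cA'_{\mathsf{C}}$ and $\cB'_{\mathsf{D}}$ as triangulations of the same marked surface. By \cite{Hatcher} and the second bullet point preceding the corollary, these triangulations are joined by a finite sequence of flips (after completing with boundary arcs as needed), and each flip lifts to a two-step sequence of elementary moves (remove one arc, add another) on the upstairs side, all performed within the class of admissible arc systems compatible with the chosen cut.

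Concatenating the three chains of elementary moves obtained in this way — from $\cA$ to $\cA'$, from $\cA'$ to $\cB'$ via the flip sequence transported through $\varphi$, and from $\cB'$ back to $\cB$ by reversing the extension — produces a sequence of elementary moves connecting $\cA$ to $\cB$. The main subtlety I anticipate is that the cuts $\mathsf{C}$ and $\mathsf{D}$ need not be isotopic as collections of paths even though they produce the same cyclic orderings of marked points; one has to verify that the identification $\varphi$ between the cut surfaces can be chosen so that the transport of arcs and flips is compatible with admissibility upstairs. This is handled by the fact that $\varphi$ is determined only up to isotopy of marked surfaces and that any flip sequence in $\Sigma_{\mathsf{D}}$ may be realized by cutting along $\mathsf{D}$, performing the flip, and regluing, which is precisely what the lift of a flip to an elementary move on $\Sigma$ does. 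Once this compatibility is in place, the concatenation of the three subsequences yields the required sequence of elementary moves from $\cA$ to $\cB$.
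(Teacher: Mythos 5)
Your overall strategy is the same as the paper's: extend both pairs to maximal ones by elementary moves, pass to the cut surfaces, use flip-connectivity of triangulations \cite{Hatcher}, and lift each flip to a two-step elementary move upstairs. The solid part of this argument shows that any two maximal admissible arc systems \emph{sharing a common cut} are connected by elementary moves, since then both triangulations live on literally the same marked surface and un-cutting is along the same collection of paths.

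The step ``from $\cA'$ to $\cB'$ via the flip sequence transported through $\varphi$'' is, however, where I see a genuine gap, and the sentence you offer to close it does not do so. Lifting the flips along the fixed cut $\mathsf{C}$ can only ever produce arc systems that admit $\mathsf{C}$ as a cut; the terminal system of your lifted sequence is the un-cut along $\mathsf{C}$ of $\varphi(\cB'_{\mathsf{D}})$, i.e.\ $\psi(\cB')$, where $\psi$ is the comparison homeomorphism ``push the punctures out along $\mathsf{D}$, identify via $\varphi$, push back in along $\mathsf{C}$''. This equals $\cB'$ only if $\psi$ acts trivially on the homotopy classes of the arcs of $\cB'$. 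The hypothesis that $\punct_{\mathsf{C}}(B)=\punct_{\mathsf{D}}(B)$ as cyclically ordered sets fixes $\varphi$ essentially canonically (up to isotopy and boundary twists), but it does \emph{not} force $\psi$ to be isotopic to the identity: $\psi$ is a product of point-pushing maps along the loops obtained from $\overline{c_p}$ followed by $d_p$, and two cuts with identical cyclic data can be non-isotopic because the cutting paths may wind differently around other punctures or around handles (already on a once-punctured torus all cuts have the same, vacuous, cyclic data yet fall into infinitely many isotopy classes, and for a suitably twisted $\mathsf{C}$ no representative of $\cB'$ avoids $\mathsf{C}$ at all, so $\cB'$ is not the un-cut along $\mathsf{C}$ of \emph{any} triangulation). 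Saying that ``$\varphi$ is determined only up to isotopy of marked surfaces'' or that a flip is realized by cut--flip--reglue does not address this: the problem is not which flip sequence you choose downstairs, but that its lift along $\mathsf{C}$ cannot reach $\cB'$ unless you first relate the two cuts beyond their cyclic data. To complete the argument one needs an additional step, e.g.\ showing that $\psi(\cB')$ and $\cB'$ are elementary-move equivalent, or modifying one of the systems by elementary moves so that the two cutting systems become disjoint (so that a common admissible system exists and one can switch cuts mid-chain), in the spirit of the intersection-reducing flips used in the proposition that follows the corollary. (To be fair, the paper compresses this point into ``the above observations show the following''; but since you explicitly raise the subtlety and then claim to dispose of it, the disposal as written is not a proof.)
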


\begin{prp}
	Let $\mathcal{A}, \mathcal{B}$ be two admissible arc systems on a punctured surface $\Sigma$. Then, there exists a sequence of elementary moves which transforms $\mathcal{A}$ into $\mathcal{B}$.
\end{prp}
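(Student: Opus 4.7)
The plan is to reduce the statement to Corollary \ref{CorollaryTriangulationConnectedByElementaryMoves}. First I would show that any admissible arc system can be enlarged to a maximal admissible arc system by iteratively adding non-redundant arcs, each addition being an elementary move; a non-maximal admissible arc system always admits such an arc by a routine topological argument using that the complement of $|\cA|$ in $\Sigma$ consists of discs and annuli. Applying this to both $\cA$ and $\cB$, we reduce to the case where both arc systems are maximal admissible, say $\cA^+$ and $\cB^+$.

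Next I would fix arbitrary cuts $\sC$ of $\cA^+$ and $\sD$ of $\cB^+$ and compare the cyclically ordered sets $\punct_{\sC}(B)$ and $\punct_{\sD}(B)$ on each boundary component $B \subseteq \partial\Sigma$. If they agree for all $B$, Corollary \ref{CorollaryTriangulationConnectedByElementaryMoves} immediately produces the required sequence. Otherwise, the plan is to modify $\cA^+$ by a finite sequence of elementary moves, yielding $\cA'$, so that some cut $\sC'$ of $\cA'$ induces the same cyclic data as $\sD$ does. After this alignment, Corollary \ref{CorollaryTriangulationConnectedByElementaryMoves} applies to $(\cA', \sC')$ and $(\cB^+, \sD)$.

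The core technical step is a \emph{puncture transport} operation: given a maximal admissible arc system with a puncture $p$ whose cutting path $c_p$ ends on a boundary component $B$, a short sequence of elementary moves modifies the arc system and its compatible cut so as to either swap $c_p(1)$ with an adjacent $c_q(1)$ in the cyclic order on $B$, or relocate $c_p(1)$ from $B$ to a boundary component that is adjacent to $B$ through the face structure of $\Sigma \setminus |\cA^+|$. I would realise each such move by a single flip of an arc in the triangulation $\cA^+_{\sC}$ of the cut surface $\Sigma_{\sC}$ that is supported in a small neighbourhood of $c_p$ and, after relabelling, corresponds on $\Sigma$ to a delete-then-add pair of elementary moves. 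Iterating these transport moves permits one to bring the cyclic data of any cut of $\cA^+$ into agreement with that of $\sD$.

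The main obstacle will be the rigorous construction of the transport moves: one must verify that admissibility is preserved at every intermediate step and that each flip has the claimed effect on the cyclic datum in every local configuration of arcs around $c_p$. Granting the transport moves, the composite sequence consisting of (i) the elementary-move extensions of $\cA$ and $\cB$ to maximal admissible arc systems, (ii) the transport moves aligning the cyclic data, and (iii) the elementary-move sequence produced by Corollary \ref{CorollaryTriangulationConnectedByElementaryMoves}, yields the desired chain of elementary moves connecting $\cA$ to $\cB$.
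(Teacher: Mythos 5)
Your overall strategy---pass to maximal admissible systems, modify the cut of $\cA$ by elementary moves until the sets $\punct_{\sC}(B)$ agree with $\punct_{\sD}(B)$ as cyclically ordered sets for every boundary component $B$, and then invoke Corollary \ref{CorollaryTriangulationConnectedByElementaryMoves}---is exactly the paper's. The genuine gap is the ``puncture transport'' step, which you name as the main obstacle but do not construct, and which as stated is doubtful. For a \emph{maximal} admissible arc system, the boundary components reachable by a cutting path of a puncture $p$ are precisely those whose annular complementary region has $p$ in its closure; a single flip supported in a small neighbourhood of $c_p$ does not change this adjacency when the annulus around the target boundary component is far from $p$, so relocating $c_p(1)$ to another boundary component cannot in general be achieved by one local flip. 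The naive alternative of deleting an arc shared by the annular regions of two different boundary components is not an elementary move either: the complement then has a component containing two boundary circles, so the resulting system is not full, hence not admissible. Finally, ``adjacent through the face structure'' is ambiguous (each complementary component meets at most one boundary circle), and the connectivity and termination of your iteration are not addressed.

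The paper resolves precisely this point with a global, not local, construction: choose an embedded path $f_q$ in the cut surface from $q=c_p(1)$ to the target boundary component which meets each arc of the triangulation $\cA_{\sC}$ transversally and at most once; flip away, one by one, all arcs crossing $f_q$ (each flip being a delete-then-add pair of elementary moves); then delete the single remaining arc $\delta$ crossed by the deformed cutting path. It is this deletion, not a flip, that changes the cut, and it is a legitimate elementary move because $\delta$ separates an annulus from a disc, so fullness is preserved; termination is controlled by induction on the total size of the symmetric differences of $\punct_{\sC}(B)$ and $\punct_{\sD}(B)$, and the same device with $f_q$ returning to the same boundary component fixes the cyclic orders afterwards. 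To complete your argument you would need to supply a construction of comparable substance for the transport moves, including verification of admissibility at each removal and a termination argument; as written, the proof is incomplete at its decisive step.
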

\begin{proof}
We may assume that $\cA$ and $\cB$ are maximal admissible and choose a cut $\mathsf{C}=(c_p)_{p\in \mathscr{P}}$ of $\cA$ and a cut $\mathsf{D}=(d_p)_{p\in \mathscr{P}}$ of $\cB$. By Corollary \ref{CorollaryTriangulationConnectedByElementaryMoves}, it suffices to transform $(\cA, \mathsf{C})$ by a sequence of elementary moves into a cutting pair $(\cA^{\ast}, \mathsf{C}^{\ast})$ such that for each connected component $B \subseteq \partial \Sigma$, $\punct_{\mathsf{C}^\ast}(B)=\punct_{\mathsf{D}}(B)$ as cyclically ordered sets.
    Our first goal is to show that we can transform $(\cA, \mathsf{C})$ into a pair $(\hat{\cA}, \hat{\mathsf{C}})$ such that $\punct_{\hat{\mathsf{C}}}(B)=\punct_{\mathsf{D}}(B)$ as \emph{sets}. We proceed by induction over the expression
    \begin{displaymath}
    \Delta_{\punct}(\mathsf{C}, \mathsf{D})=\sum_{B\subseteq \partial \Sigma} \Big | \punct_{\mathsf{C}}(B)  \, \triangle \,  \punct_{\mathsf{D}}(B)\Big|.\end{displaymath}
    
   \noindent where for a pair of sets $U$ and $V$, $U \triangle  V= (U \cup V) \setminus (U \cap V)$ denotes their symmetric difference. Note that $\Delta_{\punct}(\mathsf{C}, \mathsf{D})= 0$ if and only if $\punct_{\mathsf{C}}(B)=\punct_{\mathsf{D}}(B)$ for all components $B \subseteq \partial \Sigma$. In what follows, we describe a series of elementary moves which replaces $(\cA, \mathsf{C})$ by a cutting pair $(\cA'', \mathsf{C}'')$ such that $\Delta_{\punct}(\mathsf{C}'', \mathsf{D}) < \Delta_{\punct}(\mathsf{C}, \mathsf{D})$. If $\Delta_{\punct}(\mathsf{C}, \mathsf{D}) \neq 0$, then there exists a boundary component $B_0$ of $\Sigma$ such that $\punct_{\mathsf{C}}(B_0) \triangle \punct_{\mathsf{D}}(B_0) \neq \emptyset$. Choose $p \in \punct_{\mathsf{C}}(B_0) \triangle \punct_{\mathsf{D}}(B_0)$. Since $\bigcup_{B \subseteq \partial \Sigma}\punct_{\mathsf{C}}(B)=\punct=\bigcup_{B \subseteq \partial \Sigma}\punct_{\mathsf{D}}(B)$, there exists a component $B_1 \neq B_0$ such that $p \in \punct_{\mathsf{C}}(B_1) \triangle \punct_{\mathsf{D}}(B_1)$.  Set $q \coloneqq c_p(1)$ and let $j \in \{0,1\}$ be such that $q \in B_j$. There exists a path $f_q:[0,1] \rightarrow \Sigma_\sC$  such that $f_q(0)=q$ and $f_q(1) \in B_{1-j}\backslash \deco_\sC$  and which crosses each arc of $\cA_\sC$ transversally and at most once. Now, we can flip the arcs of the triangulation $\cA_\sC$ which cross $f_q$ in the order in which they appear on $f_q$, starting from the crossing nearest to $q$. Each flip reduces the number of crossings and we obtain a triangulation $\cA'_\sC$ of $\Sigma_{\sC}$ whose arcs meet $f_q$ only at the boundary point $f_q(1)$:
    
     \begin{displaymath}
    \begin{tikzpicture}
    
\begin{scope}
    \draw (0,0) circle (0.5);
    \draw (-2,2) circle (0.5);
      
    \coordinate (A) at (135:0.5);
    \coordinate (B) at ($ (-2,2) + (0:0.5) $);
    \coordinate (C) at ($ (-2,2) + (225:0.5) $);
    \def\r{1.5};
    \def\alp{10};
    \coordinate (D) at ({90+\alp}:{\r});
    \coordinate (E) at ({180-\alp}:{\r}); 
    \coordinate (F) at ($ (-2,2) + (292.5:0.5) $);
     
    \filldraw (A) circle (2pt) node[anchor=north west] {$q$};
    \filldraw (B) circle (2pt);
    \filldraw (C) circle (2pt);
    \filldraw (D) circle (2pt);
    \filldraw (E) circle (2pt);
                    
    \draw (B)--(D)--(A)--(E)--(C);
    \draw (E)--(D);
    \draw (B)  to[out=-40,in=70] (E);
    \draw[dashed,->] (A) to[out=135,in=-60] (F);
    \end{scope} 
    
\draw[->] (0.75,1)--(1.75,1) node[midway, above] {$\text{flip}$};
\begin{scope}[shift={(4.75,0)}]
    \draw (0,0) circle (0.5);
    \draw (-2,2) circle (0.5);
      
    \coordinate (A) at (135:0.5);
    \coordinate (B) at ($ (-2,2) + (0:0.5) $);
    \coordinate (C) at ($ (-2,2) + (225:0.5) $);
    \def\r{1.5};
    \def\alp{10};
    \coordinate (D) at ({90+\alp}:{\r});
    \coordinate (E) at ({180-\alp}:{\r}); 
    \coordinate (F) at ($ (-2,2) + (292.5:0.5) $);
     
    \filldraw (A) circle (2pt) node[anchor=north west] {$q$};
    \filldraw (B) circle (2pt);
    \filldraw (C) circle (2pt);
    \filldraw (D) circle (2pt);
    \filldraw (E) circle (2pt);
                    
    \draw (B)--(D)--(A)--(E)--(C);
    \draw (A) to[out=110, in=-35] (B);
    \draw (B)  to[out=-40,in=70] (E);
    \draw[dashed,->] (A) to[out=135,in=-60] (F);
    
    \draw[->] (0.75,1)--(1.75,1) node[midway, above] {$\text{flip}$};    
    \end{scope}

\begin{scope}[shift={(9.25,0)}]
    \draw (0,0) circle (0.5);
    \draw (-2,2) circle (0.5);
      
    \coordinate (A) at (135:0.5);
    \coordinate (B) at ($ (-2,2) + (0:0.5) $);
    \coordinate (C) at ($ (-2,2) + (225:0.5) $);
    \def\r{1.5};
    \def\alp{10};
    \coordinate (D) at ({90+\alp}:{\r});
    \coordinate (E) at ({180-\alp}:{\r}); 
    \coordinate (F) at ($ (-2,2) + (292.5:0.5) $);
     
    \filldraw (A) circle (2pt) node[anchor=north west] {$q$};
    \filldraw (B) circle (2pt);
    \filldraw (C) circle (2pt);
    \filldraw (D) circle (2pt);
    \filldraw (E) circle (2pt);
                    
    \draw (B)--(D)--(A)--(E)--(C);
    \draw (A) to[out=110, in=-35] (B);
    \draw (A)  to[out=160,in=-55] (C);
    \draw[dashed,->] (A) to[out=135,in=-60] (F);
    \end{scope} 
    
    \end{tikzpicture}
    \end{displaymath}
    
    \noindent By performing the same sequence of flips on the arcs of $\cA$ we obtain a maximal admissible arc system $\cA'$ of $\Sigma$ such that $(\cA^{\prime })_{\sC}=\cA'_\sC$ and an embedded path $c''$ which connects $p$ with a point on $B_{1-j}$ and which crosses at most one arc $\delta \in \cA'$, namely the arc corresponding to the boundary arc in $\cA'_\sC$ which contains $f_q(1)$. The arc system $\cA'':=\cA'\backslash \{\delta\}$ is admissible and has a cut $\sC'' \coloneqq \left(\mathsf{C} \setminus \{c_p\}\right)  \sqcup \{c''\}$. Indeed, $\cA''$ is full as $\cA'$ cuts $\Sigma$ into discs and annuli and removing $\delta$ results in gluing an annulus with a neighbouring disc  containing the path $c''$. By construction, $\Delta_{\punct}(\mathsf{C}'', \mathsf{D}) < \Delta_{\punct}(\mathsf{C}, \mathsf{D})$. We proceed by induction and obtain a series of elementary moves which transforms $(\cA, \mathsf{C})$ into a cutting pair $(\hat{\cA}, \hat{\mathsf{C}})$ such that $\punct_{\hat{\mathsf{C}}}(B)=\punct_{\mathsf{D}}(B)$ as sets for all $B \subseteq \partial \Sigma$. However, the cyclic orders of these sets might not agree. Using the algorithm from above with $f_q$ connecting points on the same boundary component we can change the cyclic order on $\punct_{\hat{\mathsf{C}}}(B)$ to agree with $\punct_{\mathsf{D}}(B)$ for every component $B$. This yields the desired pair $(\cA^\ast, \mathsf{C}^\ast)$ and finishes the proof.
\end{proof}

\begin{cor}\label{CorollaryBGAIndependentOfArcSystem}
Let $\cA$ and $\cB$ be graded admissible arc systems. Then, for every multiplicity function $\seq$, $\mathbb{B}(\cA, \seq)$ and  $\mathbb{B}(\cB, \seq)$ are Morita equivalent.
\end{cor}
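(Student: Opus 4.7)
The plan is to combine the two immediately preceding results: the proposition stating that any two admissible arc systems on $\Sigma$ are connected by a sequence of elementary moves, and Proposition \ref{Prop_AddingAnArcMorita} asserting that each elementary move induces a Morita equivalence between the associated Brauer graph categories.

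First, I would handle the gradings. Since a change of grading of an arc corresponds to a shift of the associated object in the category of twisted complexes (as pointed out in the bullet list after Proposition \ref{Prop_AddingAnArcMorita}), replacing the given grading on $\cA$ or $\cB$ by any other grading only changes $\mathbb{B}(\cA,\seq)$ and $\mathbb{B}(\cB,\seq)$ up to Morita equivalence. Hence it suffices to exhibit \emph{some} choice of gradings on a sequence of arc systems connecting $\cA$ to $\cB$ for which each step is a Morita equivalence.

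Next, by the proposition preceding the corollary, there exists a finite chain of admissible arc systems
\begin{displaymath}
\cA=\cA_0,\,\cA_1,\,\dots,\,\cA_N=\cB
\end{displaymath}
such that each $\cA_{i+1}$ is obtained from $\cA_i$ by a single elementary move, i.e.\ either adding or removing one arc. For each $i$, I would pick a grading on $\cA_i$ compatible with the given grading on the arcs it shares with $\cA_{i-1}$, which is possible because every arc admits a grading for a line field of ribbon type. With this choice, one of $\cA_i,\cA_{i+1}$ is obtained from the other by deleting a single graded arc $\gamma$, and $\cA_i\setminus\{\gamma\}$ (resp.\ $\cA_{i+1}\setminus\{\gamma\}$) is full by definition of an elementary move. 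Proposition \ref{Prop_AddingAnArcMorita} then yields a fully faithful inclusion
\begin{displaymath}
\mathbb{B}(\cA_i\cap\cA_{i+1},\seq)\hookrightarrow \mathbb{B}(\cA_i\cup\cA_{i+1},\seq)
\end{displaymath}
which is a Morita equivalence, giving a zig-zag of Morita equivalences between $\mathbb{B}(\cA_i,\seq)$ and $\mathbb{B}(\cA_{i+1},\seq)$. Composing these for $i=0,\dots,N-1$ (and finally adjusting the grading on $\cB$ by shifts as described above) produces the desired Morita equivalence between $\mathbb{B}(\cA,\seq)$ and $\mathbb{B}(\cB,\seq)$.

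The main conceptual obstacle has already been resolved by the two previous results; concretely, the only thing to verify carefully is that Proposition \ref{Prop_AddingAnArcMorita} applies for arbitrary multiplicity $\seq$, not only $\seq=\mathbf{1}$. This is precisely the content of that proposition, whose proof uses the trivial-extension interpretation in the multiplicity-free case together with the orbit-category description of $\mathbb{B}(\cA,\seq)$ via the branched cover $\mathbb{B}(\cA^{\seq},\mathbf{1})$ from Proposition \ref{PropsitionOrbitIsBGC}. Hence no additional work is needed, and the corollary follows formally.
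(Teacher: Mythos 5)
Your argument is correct and is essentially the paper's own (implicit) proof: the corollary follows by chaining the proposition on connectivity of admissible arc systems via elementary moves with Proposition \ref{Prop_AddingAnArcMorita} applied at each step (which indeed is stated for arbitrary $\seq$), and handling grading discrepancies via the observation that a change of grading corresponds to shifts of objects in the category of twisted complexes, hence a Morita equivalence.
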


\noindent The previous corollary does not state in which way a change of the multiplicity function or the line field affects the Morita equivalence class of the associated Brauer graph category. It turns out that the Morita equivalence class is not affected as long as the two multiplicity functions have the same multi-set of values. We shall see in the next section that this phenomenon is a consequence of the fact that the Morita equivalence class of a Brauer graph category only depends on the \emph{orbit} of the line field under the action of the mapping class group.
	
\section{Orbits of line fields under the mapping class group action and derived equivalence classification}\label{SectionOrbitsLineFields}

\noindent We discuss the action of diffeomorphisms of a surface on homotopy classes of its line fields and invariants of the resulting orbits due to Lekili-Polishchuk \cite{LekiliPolishchukGentle} which extend earlier results of Kawazumi \cite{Kawazumi}. Matching the invariants of orbits  under the  action of the mapping class group with derived invariants of Brauer graph algebras finally leads to a proof of Theorem \ref{IntroTheoremCriterionDerivedEquivalence}.

\subsection{Action of mapping class groups on line fields and multiplicities of Brauer graph algebras}\ \medskip

\noindent Being defined as the intersection number of $\eta(\Sigma)$ and $\dot{\gamma}$ inside $\mathbb{P}(T\Sigma)$, the winding number function $\omega_{\eta}$ of a line field $\eta$ (as a function on the set of loops) is an invariant of its homotopy class. In fact, one can show that two line fields are homotopic if and only if their corresponding winding number functions agree, see \cite{Chillingworth}.
More generally, we are interested in the question when two line fields on a surface $\Sigma$ are not necessarily homotopic but their homotopy classes can be transformed into each other by a diffeomorphism of $\Sigma$. The natural group of diffeomorphisms one considers for this purpose is the following.

\begin{definition}\label{DefinitionMCG}Let $(\Sigma, \deco)$ be a decorated surface. The \textbf{mapping class group $\MCG(\Sigma, \deco)$} consists of the isotopy classes of all orientation-preserving diffeomorphisms $f:\Sigma \rightarrow \Sigma$ such that $f(\deco)=\deco$.
\end{definition}
\noindent Two   diffeomorphisms $f, g: \Sigma \rightarrow \Sigma$ as in Definition \ref{DefinitionMCG} are considered \textbf{isotopic} if there exists a smooth homotopy $H: [0,1] \times \Sigma \rightarrow \Sigma$, such that $H(0, -)=f(-)$ and $H(1, -)=g(-)$ and for all $t \in [0,1]$, $H(t, -)$ is a diffeomorphism. Moreover, for every $p \in \deco$, the path $H(-,p):[0,1] \rightarrow \Sigma$ is required to be constant. In other words, isotopies leave marked points fixed. 

\begin{exa}\label{ExampleHalfTwist} By definition, every mapping class $f \in \MCG(\Sigma, \deco)$ induces a permutation of the set of punctures $\punct$. A transposition is realized by a so-called \textit{half-twist}. Suppose that $\varepsilon:[0,1] \rightarrow \Sigma$ is an embedded path which connects punctures $p$ and $q$ such that $\varepsilon\big((0,1)\big) \subseteq \Sigma_{\reg}$. A \textbf{half-twist} $T_{\varepsilon} \in \MCG(\Sigma, \deco)$ about $\varepsilon$ permutes $p$ and $q$ by a rotation and agrees with the identity outside of a neighbourhood of $\varepsilon$, see Figure \ref{FigureHalfTwists}.  
\end{exa}

	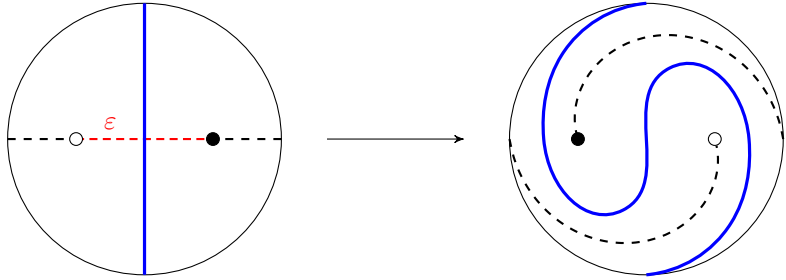
\begin{figure}[h]
			\centering
			\begin{tikzpicture}[scale=0.3]
			\draw (0,0) circle (6);

			\draw[thick, dashed] (-6,0)--(-3,0);
			\draw[thick, dashed] (6,0)--(3,0);

			\draw[->] (8,0)--(14,0);
			
			\draw ({0+22},0) circle (6);
			
			
			\draw[densely dashed, thick , red] (-3,0)--(3,0) node[pos=0.25, above] {$\varepsilon$};
			
			\filldraw (3, 0) circle(8pt);
			\filldraw[white] (-3, 0) circle(8pt);
			\draw[black] (-3, 0) circle(8pt);

			\draw[scale=1,domain=0:1,smooth, dashed,variable=\t, thick] plot ({22+(-6+3*\t)* cos(deg((6-6+3*\t)/3 * pi))},{(-6+3*\t)* sin(deg((6-6+3*\t)/3 * pi))});
			\draw[scale=1,domain=0:1,smooth, dashed,variable=\t, thick] plot ({22+(6-3*\t)* cos(deg((6-6+3*\t)/3 * pi))},{(6-3*\t)* sin(deg((6-6+3*\t)/3 * pi))});        
			
			\filldraw ({-3+22}, 0) circle(8pt);
			\filldraw[white] ({3+22}, 0) circle(8pt);
			\draw[black] ({3+22}, 0) circle(8pt);

			\draw[blue, very thick] (0,6)--(0,-6);
			\begin{scope}[rotate around={90:(22,0)}]
			\draw[blue, very thick] ({22+(-6+3*0)* cos(deg((6-6+3*0)/3 * pi))},{(-6+3*0)* sin(deg((6-6+3*0)/3 * pi))}) to[curve through ={({22+(-6+3*0.25)* cos(deg((6-6+3*0.25)/3 * pi))},{(-6+3*0.25)* sin(deg((6-6+3*0.25)/3 * pi))}) . . ({22+(-6+3*0.5)* cos(deg((6-6+3*0.5)/3 * pi))},{(-6+3*0.5)* sin(deg((6-6+3*0.5)/3 * pi))}) . . ({22+(-6+3*0.9)* cos(deg((6-6+3*0.9)/3 * pi))},{(-6+3*0.9)* sin(deg((6-6+3*0.9)/3 * pi))}) ..  ({22+(6-3*(1-0.1))* cos(deg((6-6+3*(1-0.1))/3 * pi))},{(6-3*(1-0.1))* sin(deg((6-6+3*(1-0.1))/3 * pi))}) .. ({22+(6-3*(1-0.5))* cos(deg((6-6+3*(1-0.5))/3 * pi))},{(6-3*(1-0.5))* sin(deg((6-6+3*(1-0.5))/3 * pi))}) .. ({22+(6-3*(1-0.75))* cos(deg((6-6+3*(1-0.75))/3 * pi))},{(6-3*(1-0.75))* sin(deg((6-6+3*(1-0.75))/3 * pi))})
			}] ({22+(6-3*(1-1))* cos(deg((6-6+3*(1-1))/3 * pi))},{(6-3*(1-1))* sin(deg((6-6+3*(1-1))/3 * pi))});
			\end{scope}
			

			\end{tikzpicture}
			
			\caption{The action of a half-twist on curves. It acts as the identity outside of a neighborhood of $\varepsilon$.}
			\label{FigureHalfTwists}
		\end{figure}	

\noindent The mapping class group acts naturally on homotopy classes of line fields via pullback.

\begin{definition}Let $f \in \MCG(\Sigma, \deco)$ and let $\eta: \Sigma_{\reg} \rightarrow \mathbb{P}(\operatorname{T}\Sigma)$ be a line field. The \text{pullback} of $\eta$ along $f$ is the line field $f^{\ast}\eta \coloneqq \mathbb{P}\operatorname{T}(f)^{-1} \circ \eta \circ f$, where $\mathbb{P}\operatorname{T}(f): \mathbb{P}(\operatorname{T}\Sigma) \rightarrow \mathbb{P}(\operatorname{T}\Sigma)$ denotes the map which is induced by the differential of $f$. \begin{displaymath}
 \begin{tikzcd}
        \mathbb{P}(T\Sigma) & & \mathbb{P}(T\Sigma) \arrow[swap]{ll}{\mathbb{P}\operatorname{T}(f)^{-1}} \\
        \Sigma_{\reg} \arrow{u}{f^{\ast}\eta} \arrow[swap]{rr}{f} & & \Sigma_{\reg} \arrow{u}[swap]{\eta}
 \end{tikzcd}
\end{displaymath}
\end{definition}

\begin{lem}\label{LemmaHalfTwistsPreserveHomotopyClasses}
Let $\eta$ be a line field on a punctured surface $(\Sigma, \deco)$ and let $T_{\varepsilon}: \Sigma \rightarrow \Sigma$ be a half-twist about a simple path $\varepsilon$ connecting punctures $p$ and $q$. If the winding numbers of $p$ and $q$ agree, then $T_{\varepsilon}^{\ast}\eta \simeq \eta$.
\end{lem}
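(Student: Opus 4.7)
The plan is to localize the problem to a disc around $\varepsilon$ supporting $T_\varepsilon$ and then verify by a direct computation that $\eta$ and $T_\varepsilon^*\eta$ have the same winding numbers around the two punctures inside. Since homotopy classes of line fields on a surface are detected by the winding number function (as recalled after Definition \ref{DefinitionWindingNumbers}), this will suffice.

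First, fix a closed disc $D\subset\Sigma$ containing $\varepsilon$ in its interior in which $T_\varepsilon$ is supported, so that $T_\varepsilon$ is the identity on $\Sigma\setminus D$ and hence $T_\varepsilon^*\eta=\eta$ there. Writing $D_{\reg}=D\setminus\{p,q\}$, it is then enough to construct a homotopy $\eta|_{D_{\reg}}\simeq T_\varepsilon^*\eta|_{D_{\reg}}$ relative to $\partial D$, since such a homotopy can be extended by the constant homotopy outside $D$ to yield the desired global homotopy on $\Sigma_{\reg}$. The two line fields on $D_{\reg}$ agree pointwise on $\partial D$, and the obstruction to the existence of a homotopy rel $\partial D$ between them is a class in $H^1(D_{\reg},\partial D;\pi_1(\mathbb{RP}^1))\cong H^1(D_{\reg},\partial D;\mathbb{Z})\cong H_1(D_{\reg};\mathbb{Z})=\mathbb{Z}\gamma_p\oplus\mathbb{Z}\gamma_q$, where $\gamma_p$ and $\gamma_q$ denote simple clockwise loops around $p$ and $q$. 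Its pairing with $\gamma_p$ (respectively $\gamma_q$) is exactly the difference $\omega_\eta(\gamma_p)-\omega_{T_\varepsilon^*\eta}(\gamma_p)$ (respectively $\omega_\eta(\gamma_q)-\omega_{T_\varepsilon^*\eta}(\gamma_q)$), so it suffices to show these differences vanish.

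To compute $\omega_{T_\varepsilon^*\eta}(\gamma_p)$, I will use the naturality identity
\[
\omega_{f^*\eta}(\gamma)=\omega_\eta(f\circ\gamma),
\]
valid for any orientation-preserving diffeomorphism $f$ and any loop $\gamma$, which follows from $(f^*\eta)(\Sigma_{\reg})=\mathbb{P}T(f)^{-1}(\eta(\Sigma_{\reg}))$ together with the invariance of intersection numbers under orientation-preserving maps between manifolds. The explicit local model of $T_\varepsilon$ as a $180^\circ$-rotation exchanging $p$ and $q$ (see Example \ref{ExampleHalfTwist}) shows that $T_\varepsilon\circ\gamma_p$ is regularly homotopic in $D_{\reg}$ to $\gamma_q$, and symmetrically $T_\varepsilon\circ\gamma_q$ is regularly homotopic to $\gamma_p$. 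Hence
\[
\omega_{T_\varepsilon^*\eta}(\gamma_p)=\omega_\eta(\gamma_q)=\omega_\eta(q)\quad\text{and}\quad \omega_{T_\varepsilon^*\eta}(\gamma_q)=\omega_\eta(p),
\]
and the hypothesis $\omega_\eta(p)=\omega_\eta(q)$ yields $\omega_{T_\varepsilon^*\eta}(\gamma_p)=\omega_\eta(\gamma_p)$ and $\omega_{T_\varepsilon^*\eta}(\gamma_q)=\omega_\eta(\gamma_q)$, so the obstruction vanishes.

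The two points requiring care are the naturality formula $\omega_{f^*\eta}(\gamma)=\omega_\eta(f\circ\gamma)$ and the regular homotopy $T_\varepsilon\circ\gamma_p\simeq\gamma_q$ in $D_{\reg}$; both are routine once one uses the explicit rigid-rotation model of a half-twist with $p$ and $q$ placed symmetrically about the centre of $D$, together with the fact that any two small simple clockwise loops around a common puncture are regularly homotopic in the complement of the punctures.
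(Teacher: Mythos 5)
Your proposal is correct in substance but takes a genuinely different route from the paper's. The paper argues globally: by the fact (quoted after Definition \ref{DefinitionWindingNumbers}, due to Chillingworth) that homotopy classes of line fields are determined by their winding number functions, it suffices to show $\omega_{T_{\varepsilon}^{\ast}\eta}(\gamma)=\omega_{\eta}(\gamma)$ for \emph{all} loops $\gamma$; this is done by writing the class of the tangent lift of $T_{\varepsilon}(\gamma)$ in $H_1(\mathbb{P}(T\Sigma),\mathbb{Z})$ as $[\dot\gamma]+\sum\pm[\dot\delta]$, where $\delta$ is the loop encircling both punctures, whose winding number vanishes because $[\dot\delta]=[\dot\delta_p]-[\dot\delta_q]$ and $\omega_\eta(p)=\omega_\eta(q)$. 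You instead localize to the supporting disc $D$ and run a relative obstruction argument, so that only the winding numbers about $p$ and $q$ enter, via the naturality formula $\omega_{f^{\ast}\eta}(\gamma)=\omega_{\eta}(f\circ\gamma)$ (which the paper also uses) and the observation that $T_{\varepsilon}$ carries a small clockwise loop around $p$ to one around $q$. This buys independence from the global winding-number criterion and from the bookkeeping of signs at the intersections of $\gamma$ with $\varepsilon$; the paper's version, in exchange, stays entirely inside the winding-number formalism already set up and needs no obstruction theory nor a trivialization of $\mathbb{P}(T\Sigma)$ over $D$.

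One inaccuracy should be repaired: $H^1(D_{\reg},\partial D;\mathbb{Z})$ is \emph{not} isomorphic to $H_1(D_{\reg};\mathbb{Z})\cong\mathbb{Z}^2$. Replacing $D_{\reg}$ by a pair of pants $P$ with outer boundary $\partial D$, the long exact sequence of the pair gives $H^1(P,\partial D;\mathbb{Z})\cong\mathbb{Z}$ (Lefschetz duality pairs $\partial D$ with the other two boundary circles, not with all of them). Your argument nevertheless goes through, because what you actually use is that the restriction map $H^1(D_{\reg},\partial D;\mathbb{Z})\to H^1(D_{\reg};\mathbb{Z})$ is injective (the connecting map from $H^0(\partial D)$ vanishes since $H^0(D_{\reg})\to H^0(\partial D)$ is onto), so the rel-boundary obstruction is detected by its pairings with $\gamma_p$ and $\gamma_q$; consistently with the group having rank one, the two winding-number differences automatically sum to zero because the fields agree near $\partial D$. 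With that correction, and with the standing convention that $T_{\varepsilon}$ equals the identity on a neighbourhood of $\partial D$ so that the two fields agree there pointwise, your proof is complete.
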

\begin{proof}
It is sufficient to show that the winding number functions of $\theta \coloneqq T_{\varepsilon}^{\ast}\eta$ and $\eta$ agree. For every loop $\gamma$,  $\omega_{\theta}(\gamma)=\omega_{\eta}(T_{\varepsilon}(\gamma))$. Thus, we have to show that $T_{\varepsilon}$ preserves the winding number of every loop. Let $\delta$ denote a loop as in Figure \ref{FigureSpecialCurveHalfTwist}.

The loop $\delta$ is homotopic to  the concatenation of $\varepsilon$ with a clockwise primitive loop around $p$ followed by $\overline{\varepsilon}$ and a counter-clockwise primitive loop around $q$. The associated homology class $[\dot \delta] \in H_1(\mathbb{P}(T\Sigma), \mathbb{Z})$ coincides with $[\dot \delta_p] - [ \dot \delta_q]$, where $\delta_{x}$ denotes a clockwise simple loop around a puncture $x$. In particular, $\omega_{\eta}(\delta)= \omega_{\eta}(\delta_p) - \omega_{\eta}(\delta_q)=0$ by our assumptions. Assuming that $\gamma$ and $\varepsilon$ are transverse the assertion now follows from the observation that the homology class of the tangent curve $\dot{\big(T_{\varepsilon}(\gamma)\big)}=\mathbb{P}\operatorname{T}(T_{\varepsilon})(\dot \gamma)$ can be expressed as a sum 
\begin{displaymath}[\dot \gamma] + \sum_{x \in \gamma \cap \varepsilon} \pm \left[\dot{\delta}\right],\end{displaymath}
\noindent where the exact signs depend on the index of the respective intersection between $\varepsilon$ and $\gamma$. The vanishing of $\omega_{\eta}(\delta)$ implies that $\omega_{\eta}(T_\epsilon(\gamma))=\omega_{\eta}(\gamma)$ for any loop $\gamma$.
\end{proof}
	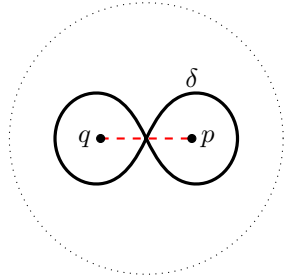
\begin{figure}[H]
			\centering
			\begin{tikzpicture}[scale=0.3]
			
			
			\draw[red, thick, dashed] (-2,0)--(2,0);
			

			\draw[dotted] ({0},0) circle (6);
			\filldraw ({-2}, 0) circle( 5pt);
			\filldraw ({2}, 0) circle(5pt);

			\draw (-2,0) node[left]{$q$};
			\draw (2,0) node[right]{$p$};


			\begin{scope}[closed hobby]
			\draw[very thick] plot coordinates {(2, 2) (4,0) (2,-2) (0,0) (-2,2) (-4,0) (-2,-2) (0,0)  } ;
			\draw (2,2.7) node{$ \delta$};
			\end{scope}
			
			\end{tikzpicture}
			
			\caption{The path $\varepsilon$ (dashed) and the loop $\delta$.}
			\label{FigureSpecialCurveHalfTwist}
		\end{figure}
\noindent 
Every homotopy between line fields $\eta$ and $\theta$ induces a bijection between the possible $\eta$-gradings and $\theta$-gradings on any arc system $\mathcal{A}$ which preserves the degrees of all oriented intersections. However, since the winding number of an arc is not invariant under homotopies of the line field, the bijection does not induce an isomorphism of the corresponding Brauer graph categories right away. The next lemma shows that the problem disappears since $\Bbbk$ is algebraically closed.
\begin{lem}\label{LemmaRootsOfUnityBaseChange}
Let $\eta_1, \eta_2$ be homotopic line fields on $(\Sigma, \punct)$ of ribbon type and let $\mathcal{A}$ be an admissible arc system  endowed with $\eta_i$-gradings corresponding to each other under a homotopy between $\eta_1$ and $\eta_2$. Let $\mathcal{A}_{\eta_i}$ denote $\mathcal{A}$ equipped with its $\eta_i$-grading. Then for each multiplicity function $\seq$, $\mathbb{B}(\mathcal{A}_{\eta_1}, \seq)$ and $\mathbb{B}(\mathcal{A}_{\eta_2}, \seq)$ are Morita equivalent.
\end{lem}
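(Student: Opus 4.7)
The plan is to combine Proposition \ref{PropsitionOrbitIsBGC} with Proposition \ref{PropositionTrivialExtensionIsomorphism} to translate the statement into a homotopy-invariance property of Fukaya categories, where the invariance is immediate since the structure maps in Definition \ref{DefinitionFukayaCategory} depend on the line field only through the degrees of oriented intersections. As a first step I would reduce to $\seq=\mathbf{1}$: let $\Theta_\Sigma\colon\Sigma^\seq\to\Sigma$ be the regular branched cover of Lemma \ref{LemmaBranched}, let $\cA^\seq$ be the lifted arc system and, for $i\in\{1,2\}$, let $\tilde{\eta}_i$ be the unique $\mathbb{Z}_{\overline{\seq}}$-invariant lift of $\eta_i$ to $\Sigma^\seq_{\reg}$. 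Any homotopy $\eta_1\simeq\eta_2$ lifts to a $\mathbb{Z}_{\overline{\seq}}$-equivariant homotopy $\tilde{\eta}_1\simeq\tilde{\eta}_2$. By Proposition \ref{PropsitionOrbitIsBGC} together with Corollary \ref{CorollarySkeletonMoritaEquivalence}, $\mathbb{B}(\cA_{\eta_i},\seq)$ is Morita equivalent to the orbit category $\mathbb{B}(\cA^\seq_{\tilde{\eta}_i},\mathbf{1})/\mathbb{Z}_{\overline{\seq}}$, so it suffices to produce a $\mathbb{Z}_{\overline{\seq}}$-equivariant $A_\infty$-equivalence between $\mathbb{B}(\cA^\seq_{\tilde{\eta}_1},\mathbf{1})$ and $\mathbb{B}(\cA^\seq_{\tilde{\eta}_2},\mathbf{1})$ and to descend it to orbit categories.

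For the multiplicity-free case on the cover, I would fix a cut $\mathsf{C}$ of $\cA^\seq$ and set $\mathcal{F}_i\coloneqq\mathcal{F}_{(\cA^\seq_{\mathsf{C}})_{\tilde{\eta}_i}}$. Proposition \ref{PropositionTrivialExtensionIsomorphism} then yields strict $A_\infty$-isomorphisms $\Phi_i\colon\triv(\mathcal{F}_i)\xrightarrow{\sim}\mathbb{B}(\cA^\seq_{\tilde{\eta}_i},\mathbf{1})$. Inspection of Definition \ref{DefinitionFukayaCategory} shows that the structure data of $\mathcal{F}_i$---objects, bases of morphism spaces from oriented intersections, compositions, and higher operations---depend on $\tilde{\eta}_i$ only through the degrees of oriented intersections, and the canonical bijection between $\tilde{\eta}_1$- and $\tilde{\eta}_2$-gradings induced by the lifted homotopy preserves all such degrees. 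Hence $\mathcal{F}_1$ and $\mathcal{F}_2$ are strictly equal under this identification, and $\Phi_2\circ\Phi_1^{-1}\colon\mathbb{B}(\cA^\seq_{\tilde{\eta}_1},\mathbf{1})\xrightarrow{\sim}\mathbb{B}(\cA^\seq_{\tilde{\eta}_2},\mathbf{1})$ is a strict $A_\infty$-isomorphism.

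The main obstacle I anticipate is ensuring that this composite is $\mathbb{Z}_{\overline{\seq}}$-equivariant so that it descends to the orbit categories. Each $\Phi_i$ depends on the cut $\mathsf{C}$ and on the auxiliary data $(\Delta_p)_{p\in\tilde{\punct}}$ from Lemma \ref{LemmaDeltaP}, neither of which is a priori equivariant. I would address this by choosing $\mathsf{C}$ to be $\mathbb{Z}_{\overline{\seq}}$-equivariant---selecting one cutting path per orbit of punctures on $\Sigma^\seq$ and propagating it by deck transformations---so that the recipe in Lemma \ref{LemmaDeltaP} and the constraint \eqref{MagicFormulaDelpap} admit an equivariant solution, and the sign choices in the definition of $\Phi_i$ can be taken compatibly with the group action. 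If equivariance of $\mathsf{C}$ proves delicate because of ramified punctures (where the $\mathbb{Z}_{\overline{\seq}}$-orbits have nontrivial stabilizers), an alternative route is to construct $\Phi_2\circ\Phi_1^{-1}$ directly as a gauge transformation on the Brauer graph category using that $\Delta(\gamma)\coloneqq\omega_{\tilde{\eta}_1}(\gamma)-\omega_{\tilde{\eta}_2}(\gamma)\bmod 2$ is a $1$-coboundary on $\Gamma_{\cA^\seq}$ by Corollary \ref{CorollaryWindingEdges} and the homotopy invariance of winding numbers of loops, with roots of the resulting $\mathbb{Z}/2$-cocycle extracted in $\Bbbk^{\ast}$ by algebraic closedness. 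Either way, the resulting equivariant $A_\infty$-equivalence descends to the orbit categories and yields the desired Morita equivalence $\mathbb{B}(\cA_{\eta_1},\seq)\simeq\mathbb{B}(\cA_{\eta_2},\seq)$.
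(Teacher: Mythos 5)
Your reduction to the multiplicity-free cover is set up correctly (the lifted homotopy, the identification $\mathcal{F}_1=\mathcal{F}_2$ via degrees, and the fact that $\Phi_2\circ\Phi_1^{-1}$ is automatically a strict $A_\infty$-isomorphism are all fine), but the crux of your argument — equivariance — contains a genuine gap. An equivariant cut of $\cA^{\seq}$ does not exist as soon as $\seq\neq\mathbf{1}$: a puncture $\tilde{p}$ of $\Sigma^{\seq}$ lying over a vertex $p$ with $\seq(p)>1$ has nontrivial stabilizer $U_p\subseteq\mathbb{Z}_{\overline{\seq}}$, and $U_p$ acts \emph{freely} on the cyclically ordered set of sectors (half-edges) around $\tilde{p}$, so no sector, hence no homotopy class of cutting path at $\tilde{p}$, is fixed; ``propagating one cutting path per orbit by deck transformations'' produces $\seq(p)$ distinct cutting paths at the same puncture rather than a cut. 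Consequently your primary route to an equivariant isomorphism breaks down exactly at the ramified punctures, which is the only case where there is anything to prove beyond signs. Your fallback — building the gauge transformation on $\mathbb{B}(\cA^{\seq},\mathbf{1})$ directly from the coboundary property of $\omega_{\tilde{\eta}_1}+\omega_{\tilde{\eta}_2}\bmod 2$ — is the right idea but is left unproven at the two points where the work actually lies: (i) deck-equivariance of the rescaling scalars forces the product of the scalars around $\tilde{p}$ to be an $\seq(p)$-th power, so one needs $\lambda_p\in\Bbbk^\ast$ with $\lambda_p^{\seq(p)}=(-1)^{\sigma_p}$ — this is precisely where algebraic closedness enters, and it must be said explicitly rather than as ``roots of a $\mathbb{Z}/2$-cocycle''; (ii) once you abandon the composite $\Phi_2\circ\Phi_1^{-1}$ and rescale by hand, compatibility with the higher operations \eqref{Mu1}--\eqref{HigherOps5} (whose signs involve the winding numbers that differ between $\tilde{\eta}_1$ and $\tilde{\eta}_2$) is no longer automatic and must either be verified or avoided.

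For comparison, the paper's proof needs none of the covering machinery: it first uses Corollary \ref{CorollaryBGAIndependentOfArcSystem} to replace $\cA$ by a \emph{formal} admissible arc system, so that there are no higher operations to check, then solves the coboundary equation $\sigma_p+\sigma_q\equiv\omega_{\eta_1}(\gamma)+\omega_{\eta_2}(\gamma)\bmod 2$ on $\Gamma_{\cA}$ (by the same argument as Lemma \ref{LemmaDeltaP}, using Corollary \ref{CorollaryWindingEdges} and homotopy invariance of winding numbers of loops), and finally defines the isomorphism $\mathbb{B}(\cA_{\eta_1},\seq)\rightarrow\mathbb{B}(\cA_{\eta_2},\seq)$ directly by rescaling the single arrow $\alpha_p^{\mathsf{C}}$ at each puncture by a $\lambda$ with $\lambda^{\seq(p)}=(-1)^{\sigma_p}$. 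If you want to salvage your approach, carry out your fallback downstairs in exactly this way (or, equivalently, check that the equivariance constraint on the cover reduces to the same root-extraction condition), and either pass to a formal arc system first or verify the gauge transformation against the higher operations.
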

\begin{proof}
By Corollary \ref{CorollaryBGAIndependentOfArcSystem} we may and will assume that $\mathcal{A}$ is formal. First, we claim that there exists a tuple $(\sigma_p)_{p \in \punct}$, $\sigma_p \in \{0,1\}$, such that for any $\gamma \in \mathcal{A}$ with end points $p, q \in \punct$, $\sigma_p + \sigma_q \equiv \omega_{\eta_1}(\gamma) + \omega_{\eta_2}(\gamma) \mod 2$. The proof is similar to Lemma \ref{LemmaDeltaP}. Choose $p_0 \in \punct$ and set $\sigma_{p_0}=0$. For each path $\gamma^p=\gamma_l^p \cdots \gamma_1^p$ in $\Gamma=\Gamma_{\mathcal{A}}$ from $p_0$ to $p$ with oriented edges $\gamma_i^p \in \mathcal{A}=\BE{\Gamma}$, let $\sigma_p \in \{0,1\}$ denote the unique element such that
\begin{equation}\label{EquationComparison}
\sigma_p \equiv \sum_{i=1}^l \omega_{\eta_1}(\gamma_i^p) + \sum_{i=1}^l\omega_{\eta_2}(\gamma_i^p) \mod 2.
\end{equation}
\noindent Then $\sigma_p$ is independent of the chosen path $\gamma^p$ since given a closed path $\delta=\delta_r \cdots \delta_1$ based at $p_0$, Corollary \ref{CorollaryWindingEdges} and the assumption $\eta_1 \simeq \eta_2$ imply
\begin{displaymath}
\sum_{i=1}^r \omega_{\eta_1}(\delta_i) + \sum_{i=1}^r \omega_{\eta_2}(\delta_i) \equiv 
\big(r + \omega_{\eta_1}(\delta_{\text{sm}})\big) + \big(r + \omega_{\eta_2}(\delta_{\text{sm}})\big) \equiv 0 \mod 2.
\end{displaymath}
\noindent Let $\mathsf{C}$ be an arbitrary cut of $\mathcal{A}$. Then an isomorphism $\mathbb{B}(\mathcal{A}_{\eta_1}, \seq) \rightarrow \mathbb{B}(\mathcal{A}_{\eta_2}, \seq)$ can be given as follows. It maps all objects to themselves and it maps an arrow $a$ in $B_{\mathcal{A}_{\eta_1}}$ to itself unless $a=\alpha_p^{\mathsf{C}}$ for some $p \in \punct$ (as defined in the paragraph succeeding Definition \ref{DefCP}), in which case $a$ is mapped to $\lambda a$ where $\lambda \in \Bbbk$ is such that $\lambda^{\seq(p)}=(-1)^{\sigma_p}$. The existence of $\lambda$ is guaranteed by the assumption that $\Bbbk$ is algebraically closed. 
\end{proof}

\begin{prp}\label{PropositionSameOrbitsSameCategories}
Let $\eta_1, \eta_2$ be line fields on $(\Sigma, \punct)$ of ribbon type and let $\mathcal{A}_i$ be an $\eta_i$-graded admissible arc system. If the homotopy classes of $\eta_1$ and $\eta_2$ lie in the same $\MCG(\Sigma, \punct)$-orbit, then for all multiplicity functions $\seq_1, \seq_2$ with the same multi-sets of multiplicities, $\mathbb{B}(\mathcal{A}_1, \seq_1)$ and $\mathbb{B}(\mathcal{A}_2, \seq_2)$ are Morita equivalent.
\end{prp}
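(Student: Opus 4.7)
My plan is to reduce the statement in four steps, first aligning the line fields up to homotopy via the given mapping class, then adjusting the arc system via \ref{CorollaryBGAIndependentOfArcSystem}, and finally matching the multiplicity functions via half-twists that are harmless because all punctures have vanishing winding number.

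\textbf{Transporting along a diffeomorphism.} Choose $f \in \MCG(\Sigma,\punct)$ with $f^\ast \eta_2 \simeq \eta_1$. Any orientation-preserving diffeomorphism preserves oriented intersections of arcs, cyclic orders at vertices, gradings (via $f^\ast$) and winding numbers (since $\omega_{f^\ast\eta}(f^{-1}\gamma)=\omega_\eta(\gamma)$), so $f$ induces a strict isomorphism of $A_\infty$-categories
\[
\mathbb{B}(\mathcal{A}_2,\seq_2) \;\cong\; \mathbb{B}\bigl(f^{-1}(\mathcal{A}_2),\,\seq_2\circ f\bigr),
\]
where the right-hand side is formed on $(\Sigma,\punct,f^\ast\eta_2)$ with the $f^\ast\eta_2$-grading induced by $f$ from the $\eta_2$-grading of $\mathcal{A}_2$.

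\textbf{Changing grading and arc system.} Because $f^\ast\eta_2\simeq\eta_1$, Lemma \ref{LemmaRootsOfUnityBaseChange} shows that replacing the $f^\ast\eta_2$-grading on $f^{-1}(\mathcal{A}_2)$ by its $\eta_1$-counterpart (obtained from the chosen homotopy) produces a Morita-equivalent Brauer graph category. Next, Corollary \ref{CorollaryBGAIndependentOfArcSystem} (applied on $(\Sigma,\punct,\eta_1)$, for the fixed multiplicity function $\seq':=\seq_2\circ f$) yields a Morita equivalence between $\mathbb{B}(f^{-1}(\mathcal{A}_2),\seq')$ and $\mathbb{B}(\mathcal{A}_1,\seq')$. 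At this point the remaining task is to compare the Brauer graph categories $\mathbb{B}(\mathcal{A}_1,\seq')$ and $\mathbb{B}(\mathcal{A}_1,\seq_1)$ on $(\Sigma,\punct,\eta_1)$, where $\seq'$ and $\seq_1$ have the same multi-set of values.

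\textbf{Matching the multiplicities by half-twists.} This is the only genuine obstacle. Pick a permutation $\sigma$ of $\punct$ with $\seq_1=\seq'\circ\sigma$ and decompose it into transpositions. Because $\Sigma$ is connected, each transposition of punctures $p,q$ is realized by a half-twist $T_\varepsilon$ along a simple path $\varepsilon$ connecting them. Here the ribbon-type hypothesis on $\eta_1$ enters decisively: by Example \ref{ExampleWindingNumbersBoundaries} and the homotopy invariance of winding numbers of loops, $\omega_{\eta_1}(p)=0=\omega_{\eta_1}(q)$ for every pair of punctures, so Lemma \ref{LemmaHalfTwistsPreserveHomotopyClasses} guarantees $T_\varepsilon^\ast\eta_1\simeq\eta_1$. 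Composing these half-twists produces $g\in\MCG(\Sigma,\punct)$ with $g|_\punct=\sigma^{-1}$ and $g^\ast\eta_1\simeq\eta_1$.

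\textbf{Assembly.} Applying $g$ as in the first step gives an isomorphism
\[
\mathbb{B}(\mathcal{A}_1,\seq_1) \;\cong\; \mathbb{B}\bigl(g^{-1}(\mathcal{A}_1),\,\seq_1\circ g\bigr) \;=\; \mathbb{B}\bigl(g^{-1}(\mathcal{A}_1),\,\seq'\bigr),
\]
on $(\Sigma,\punct,g^\ast\eta_1)$. Since $g^\ast\eta_1\simeq\eta_1$, Lemma \ref{LemmaRootsOfUnityBaseChange} together with Corollary \ref{CorollaryBGAIndependentOfArcSystem} identifies this, up to Morita equivalence, with $\mathbb{B}(\mathcal{A}_1,\seq')$ on $(\Sigma,\punct,\eta_1)$. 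Concatenating the Morita equivalences obtained in the previous paragraphs completes the proof. The only conceptual novelty is the observation that ribbon-type line fields have trivial winding at every puncture, which is what upgrades the a priori weak conclusion "same orbit of line fields" into the much stronger conclusion "one can freely permute the multiplicities".
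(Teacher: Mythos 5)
Your proposal is correct and follows essentially the same route as the paper: transport the data along a mapping class, use half-twists (harmless because ribbon-type line fields have vanishing winding number at every puncture, so Lemma \ref{LemmaHalfTwistsPreserveHomotopyClasses} applies) to permute the multiplicities, and then combine Lemma \ref{LemmaRootsOfUnityBaseChange} with Corollary \ref{CorollaryBGAIndependentOfArcSystem} to handle homotopic-but-unequal line fields and differing arc systems. The only difference is organizational — the paper absorbs the half-twists into the choice of $f$ before transporting, while you perform a second transport — which does not change the substance of the argument.
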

\begin{proof}
 Every $f \in \MCG(\Sigma, \punct)$ induces a canonical bijection between $\eta$-graded arc systems  and $f^{\ast}\eta$-graded arc systems which induces an isomorphism on the level of Brauer graph categories.
Suppose now that $\mathcal{A}$ is an $\eta$-graded arc system and that $\mathcal{B}$ is a $f^{\ast}\eta$-graded arc system. Let $\seq$ be a multiplicity function. Then, it follows from the bijection between arc systems above and Corollary \ref{CorollaryBGAIndependentOfArcSystem} that the Brauer graph categories $\mathbb{B}(\mathcal{A}, \seq)$ and $\mathbb{B}(\mathcal{B}, \seq \circ f)$ are Morita equivalent.

Now the proof of the assertion follows from this special case. By assumption, there exists $f \in \MCG(\Sigma, \punct)$ such that $f^{\ast}\eta_1 \simeq \eta_2$. Then, permuting punctures via half-twists and replacing the line field by a homotopic one, we may assume that $\seq_1 \circ f= \seq_2$  by Lemma \ref{LemmaHalfTwistsPreserveHomotopyClasses}. The assertion finally follows from Lemma \ref{LemmaRootsOfUnityBaseChange}.
\end{proof}

\subsection{A complete set of invariants for orbits}\ \medskip

\noindent Building on work of Kawazumi \cite{Kawazumi}, Lekili and Polishchuk \cite{LekiliPolishchukGentle} classified  $\MCG(\Sigma, \deco)$-orbits of line fields in terms of invariants. For a line field $\eta$, one defines an element $\sigma(\eta) \in \mathbb{Z}_2$ as follows:
\begin{displaymath}
\sigma(\eta)\coloneqq  \begin{cases} 0, & \text{if }\eta \text{ is orientable}; \\ 1, & \text{otherwise.} \end{cases} 
\end{displaymath}

\noindent We recall that a line field is orientable if it lifts to a vector field. It is clear that $\sigma(\eta)$ is an invariant under the action of $\MCG(\Sigma, \deco)$. Here is a description of $\sigma(\eta)$  in terms of winding numbers we will use later:
\begin{lem}[{\cite[Lemma 1.1.4.]{LekiliPolishchukGentle}}]\label{LemmaCharacterizationOrientable}
A line field $\eta$ is orientable, i.e.\ $\sigma(\eta)=0$, if and only if $\omega_{\eta}(\gamma)$ is even for every loop $\gamma$.
\end{lem}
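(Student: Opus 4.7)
The plan is to trivialize the projectivized tangent bundle and reduce the statement to an elementary covering-space argument about a map $\Sigma_{\reg}\to\mathbb{RP}^1$. Since $\partial\Sigma\neq\emptyset$, the punctured surface $\Sigma_{\reg}$ is an open (hence non-compact) surface, so $H^2(\Sigma_{\reg},\mathbb{Z})=0$ and every oriented rank-$2$ bundle on $\Sigma_{\reg}$ is trivial. In particular, $T\Sigma_{\reg}$ admits a nowhere-vanishing vector field $w$, which induces a trivialization $\mathbb{P}(T\Sigma_{\reg})\cong\Sigma_{\reg}\times\mathbb{RP}^1$. Under this identification the line field $\eta$ is a smooth map $\bar\eta\colon\Sigma_{\reg}\to\mathbb{RP}^1$, and it is orientable precisely when $\bar\eta$ lifts along the non-trivial double cover $S^1\to\mathbb{RP}^1$. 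By standard covering space theory, such a lift exists if and only if $\deg(\bar\eta\circ\gamma)\in\mathbb{Z}$ is even for every loop $\gamma$ in $\Sigma_{\reg}$.

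Next, I would translate the winding number $\omega_\eta(\gamma)$ into the same language. The pullback bundle $\gamma^*\mathbb{P}(T\Sigma_{\reg})$ is trivialized by $w$ as $S^1\times\mathbb{RP}^1$, the line field becomes the graph of $\bar\eta\circ\gamma\colon S^1\to\mathbb{RP}^1$, and $\dot\gamma$ becomes the graph of a smooth map $d_\gamma\colon S^1\to\mathbb{RP}^1$ describing the tangent direction relative to $w$. The intersection number of two sections of a trivialized circle bundle over $S^1$ is the difference of their degrees, so, working modulo $2$ and ignoring signs,
\[
\omega_\eta(\gamma)\equiv\deg(d_\gamma)+\deg(\bar\eta\circ\gamma)\pmod{2}.
\]

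The crucial step is then to show that $\deg(d_\gamma)$ is always even. For this, lift $\dot\gamma/\|\dot\gamma\|$ and $w$ to the unit tangent bundle $\Sigma_{\reg}\times S^1$ (using the same framing $w$): the angle between them winds some integer number $D_\gamma$ of full turns around $S^1$ as $t$ traverses the loop. Pushing down along the antipodal quotient $S^1\to\mathbb{RP}^1$ doubles the number of revolutions in the fiber, because $\mathbb{RP}^1$ has ``half the circumference'' of $S^1$. Hence $\deg(d_\gamma)=2D_\gamma$ is even, which combined with the displayed congruence gives
\[
\omega_\eta(\gamma)\equiv\deg(\bar\eta\circ\gamma)\pmod{2}.
\]

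Putting the pieces together, ``$\omega_\eta(\gamma)$ is even for every loop $\gamma$'' is equivalent to ``$\deg(\bar\eta\circ\gamma)$ is even for every loop $\gamma$'', which by the first paragraph is exactly the obstruction to lifting $\bar\eta$ along the double cover, i.e.\ the orientability of $\eta$. The main delicate point I would be careful with is the factor-of-two relation $\deg(d_\gamma)=2D_\gamma$: it rests on an unambiguous identification of fiber revolutions under the quotient $S^1\to\mathbb{RP}^1$, and any slip in orientation conventions there would invalidate the mod-$2$ comparison. Everything else is a routine unwinding of definitions once the global trivialization of $T\Sigma_{\reg}$ has been chosen.
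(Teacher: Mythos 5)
The paper does not prove this lemma itself; it is cited from Lekili--Polishchuk. Your argument is correct and is the natural one: trivialize $\mathbb{P}(T\Sigma_{\reg})$ using a global framing of $T\Sigma_{\reg}$ (which exists because $\Sigma_{\reg}$ deformation retracts onto a $1$-complex, using $\partial\Sigma\neq\emptyset$), rephrase orientability as the lifting problem for $\bar\eta\colon\Sigma_{\reg}\to\mathbb{RP}^1$ along the double cover $S^1\to\mathbb{RP}^1$, apply the $\pi_1$-lifting criterion, and observe that in the pulled-back torus $\gamma^*\mathbb{P}(T\Sigma)\cong S^1\times\mathbb{RP}^1$ the intersection number defining $\omega_\eta(\gamma)$ reduces mod $2$ to $\deg(d_\gamma)+\deg(\bar\eta\circ\gamma)$, with $\deg(d_\gamma)$ always even.

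One correction to your self-assessment: the factor-of-two relation $\deg(d_\gamma)=2D_\gamma$, which you single out as the delicate step, is actually the most robust part of the argument and involves no subtlety about orientation conventions. Any map $S^1\to\mathbb{RP}^1$ that factors through the connected double cover $S^1\to\mathbb{RP}^1$ has even degree automatically: under any identification $\mathbb{RP}^1\cong S^1$, the covering map has degree $\pm 2$, so its image on $\pi_1$ is $2\mathbb{Z}$ no matter which generators or orientations one picks. The sign might flip, but the parity cannot, and parity is all you need for the mod-$2$ comparison. The genuinely load-bearing inputs are the existence of the global framing, the covering-space lifting criterion, and the identification of the intersection number with a difference of fiber degrees; all three are sound.
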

\noindent  Further invariants of the $\MCG(\Sigma, \deco)$-orbit of $\eta$ are 
\begin{itemize}
    \item the multi-set $\Omega_{\eta}^{\partial}$ consisting of the winding numbers of all boundary components and the multi-set $\Omega_{\eta}^{\punct}$ consisting of the winding numbers of all punctures;
    \item the greatest common divisor $\gcd(\eta) \geq 0$ of all integers $\omega_{\eta}(\gamma)$, where $\gamma \subseteq \Sigma$ is a non-separating simple loop.
\end{itemize}

\noindent Under certain conditions one can define an additional invariant $\Arf(\eta) \in \mathbb{Z}_2$ which is expressed as the \textit{Arf invariant} of a certain quadratic form defined by $\eta$. This invariant will turn out to be irrelevant for our discussion of Brauer graph algebras and we refer the interested reader to \cite{LekiliPolishchukGentle} for the definition of $\Arf(\eta)$.\medskip

\noindent With all invariants in place, we can finally state the classification result for $\MCG(\Sigma, \deco)$-orbits of line fields. 

\begin{thm}[{\cite[Theorem 1.2.4.]{LekiliPolishchukGentle}}]\label{TheoremClassificationOrbits}
Let $\eta, \eta'$ be line fields on $\Sigma_{\reg}$ and let $g$ denote the genus of $\Sigma$. The homotopy classes of $\eta$ and $\eta'$ are in the same $\MCG(\Sigma, \deco)$-orbit if and only if $\Omega_{\eta}^{\partial}=\Omega_{\eta'}^{\partial}$ and $\Omega_{\eta}^{\punct}=\Omega_{\eta'}^{\punct}$ as multi-sets and moreover one of the following conditions holds:

\begin{enumerate}
    \item $g=0$;
    \item $g=1$ and $\gcd(\eta)=\gcd(\eta')$;
    \item $g \geq 2$ and $\sigma(\eta)=\sigma(\eta')$, additionally if $\sigma(\eta)=0=\sigma(\eta')$ and $b \equiv 2 \mod 4$ for all $b \in \Omega_{\eta}^{\partial}\cup \Omega_{\eta}^{\punct}$, then $\Arf(\eta)=\Arf(\eta')$. 
\end{enumerate}
\end{thm}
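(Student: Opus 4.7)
The plan is to break the statement into the easy direction (each listed quantity is an $\MCG(\Sigma,\deco)$-invariant) and the harder converse (the invariants are complete), attacking the latter by reducing $\eta$ and $\eta'$ to a common normal form via a carefully chosen sequence of Dehn and half-twists.

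For the easy direction, note first that any $f \in \MCG(\Sigma,\deco)$ permutes $\partial\Sigma$ and $\punct$, so $\Omega_\eta^{\partial}$ and $\Omega_\eta^{\punct}$ are preserved as multi-sets. Orientability is intrinsic to the bundle-theoretic data, so $\sigma(\eta)$ is preserved. Non-separating simple closed curves form an $\MCG$-orbit, so $\gcd(\eta)$ is preserved. Finally, $\Arf(\eta)$ is preserved because, as I would verify at the end, it is the Arf invariant of a $\mathbb{Z}/2$-quadratic refinement of the intersection form on $H_1(\Sigma_{\reg},\mathbb{Z}/2)$, and Arf invariants are symplectic invariants.

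For the converse, the main technical tool is Chillingworth's theorem: two line fields are homotopic if and only if their winding-number functions on loops agree. Thus the homotopy class of $\eta$ is encoded by an element $\omega_\eta \in H^1(\Sigma_{\reg},\mathbb{Z})$ subject to constraints fixing $\omega_\eta(B)$ for boundary components $B$ and $\omega_\eta(p)$ for punctures $p$. Choose a standard geometric symplectic basis $(a_1,b_1,\dots,a_g,b_g)$ of the homology of the closed surface, together with the boundary and puncture loops. A Dehn twist $T_\gamma$ along a simple closed curve $\gamma$ transforms winding numbers by the formula $\omega_{T_\gamma^*\eta}(\delta)=\omega_\eta(\delta)+\omega_\eta(\gamma)\langle\gamma,\delta\rangle$. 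Using this, I would iteratively apply Dehn twists to zero out winding numbers on standard basis loops, modulo unavoidable obstructions; half-twists supplied by Lemma \ref{LemmaHalfTwistsPreserveHomotopyClasses} take care of permuting punctures with equal winding numbers.

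This reduction naturally splits by genus. In genus $0$, the cohomology of $\Sigma_{\reg}$ is generated by boundary and puncture classes, so $\Omega_\eta^{\partial}$ and $\Omega_\eta^{\punct}$ already determine $\omega_\eta$ and the fixed-data assumption gives the result directly. In genus $1$, there is a single pair $(a_1,b_1)$ of hyperbolic generators, and the available Dehn twist transformations on a torus act on $(\omega_\eta(a_1),\omega_\eta(b_1))$ by $\mathrm{SL}_2(\mathbb{Z})$ together with translations by multiples of the fixed winding numbers; the orbit of such a pair is characterised precisely by $\gcd(\eta)$ via the Euclidean algorithm. In genus $\geq 2$, once the winding numbers of the fixed loops match, the problem becomes classifying $\omega_\eta$ modulo the action of the Torelli-extended symplectic group; if $\sigma(\eta)=1$ one can further use a Dehn twist along a loop with odd winding number to normalise all remaining winding numbers, whereas if $\sigma(\eta)=0$ the parity obstruction survives and the residual classification is governed by the Arf invariant of the induced quadratic refinement.

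The hardest step is establishing sufficiency when $\sigma(\eta)=0$ and $b\equiv 2\pmod 4$ for all $b\in\Omega_\eta^{\partial}\cup\Omega_\eta^{\punct}$. Here one must verify that $q(\gamma)\coloneqq \tfrac{1}{2}\omega_\eta(\gamma)\bmod 2$ (or an analogous shift) descends to a well-defined $\mathbb{Z}/2$-quadratic refinement of the intersection form on a suitable quotient of $H_1$, identify $\Arf(\eta)$ as its Arf invariant, and invoke the classical fact that $\mathbb{Z}/2$-quadratic refinements of a fixed symplectic form are classified up to symplectic equivalence by the Arf invariant alone. I expect this verification — checking compatibility of $q$ with the symplectic structure, and surjectivity of the MCG action onto the appropriate symplectic group — to be the main obstacle, and would model the argument on Kawazumi's analysis of spin structures adapted to the punctured-surface setting.
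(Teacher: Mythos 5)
This statement is \emph{cited} in the paper (as Theorem 1.2.4 of Lekili--Polishchuk) rather than proved; the paper's only contribution is the short remark that follows it, which reduces the punctured-surface version to the compact-with-boundary version proved by Lekili--Polishchuk: one replaces each puncture by a boundary component with a single marked point, applies their Corollary 1.2.6, and then observes that any diffeomorphism of $\Sigma_{\circ}$ permuting the puncture-boundary circles extends across the removed discs (using that the homotopy class of a line field on a punctured disc is determined by the winding number of a simple loop). You are instead proposing to reprove the classification from scratch, which is a genuinely different and far more ambitious route. That is not wrong in principle, but it is worth knowing that the paper itself offloads the substance to the reference and only handles the puncture bookkeeping, which your sketch never addresses.

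As a from-scratch sketch, your plan has the right shape (Chillingworth to reduce to winding-number functions, normalization via Dehn and half-twists, genus case split, Arf invariant for the orientable case), but several steps would not survive being written out. The Dehn-twist formula you quote, $\omega_{T_\gamma^*\eta}(\delta)=\omega_\eta(\delta)+\omega_\eta(\gamma)\langle\gamma,\delta\rangle$, omits the quadratic correction coming from the smoothings at the newly created intersection points; the correct formula in Kawazumi's work (and the winding-number computation in Lemma \ref{LemmaDegreeCondition} of this paper reflects the same phenomenon) has an extra term roughly of the form $\langle\gamma,\delta\rangle^2$, and without it the genus-one Euclidean-algorithm argument and the genus $\geq 2$ normalization do not go through as stated. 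The claim that one can \emph{further use a Dehn twist along a loop with odd winding number to normalise all remaining winding numbers} in the $\sigma(\eta)=1$ case is asserted but not argued, and this is precisely where the non-trivial surjectivity-onto-the-symplectic-group input is needed. Finally, you explicitly flag the Arf-invariant case as the hardest step and leave it open: defining the quadratic refinement, checking it is well defined, identifying $\Arf(\eta)$, and invoking the classification of $\mathbb{Z}/2$-quadratic forms is essentially the content of Lekili--Polishchuk's and Kawazumi's proofs, so until that is done your proposal does not actually constitute a proof. In short: correct strategy in outline, but it reconstructs the cited reference rather than the paper's argument, and the key quantitative steps are either mis-stated or deferred.
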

\noindent Note that if $\eta$ is of ribbon type, then $b \equiv 0 \mod 4$ for all $b \in \Omega_\eta^{\punct}$ and it follows immediately that line fields $\eta, \eta'$ of ribbon type on a punctured surface of genus at least $2$ lie in the same orbit if and only if $\Omega_{\eta}^{\partial}=\Omega_{\eta'}^{\partial}$ and $\sigma(\eta)=\sigma(\eta')$.\medskip

\noindent In their paper, Lekili and Polishchuk consider compact surfaces without punctures. However, their result carries over to the punctured case as stated. To see this, one replaces $(\Sigma, \punct)$ with a pair $(\Sigma_{\circ}, \marked_\circ)$, where   $\Sigma_{\circ} \subseteq \Sigma$ denotes a subsurface which is obtained by removing an open disc around every puncture and $\marked_{\circ}$ contains a single point on each of the new boundary components (see \cite[Corollary 1.2.6]{LekiliPolishchukGentle} for the case with marked points on the boundary). If $\eta_1$ and $\eta_2$ satisfy one of the conditions in Theorem \ref{TheoremClassificationOrbits}, it follows that $\eta_1|_{\Sigma_{\circ}}$ and $\eta_2|_{\Sigma_{\circ}}$ lie in the same $\MCG(\Sigma_{\circ}, \marked_{\circ})$-orbit. In the end, one uses the fact that every diffeomorphism $\Sigma_{\circ} \rightarrow \Sigma_{\circ}$ which permutes the boundary components corresponding to punctures can be extended to a diffeomorphism $\Sigma \rightarrow \Sigma$ and that the homotopy class of a line field on a punctured disc is determined by the winding number of a simple loop.

\subsection{Orbit invariants for line fields of ribbon type}\ \medskip

\noindent We translate some of the invariants of a ribbon graph into invariants of the line field $\eta_{\Gamma}$ on its ribbon surface.
\begin{lem}\label{LemmaBipartiteOrientable}
Let $\Gamma$ be a ribbon graph. Then,
$
 \sigma(\eta_\Gamma)=\sigma(\Gamma).
$
 In other words,
$\eta_\Gamma$ is orientable if and only if $\Gamma$ is bipartite.
\end{lem}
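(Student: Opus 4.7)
The plan is to leverage Lemma~\ref{LemmaCharacterizationOrientable}, which reduces orientability of $\eta_{\Gamma}$ to the condition that every loop in $\Sigma_{\reg} = \Sigma_\Gamma \setminus V(\Gamma)$ has even winding number with respect to $\eta_\Gamma$. I will match this parity condition against bipartiteness of $\Gamma$ using the combinatorial description of loops on the ribbon surface via smoothings of closed walks.

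The first step is to note that each edge $e \in E(\Gamma)$ is everywhere transverse to $\eta_\Gamma$ by the construction in Example~\ref{ExampleLineFieldRibbonGraph}, so $\omega_{\eta_\Gamma}(e) = 0$. Applying Corollary~\ref{CorollaryWindingEdges} to the cyclic sequence of oriented intersections at the shared vertices of a closed walk $e_1, \ldots, e_m$ in $\Gamma$, the resulting smoothed loop $\gamma_{\text{sm}} \subset \Sigma_{\reg}$ satisfies
\[
\omega_{\eta_\Gamma}(\gamma_{\text{sm}}) \equiv m \pmod 2.
\]
If $\Gamma$ is not bipartite, an odd cycle produces such a loop with odd winding number, so $\eta_\Gamma$ fails to be orientable. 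Conversely, if $\Gamma$ is bipartite, all such smoothed loops have even winding number; combined with the fact that small loops around vertices $v \in V(\Gamma)$ have winding number $\omega_{\eta_\Gamma}(v) = 0$ by Example~\ref{ExampleWindingNumbersBoundaries}, this handles a generating set of $H_1(\Sigma_{\reg}, \mathbb{Z}/2)$, which one obtains via a Mayer--Vietoris computation applied to the covering of $\Sigma_\Gamma$ by $\Sigma_{\reg}$ and small discs around the vertices of $\Gamma$.

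The main technical point I anticipate is verifying that the parity of the winding number is additive under concatenation of loops and hence descends to a homomorphism $H_1(\Sigma_{\reg}, \mathbb{Z}/2) \to \mathbb{Z}/2$; this then allows one to conclude from vanishing on the above generators that every loop in $\Sigma_{\reg}$ has even winding number, so $\eta_\Gamma$ is orientable by Lemma~\ref{LemmaCharacterizationOrientable}. This additivity can be checked directly from the intersection-theoretic definition of the winding number in $H_1(\mathbb{P}(T\Sigma), \mathbb{Z})$, or equivalently read off from the interpretation of the parity as the pullback of the first Stiefel--Whitney class of the tautological line bundle on $\mathbb{P}(T\Sigma)$ along the section $\eta$. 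Either viewpoint gives a short verification and completes the argument.
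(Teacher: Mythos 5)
Your argument is correct, but it reaches the conclusion by a different route than the paper, and the difference lies precisely in the step you flag as the main technical point. The paper avoids any homological descent: it deformation retracts $\Sigma_{\reg}$ onto the graph $\hat\Gamma$ obtained by blowing up each vertex to a circle $S^1_v$ parallel to $\eta_\Gamma$, describes \emph{every} homotopy class of loops in $\Sigma_{\reg}$ by a closed reduced walk $(h_1,h_2,d_1,\dots,h_{2m},d_m)$, and reads off directly that the winding-number parity of any such loop is $m$, the number of edges traversed (the $d_i$-portions along the circles contribute nothing); bipartiteness is then immediately equivalent, via Lemma \ref{LemmaCharacterizationOrientable}, to orientability. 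You instead compute the parity only on smoothed walk-loops (via $\omega_{\eta_\Gamma}(e)=0$ and Corollary \ref{CorollaryWindingEdges}, which gives the same count $m$ mod $2$) and on the vertex loops, check that these generate $H_1(\Sigma_{\reg},\mathbb{Z}_2)$, and then invoke that $\omega_{\eta}\bmod 2$ is a homomorphism on $H_1(\Sigma_{\reg},\mathbb{Z}_2)$. That linearity statement is true, and both of your suggested justifications work, but it is genuinely special to \emph{line} fields: for a vector field the analogous parity is only a quadratic refinement of the intersection form (Johnson), because concatenation-and-smoothing corrects the tangent lift by fiber classes that can pair oddly with the section. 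For line fields the correction lifts to the unit tangent bundle and hence is an even multiple of the $\mathbb{P}(T\Sigma)$-fiber class; equivalently, trivialize $T\Sigma$ (possible since $\partial\Sigma\neq\emptyset$) and note that the tangent-line rotation of $\dot\gamma$ is twice an integer, so $\omega_\eta(\gamma)\bmod 2$ equals the pairing of $w_1$ of the pulled-back tautological bundle with $[\gamma]$. Your write-up should make this point explicit; alternatively you could lean on the same Johnson-type statement the paper itself uses later in the proof of Lemma \ref{LemmaGCD}, which creates no circularity. In exchange for this extra input, your approach buys a shorter combinatorial core (only a generating set of cycles, rather than a complete normal form for loops with winding data), while the paper's walk description is more elementary and yields the parity of every loop at once.
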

\begin{proof}
The surface $\Sigma_{\Gamma} \setminus \BV{\Gamma}$ deformation retracts onto a graph $\hat{\Gamma}$ which is obtained by blowing up each vertex $v \in \BV{\Gamma}$ to a circle $S^1_v$ and attaching the half-edges from $\BH{\Gamma}_v$ to $S^1_v$ according to the cyclic order on $\BH{\Gamma}_v$. The attaching point of $h$  on $S^1_{s(h)}$ is denoted by $z_h$. Thus, $\pi_1(\hat{\Gamma}) \cong \pi_1(\Sigma_{\Gamma})$ and hence there exists a bijection between homotopy classes of oriented loops on $\Sigma_{\Gamma}$ and equivalence classes of \textit{closed reduced walks} in $\Gamma$. By a closed reduced walk we mean a cyclic sequence $(h_1, h_2, d_1, h_3, h_4, d_2, \dots, h_{2m}, d_m)$ consisting of half-edges $h_i$ and integers $d_i \in \mathbb{Z}$ such that for all $i \in [1,m]$, $h_{2i}=\iota(h_{2i-1})$ and $s(h_{2i+1})=s(h_{2i})$. In addition we assume that $h_{2i+1}\neq h_{2i}$ whenever $d_i=0$. Two walks are considered equivalent if they agree after a rotation of their entries by $3l$ steps for some $l \in \mathbb{Z}$. Given a walk $(h_1, h_2, d_1, \dots, h_{2m}, d_m)$  its associated loop is obtained by joining each pair of edges $\overline{h_{2i+1}}$ and $\overline{h_{2i}}$, where $1 \leq i \leq m$ and indices are considered modulo $2m$, by a path from $z_{h_{2i}}$ to $z_{h_{2i+1}}$ in $S^1_v$, where $v=s(h_{2i+1})=s(h_{2i})$. More specifically, this connecting path corresponds to $d_i \in \mathbb{Z} \cong \pi_1(S^1_v, z_{h_{2i}})$ under the bijection between homotopy classes of all paths from $z_{h_{2i}}$ to $z_{h_{2i+1}}$ in $S^1_v$ and $\pi_1(S^1_v, z_{h_{2i}})$ given by concatenating a loop based at $z_{h_{2i}}$ with a path $\epsilon$ between the two points which we fixed in advance (e.g. a shortest clockwise path). In case $z_{h_{2i}}=z_{h_{2i+1}}$ we require $\epsilon$ to be constant.

We may assume that the circles $S^1_v$ are everywhere parallel to $\eta_{\Gamma}$. It follows that the parity of the winding number of a loop $\gamma$ which is represented by a walk $(h_1, h_2, d_1, \dots, h_{2m}, d_m)$ is $m$, c.f. Figure \ref{FigureWindingNumberBoundary} on page \pageref{FigureWindingNumberBoundary}. Thus, $\omega(\gamma)$ is even for all loops $\gamma$ if and only if all closed walks in $\Gamma$ have even length. The latter condition is equivalent to bipartivity of $\Gamma$ and by Lemma \ref{LemmaCharacterizationOrientable}, the former condition is equivalent to the orientability of $\eta_\Gamma$.
\end{proof}
	
\begin{lem}\label{LemmaGCD} Let $\eta$ be a line field on a punctured surface $\Sigma$ of genus $g \geq 1$. If $\eta$ is of ribbon type, then
\begin{displaymath}\gcd(\eta)=\begin{cases} 1, & \text{if }\sigma(\eta)=1; \\ 2, & \text{if }\sigma(\eta)=0.  \end{cases}\end{displaymath}

\end{lem}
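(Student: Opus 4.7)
The claim breaks into two cases according to $\sigma(\eta) \in \{0,1\}$. By Lemma~\ref{LemmaCharacterizationOrientable}, $\sigma(\eta)=0$ is equivalent to $\omega_\eta(\gamma)$ being even for every loop $\gamma$; consequently $2 \mid \gcd(\eta)$ whenever $\sigma(\eta)=0$, and when $\sigma(\eta)=1$ the value $\gcd(\eta)$ is bounded above by any odd winding number realised by a non-separating simple loop. Hence it suffices to produce a non-separating simple loop with winding $\pm 1$ in the non-orientable case, and one with winding $\pm 2$ in the orientable case.

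Since $\eta$ is of ribbon type, write $\eta \simeq \eta_\Gamma$ for a ribbon graph $\Gamma \hookrightarrow \Sigma$ with $V(\Gamma)=\punct$, so that Lemma~\ref{LemmaBipartiteOrientable} identifies $\sigma(\eta)=0$ with bipartivity of $\Gamma$. The Euler-characteristic identity $|E(\Gamma)|-|V(\Gamma)|=2g+b-2$, where $b$ is the number of faces of $\Gamma$ (equivalently, of boundary components of $\Sigma$), combined with $g \geq 1$ and $b\geq 1$, shows that the first Betti number of $\Gamma$ is at least two.

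In the non-bipartite case, take a shortest odd cycle $C$ in $\Gamma$; by minimality $C$ is simple as a graph cycle. Its canonical realisation as an embedded loop $\gamma_C \subseteq \Sigma$, obtained by following the edges of $C$ and turning through each vertex via the corner dictated by the ribbon structure, has odd winding by the walk-parity formula appearing in the proof of Lemma~\ref{LemmaBipartiteOrientable}. Because $\omega_\eta(p)=0$ at every puncture (Example~\ref{ExampleWindingNumbersBoundaries}), the reduction $\omega_\eta \bmod 2$ descends to a class in $H^1(\Sigma,\mathbb{Z}/2)$; since this class pairs non-trivially with $[\gamma_C]$, the loop $\gamma_C$ represents a non-zero class in $H_1(\Sigma,\mathbb{Z}/2)$, and is in particular non-separating.

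In the bipartite case, $\Gamma$ has at least two independent cycles, all of even length. One chooses two simple even cycles $C_1, C_2 \subseteq \Gamma$ sharing exactly one edge $e$ and representing linearly independent classes in $H_1(\Sigma)$; the symmetric difference $(C_1 \cup C_2)\setminus e$ smoothed at the endpoints of $e$ yields a simple embedded loop $\gamma \subseteq \Sigma$ with $[\gamma] = [C_1] \pm [C_2] \neq 0$, so $\gamma$ is non-separating. A direct local computation of $\omega_\eta(\gamma)$, using the explicit foliation of $\eta_\Gamma$ near each vertex from Example~\ref{ExampleLineFieldRibbonGraph} together with the contributions of the two smoothings, gives $|\omega_\eta(\gamma)|=2$.

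The principal obstacle lies in the bipartite case: one must arrange simultaneously for the two cycles to be simple, to share a single edge, to be homologically independent, and for the resulting loop to have winding \emph{exactly} $\pm 2$ rather than some larger even integer. Both the combinatorial existence of such a pair of cycles in $\Gamma$ and the precise local computation of its winding require careful use of the model of $\eta_\Gamma$ from Example~\ref{ExampleLineFieldRibbonGraph}.
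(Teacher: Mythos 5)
Your plan does not match the paper's proof and has several real gaps, the most important of which you partly flag yourself at the end.

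First, in the non-orientable case, the shortest-odd-cycle construction together with the walk-parity formula of Lemma~\ref{LemmaBipartiteOrientable} only gives a non-separating simple loop with \emph{odd} winding, not with winding $\pm 1$. This shows that $\gcd(\eta)$ is odd, but it does not show $\gcd(\eta)=1$ unless you separately establish $\gcd(\eta)\mid 2$. Nothing in the proposal supplies that bound.

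Second, the non-separating claim is not justified. On a surface with non-empty boundary, a separating simple closed curve can represent a non-zero class in $H_1(\Sigma,\mathbb{Z}/2)$ — it is homologous to a sum of boundary classes, and individual boundary classes need not vanish. So the fact that $\omega_\eta^{(2)}$ pairs non-trivially with $[\gamma_C]$ does not imply $\gamma_C$ is non-separating. (In fact, for $\Gamma$ non-bipartite a face can well have odd perimeter, so the mod-$2$ winding class does not vanish on all boundary classes.)

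Third, the bipartite case: your claim that the symmetric-difference loop has winding \emph{exactly} $\pm 2$ is unsubstantiated. The mod-$2$ winding is determined by the length (Lemma~\ref{LemmaBipartiteOrientable}), but the actual integer depends on the turning data $(d_i)$ at each vertex, which you have not controlled. Choosing cycles simple, sharing one edge, and homologically independent does not pin down the winding beyond its parity. You acknowledge this gap but do not close it.

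The paper avoids all of these difficulties by a more geometric argument: fix any non-separating simple loop $\gamma$ and a puncture $p$, and attach a small loop around $p$ at a point of $\gamma$ to obtain a second non-separating simple loop $\gamma'$. The Poincar\'e--Hopf theorem applied to the subsurface bounded by $\gamma$ and $\gamma'$ and containing only the puncture $p$ gives $\omega_\eta(\gamma)+\omega_\eta(\gamma')=-2$, hence $\gcd(\eta)>0$ and $\gcd(\eta)\mid 2$ at once, without any combinatorial control over winding values. The identification of $\gcd(\eta)=2$ with orientability is then carried out via the mod-$2$ winding homomorphism and Kawazumi's observation $\gcd(\eta)\mid\omega_\eta(B)+2$ for boundary components. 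If you want to keep your combinatorial route, you would at minimum need (a) a correct criterion for non-separating in terms of intersection pairing (not mere nonvanishing in $H_1$), and (b) an argument producing two non-separating loops whose windings differ by $2$, rather than a single loop with a prescribed winding.
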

\begin{proof}Let $p \in \Sigma$ be a puncture and let $\gamma:S^1\rightarrow \Sigma \setminus\punct$ be any non-separating simple loop. Choose any embedded closed path $\delta \subseteq \Sigma_{\reg}$ with base point $\gamma(z)$ for some $z \in S^1$, which bounds a closed disc $U \subseteq \Sigma$ such that $U \cap \mathscr{P}=\{p\}$ and $U \cap \gamma = \{\gamma(z)\}$. Regarding $\gamma$ as a closed path based at $\gamma(z)$, define a new non-separating simple loop $\gamma'$ as the concatenation of $\gamma$ and $\delta$ (this may require replacing $\delta$ by its inverse path). After deforming $\gamma'$ we may assume that $\gamma$ and $\gamma'$ are disjoint and bound a unique subsurface $V$ of genus $0$ such that $\partial V= \gamma \sqcup \gamma'$ and $V \cap \mathscr{P}= \{p\}$. An application of the Poincar\'e-Hopf Theorem to the punctured surface $(V, \{p\})$ and the line field $\eta|_{V}$ shows that \begin{displaymath}
 \omega_{\eta}(\gamma) + \omega_{\eta}(\gamma') = 2 \chi(V \setminus \{p\})= -2,\end{displaymath}

\noindent for suitable orientations of $\gamma$ and $\gamma'$. Hence, $\gcd(\eta) > 0$ and $\gcd(\eta) \mid 2$ which shows that $\gcd(\eta) \in \{1,2\}$. In particular, $\gcd(\eta)=2$ if all winding numbers are even, that is if $\sigma(\eta)=0$. On the other hand, suppose that $\gcd(\eta)=2$ but $\omega_{\eta}(\gamma)$ is odd for a loop $\gamma$. By \cite{Johnson}, $\omega_{\eta}$ induces a homomorphism $\omega_{\eta}^{(2)}:\operatorname{H}_1(\Sigma, \mathbb{Z}_2) \rightarrow \mathbb{Z}_2$ such that $\omega_{\eta}^{(2)}([\gamma]) \equiv \omega_{\eta}(\gamma) \mod 2$. By assumption, $\omega_{\eta}^{(2)} \neq 0$. As $\gcd(\eta)=2$ and $\operatorname{H}_1(\Sigma, \mathbb{Z}_2)$ is generated by boundary and non-separating loops, we conclude that there exists a boundary component $B_0$ such that $\omega_{\eta}(B_0)$ is odd. 
It follows from the argument in \cite[Lemma 2.6]{Kawazumi} that $\gcd(\eta) \mid \omega_{\eta}(B)+2$ for all boundary components $B$. For convenience let us recall it here: for any non-separating loop $\alpha$ and every boundary component $B$ one finds a non-separating loop $\alpha'$ such that $\alpha, \alpha'$ and $B$ bound a subsurface which is diffeomorphic to a pair of pants $P$. Another application of Poincar\'e-Hopf to $P$ shows that $\omega_{\eta}(\alpha')=\omega_{\eta}(\alpha)+\omega_{\eta}(B)+2$ for suitable orientations of the three loops and hence $\gcd(\eta) \mid \omega_{\eta}(B) +2$. Thus, $\gcd(\eta)$ is odd, which contradicts $\gcd(\eta)=2$. Hence, all winding numbers of $\eta$ are even and Lemma \ref{LemmaCharacterizationOrientable} shows that $\eta$ is orientable. 
\end{proof}	

\noindent The following is a consequence of Theorem \ref{TheoremClassificationOrbits} together with Example \ref{ExampleWindingNumbersBoundaries}, Lemma \ref{LemmaBipartiteOrientable} and Lemma \ref{LemmaGCD}.

\begin{cor}\label{CorollaryOrbitsLineFieldsRibbonGraph}Let $\Gamma, \Gamma'$ be ribbon graphs such that $\Sigma \coloneqq \Sigma_{\Gamma} \cong \Sigma_{\Gamma'}$ as punctured surfaces. Let $g$ denote the genus of $\Sigma$. Then, the homotopy classes of  $\eta_{\Gamma}$ and $\eta_{\Gamma'}$ lie in the same $\MCG(\Sigma, \punct)$-orbit if and only if the multi-sets of perimeters of faces of $\Gamma$ and $\Gamma'$ agree and one of the following conditions is satisfied:
\begin{enumerate}
\setlength\itemsep{1ex}
\item $g=0$;
\item $g \geq 1$ and $\sigma(\Gamma)=\sigma(\Gamma')$.
\end{enumerate}
\end{cor}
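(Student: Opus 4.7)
The plan is to apply Theorem \ref{TheoremClassificationOrbits} by translating each of its invariants into ribbon graph data via Example \ref{ExampleWindingNumbersBoundaries} and Lemmas \ref{LemmaBipartiteOrientable} and \ref{LemmaGCD}.

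First, I will handle the multi-sets of winding numbers of boundary components and of punctures. By Example \ref{ExampleWindingNumbersBoundaries}, for any ribbon graph $\Gamma$ one has $\omega_{\eta_{\Gamma}}(p)=0$ for every puncture $p$ and $\omega_{\eta_{\Gamma}}(B)=-l$, where $l$ is the perimeter of the face of $\Gamma$ corresponding to $B$. Hence $\Omega_{\eta_{\Gamma}}^{\punct}$ consists of $|\punct|$ copies of $0$; since we assume $\Sigma_{\Gamma}\cong \Sigma_{\Gamma'}$ as punctured surfaces, $\Omega_{\eta_{\Gamma}}^{\punct}=\Omega_{\eta_{\Gamma'}}^{\punct}$ automatically, and $\Omega_{\eta_{\Gamma}}^{\partial}=\Omega_{\eta_{\Gamma'}}^{\partial}$ if and only if the multi-sets of perimeters of faces of $\Gamma$ and $\Gamma'$ coincide (the number of boundary components equals the number of faces by the definition of the ribbon surface).

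Next, I will dispose of the genus-dependent invariants one genus stratum at a time. In genus $0$ nothing more is required, which gives the first alternative. In genus $1$, Theorem \ref{TheoremClassificationOrbits} additionally asks for $\gcd(\eta_{\Gamma})=\gcd(\eta_{\Gamma'})$; by Lemma \ref{LemmaGCD} this greatest common divisor is determined by $\sigma(\eta_{\Gamma})\in\{0,1\}$ (taking values $2$ and $1$ respectively), and by Lemma \ref{LemmaBipartiteOrientable} one has $\sigma(\eta_{\Gamma})=\sigma(\Gamma)$. So $\gcd(\eta_{\Gamma})=\gcd(\eta_{\Gamma'})$ is equivalent to $\sigma(\Gamma)=\sigma(\Gamma')$. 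In genus $g\geq 2$, Theorem \ref{TheoremClassificationOrbits} requires $\sigma(\eta_{\Gamma})=\sigma(\eta_{\Gamma'})$, which again by Lemma \ref{LemmaBipartiteOrientable} is precisely $\sigma(\Gamma)=\sigma(\Gamma')$.

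The only subtlety is the Arf-invariant clause of Theorem \ref{TheoremClassificationOrbits}, which only needs to be checked when $\sigma(\eta)=0$ and every $b\in \Omega_{\eta}^{\partial}\cup\Omega_{\eta}^{\punct}$ satisfies $b\equiv 2\pmod 4$. However, $\Omega_{\eta_{\Gamma}}^{\punct}$ contains $0$ as soon as $\Sigma$ has a puncture (which it does by our standing assumption that Brauer graphs have at least one vertex), and $0\not\equiv 2\pmod 4$. Hence the Arf condition is vacuous for line fields of ribbon type, so no additional invariant appears. Combining these observations gives the claimed if-and-only-if statement; the only step that requires any care is verifying that the Arf clause is automatically triggered off, and this is the minor obstacle of the argument.
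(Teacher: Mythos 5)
Your argument is correct and follows exactly the route the paper intends: it deduces the corollary from Theorem \ref{TheoremClassificationOrbits} by translating $\Omega_\eta^{\partial}$, $\Omega_\eta^{\punct}$, $\sigma(\eta)$ and $\gcd(\eta)$ into ribbon graph data via Example \ref{ExampleWindingNumbersBoundaries}, Lemma \ref{LemmaBipartiteOrientable} and Lemma \ref{LemmaGCD}, including the observation (made in the paper right after Theorem \ref{TheoremClassificationOrbits}) that the vanishing winding numbers at punctures render the Arf clause vacuous for line fields of ribbon type. No gaps; this matches the paper's (uncited-in-detail) proof.
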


\noindent We can finally give a proof of Theorem \ref{IntroTheoremCriterionDerivedEquivalence}.
\begin{thm}\label{TheoremClassificationBGAs}
Let $(\Gamma, \seq)$ and  $(\Gamma', \seq')$ be Brauer graphs and assume that the corresponding Brauer graph algebras $B$ and $B'$ are not local. Then, $B$ and $B'$ are derived equivalent if and only if all of the following conditions are satisfied:
\begin{enumerate}
   	\item $\Gamma$ and $\Gamma'$ have the same number of vertices, edges and faces, i.e.\ $\Sigma_{\Gamma} \cong \Sigma_{\Gamma'}$;
   	  \item $\Gamma$ and $\Gamma'$ have identical multi-sets of perimeters of faces and the multi-sets of entries of $\seq$ and $\seq'$ coincide;
    \item $\sigma(\Gamma)= \sigma(\Gamma')$.
\end{enumerate}
\end{thm}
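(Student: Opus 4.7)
The forward direction (derived equivalence implies the three conditions) is already supplied by Theorem \ref{TheoremDerivedInvariantsBrauerGraphAlgebra}, so the only work is the converse. The plan is to assemble the Brauer graph category machinery developed in Sections \ref{SectionBrauerGraphCategories}--\ref{SectionOrbitsLineFields} and cash in on the orbit classification for line fields.

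Concretely, given Brauer graphs $(\Gamma,\seq)$ and $(\Gamma',\seq')$ satisfying (1)--(3), I would first identify the ribbon surfaces $\Sigma_\Gamma$ and $\Sigma_{\Gamma'}$ via a diffeomorphism (guaranteed by (1), since the number of vertices, edges and faces determines the topological type of a compact orientable surface with boundary). Under this identification, the line fields $\eta_\Gamma$ and $\eta_{\Gamma'}$ of Example \ref{ExampleLineFieldRibbonGraph} are both of ribbon type on a common punctured surface $(\Sigma,\punct)$. By Example \ref{ExampleWindingNumbersBoundaries}, the multi-set $\Omega^{\partial}_{\eta_\Gamma}$ is determined by the multi-set of perimeters of faces of $\Gamma$, and $\Omega^{\punct}_{\eta_\Gamma}$ consists entirely of zeros; condition (2) therefore yields $\Omega^{\partial}_{\eta_\Gamma}=\Omega^{\partial}_{\eta_{\Gamma'}}$ and $\Omega^{\punct}_{\eta_\Gamma}=\Omega^{\punct}_{\eta_{\Gamma'}}$. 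Condition (3) together with Lemma \ref{LemmaBipartiteOrientable} gives $\sigma(\eta_\Gamma)=\sigma(\eta_{\Gamma'})$. Invoking Corollary \ref{CorollaryOrbitsLineFieldsRibbonGraph} (which is the punctured version of Lekili--Polishchuk's classification, Theorem \ref{TheoremClassificationOrbits}, reduced via Lemma \ref{LemmaGCD}) concludes that the homotopy classes of $\eta_\Gamma$ and $\eta_{\Gamma'}$ lie in the same $\MCG(\Sigma,\punct)$-orbit.

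Next, regard $\Gamma$ and $\Gamma'$ as admissible $\eta$-graded arc systems on $(\Sigma,\punct)$ and apply Proposition \ref{PropositionSameOrbitsSameCategories} to the two graded arc systems with their respective multiplicities $\seq$, $\seq'$; this is precisely where condition (2) on the multi-set of multiplicities is used (the proposition allows us to permute multiplicities along half-twists, which is legitimate because all punctures have winding number zero, so Lemma \ref{LemmaHalfTwistsPreserveHomotopyClasses} applies). The conclusion is that the Brauer graph categories $\mathbb{B}(\Gamma,\seq)$ and $\mathbb{B}(\Gamma',\seq')$ are Morita equivalent as $A_\infty$-categories, i.e.\ their homotopy categories of twisted complexes are equivalent as triangulated categories. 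Finally, Proposition \ref{PropositionBGAIsBrauerGraphCategory} identifies $\H^0(\Tw\,\mathbb{B}(\Gamma,\seq))\simeq \Kb{B_\Gamma}$ and likewise for $\Gamma'$, and Rickard's theorem \cite{RickardMoritaTheory} converts the resulting equivalence $\Kb{B_\Gamma}\simeq \Kb{B_{\Gamma'}}$ into a derived equivalence $\mathsf{D}^b(B_\Gamma)\simeq \mathsf{D}^b(B_{\Gamma'})$.

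The conceptual bulk of the argument is therefore not in this final assembly but in the supporting scaffolding already in place: the $A_\infty$-structure on $\mathbb{B}(\Gamma,\seq)$ (Proposition \ref{PropositionTrivialExtensionIsomorphism} and Corollary \ref{CorollaryBGACategoryHigherMultWellDefined}), the invariance of the Morita class under elementary moves (Proposition \ref{Prop_AddingAnArcMorita} and Corollary \ref{CorollaryBGAIndependentOfArcSystem}), and the translation of homotopy equivalences of line fields into isomorphisms of Brauer graph categories (Lemma \ref{LemmaRootsOfUnityBaseChange}, which is the step where algebraic closedness of $\Bbbk$ is decisively used to rescale arrows by an appropriate $\seq(p)$-th root of $\pm 1$). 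The only remaining subtlety I would want to double-check carefully is the matching of punctures under the diffeomorphism of surfaces: since the MCG is allowed to permute punctures, one needs to know that after applying the chosen element of $\MCG(\Sigma,\punct)$ one can further arrange, via half-twists, that $\seq\circ f=\seq'$ — this is precisely the content of the argument in the proof of Proposition \ref{PropositionSameOrbitsSameCategories}, and is the main place where having equal multi-sets (rather than merely agreement on a single puncture) is used.
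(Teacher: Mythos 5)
Your proposal is correct and reproduces the paper's own argument: the paper's proof of the converse cites exactly Corollary \ref{CorollaryOrbitsLineFieldsRibbonGraph}, Proposition \ref{PropositionSameOrbitsSameCategories} and Rickard's theorem \cite{RickardMoritaTheory}, and you have simply unpacked the intermediate steps (Example \ref{ExampleWindingNumbersBoundaries}, Lemma \ref{LemmaBipartiteOrientable}, Lemma \ref{LemmaGCD}, Proposition \ref{PropositionBGAIsBrauerGraphCategory}) that those citations package. The one subtlety you flag at the end --- permuting punctures by half-twists to match the multiplicity functions, which requires Lemma \ref{LemmaHalfTwistsPreserveHomotopyClasses} and hence that all punctures have winding number zero --- is indeed exactly where the proof of Proposition \ref{PropositionSameOrbitsSameCategories} does the same work.
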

\begin{proof}
By Theorem \ref{TheoremDerivedInvariantsBrauerGraphAlgebra}, all conditions are satisfied if $B$ and $B'$ are derived equivalent. The converse follows from Proposition \ref{PropositionSameOrbitsSameCategories}, Corollary \ref{CorollaryOrbitsLineFieldsRibbonGraph} and the fact that an equivalence between the categories of perfect complexes over algebras $B$ and $B'$ implies an equivalence between the corresponding derived categories \cite{RickardMoritaTheory}.
\end{proof}

\vspace{-12pt}
\subsection{Concluding remarks} \ \medskip

\noindent We finish this paper with a couple of remarks on derived equivalences of Brauer graph algebras. For the undefined terminology of silting mutation see \cite{AiharaIyama}.

\begin{rem}
As can be seen from Theorem \ref{TheoremClassificationBGAs},  two Brauer graph algebras $B_1, B_2$ with identical underlying ribbon graphs $\Gamma$ are derived equivalent if their multi-sets of multiplicities agree. On the topological level, this is a consequence of Lemma \ref{LemmaHalfTwistsPreserveHomotopyClasses} which shows that the homotopy class of a line field of ribbon type is invariant under half-twists. From an algebraic perspective an explanation can be given in terms of mutation of tilting complexes and spherical twists. Suppose that $\seq_1$ and $\seq_2$ are the multiplicity functions of $B_1$ and $B_2$ and suppose that they differ only at two vertices, say $v_1, v_2$, which are connected by an edge $h$. Then, a tilting complex $T$ over $B_1$ providing a derived equivalence between $B_1$ and $B_2$ can be constructed as follows: $T$ is a silting mutation of $B_1$ with respect to the additive subcategory generated by $P_h$. That is, $T=P_h \oplus\sum_{h'\in E(\Gamma), h'\neq h} T_{h'}$ with $T_{h'}$ defined from the following triangle
\begin{equation}\label{EquationApproximation}
\begin{tikzcd}
T_{h'}\arrow{r} & P  \arrow{r}{f_{h'}} & P_{h'}\arrow{r} & T_{h'}[1],
\end{tikzcd}
\end{equation}
\noindent where $f_{h'}$ is the minimal right $\operatorname{add}(P_{h})$-approximation of $P_{h'}$, where $\operatorname{add}(P_h)$ denotes the (small) additive closure of $P_h$. This is a tilting complex, since the category $\Kb{B_1}$ is $0$-Calabi-Yau. In case $\seq(v_1)=1=\seq(v_2)$, the derived equivalence induced by the
complex $T$ is an auto-equivalence and \eqref{EquationApproximation} is isomorphic to the defining triangle of $T_{P_h}(P_{h'})[-1]$, where $T_{P_h}$ denotes the spherical twist associated to the $0$-spherical object $P_h$.

Let us sketch the idea of the proof for the simplest case, when there are no loops or multiple edges at the vertices $v_1$ and $v_2$. The summands of $T$ are of the form:
$$T_{h'}=\begin{cases}
P_{h'}[-1], \text{ if } h' \text{ and } h \text{ do not share a common vertex,}\\
P_h\xrightarrow{\alpha_{hh'}}P_{h'}, \text{ if } h' \text{ and } h \text{ share a common vertex, } h\neq h',\\
P_h, \text{ if } h=h'.
\end{cases}$$
Here $\alpha_{hh'}$ denotes the shortest non-trivial path from $h$ to $h'$ and in the second case $P_h$ is concentrated in degree $0$. In order to define a homomorphism $f$ from $B_2$ to $E=\End_{\mathsf{D}^b(B_1)}(T)$ it is enough to define $f$ on the idempotents corresponding to the vertices of $B_2$ and on arrows. Since the edges of the Brauer graphs corresponding to $B_1$ and $B_2$ are identified we can define $f(e_{h'})=id_{T_{h'}}.$ By abuse of notation we will consider $h$ as a half-edge in order to define $f$ on arrows (then $h$, $\iota(h)$ are the half-edges comprising the edge with endpoints $v_1,v_2$ that we are considering). On the arrows $\alpha_{hh^{+}}$, $\alpha_{gg^{+}}$, $\alpha_{h^{-}h}$ ($g\neq h,h^{-}$), corresponding to sectors between edges incident to the vertices $v_1$ and $v_2$ we define
 \begin{displaymath}
 \begin{tikzcd}
        P_{h} \arrow{d}{\lambda\alpha_{\iota(h)\iota(h)}} \\
        P_{h} \arrow{r}{\alpha_{hh^{+}}} & P_{h^{+}},
 \end{tikzcd}\quad\quad
\begin{tikzcd}
        P_{h}\arrow[swap]{d}{\operatorname{id}} \arrow{r}{\alpha_{hg}}  & P_{g} \arrow{d}{\alpha_{gg^{+}}}   \\
        P_{h} \arrow{r}{\alpha_{hg^{+}}} & P_{g^{+}},
 \end{tikzcd}\quad\quad
\begin{tikzcd}
        P_{h}\arrow[swap]{d}{\operatorname{id}} \arrow{r}{\alpha_{hh^{-}}}  & P_{h^{-}}    \\
        P_{h}   &{ }& { }
 \end{tikzcd}
\end{displaymath}
\noindent where $\lambda^{\seq(s(\iota(h)))}=-1$. On the arrows $\alpha_{kk^{+}}$, $\alpha_{\iota(g)\iota(g)^{+}}$, $\alpha_{\iota(g)^{-}\iota(g)}$ ($k\neq \iota(g),\iota(g)^{-}$), corresponding to the sectors between edges at vertices $v\neq v_1,v_2$, where $\{g,\iota(g)\}$ connects $v$ and $v_1$ or $v_2$, we define: 
 \begin{displaymath}
 \begin{tikzcd}
        P_{k}[-1] \arrow{d}{\alpha_{kk^+}} \\
        P_{k^+}[-1],
 \end{tikzcd}\quad\quad
\begin{tikzcd}
        P_{h} \arrow{r}{\alpha_{hg}} & P_{g} \arrow{d}{\alpha_{\iota(g)\iota(g)^+}} \\
       & P_{\iota(g)^+},
\end{tikzcd}\quad\quad
\begin{tikzcd}
       & P_{\iota(g)^-} \arrow{d}{\alpha_{\iota(g)^-\iota(g)}}  \\
        P_{h} \arrow{r}{\alpha_{hg}}  & P_{g}.  
 \end{tikzcd}
\end{displaymath}
It is straightforward to check that this assignment defines a homomorphism $B_2 \rightarrow E$. The map $f$ is injective, since it maps socle elements of $B_2$ to non-zero elements and it is surjective, since the dimensions of $B_2$ and $E$ coincide. 
The isomorphism $E \cong B_2$ can alternatively be deduced directly from the proof of Proposition \ref{Prop_AddingAnArcMorita}. If $T_h$ denotes the half-twist along $h$ (see Example \ref{ExampleHalfTwist}), then $T_h(\Gamma)$ is obtained from $\Gamma$ by a series of edge removals and inclusions, see Figure \ref{FigureHalfTwistMutation}. From this perspective the presence of $\lambda$ in the definition of $f$ is a result of Lemma \ref{LemmaRootsOfUnityBaseChange} and Corollary \ref{CorollaryWindingEdges} which forces the winding numbers of new edges to be odd.
\end{rem}

\begin{figure}[H]
\centering
\begin{tikzpicture}
\def\ang{5};
\def\radi{0.75};
\def\thick{2pt};
\def\off{20};
\begin{scope}[scale=1, shift={(0,0)}]
\filldraw (0,0) circle (\thick); 
\filldraw (2,0) circle (\thick);
\draw (0,0)--(2,0);
    \foreach \i in {1,...,3} 
    {
        \draw (0,0)-- ({\i*360/5+\off}:\radi);
        \filldraw ({\i*360/5+\off}:\radi) circle (\thick);
    }
    \begin{scope}[shift={(2,0)}, xscale=-1]
        \foreach \i in {1,...,3} 
        {
            \draw (0,0)--({\i*360/5+\off}:\radi);
            \filldraw ({\i*360/5+\off}:\radi) circle (\thick);
        }
    \end{scope}
        \draw[red, ->] ({\ang}:{\radi*0.5}) arc ({\ang}:{360/5+\off-\ang}:{\radi*0.5});
        \begin{scope}[shift={(2,0)}]
           \draw[red, ->] ({180+\ang}:{\radi*0.5}) arc ({180+\ang}:{180+360-3*360/5-\off-\ang}:{\radi*0.5});
           \end{scope}
    \end{scope}
 \draw[->, thick] (3.25,0)--(3.75,0);   
\begin{scope}[scale=1, shift={(5,0)}]
\filldraw (0,0) circle (\thick); 
\filldraw (2,0) circle (\thick);
\draw (0,0)--(2,0);
  \draw[dashed] (0,0)-- ({1*360/5+\off}:\radi);
    \coordinate (A) at (2,0);
     \coordinate (B) at ({1*360/5+\off}:\radi);
   \draw (A) to[out=140, in=25] (B);
        \filldraw ({1*360/5+\off}:\radi) circle (\thick);
    \foreach \i in {2,...,3} 
    {
        \draw (0,0)-- ({\i*360/5+\off}:\radi);
        \filldraw ({\i*360/5+\off}:\radi) circle (\thick);
    }
    \begin{scope}[shift={(2,0)}, xscale=-1]
     \draw[dashed] (0,0)--({3*360/5+\off}:\radi);
     \coordinate (A) at (2,0);
     \coordinate (B) at ({3*360/5+\off}:\radi);
     \draw (A) to[out=-130, in=-15] (B);
  
            \filldraw ({3*360/5+\off}:\radi) circle (\thick);
        \foreach \i in {1,...,2} 
        {
            \draw (0,0)--({\i*360/5+\off}:\radi);
            \filldraw ({\i*360/5+\off}:\radi) circle (\thick);
        }
    \end{scope}
        \draw[red, ->] ({\ang}:{\radi*0.5}) arc ({\ang}:{2*360/5+\off-\ang}:{\radi*0.5});
        \begin{scope}[shift={(2,0)}]
           \draw[red, ->] ({180+\ang}:{\radi*0.5}) arc ({180+\ang}:{180+360-2*360/5-\off-\ang}:{\radi*0.5});
           \end{scope}
    \end{scope}    
 \draw[->, thick] (8.25,0)--(8.75,0);  
\begin{scope}[scale=1, shift={(10,0)}]
\filldraw (0,0) circle (\thick); 
\filldraw (2,0) circle (\thick);
\draw (0,0)--(2,0);
  \draw[dashed] (0,0)-- ({2*360/5+\off}:\radi);
    \coordinate (A) at (2,0);
     \coordinate (B) at ({1*360/5+\off}:\radi);
       \coordinate (C) at ({2*360/5+\off}:\radi);
  \draw (A) to[out=140, in=25] (B);
    \draw (A) to[out=160, in=35] (C);
        \filldraw ({1*360/5+\off}:\radi) circle (\thick);
         \filldraw ({2*360/5+\off}:\radi) circle (\thick);
    \foreach \i in {3} 
    {
        \draw (0,0)-- ({\i*360/5+\off}:\radi);
        \filldraw ({\i*360/5+\off}:\radi) circle (\thick);
    }
    \begin{scope}[shift={(2,0)}, xscale=-1]
     \draw[dashed] (0,0)--({2*360/5+\off}:\radi);
     \coordinate (A) at (2,0);
     \coordinate (B) at ({3*360/5+\off}:\radi);
     \coordinate (C) at ({2*360/5+\off}:\radi);
     
     \draw (A) to[out=-130, in=-15] (B);
     \draw (A) to[out=-150, in=-65] (C);
            \filldraw ({3*360/5+\off}:\radi) circle (\thick);
             \filldraw ({2*360/5+\off}:\radi) circle (\thick);
        \foreach \i in {1} 
        {
            \draw (0,0)--({\i*360/5+\off}:\radi);
            \filldraw ({\i*360/5+\off}:\radi) circle (\thick);
        }
    \end{scope}
        \draw[red, ->] ({\ang}:{\radi*0.5}) arc ({\ang}:{3*360/5+\off-\ang}:{\radi*0.5});
        \begin{scope}[shift={(2,0)}]
           \draw[red, ->] ({180+\ang}:{\radi*0.35}) arc ({180+\ang}:{180+360-1*360/5-\off-\ang}:{\radi*0.35});
           \end{scope}
    \end{scope}    
 \draw[->, thick] (3.25,-2.5)--(3.75,-2.5);  
\begin{scope}[scale=1, shift={(5,-2.5)}]
\filldraw (0,0) circle (\thick); 
\filldraw (2,0) circle (\thick);
\draw (0,0)--(2,0);
    \coordinate (A) at (2,0);
    \coordinate (B) at ({1*360/5+\off}:\radi);
    \coordinate (C) at ({2*360/5+\off}:\radi);
    \coordinate (D) at ({3*360/5+\off}:\radi);
    \draw (A) to[out=140, in=25] (B);
    \draw (A) to[out=160, in=35] (C);
 
    \coordinate (E) at (0.8,0.23);
    \draw[hobby] plot coordinates {(A) (E) ({135}:{0.4*\radi}) (D)};
        \filldraw ({1*360/5+\off}:\radi) circle (\thick);
         \filldraw ({2*360/5+\off}:\radi) circle (\thick);
    \foreach \i in {3} 
    {
        \filldraw ({\i*360/5+\off}:\radi) circle (\thick);
    }
    \begin{scope}[shift={(2,0)}, xscale=-1]
     \coordinate (A) at (2,0);
     \coordinate (B) at ({3*360/5+\off}:\radi);
     \coordinate (C) at ({2*360/5+\off}:\radi);
         \coordinate (D) at ({1*360/5+\off}:\radi);

      \draw[hobby] plot coordinates {(A) (1,-0.21) ({180}:{0.35*\radi})  (D)}; 
     \draw (A) to[out=-130, in=-15] (B);
     \draw (A) to[out=-150, in=-65] (C);
            \filldraw ({3*360/5+\off}:\radi) circle (\thick);
             \filldraw ({2*360/5+\off}:\radi) circle (\thick);
        \foreach \i in {1} 
        {
            \filldraw ({\i*360/5+\off}:\radi) circle (\thick);
        }
    \end{scope}
    \end{scope}    
\end{tikzpicture}
    \caption{A sequence of arc removals and inclusions. The resulting arc system represents the tilting complex $T$ and is obtained from the original by a half-twist.}
    \label{FigureHalfTwistMutation}
\end{figure}
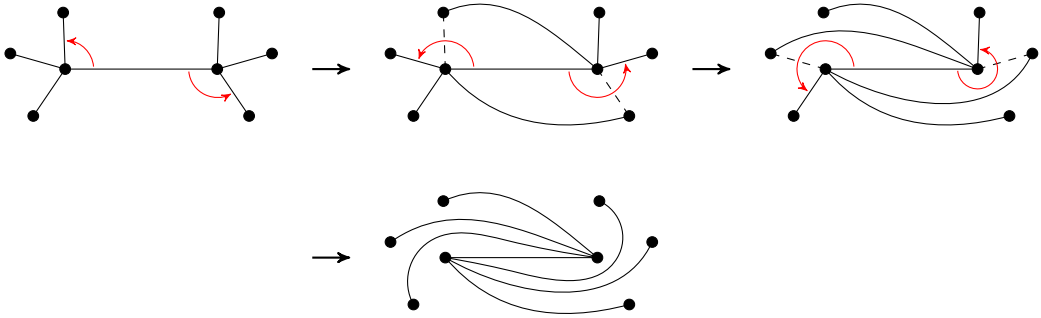

\begin{rem}\label{RemarkAAC} 
Irreducible silting mutations for Brauer graph algebras were studied, for example, in \cite{Antipov, AdachiAiharaChan}. On the level of Brauer graphs irreducible silting mutations are realized as the so-called \textit{Kauer moves}. In \cite[Example 4.6]{Antipov} Antipov constructs two Brauer graphs $\Gamma_1$, $\Gamma_2$ with coinciding derived invariants such that $B_{\Gamma_1}$ is not isomorphic to the endomorphism ring of any tilting complex over  $B_{\Gamma_2}$ which is obtained from $B_{\Gamma_2}$ through a sequence of irreducible silting mutations. Thus, Kauer moves are not sufficient to reach all Brauer graph algebras which are derived equivalent to a given Brauer graph algebra.
\end{rem}

\begin{rem}\label{RemarkMainTheoremRickardTheorem} As mentioned before Brauer graph algebras are trivial extensions of gentle algebras. A classical theorem of Rickard \cite[Theorem 3.1.]{RickardDerivedStable} states that trivial extensions of derived equivalent finite dimensional algebras are again derived equivalent.
One might wonder whether the derived equivalence classification of Brauer graph algebras from Theorem \ref{IntroTheoremCriterionDerivedEquivalence} follows from the derived equivalence classification of gentle algebras. In Example \ref{ExampleBGACuts} we describe two derived equivalent Brauer graph algebras $B_1$ and $B_2$ such that any two gentle algebras $\Lambda_1$ and $\Lambda_2$ with $\triv(\Lambda_1) \cong B_1$ and $\triv(\Lambda_2) \cong B_2$ are not derived equivalent. This implies that Theorem \ref{IntroTheoremCriterionDerivedEquivalence} does not follow from Rickard's Theorem and the derived equivalence classification of gentle algebras in \cite{OpperDerivedEquivalences} and \cite{AmiotPlamondonSchroll}.\end{rem}

\begin{exa}\label{ExampleBGACuts}
Let $\Gamma_1$ and $\Gamma_2$ denote the Brauer graphs with trivial multiplicities as depicted in Figure \ref{FigureExaCuts}. Edges with the same label are meant to be identified. The resulting Brauer graphs have $8$ vertices, $14$ edges and $2$ faces each, producing a ribbon surface $\Sigma_{\Gamma_1} \cong \Sigma_{\Gamma_2}$ of genus $3$. Every component of $\partial \Sigma_{\Gamma_i}$ corresponds to a face of $\Gamma_i$ with perimeter $14$. Moreover, $\Gamma_1$ and $\Gamma_2$ are bipartite and hence Theorem \ref{IntroTheoremCriterionDerivedEquivalence} shows that their associated Brauer graph algebras $B_1$ and $B_2$ are derived equivalent. Let $M_i$ and $N_i$ denote the number of marked points on the pair of boundary components of any cut surface $\Sigma_i$ of $(\Sigma_{\Gamma_i}, \BV{\Gamma_i}, \eta_{\Gamma_i})$, where as usual $\Gamma_i$ is regarded as an arc system inside its ribbon surface. Since $|\!\BV{\Gamma_1}|=2=|\!\BV{\Gamma_2}|$, we have $\{M_1, N_1\}=\{2-x, 6+x\}$ and  $\{M_2, N_2\}=\{3+y, 5-y\}$ for some $x,y \in \{0,1,2\}$ and therefore $\{M_1, N_1\} \neq \{M_2, N_2\}$. It follows that $\Sigma_1$ and $\Sigma_2$ are not diffeomorphic as marked surfaces as such a diffeomorphism is required to induce a bijection between the sets of marked points. Thus, \cite[Theorem B]{OpperDerivedEquivalences} (see also \cite{AmiotPlamondonSchroll}) shows that $\Fuk(\Sigma_1)$ and $\Fuk(\Sigma_2)$ are not equivalent and hence that any two gentle algebras obtained from cuts of $\Gamma_1$ and $\Gamma_2$ are not derived equivalent. Finally, any gentle algebra whose trivial extension is isomorphic to $B_i$ is obtained from $B_i$ by a cut. This is a consequence  of the fact that the Brauer graph of a non-local Brauer graph algebra is uniquely determined by the isomorphism class of the algebra.
\end{exa}

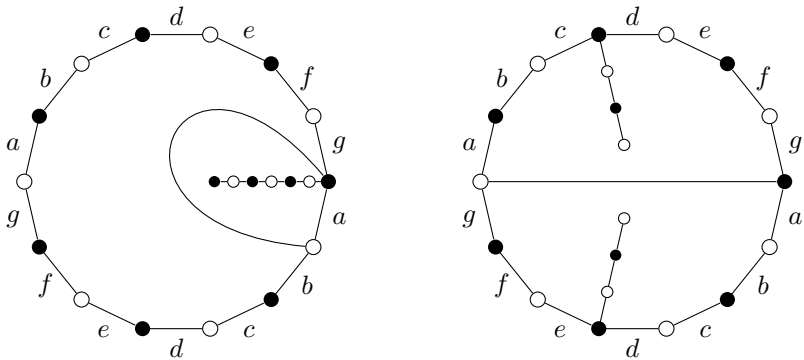
\begin{figure}[H]
    \centering
\begin{tikzpicture}[scale=1]
    \def\r{1};
    \def\rt{1.1};
    \def\thick{2pt};
    \def\angl{360/14};

    \begin{scope}[scale=2]

        \foreach \e [count=\i from 0]  in {a,...,g}
        {
            \node[draw, shape=circle, minimum size=\thick, inner sep=\thick] (P\i) at ({-(2*\i+1)*\angl}:\r){};
            \node[fill, shape=circle, inner sep=\thick] (Q\i) at (-{(2*\i)*\angl}:\r){};
            \draw (P\i)--(Q\i);
            \draw (P\i)--({-(2*\i+2)*\angl}:\r);

            \draw (-{(\i+0.5)*\angl}:\rt) node{$\e$};
            \draw (-{(\i+0.5)*\angl+180}:{\rt}) node{$\e$};
            
        }
        \def\m{0.25};
        
        \foreach \i in {0,2,4,6} 
        {
            \node[fill, shape=circle, inner sep=\thick*0.75] (p\i) at ({\m+(1-\m)/6*\i}, 0){};
        };
         \foreach \i in {1,3,5} 
        {
            \node[draw, shape=circle, inner sep=\thick*0.75] (p\i) at ({\m+(1-\m)/6*\i}, 0){};
        };
        
        \foreach \i in {1,...,6} 
            {
                \draw (p\i)--(p\the\numexpr\i-1\relax);
            };
            
            \draw (P0) to[in=130, out=175, looseness=8] (Q0);
    \end{scope}
    
     \begin{scope}[scale=2, shift={(3,0)}]

        \foreach \e [count=\i from 0]  in {a,...,g}
        {
            \node[draw, shape=circle, minimum size=\thick, inner sep=\thick] (P\i) at ({-(2*\i+1)*\angl}:\r){};
            \node[fill, shape=circle, inner sep=\thick] (Q\i) at (-{(2*\i)*\angl}:\r){};
            \draw (P\i)--(Q\i);
            \draw (P\i)--({-(2*\i+2)*\angl}:\r);

            \draw (-{(\i+0.5)*\angl}:\rt) node{$\e$};
            \draw (-{(\i+0.5)*\angl+180}:{\rt}) node{$\e$};
            
        }      
        \draw (Q0)--(P3);

         \def\m{0.25};
        
        \foreach \i in {1, 3} 
        {
            \node[fill, shape=circle, inner sep=\thick*0.75] (p\i) at (-{(2*2)*\angl}:{\m+(1-\m)/3*\i}){};
             \node[fill, shape=circle, inner sep=\thick*0.75] (q\i) at (-{(2*5)*\angl}:{\m+(1-\m)/3*\i}){};
        };
          \foreach \i in {0, 2} 
        {
            \node[draw, shape=circle, inner sep=\thick*0.75] (p\i) at (-{(2*2)*\angl}:{\m+(1-\m)/3*\i}){};
             \node[draw, shape=circle, inner sep=\thick*0.75] (q\i) at (-{(2*5)*\angl}:{\m+(1-\m)/3*\i}){};
        };
        
        \foreach \i in {1,...,3} 
            {
                \draw (p\i)--(p\the\numexpr\i-1\relax);
                 \draw (q\i)--(q\the\numexpr\i-1\relax);
            };
    \end{scope}

\end{tikzpicture}
    \caption{Bipartite Brauer graphs $\Gamma_1$ and $\Gamma_2$ with identical invariants.}
    \label{FigureExaCuts}
\end{figure}

	\bibliography{bibliography}{}
	\bibliographystyle{alpha}
	
	\ \medskip
	
\end{document}